\newcommand{\bbC}{{\mathbb{C}}}
\newcommand{\bbD}{{\mathbb{D}}}
\newcommand{\bbF}{{\mathbb{F}}}
\newcommand{\bbG}{{\mathbb{G}}}
\newcommand{\bbL}{{\mathbb{L}}}
\newcommand{\bbR}{{\mathbb{R}}}
\newcommand{\bbZ}{{\mathbb{Z}}}
\newcommand{\fre}{{\frak{e}}}
\newcommand{\frA}{{\frak{A}}}
\newcommand{\x}{{\mathbf{x}}}
\newcommand{\z}{{\mathbf{z}}}
\newcommand{\J}{{\mathscr{J}}}
\newcommand{\calB}{{\mathcal{B}}}
\newcommand{\calF}{{\mathcal F}}
\newcommand{\calJ}{{\mathcal J}}
\newcommand{\calL}{{\mathcal L}}
\newcommand{\calM}{{\mathcal M}}
\newcommand{\calR}{{\mathcal R}}
\newcommand{\calS}{{\mathcal S}}
\newcommand{\calT}{{\mathcal T}}
\newcommand{\bddot}{{\boldsymbol{\cdot}}}
\newcommand{\dott}{\,\cdot\,}
\newcommand{\lb}{\label}
\newcommand{\f}{\frac}
\newcommand{\ol}{\overline}
\newcommand{\ti}{\tilde  }
\newcommand{\wti}{\widetilde  }
\newcommand{\Oh}{O}
\newcommand{\Wr}{\text{\rm{Wr}}}
\newcommand{\dist}{\text{\rm{dist}}}
\newcommand{\Var}{\text{\rm{Var}}}
\newcommand{\ess}{\text{\rm{ess}}}
\newcommand{\ac}{\text{\rm{ac}}}
\newcommand{\s}{\text{\rm{s}}}
\newcommand{\supp}{\text{\rm{supp}}}
\newcommand{\intt}{\text{\rm{int}}}
\newcommand{\bi}{\bibitem}
\newcommand{\beq}{\begin{equation}}
\newcommand{\eeq}{\end{equation}}
\newcommand{\ba}{\begin{align}}
\newcommand{\ea}{\end{align}}
\newcommand{\veps}{\varepsilon}
\newcommand{\vy}{{\vec{y\!}}\,}
\DeclareMathOperator{\ca}{cap}
\let\det=\undefined\DeclareMathOperator{\det}{det}
\newcounter{smalllist}
\newenvironment{SL}{\begin{list}{{\rm\roman{smalllist})}}{%
\setlength{\topsep}{0mm}\setlength{\parsep}{0mm}\setlength{\itemsep}{0mm}%
\setlength{\labelwidth}{2em}\setlength{\leftmargin}{2em}\usecounter{smalllist}%
}}{\end{list}}
\newcommand{\comm}[1]{}
\DeclareMathOperator{\Real}{Re}
\DeclareMathOperator{\Ima}{Im}
\numberwithin{equation}{section}
\newtheorem{theorem}{Theorem}[section]
\newtheorem*{p2.1}{Proposition 2.1}
\newtheorem{proposition}[theorem]{Proposition}
\newtheorem{lemma}[theorem]{Lemma}
\newtheorem{corollary}[theorem]{Corollary}
\theoremstyle{definition}
\newtheorem*{remark}{Remark}
\newtheorem*{remarks}{Remarks}
\newtheorem*{definition}{Definition}
\newcommand{\abs}[1]{\lvert#1\rvert}
\newcommand{\jap}[1]{\langle #1 \rangle}
\newcommand{\norm}[1]{\lVert#1\rVert}
\newcommand{\ve}[1]{{\vec{#1\!}}\,}
\begin{document}

\title[Finite Gap Jacobi Matrices, I]{Finite Gap Jacobi Matrices,\\I. The Isospectral Torus}
\author[J.~S.~Christiansen, B.~Simon, and M.~Zinchenko]{Jacob S.~Christiansen$^*$, Barry Simon$^{*,\dagger}$, and
Maxim Zinchenko$^*$}

\thanks{$^*$ Mathematics 253-37, California Institute of Technology, Pasadena, CA 91125.
E-mail: stordal@caltech.edu; bsimon@caltech.edu; maxim@caltech.edu}
\thanks{$^\dagger$ Supported in part by NSF grant DMS-0652919}

\date{September 25, 2008}
\keywords{Isospectral torus, covering map, orthogonal polynomials}
\subjclass[2000]{42C05, 58J53, 14H30}

\begin{abstract}
Let $\fre\subset\bbR$ be a finite union of disjoint closed
intervals. In the study of OPRL with measures whose essential
support is $\fre$, a fundamental role is played by the isospectral
torus. In this paper, we use a covering map formalism to define and
study this isospectral torus. Our goal is to make a coherent
presentation of properties and bounds for this special class as a
tool for ourselves and others to study perturbations. One important
result is the expression of Jost functions for the torus in terms of
theta functions.
\end{abstract}

\maketitle

\section{Introduction} \lb{s1}

Let $\fre\subset\bbR$ be a union of $\ell+1$ disjoint closed intervals
\begin{align}
& \fre= \fre_1 \cup \fre_2 \cup \cdots \cup \fre_{\ell+1} \lb{1.1} \\
& \fre_j = [\alpha_j, \beta_j] \lb{1.2} \\
& \alpha_1 < \beta_1 < \alpha_2 < \cdots < \alpha_{\ell+1} < \beta_{\ell+1} \lb{1.3}
\end{align}
$\ell$ counts the number of gaps.

For later purposes, we will need to exploit potential theoretic
objects associated to $\fre$. $\ca(\fre)$ will be its {logarithmic
capacity}, $d\rho_\fre$ the {equilibrium measure} (normalized by
$\rho_\fre(\bbR)=1$)
\begin{equation} \lb{1.4}
d\rho_\fre(x) = \rho_\fre(x)\, dx
\end{equation}
and $\rho_\fre (\fre_j)$ the {harmonic measures}. For reasons that
become clear soon, we say $\fre$ is {periodic} if all harmonic
measures, $\rho_\fre(\fre_j)$, $j=1, \dots, \ell+1$, are rational.
See \cite{Helms,Land,Ran,EqMC, StT,Tsu} for discussions of potential
theory.

We will be interested in one- and two-sided Jacobi matrices: one-sided with parameters labelled
$\{a_n,b_n\}_{n=1}^\infty$,
\begin{equation} \lb{1.5}
J=
\begin{pmatrix}
b_1 & a_1 & 0 & 0 & \cdots \\
a_1 & b_2 & a_2 & 0 & \cdots \\
0 & a_2 & b_3 & a_3 & \cdots \\
\vdots & \vdots & \vdots & \vdots & \ddots
\end{pmatrix}
\end{equation}
and two-sided with $\{a_n, b_n\}_{n=-\infty}^\infty$ extended to the top and left in the
obvious way. And, of course, we want to consider the orthogonal polynomials on the real line
(OPRL) \cite{FrBk,Rice,SzBk} defined by
\begin{equation} \lb{1.6}
\begin{gathered}
p_{-1}(x)=0 \qquad p_0(x) =1 \\
xp_n(x) =a_{n+1} p_{n+1}(x) + b_{n+1} p_n(x) + a_n p_{n-1}(x)
\end{gathered}
\end{equation}
If $d\mu$ is the spectral measure for $J$ and vector
$(1,0,0,\dots)^t$, then the $p_n$'s are orthonormal
\begin{equation} \lb{1.7}
\int p_n(x) p_m(x) \, d\mu(x) = \delta_{nm}
\end{equation}
We will also want to consider monic OPs, $P_n$, the multiple of $p_n$ with leading coefficient $1$,
\begin{align}
p_n(x) &= (a_1 \cdots a_n)^{-1} P_n(x) \lb{1.8} \\
xP_n(x) &= P_{n+1}(x) + b_{n+1} P_n(x) + a_n^2 P_{n-1}(x) \lb{1.9}
\end{align}

We want to analyze the case where
\begin{equation} \lb{1.10}
\sigma_\ess (J) \equiv \sigma_\ess(d\mu) = \fre
\end{equation}
Here $\sigma_\ess(J)$ is the essential spectrum of $J$, aka the
derived set of $\supp(d\mu)$. We will use $\sigma(J)$ (or
$\sigma(d\mu)$) for the spectrum of $J$ and
$\Sigma_\ac(d\mu)=\{x\mid\f{d\mu}{dx}\neq 0\}$ for the essential
support of the a.c.\ part of $d\mu$. In this paper, we will focus on
the isospectral torus, in \cite{CSZ2} on the Szeg\H{o} class, and in
\cite{CSZ3} on results that go beyond the Szeg\H{o} class. Some of
our results were announced in \cite{CSZann}.

The goal is to extend what is known about the case $\fre=[-1,1]$. This can be viewed as a problem
in approximation theory where polynomial asymptotics is critical or as a problem in spectral
theory where Jacobi parameter asymptotics is critical. As usual, there are three main levels from
the point of view of polynomial asymptotics:

\smallskip
\noindent{\bf (a) Root asymptotics.}  Asymptotics of
$\abs{P_n(x)}^{1/n}$. For $[-1,1]$, the theory is due to
Erd\"os--Tur\'{a}n \cite{ET} and Ullman \cite{Ull,Ull84,Ull85}. For
general sets, including finite gap sets, the theory is due to
Stahl--Totik \cite{StT} (see Simon \cite{EqMC} for a review). One
has for $x\notin\sigma(d\mu)$ and $d\mu$ regular (i.e.,
$\sigma_\ess(d\mu)=\fre$ and $\lim(a_1 \cdots a_n)^{1/n}=\ca(\fre)$)
that
\begin{equation} \lb{1.11x}
\abs{P_n(x)}^{1/n} \to \exp \biggl( \int\log\abs{x-y}\, d\rho_\fre(y)\biggr)
\end{equation}

\smallskip
\noindent{\bf (b) Ratio asymptotics.}  Traditionally, this involves
the ratio $P_{n+1}(x)/P_n(x)$ having a limit. Nevai \cite{Nev79}
showed that if $a_n\to a$, $b_n\to b$ ($a\neq 0$) so that
$\sigma_\ess (d\mu)=[b-2a, b+2a]$, then the limit exists for
$x\notin\sigma(d\mu)$. Simon \cite{S290} proved a converse: if the
limit exists at a single point in $\bbC_+=\{z\mid\Ima z >0\}$, then
for some $a,b$, we have that $a_n\to a$, $b_n\to b$. Thus, the
proper analog for $\sigma_\ess (d\mu)=\fre$ will not be existence of
a limit but something more subtle. This is an interesting open
question which we will not address.

\smallskip
\noindent{\bf (c) Szeg\H{o} asymptotics.}  This says that for
$z\notin\sigma(d\mu)$, $P_n(z)/D(z)E(z)^n \to 1$ for an explicit
function $E$ ($(z+\sqrt{z^2-1})$ for $\fre = [-1,1]$) and a function
$D$ which is $\mu$-dependent. The proper analog for general finite
gap sets was obtained by Widom \cite{Widom} (see also Aptekarev
\cite{Apt}) and by Peherstorfer--Yuditskii \cite{PY} using
variational methods. The ratio is only asymptotically (almost)
periodic. One of our main goals in this series is to provide a new
nonvariational approach to this result. In addition, following
Damanik--Simon \cite{Jost1} for $[-1,1]$, we want to consider cases
where the Szeg\H{o} condition fails.

\medskip
From the spectral theory point of view, the analogs of $a_n\to\f12$,
$b_n\to 0$ (aka the Nevai class) concern the isospectral torus, an
object we will discuss extensively in this paper. For now, we note
that if $\fre$ is periodic, the $J$'s in the isospectral torus are
all periodic Jacobi matrices with $\sigma_\ess(J)=\fre$. In the
general case, it is an $\ell$-dimensional torus of almost periodic
$J$'s with $\sigma_\ess (J)=\fre$. It can be singled out via minimal
Herglotz functions \cite{Rice} or reflectionless potentials
\cite{Remppt}; see Section \ref{s6} below.

The key realization is that the Nevai class needs to be replaced by approach to an isospectral
torus. This was first noted by Simon \cite{OPUC1,OPUC2} as conjectures in the context of  the
OPUC case. In turn, Simon was motivated by work of L\'opez  and collaborators \cite{BRLL,BHLL}
who studied the case of a single gap for OPUC.

From a spectral point of view, the analogs of the asymptotics results are:

\smallskip
\noindent{\bf (a)} \ Regularity implies more restrictions on the
Jacobi parameters than $(a_1 \cdots a_n)^{1/n} \to \ca(\fre)$. For
example, for $\fre=[-1,1]$, it is known that $\f{1}{n} \sum_{j=1}^n
(a_j-\f12)^2 + b_j^2\to 0$ and, for $\fre$ periodic, a similar
Ces\`aro convergence result for distances to the isospectral torus
is proven in \cite{S319}. The analog for general finite gap sets 
remains an interesting open question.

\smallskip
\noindent{\bf (b)} \ The key result here in the case $\fre=[-1,1]$
is the theorem of Denisov--Rakhmanov \cite{Denpams} stating that if
$\Sigma_\ac(d\mu)=\sigma_\ess(d\mu)=[-1,1]$, then $a_n\to \f12$,
$b_n\to 0$. Simon \cite{OPUC2} conjectured that for periodic $\fre$,
the proper result is that if
$\Sigma_\ac(d\mu)=\sigma_\ess(d\mu)=\fre$, then all right limits lie
in the isospectral torus. For periodic $\fre$, this was proven by
Damanik--Killip--Simon \cite{DKSppt} who conjectured the result for
general $\fre$. It was then proven for general finite gap sets by
Remling \cite{Remppt}. Remling's result plays a key role in our work
in paper~II \cite{CSZ2}. We note that in the opposite direction, Last--Simon
\cite{LS2006} have shown that if all right limits lie in the
isospectral torus, then $\sigma_\ess (d\mu)=\fre$.

\smallskip
\noindent{\bf (c)} \ Here there are two main results. When
$\sigma(d\mu)=\fre$ (no bound states), Widom proved that a Szeg\H{o}
condition implies
\begin{align}
\liminf\, \f{a_1 \cdots a_n}{\ca(\fre)^n} &> 0  \lb{1.11} \\
\limsup\, \f{a_1 \cdots a_n}{\ca(\fre)^n} &< \infty \lb{1.12}
\end{align}
The Szeg\H{o} condition in this situation is
\begin{equation} \lb{1.13}
\int_\fre \dist(x,\bbR\setminus\fre)^{-1/2} \log \biggl( \f{d\mu}{dx}\biggr)\, dx > -\infty
\end{equation}
Widom allowed no eigenvalues outside $\fre$. Peherstorfer--Yuditskii \cite{PY} had eigenvalues, but only
in a later note \cite{PYarx} did they have the natural (from their paper \cite{PYpams}) condition
\begin{equation} \lb{1.14}
\sum_j \dist (x_j, \fre)^{1/2} <\infty
\end{equation}
where $x_j$ are the point masses of $d\mu$ (or eigenvalues of $J$)
outside $\fre$. Thus, Peherstorfer--Yuditskii \cite{PYarx} showed
\begin{equation} \lb{1.15}
\eqref{1.13} + \eqref{1.14} \Rightarrow \eqref{1.11} + \eqref{1.12}
\end{equation}
One of our main results in paper~II \cite{CSZ2} is to show
\begin{equation} \lb{1.16}
\eqref{1.11} + \eqref{1.14} \Rightarrow \eqref{1.13} + \eqref{1.12}
\end{equation}
Peherstorfer remarked to us that, while this result is new, it can also be derived from the results of
\cite{PY}.

The key to our analysis is a machinery developed by Sodin--Yuditskii \cite{SY} and exploited by
Peherstorfer--Yuditskii \cite{PY,PYarx}. To explain it, we note that the key to recent sum rule
discussions (summarized in \cite{Rice}) is to take the $m$-function given by
\begin{equation} \lb{1.17}
m(z) =\int \f{d\mu(x)}{x-z}
\end{equation}
and in the case $\fre=[-2,2]$, move it to $\bbD=\{z\mid\abs{z}<1\}$ via
\begin{equation} \lb{1.18}
M(z)=-m(z+z^{-1})
\end{equation}
The map
\begin{equation} \lb{1.19}
\x(z)=z+z^{-1}
\end{equation}
is the unique analytic bijection of $\bbD$ to $\bbC\cup\{\infty\}\setminus [-2,2]$ with
\begin{equation} \lb{1.20}
\x(0)=\infty \qquad \lim_{\substack{z\to 0 \\ z\neq 0}} z \x(z) >0
\end{equation}
The minus sign in \eqref{1.18} comes from the fact that $\x$ maps
$\bbD\cap\bbC_+$ to $-\bbC_+$ (where $\bbC_+=\{z\mid\Ima z > 0\}$).

In our case, there cannot be an analytic bijection of $\bbD$ to
$\bbC\cup\{\infty\}\setminus\fre$ since
$\bbC\cup\{\infty\}\setminus\fre$ is not simply connected. However,
because the holomorphic universal cover of
$\bbC\cup\{\infty\}\setminus\fre$ is $\bbD$, there is an analytic
map $\x\colon\bbD\to \bbC\cup\{\infty\}\setminus\fre$ which is
locally one-one and obeys \eqref{1.20}. Moreover, there is a group
$\Gamma$ of M\"obius transformations of $\bbD$ to $\bbD$ so that
\begin{equation} \lb{1.21}
\x(z) = \x(w) \;\Leftrightarrow\; \exists \gamma\in\Gamma \text{ so
that } z=\gamma(w)
\end{equation}
This group is isomorphic to $\pi_1 (\bbC\cup\{\infty\}\setminus\fre)
= \bbF_\ell$, the free nonabelian group on $\ell$ generators. We
mention that $\x$ is uniquely determined if
\eqref{1.20}--\eqref{1.21} hold and $\x$ is locally one-one.

Our goal in this paper is to discuss the isospectral torus in terms
of this formalism. It turns out that basic objects for the
isospectral torus, like Bloch waves and Green's function behavior,
are not discussed in detail anywhere. We will remedy that here.
While these results will not be surprising to experts, they are
exceedingly useful both in our further works \cite{CSZ2,CSZ3} and in
\cite{BLS,FSW,HS2008,2ext}.

We should expand on the point we already remarked upon that there
are two distinct ways of describing the isospectral torus: as a set
of minimal Herglotz functions or as the family of reflectionless
Jacobi matrices with spectrum $\fre$. The view as minimal Herglotz
functions goes back to the earliest periodic KdV work
\cite{DubMatNov,McvM1} (see also \cite{FlMcL,Krich1,vMoer}), while
the reflectionless definition goes back at least to Sodin--Yuditskii
\cite{SY} (see also \cite{Remppt}).

There is an important distinction: reflectionless objects are
natural whole-line (doubly infinite) Jacobi matrices, while minimal
Herglotz functions are associated to half-line objects. Of course,
the passage from whole-line to half-line objects is by
restriction---but the converse is not so simple. From our point of
view, the key is that the $J$'s associated to minimal Herglotz
functions are quasiperiodic and such functions are determined by
their values on a half-line (because a quasiperiodic function
vanishing on a half-line is identically zero). Alternatively, if
$m(z)$ is a minimal Herglotz function, the demand that
\begin{equation} \lb{1.23a}
m_0(z) =\calM (a_0,b_0, m(z))
\end{equation}
where
\begin{equation} \lb{1.23b}
\calM(a,b,f(z)) = \f{1}{-z+b-a^2 f(z)}
\end{equation}
be a minimal Herglotz function determines $a_0$ and $b_0$, and so
inductively, minimality allows a unique continuation from the
half-line.

In Section~\ref{s2}, we describe the map $\x$ in \eqref{1.20} and its natural extension to a covering
(albeit not universal covering) map of the two-sheeted Riemann surface, $\calS$, that the $m$-function
for elements of the isospectral torus lives on. In Section~\ref{s3}, we describe a critical result of
Beardon \cite{Bear} on the Poincar\'e index of $\Gamma$. Section~\ref{s4} reviews the facts about
character automorphic Blaschke products and their connection to potential theory. We will also present
estimates on these products needed in later papers \cite{CSZ2,CSZ3}. In Section~\ref{s5}, we use this
machinery to prove Abel's theorem. In Section~\ref{s6}, we describe the isospectral torus as the family
of minimal Herglotz functions on $\calS$. Sections~\ref{s7} and \ref{s8} will describe the Jost functions
of elements of the isospectral torus and will prove that the natural map from the isospectral torus to the
group of characters of $\Gamma$, given by the character of the Jost function, is an isomorphism of tori.
We will also relate Jost functions to theta functions, one of the more significant results of the present
paper. Section~\ref{s9} will discuss Jost solutions and the associated Bloch waves. Finally, Section~\ref{s10}
will apply these solutions to the study of the Green's function. Some of the material in Sections~\ref{s2},
\ref{s4}, and \ref{s6} is in suitable texts but included here because we wish to make this paper more
accessible to approximation theorists who may be unfamiliar with it.

We also mention the enormous debt this paper owes to the seminal work of Sodin--Yuditskii \cite{SY} and
Peherstorfer--Yuditskii \cite{PY}. About the only real advantage of our presentation in this first
paper over ideas implicit in \cite{SY,PY} is that we are more explicit and our Jost functions,
unlike the close relatives in \cite{SY,PY}, are strictly character automorphic. We emphasize that \cite{SY,PY}
had as their focus the theory of certain infinite gap sets for which $\fre$ is typically a Cantor set of
positive Lebesgue measure. But they include finite gap sets and provide useful tools in that special case.
Our work makes use of some results special to this finite gap situation.

We note that while we discuss Jost functions and solutions here for the isospectral torus, in
\cite{CSZ2,CSZ3} we will present them for any $J$ in the Szeg\H{o} class. For us, they are the key
to understanding Szeg\H{o} asymptotics in this finite gap situation.

\medskip
We want to thank D.~Calegari, H.~Farkas, F.~Gesztesy, I.~Kra,
N.~Makarov, F.~Peherstorfer, and P.~Yuditskii.

\section{The Covering Map and the Fuchsian Group} \lb{s2}

In this section, we describe the basic objects and setup that we will use. We emphasize that these
constructs are not new here, and more than anything else, this section sets up notation and gives
a pedagogical introduction. The Riemann surface, $\calS$, was introduced for finite gap KdV in
\cite{DubMatNov,McvM1} and for finite gap Jacobi matrices in \cite{FlMcL,Krich1,vMoer}.
The Fuchsian group formalism is from \cite{SY}.

Let $\calS_+$ be the set $\bbC\cup\{\infty\}\setminus\fre$ viewed as a Riemann surface. First of all,
we want to view this as one sheet of the Riemann surface of the function
\begin{equation} \lb{2.1a}
w=(R(z))^{1/2}
\end{equation}
where
\begin{equation} \lb{2.1b}
R(z) = \prod_{j=1}^{\ell+1} (z-\alpha_j)(z-\beta_j)
\end{equation}
More explicitly, we consider pairs $(w,z)$ in $\bbC^2$ obeying
\begin{equation} \lb{2.2}
w^2 -R(z) \equiv G(w,z) =0
\end{equation}
Since $\f{\partial G}{\partial z} \neq 0$ at those $2\ell+2$ points where $\f{\partial G}{\partial w}
=0$, this set is a one-dimensional complex manifold, aka a Riemann surface.

With two points at infinity added, this set becomes a compact
surface $\calS$. One can formally define $\calS$ by looking in
$\bbC^3\setminus\{0\}$ at triples, $(w,z,u)$, with
\begin{equation} \lb{2.3}
w^2 u^{2\ell} =\prod_{j=1}^{\ell+1} (z-\alpha_j u)(z-\beta_j u)
\end{equation}
and regarding $(w,z,u)$ as equivalent to $(w',z',u')$ if there is $\lambda\in\bbC\setminus\{0\}$,
so $w=\lambda w'$, $z=\lambda z'$, $u=\lambda u'$. Rather than this formal projective space view,
we will think of two points $\infty_\pm\in\calS$, obtained by using $\zeta=1/z$ coordinates on $\calS_\pm$
and adding the missing point $\zeta=0$.

There is a natural map $\pi\colon\calS\to\bbC\cup\{\infty\}$ given by $(w,z)\to z$. It sets up
$\calS$ as a branched cover of $\bbC\cup\{\infty\}$. $\pi$ is two-one on all points in $\bbC\cup\{\infty\}$
except $\{\alpha_j,\beta_j\}_{j=1}^{\ell+1}$---these latter points are the branch points. There is a
second natural map $\tau\colon\calS\to\calS$ that in $(w,z)$ coordinates takes $w\to -w$. $\calS_-$
will denote the image of $\calS_+$ under $\tau$. $\tau(\infty_+)=\infty_-$. $\calS\setminus
(\calS_+\cup\calS_-)$ is thus $\pi^{-1}(\fre)$. Each $\pi^{-1} (\fre_j)$ is topologically a circle.

There is a close connection between $\calS$ and the potential theory associated to $\fre$. In terms of
the equilibrium measure, $d\rho_\fre$, consider its Borel transform,
\begin{equation} \lb{2.3a}
M_\fre(z) = \int \f{d\rho_\fre(x)}{x-z}
\end{equation}
It is a basic fact (due to Craig \cite{Craig}; see also \cite{EqMC,Rice}) that for suitable points,
$x_j\in (\beta_j,\alpha_{j+1})$, we have
\begin{equation} \lb{2.3b}
M_\fre(z) = \f{-\prod_{j=1}^\ell (z-x_j)}{\Bigl(\prod_{j=1}^{\ell+1}
(z-\alpha_j)(z-\beta_j)\Bigr)^{1/2}}
\end{equation}
so $M_\fre$ has a natural analytic continuation from $\bbC\cup\{\infty\}\setminus\fre$ to $\calS$.

Topologically, $\calS$ is the sphere with $\ell$ handles attached---the canonical surface of
genus $\ell$. Its first homology group (see \cite{Arm,Hat,Vas} for basic topological notions we use
here) is $\bbZ^{2\ell}$. One way of looking at the generators is picking curves that loop around
each $\pi^{-1} (\fre_j)$ but one (the sum of all $\ell+1$ is homologous to zero) and also curves that
loop around each
\begin{equation} \lb{2.5a}
\pi^{-1}([\beta_j, \alpha_{j+1}]) \equiv G_j \qquad j=1, \dots, \ell
\end{equation}

For a while, we put $\calS$ aside and focus on $\calS_+$. $\calS_+$ is not simply connected. Its
fundamental group is the free nonabelian group on $\ell$ generators. We will pick $\infty$ as the
base point. One way of picking generators is to pick $\ti\gamma_1, \dots, \ti\gamma_\ell$ where
$\ti\gamma_j$ is the curve that starts at $\infty$, traverses in $\bbC_-$ to $\f12(\beta_j
+ \alpha_{j+1})$ (in the $j$-th gap), and returns to $\infty$ in $\bbC_+$ (see the lower half
of Fig.~2 below).

The universal cover of $\calS_+$ inherits the local complex structure of $\calS_+$ and so is a Riemann
surface. The deck transformations preserve this complex structure so there is a discrete group,
$\Gamma$, of complex automorphisms of the universal cover where each $\gamma\in\Gamma$ has no fixed
points.

It is a fundamental result in the theory of Riemann surfaces (the uniformization theorem; see
\cite{FarKra,GriHar,Miran}) that the only simply connected Riemann surfaces are the Riemann sphere,
the complex plane, and the unit disk, $\bbD$. The sphere has no fixed point free complex
automorphism and the only discrete groups of automorphisms on $\bbC$ are one- and two-dimensional
lattices, so the only Riemann surfaces with cover $\bbC$ are the tori and the punctured disk. Since
$\calS_+$ is neither of these, its universal cover is $\bbD$.

Thus, there exists a map $\x\colon \bbD\to\calS_+$, which is locally
one-one, and a group, $\Gamma$, of M\"obius transformations on
$\bbD$ so that \eqref{1.21} holds. By requiring \eqref{1.20} (which,
by the action of maps on $\bbD$, we can always do), we uniquely fix
$\x$. There is a lovely proof of the existence of $\x$ due to
Rad\'{o} \cite{Rado} that follows the standard proof of the Riemann
mapping theorem (\cite{Ahl}); see \cite[Sect.~9.5]{Rice}.

$\Gamma$ is a discrete group of M\"obius transformations leaving $\bbD$ setwise fixed, aka a
Fuchsian group. For background on such groups, see \cite{Katok} or \cite[Ch.~9]{Rice}.

$\calS_+$ is invariant under complex conjugation, as is $\bbD$, so
$\ol{\x(\bar z)}$ is also a covering map of $\bbD$ over $\calS_+$.
But it obeys \eqref{1.20}, so by uniqueness,
\begin{equation} \lb{2.4}
\x(\bar z) = \ol{\x(z)}
\end{equation}

We define the {fundamental region}, $\calF^\intt\subset\bbD$, as
follows: $\x^{-1}(\bbC\cup\{\infty\} \setminus
[\alpha_1,\beta_{\ell+1}])$ consists of connected components on
which $\x$ is a bijection (this is because
$\bbC\cup\{\infty\}\setminus[\alpha_1,\beta_{\ell+1}]$ is simply
connected, and so contains no closed curve nonhomotopic to the
trivial curve in $\calS_+$). We let $\calF^\intt$ be the component
containing $0\in \x^{-1}(\{\infty\})$; we will shortly enlarge
$\calF^\intt$ to a fundamental set, $\calF$.

In $\calF^\intt$, consider $\x^{-1} (\bbR\cup\{\infty\}\setminus
[\alpha_1, \beta_{\ell+1}])$. By \eqref{2.4}, the set is a subset of
$\bbD\cap\bbR$. But as $y\to\alpha_1$ or $\beta_{\ell+1}$,
$\x^{-1}(y)$ must approach the boundary of $\bbD$. It follows that
$\x^{-1}(\bbR\cup\{\infty\}\setminus [\alpha_1,
\beta_{\ell+1}])=(-1,1) \subset\bbD$. The other inverse images of
this set are, by \eqref{1.21}, images of $(-1,1)$ under M\"obius
transformations, so arcs of orthocircles, that is, pieces of circles
orthogonal to $\partial\bbD$.

In place of \eqref{1.20}, we could have required $\x(0)=\f12
(\beta_j + \alpha_{j+1})$ together with $\x'(0) >0$ and seen that
for this $\x$, one has $(-1,1)$ in the inverse image of the gap
$(\beta_j, \alpha_{j+1})$. Since the two $\x$'s are related by a
M\"obius transformation, we conclude that under our $\x$ (normalized
by \eqref{1.20}) the inverse images of gaps are also arcs of
orthocircles. The boundary of $\calF^\intt$ in $\bbD$ (not
$\ol{\bbD}$) clearly has $2\ell$ pieces corresponding to the tops
and bottoms of the $\ell$ gaps. Thus, $\calF^\intt$ is $\bbD$ with
$2\ell$ orthocircles (and their interior) removed---$\ell$ in each
half-plane---these are conjugate to one another. We label the
boundary pieces in the upper half-disk $C_1^+, \dots, C_\ell^+$.
Figure~1 shows a typical $\calF^\intt$ for $\ell=2$ with the inverse
image of $\bbC_-\cap\calS_+$ shaded.
\begin{center}
\begin{figure}[h]
\includegraphics[scale=0.4]{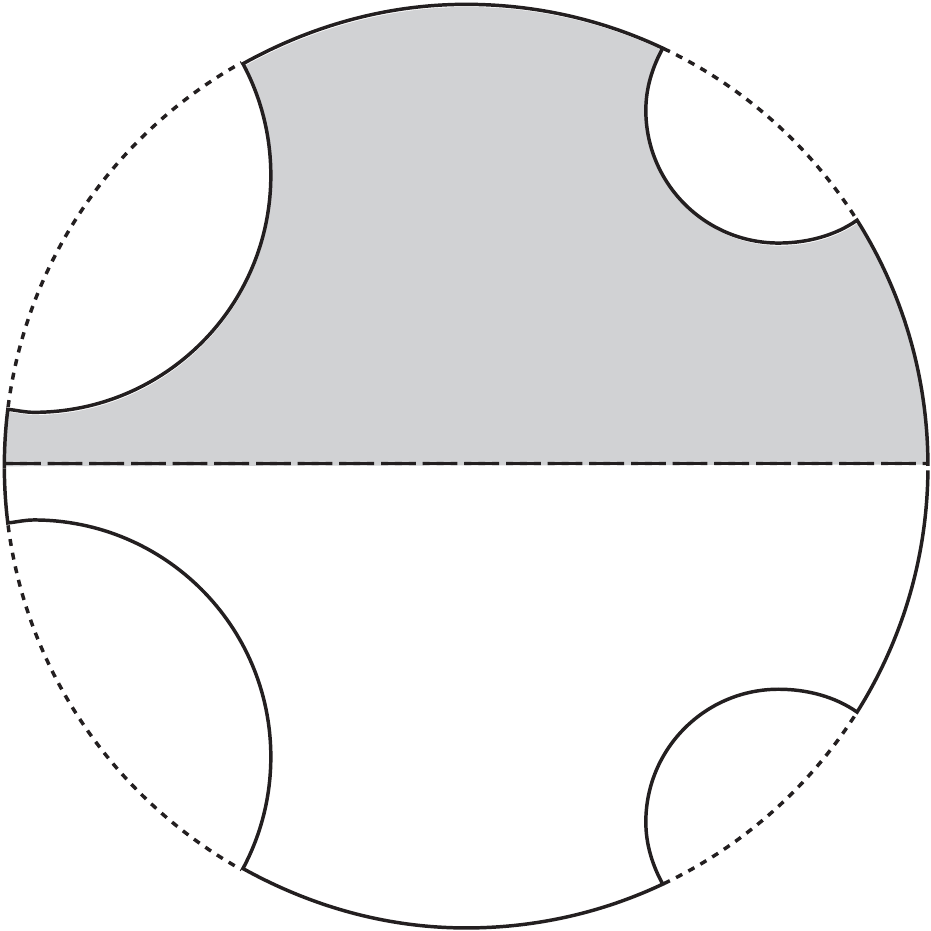}
\caption{The fundamental region}
\end{figure}
\end{center}

We now define a {fundamental set}, $\calF$, by adding the arcs
$C_1^+, \dots, C_\ell^+$ to $\calF^\intt$. With this definition,
every point in $\bbD$ can be uniquely written as $\gamma(w)$ for
some $w\in\calF$ and some $\gamma\in\Gamma$. The fundamental region
$\calF^\intt$ is indeed the interior of $\calF$. 
As a subset of $\bbD$, $\ol\calF$ has $C_1^-, \dots, C_\ell^-$
added. Here $C_j^-$ denotes the complex conjugate of $C_j^+$.
Sometimes we want to consider the closure of $\calF$ in $\ol{\bbD}$,
that is, also add the $2\ell$ arcs in $\partial\bbD$ at the ends.

To describe the Fuchsian group, $\Gamma$, we begin with the $\ell$
generators: the deck transformations that go into the generators,
$\ti\gamma_1, \dots, \ti\gamma_\ell$, of the homotopy group, $\pi_1
(\calS_+)$. Figure~2 shows the lift of the curve associated to
$\ti\gamma_2$ in our example. The bottom half of the curve in
$\calS_+$ under $\x^{-1}$ goes from $0$ to a point on $C_2^+$. Since
that half of $\ti\gamma_2$ lies in $\bbC_-$, this piece of curve
lies in $\bbC_+\cap\calF$. The second half must be the inversion in
the curve $C_2^+$ of the first half, and so it is as shown.
\begin{center}
\begin{figure}[h]
\includegraphics[scale=0.4]{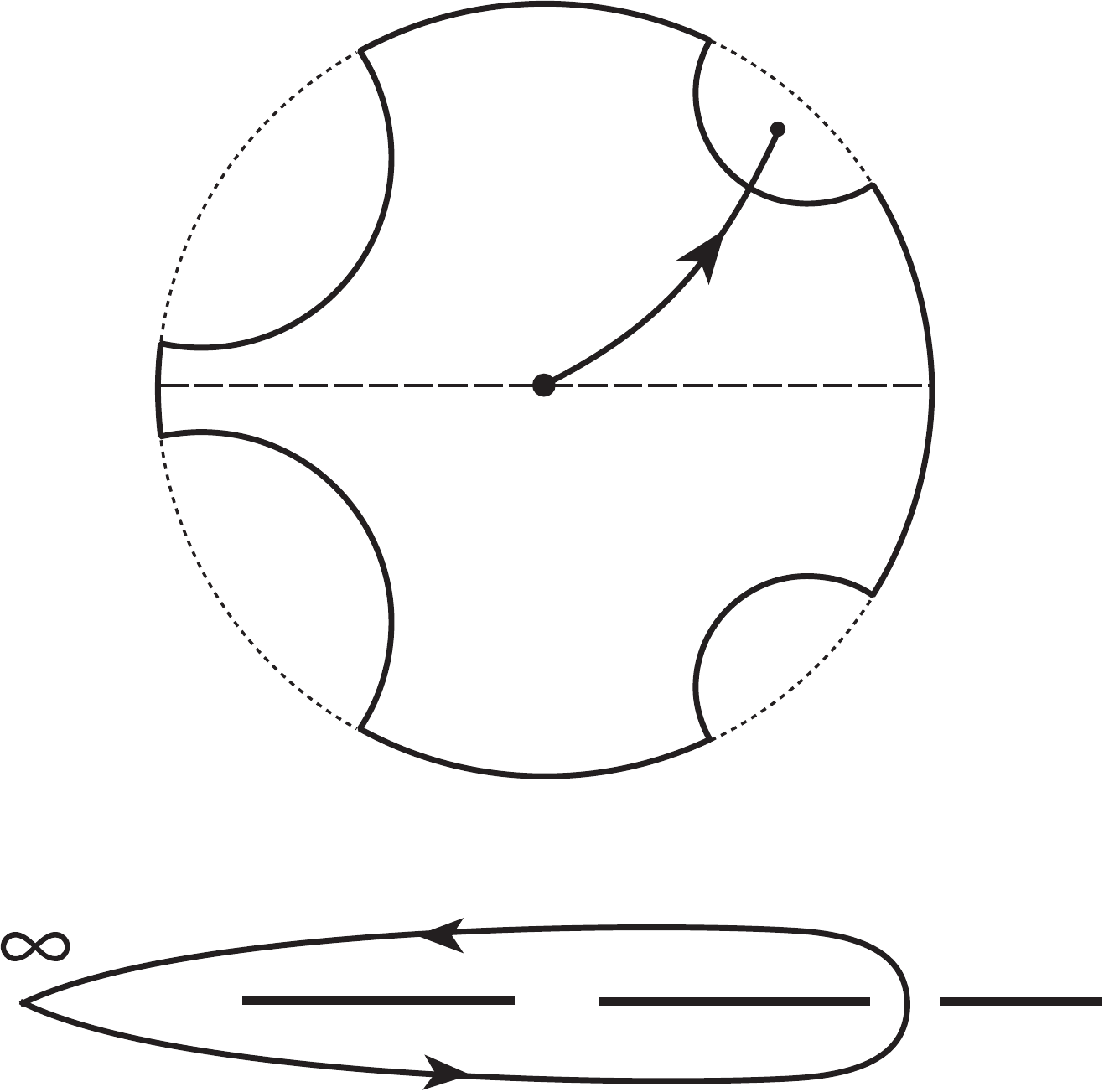}
\caption{Fuchsian group generators}
\end{figure}
\end{center}

A little thought shows that the M\"obius map that corresponds to
$\ti\gamma_2$, which we will call $\gamma_2$, is what one gets by
composing complex conjugation with inversion in $C_2^+$. Inversion
in the circle $\abs{z-z_0}=r$ is the map
\begin{equation} \lb{2.5}
z\to z_0 + \f{r^2}{\bar z-\bar z_0}
\end{equation}
Thus,
\[
\gamma_j = r_j^+ c
\]
where $c(z)=\bar z$ and $r_j^+$ is inversion in $C_j^+$.

$\Gamma$ is the free nonabelian group generated by
$\{\gamma_j\}_{j=1}^\ell$. Every element of $\Gamma$ can be uniquely
written as $\alpha_{w(\gamma)} \cdots \alpha_2\alpha_1$ where each
$\alpha_j$ is either a $\gamma_j$ or a $\gamma_j^{-1}$ and no
$\alpha_j$ is an $\alpha_{j-1}^{-1}$. $w(\gamma)$ is the word length
of $\gamma$. It will be convenient to define
\begin{equation} \lb{2.6}
\Gamma_k =\{\gamma\mid w(\gamma) =k\}
\end{equation}
We have $\#\Gamma_k = 2\ell(2\ell-1)^{k-1}$ since $\alpha_1$ has
$2\ell$ choices ($\gamma_1, \dots, \gamma_\ell$, $\gamma_1^{-1},
\dots, \gamma_\ell^{-1}$) and each other $\alpha_j$ has $2\ell-1$
choices. By definition, $\Gamma_0 = \{1\}$.

Alternatively, one can write for $\gamma\in\Gamma_{2m}$,
\begin{equation} \lb{2.6a}
\gamma = s_1 \cdots s_{2m}
\end{equation}
with each $s_k$ an $r_j^\pm$ ($r_j^-$ is inversion in $C_j^-$), and for $\gamma\in\Gamma_{2m+1}$,
\begin{equation} \lb{2.6b}
\gamma = s_1 \cdots s_{2m+1} c
\end{equation}

We point out that $\calF$ is the Dirichlet fundamental region for $\Gamma$, that is,
\begin{equation} \lb{2.6c}
\ol\calF = \{z\mid \abs{\gamma(z)}\geq\abs{z}\text{ for all }
\gamma\in\Gamma\}
\end{equation}
Moreover, $C_j^+$ is the perpendicular bisector in the hyperbolic
metric of $0$ and $\gamma_j(0)$ (see, e.g., \cite[Sect.~9.3]{Rice}).

Since $\gamma\in\Gamma$ has no fixed points in $\bbD$, it cannot be elliptic, and it is not hard to
see \cite{Rice} that it is, in fact, hyperbolic.

The fact that $\calF$ is a fundamental set implies that
\begin{equation} \lb{2.7}
\bbD = \bigcup_{\gamma\in\Gamma} \gamma (\calF)
\end{equation}
We will let
\begin{equation} \lb{2.8}
\bbD_k = \bigcup_{w(\gamma) \leq k} \gamma (\calF)
\end{equation}
and
\begin{equation} \lb{2.9}
\calR_k = \bbD\setminus\bbD_k
\end{equation}
and finally define $\partial\calR_k\subset\partial\bbD$ as
\begin{equation} \lb{2.10}
\partial\calR_k = \ol\calR_k \cap \partial\bbD
\end{equation}
where the closure is taken in $\ol{\bbD}$. Thus, $\bbD_k$ is
connected, while $\calR_k$ consists of $2\ell(2\ell-1)^k$ disks
(intersected with $\bbD)$ with only some boundaries included and
$\partial\calR_k$ is $2\ell(2\ell -1)^k$ connected arcs in
$\partial\bbD$. Figure~3 shows the arcs $C_j^\pm$ and their images
under $\Gamma_1$ and $\Gamma_2$.
\begin{center}
\begin{figure}[h]
\includegraphics[scale=0.4]{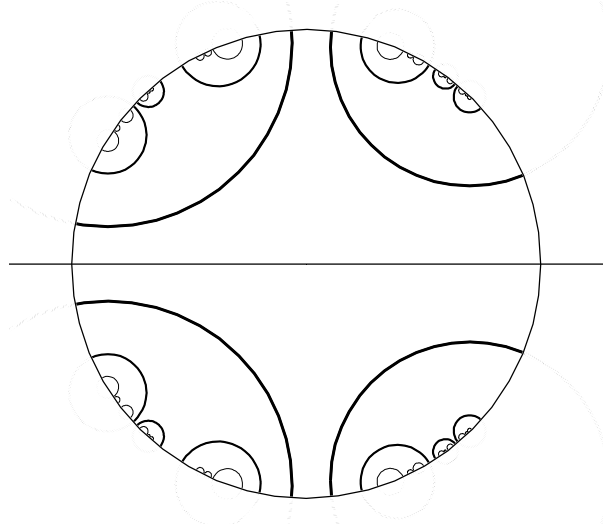}
\caption{Iterated generators}
\end{figure}
\end{center}
In Figure~3, the union of the large partial disks is $\calR_0$, the
union of the medium partial disks, $\calR_1$, and the union of the
tiny partial disks, $\calR_2$.

Notice the geometry is such that
\begin{equation} \lb{2.11}
z\in\calR_k \;\Rightarrow\; \f{z}{\abs{z}} \in \partial\calR_k
\end{equation}
which we will need in Section~\ref{s7} and \cite{CSZ2}.

We denote by $\calL$ the set of limit points of $\Gamma$. It is a subset of $\partial\bbD$
and can be defined via several equivalent definitions:
\begin{SL}
\item[(i)] $\calL = \cap_k \ol\calR_k$
\item[(ii)] $\calL = \ol{\{z\in\partial\bbD \mid\gamma(z)=z \text{ for some } \gamma\neq 1;
\, \gamma\in\Gamma\}}$
\item[(iii)] $\calL = \ol{\{\gamma(0)\mid\gamma\in\Gamma\}} \cap\partial\bbD$.
\end{SL}

In (ii), each $\gamma\in\Gamma$ is hyperbolic, so it has two fixed
points on $\partial\bbD$, each of which is either
$\lim_{n\to\infty}\gamma^n(0)$ or $\lim_{n\to\infty}\gamma^{-n}(0)$.
This is the key to proving that (ii) and (iii) are equivalent and
that (iii) is the same if $\gamma(0)$ is replaced by $\gamma (z_0)$
for any fixed $z_0\in\bbD$. The key to understanding (i) is that, by
definition, $\bbD_k$ contains only finitely many $\gamma(0)$, all of
which are a finite distance from $\partial\bbD$. As we will explain
in the next section, $\calL$ is of one-dimensional Lebesgue measure
zero, indeed, of Hausdorff dimension strictly smaller than one.

Now, we return to $\calS_+$ and the two-sheeted Riemann surface
$\calS$. The basic fact here is that the map $\x\colon
\bbD\to\calS_+$ has an analytic continuation both to a map of
$\bbC\cup\{\infty\}\setminus\calL$ to $\bbC\cup\{\infty\}$ and to a
map
\begin{equation}
\x^\sharp: \bbC\cup\{\infty\}\setminus\calL\to\calS.
\end{equation}
By construction, $\x(z)$ approaches $\bbR$ as $z\to\partial\bbD$
with $z\in\calF$. So, by the strong form of the reflection
principle, $\x$ is continuous and real-valued up to
$\partial\bbD\cap\ol\calF$ and can be meromorphically continued to
$(\partial\bbD\cap\ol\calF)\cup\calF^{-1}$. Utilizing \eqref{1.21},
we can thus extend $\x$ to a map of
$\bbC\cup\{\infty\}\setminus\calL$ onto $\bbC\cup\{\infty\}$. $\x$
outside $\ol{\bbD}$ is defined by
\begin{equation} \lb{2.16a}
\x(1/z) = \x(z)
\end{equation}

At points, $z_0$, where $\x(z_0)$ is real and $\x(z)-\x(z_0)$ has a
zero of order $k$, there are $2k$ curves (asymptotically rays)
coming out of $z_0$ on which $\x(z)$ is real. On $\bbC_+\cap\calF$,
$\x$ has negative imaginary part and so, by reflection, at points,
$z_0$, in $\bbC_+\cap\partial\bbD\cap\ol\calF$---except for the
endpoints, there are two rays near $z_0$ where $\x$ is real. It
follows that on the set
\begin{equation}
\bigl\{z\in\bbC\cup\{\infty\}\setminus \calL \mid \x(z)\notin
\{\alpha_j,\beta_j\}_{j=1}^{\ell+1}\bigr\}
\end{equation}
$\x'$ is nonzero. At points, $z_0$, where $\x(z_0)\in
\{\alpha_j,\beta_j\}_{j=1}^{\ell+1}$, images of $\bbR$ or a $C_j^+$
under an element $\gamma\in\Gamma$ intersect $\partial\bbD$
orthogonally, so four rays on which $\x$ is real come out of $z_0$.
Hence, $\x(z)-\x(z_0)$ has a double zero, that is, $\x'(z_0)=0$ and
$\x''(z_0)\neq 0$, and the extended map $\x$ is therefore not a
local bijection at such points $z_0$. The same is true for the
canonical projection $\pi\colon\calS\to\bbC\cup\{\infty\}$ so if we
think of $\x\colon \bbD\to\calS_+\subset\calS$ (rather than into a
subset of $\bbC\cup\{\infty\})$, we can extend it to a map
$\x^\sharp\colon\bbC\cup\{\infty\} \setminus\calL\to\calS$ via
\begin{equation} \lb{2.12}
\x^\sharp(1/z) = \tau(\x(z))
\end{equation}
Then the maps $\x$ and $\x^\sharp$ are related via
\begin{equation}
\x=\pi\circ\x^\sharp
\end{equation}

The elements of $\Gamma$ are rational functions and so maps of $\bbC\cup\{\infty\}$ to itself. It is easy
to see that each $\gamma$ maps $\calL$ to $\calL$ (for if $\gamma_n(0)\to z_0$, then $\gamma\circ\gamma_n
(0)\to\gamma(z_0)$) and so $\bbC\cup\{\infty\}\setminus\calL$ to itself. Of course, we have
\begin{equation} \lb{2.13}
\x(\gamma(z)) = \x(z)
\end{equation}
since $\gamma$ is analytic and this holds on $\bbD$. $\x^\sharp$ has
a similar relation. Indeed, since \eqref{2.12} holds, we have
\eqref{1.21} for $\x^\sharp$ on all of $\calS$. By the unfolding
discussed above, $\x^\sharp$ is a local bijection on all of
$\bbC\cup\{\infty\}\setminus\calL$, that is, a covering map, albeit
not the universal cover.

A major role will be played by automorphic and character automorphic functions. These are functions, $f$,
defined on $\bbD$ (usually analytic but occasionally meromorphic and occasionally only harmonic) or on
$\bbC\cup\{\infty\}\setminus \calL$ (always meromorphic) which obey
\begin{equation} \lb{2.14}
f(\gamma(z)) = c(\gamma) f(z)
\end{equation}
for all $\gamma$ and $z$. Here $c\neq 0$ and must obey
\begin{equation}
c(\gamma\gamma') = c(\gamma) c(\gamma')
\end{equation}
If $c\equiv 1$, $f$ is called {\it automorphic}. If $\abs{c}=1$ so
that $c$ is a unitary group character, we call $f$ {\it character
automorphic}. A character is determined by
$\{c(\gamma_j)\}_{j=1}^\ell$ which can be chosen independently so
the set of all characters is an $\ell$-dimensional torus,
$\Gamma^*$. This set has a group structure if $c\ti c(\gamma)
=c(\gamma) \ti c(\gamma)$ is the product and $c(\gamma)=1$ the
identity. Moreover, $c^{-1}(\gamma) = \ol{c(\gamma)}$.

Notice that $\x$ is automorphic on $\bbD$ and $\x^\sharp$ is automorphic on $\bbC\cup\{\infty\}\setminus
\calL$ if we extend the notion to include $\calS$-valued functions. Moreover, $f$ is automorphic and analytic
(resp.\ meromorphic) on $\bbD$ if and only if there is a function, $F$, on $\calS_+$ which
is analytic (resp.\ meromorphic) with
\begin{equation} \lb{2.15}
F(\x(z)) = f(z)
\end{equation}

Similarly, \eqref{2.15} with $\x$ replaced by $\x^\sharp$ sets up a one-one correspondence between meromorphic
functions on $\calS$ and meromorphic automorphic functions on $\bbC\cup\{\infty\}\setminus\calL$.

In particular, we have that the analog of \eqref{1.18},
\begin{equation} \lb{2.22}
M(z) = -m(\x(z))
\end{equation}
is an automorphic function with $\Ima M(z)>0$ for $z\in\calF^\intt\cap\bbC_+$.

We will require the following result:

\begin{theorem} \lb{T2.1} Fix $\ell$ and let $Q_\ell \subset\bbR^{2\ell+2}$ be the set of
$(\alpha_1, \beta_1, \dots, \alpha_{\ell+1},\beta_{\ell+1})$ for
which \eqref{1.3} holds. Given $q\in Q_\ell$, let $\x_q$ be the
covering map and $\gamma_{j;q}$ the Fuchsian group generators. Then
$\x_q$ and $\gamma_{j;q}$ are continuous in $q$ on $Q_\ell$.
\end{theorem}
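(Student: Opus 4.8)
Since $Q_\ell$ and the target spaces are metric, it suffices to prove: whenever $q_n\to q_0$ in $Q_\ell$, every subsequence has a further subsequence along which $\x_{q_n}\to\x_{q_0}$ locally uniformly on $\bbD$ in the spherical metric, and $\gamma_{j;q_n}\to\gamma_{j;q_0}$ uniformly on compact subsets of $\bbD$ for $j=1,\dots,\ell$. Fix such $q_n\to q_0$; then $\fre_{q_n}\to\fre_{q_0}$ in the Hausdorff metric and, for large $n$, $q_n$ lies in a fixed compact subset of $Q_\ell$. Each $\x_{q_n}$ omits the values in $\fre_{q_n}$, which for large $n$ contains three fixed points of $\intt(\fre_{q_0})$, so by Montel's theorem $\{\x_{q_n}\}$ is a normal family of meromorphic functions, and along a subsequence $\x_{q_n}\to\Phi$ locally uniformly in the spherical metric, with $\Phi$ meromorphic on $\bbD$ and $\Phi(0)=\lim_n\x_{q_n}(0)=\infty$. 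The plan is to show $\Phi=\x_{q_0}$ by checking the hypotheses of the uniqueness statement recorded after \eqref{1.21}; convergence of the generators will then follow.

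\textbf{Non-degeneracy and the normalization constant.} First, the orthocircles $C_j^\pm(q_n)$ cannot crowd $0$: if $z_n\in C_j^\pm(q_n)$ then $\x_{q_n}(z_n)$ lies in the $j$-th gap of $\fre_{q_n}$, hence in a fixed bounded subset of $\bbR$, while any subsequential limit $z_\ast$ of $z_n$ would satisfy $\Phi(z_\ast)=\lim\x_{q_n}(z_n)\neq\infty$, forcing $z_\ast\neq0$; so there is $d_0>0$ with $\{|z|<d_0\}\subseteq\calF^\intt_{q_n}$ for all large $n$. Now recall that $\x_q$ maps $\calF^\intt_q$ biholomorphically onto $\bbC\cup\{\infty\}\setminus[\alpha_1(q),\beta_{\ell+1}(q)]$; writing $\psi_q$ for the Riemann map $\bbD\to\bbC\cup\{\infty\}\setminus[\alpha_1(q),\beta_{\ell+1}(q)]$ normalized as in \eqref{1.20}, so $\lim_{w\to0}w\psi_q(w)=\tfrac14(\beta_{\ell+1}(q)-\alpha_1(q))=\ca([\alpha_1(q),\beta_{\ell+1}(q)])>0$, we get $\x_q|_{\calF^\intt_q}=\psi_q\circ\phi_q$ with $\phi_q\colon\calF^\intt_q\to\bbD$ a biholomorphism fixing $0$, and therefore $C_q:=\lim_{z\to0}z\x_q(z)=\tfrac14(\beta_{\ell+1}(q)-\alpha_1(q))/\phi_q'(0)$. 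Since $\phi_q^{-1}$ maps $\bbD$ into $\bbD$ fixing $0$, the Schwarz lemma gives $\phi_q'(0)\geq1$; since $\{|z|<d_0\}\subseteq\calF^\intt_{q_n}\subseteq\bbD$, monotonicity of the conformal radius at $0$ gives $(\phi_{q_n}^{-1})'(0)\geq d_0$, i.e.\ $\phi_{q_n}'(0)\leq1/d_0$. Hence $C_{q_n}$ stays in a fixed compact subset of $(0,\infty)$, so $\Phi$ has a simple pole at $0$ with $\lim_{z\to0}z\Phi(z)=\lim_n C_{q_n}>0$; in particular $\Phi$ is nonconstant and satisfies \eqref{1.20}.

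\textbf{Identification.} For compact $K\subseteq\bbD$ one has $\x_{q_n}(K)\subseteq\bbC\cup\{\infty\}\setminus\fre_{q_n}$, and since $\fre_{q_n}\to\fre_{q_0}$ and $\fre_{q_0}$ is a finite union of nondegenerate closed intervals (so it equals the closure of its interior), the open set $\Phi(\bbD)$ lies in $\bbC\cup\{\infty\}\setminus\fre_{q_0}=\calS_+$. The one input that is not purely soft is the continuity of the Poincar\'e metric $\lambda_q$ of $\bbC\cup\{\infty\}\setminus\fre_q$ in $q$, locally uniformly on $\calS_+$ (standard, e.g.\ via Carath\'eodory kernel convergence of universal covering maps). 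Each $\x_{q_n}$ is a local isometry from $(\bbD,\lambda_\bbD)$ onto $(\bbC\cup\{\infty\}\setminus\fre_{q_n},\lambda_{q_n})$, i.e.\ $\lambda_{q_n}(\x_{q_n}(z))\,|\x_{q_n}'(z)|=\lambda_\bbD(z)$; letting $n\to\infty$ (using loc.\ unif.\ convergence of $\x_{q_n}$ and its derivative away from $0$, the fact that $\x_{q_n}(K)$ eventually lies in a fixed compact subset of $\calS_+$, and the coordinate $1/w$ near the pole) gives $\lambda_{q_0}(\Phi(z))\,|\Phi'(z)|=\lambda_\bbD(z)$ on $\bbD$. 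As $\lambda_\bbD>0$, $\Phi'$ never vanishes, so $\Phi$ is locally one-one. Lifting $\Phi$ through the universal covering $\x_{q_0}$ yields holomorphic $F\colon\bbD\to\bbD$ with $\x_{q_0}\circ F=\Phi$ and $F(0)=0$; the identity just displayed forces $F^\ast\lambda_\bbD=\lambda_\bbD$, so $F\in\mathrm{Aut}(\bbD)$, and then $F(0)=0$ together with $\lim_{z\to0}z\Phi(z)>0$ and $\lim_{z\to0}z\,\x_{q_0}(z)>0$ forces $F=\mathrm{id}$. Hence $\Phi=\x_{q_0}$, and by the subsequence principle $\x_q$ is continuous in $q$.

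\textbf{The generators, and the main obstacle.} The generator $\gamma_{j;q}$ equals $r_j^+ c$, where $c(z)=\bar z$ and $r_j^+$ is inversion in the arc $C_j^+(q)$, which is the perpendicular bisector, in the hyperbolic metric, of $0$ and $\gamma_{j;q}(0)$; thus $\gamma_{j;q}$ is an explicit continuous function of the single point $\gamma_{j;q}(0)\in\bbD$, and it remains to show $q\mapsto\gamma_{j;q}(0)$ is continuous. Now $\gamma_{j;q}(0)$ is the endpoint of the $\x_q$-lift, starting at $0$, of a loop $c_j(q)$ representing $\ti\gamma_j$ — e.g.\ the one running from $\infty$ to $\tfrac12(\beta_j(q)+\alpha_{j+1}(q))$ through $\bbC_-$ and back through $\bbC_+$. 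Choose $c_j(q)$ to depend continuously on $q$ and, for $q$ near $q_0$, to stay in a fixed compact subset of $\calS_+$ at a fixed positive distance from $\fre_{q_0}$ (possible since $\fre_{q_0}$ has $\ell$ genuine gaps); by the elementary bound $\lambda_q(w)=O(1/\dist(w,\fre_q))$ its $\lambda_q$-length, hence the $\lambda_\bbD$-length of its lift, is bounded, so the lifts stay in a fixed compact subset of $\bbD$. Since lifts of a continuously varying loop through the (now established) continuously varying covering maps $\x_q$ vary continuously, $q\mapsto\gamma_{j;q}(0)$ is continuous, completing the proof. The one non-formal ingredient is the continuity of $\lambda_q$, and it is precisely the hypothesis that $\fre_{q_0}$ has $\ell$ genuine gaps that makes it hold — it prevents $\calF_{q_n}$, equivalently $\gamma_{j;q_n}(0)$, from escaping to $\partial\bbD$ and makes $\Phi$ the full universal cover rather than a cover of a smaller surface; one may instead re-derive by hand the two consequences used above, the identity $\Phi^\ast\lambda_{q_0}=\lambda_\bbD$ and the boundedness of the lifts, from explicit Poincar\'e-metric estimates, but some such quantitative non-degeneracy input is unavoidable.
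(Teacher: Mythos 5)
The paper itself contains no proof of Theorem~\ref{T2.1}: it is justified by citing Hejhal \cite{Hej} (with a self-contained version deferred to \cite[Sect.~9.8]{Rice}), so there is no internal argument to compare yours with step by step. Judged on its own, your outline is the right kind of argument (normal families plus identification of the subsequential limit), and most of the supporting work is correct: the non-crowding of the orthocircles at $0$, the conformal-radius bounds showing $C_{q_n}$ stays in a compact subset of $(0,\infty)$, the recovery of $\gamma_{j;q}$ from $\gamma_{j;q}(0)$ via the perpendicular-bisector description of $C_j^+$, and the boundedness of the lifted loops, which uses only the elementary estimate $\lambda_q(w)\leq 2/\dist(w,\fre_q)$. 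The genuine gap is in the identification step: you make it rest on continuity of $(q,w)\mapsto\lambda_q(w)$, justified by appeal to ``Carath\'eodory kernel convergence of universal covering maps.'' That statement \emph{is} Hejhal's theorem, i.e., essentially the assertion being proved (continuity of $\x_q$ and continuity of $\lambda_q$ are equivalent in this setting), so as written the one piece of real analytic content is assumed rather than proven; your proposal defers the crux to the literature in exactly the way the paper does.

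The gap is repairable, and then the Poincar\'e-density input disappears entirely. Only one inequality is at stake: lifting $\Phi$ through $\x_{q_0}$ gives $F\colon\bbD\to\bbD$ with $F(0)=0$, $\Phi=\x_{q_0}\circ F$, and matching the coefficients at the pole gives $F'(0)=C_{q_0}/\lim_n C_{q_n}>0$, so Schwarz already yields $\lim_n C_{q_n}\geq C_{q_0}$; what is missing is $\limsup_n C_{q_n}\leq C_{q_0}$. For that, fix $r<1$: the set $\x_{q_0}(\ol{r\bbD})$ is a compact subset of $\bbC\cup\{\infty\}\setminus\fre_{q_0}$, hence disjoint from $\fre_{q_n}$ for large $n$, so $\x_{q_0}\restriction r\bbD$ maps into the base of the covering $\x_{q_n}$ and, $r\bbD$ being simply connected, lifts to a holomorphic $G_n\colon r\bbD\to\bbD$ with $G_n(0)=0$; comparing pole coefficients gives $G_n'(0)=C_{q_n}/C_{q_0}$, and Schwarz on $r\bbD$ gives $C_{q_n}\leq C_{q_0}/r$. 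Letting $n\to\infty$ and then $r\uparrow 1$ yields the needed semicontinuity, hence $F'(0)=1$, $F=\mathrm{id}$, $\Phi=\x_{q_0}$, with no metric identity and no local injectivity of $\Phi$ required. Two smaller points: the jump from boundedness of $C_{q_n}$ to ``$\Phi$ has a simple pole at $0$ with coefficient $\lim_n C_{q_n}$'' needs a line (e.g., pass to a locally uniformly convergent subsequence of the univalent maps $\phi_{q_n}^{-1}$, whose derivatives at $0$ lie in $[d_0,1]$, and use $\x_{q_n}\circ\phi_{q_n}^{-1}=\psi_{q_n}$ with the explicit continuity of $\psi_q$); and in the generator step the loops must be kept at positive distance from $\fre_q$, not merely from $\fre_{q_0}$, which your choice does supply for $q$ near $q_0$. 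With these repairs your argument becomes a complete, self-contained proof of the special case the paper relegates to \cite[Sect.~9.8]{Rice}.
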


This is a special case of a theorem of Hejhal \cite{Hej} who noted
that one can also base a proof using ideas from Ahlfors--Bers
\cite{AB}. We have found a proof for the case at hand and given it
in \cite[Sect.~9.8]{Rice}. We note that the Blaschke product,
$B(z)$, that we discuss in Section~\ref{s4} below is also continuous
in $q$.

We will also need the following well-known fact about functions on $\calS$ (see \cite{FarKra,GriHar} or
\cite[Thm.~5.12.5]{Rice}):

\begin{theorem}\lb{T2.2} Let $F$ be a nonconstant meromorphic function on $\calS$. Then $F$ has a degree, $d$,
so that for all $a$, $\{w\mid F(w)=a\}$ has $d$ points, counting
multiplicity {\rm{(}}i.e., $d$ is the sum of the orders of zeros of
$F(w)-a$ in local coordinates{\rm{)}}. If $F\circ\tau\not\equiv F$,
then the degree of $F$ is at least $\ell+1$.
\end{theorem}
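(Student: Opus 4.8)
The plan is to prove the two assertions separately. For the existence of the degree, I would invoke the standard fact that a nonconstant holomorphic map between compact Riemann surfaces is a branched covering of constant fibre size. Concretely: since $F$ is nonconstant and meromorphic, $F\colon\calS\to\bbC\cup\{\infty\}$ is a nonconstant holomorphic, hence proper, map with only finitely many branch points; near any $w_0$ with $F(w_0)=a$ the function $F-a$ has, in a local coordinate, a zero of finite order $e(w_0)\ge1$, and the argument principle shows that $N(a):=\sum_{F(w)=a}e(w)$ is locally constant in $a$ (a small perturbation of $a$ splits each zero of order $e(w)$ into $e(w)$ nearby simple preimages). Since $\bbC\cup\{\infty\}$ is connected, $N$ is constant, and its value is the asserted degree $d$.

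For the lower bound $d\ge\ell+1$ under the assumption $F\circ\tau\not\equiv F$, I would argue by contradiction, supposing $d=\deg F\le\ell$ (recall $\calS$ has genus $\ell$). Let $D$ be the pole divisor of $F$; it is effective of degree $d$, and since $1$ and $F$ are linearly independent in $L(D)$ we have $\dim L(D)\ge2$. Riemann--Roch on $\calS$, with canonical divisor $K$ of degree $2\ell-2$, gives
\[
\dim L(K-D)=\dim L(D)-d+\ell-1\ge 2-d+\ell-1=\ell+1-d\ge1,
\]
so $D$ is a special divisor. Here the hyperelliptic nature of $\calS$ enters: $\calS$ carries the two-sheeted projection $\pi$, and it is classical (the equality analysis in Clifford's theorem; see \cite{FarKra,GriHar} and \cite[Thm.~5.12.5]{Rice}) that on such a surface the moving part of any special complete linear system is a multiple of a fibre $\pi^{-1}(p)$ of $\pi$. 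Consequently $\phi_{|D|}$ factors through $\pi$, so that $L(D)$---after dividing out its base locus---consists of $\pi$-pullbacks of rational functions; since $1\in L(D)$ the base locus divides out trivially and already $L(D)\subset\pi^{*}\bbC(z)$, where $\bbC(z)$ denotes the rational functions of $z=\pi$. In particular $F=\pi^{*}g$ for a rational $g$, so $F\circ\tau=F$ because $\pi\circ\tau=\pi$---contradicting the hypothesis. Hence $d\ge\ell+1$.

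I would also record the following slicker route, which avoids the structure theorem: $F\circ\tau\not\equiv F$ means $F\notin\pi^{*}\bbC(z)$, and since $[\bbC(\calS):\pi^{*}\bbC(z)]=2$ it follows that $\bbC(\calS)=\bbC(\pi,F)$, so $(\pi,F)$ maps $\calS$ birationally onto a curve $X\subset(\bbC\cup\{\infty\})\times(\bbC\cup\{\infty\})$ of bidegree $(2,d)$; the arithmetic genus of such an $X$ is $(2-1)(d-1)=d-1$, and $\calS$, being the normalization of $X$, therefore has genus $\ell\le d-1$.

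The only nonroutine ingredient is the hyperelliptic-structure input in the second step (equivalently, the arithmetic-genus bound in the alternative); everything else is bookkeeping with Riemann--Roch. Since the theorem is classical and is used here only as a tool, in the paper I would in fact simply cite \cite{FarKra,GriHar} or \cite[Thm.~5.12.5]{Rice}, and include the Riemann--Roch reduction above at most as a convenience for the reader.
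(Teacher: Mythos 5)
Your sketch is correct, but there is nothing in the paper to compare it against: Theorem~\ref{T2.2} is stated there as a known fact with no argument, only the citations to \cite{FarKra,GriHar} and \cite[Thm.~5.12.5]{Rice}---exactly the course you say you would take in the end. As a standalone proof, your first part (constancy of the fibre count for a proper holomorphic map to $\bbC\cup\{\infty\}$) is standard and fine, and the Riemann--Roch reduction is carried out correctly: $\dim L(D)\ge 2$ and $\deg D=d\le\ell$ force $\dim L(K-D)\ge\ell+1-d\ge 1$. Two small remarks on the second step. First, the classical input you need is not literally ``the equality analysis in Clifford's theorem'' but the structure theorem that on a hyperelliptic surface every \emph{complete} special linear series is the appropriate multiple of the hyperelliptic pencil plus base points; with that, your conclusion $L(D)\subset\pi^*\bbC(z)$ (hence $F\circ\tau=F$, a contradiction) follows as you indicate, since the difference of two members of the moving part is the pullback of a degree-zero, hence principal, divisor on the sphere. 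Second, for $\ell=1$ the surface has genus one and that structure theorem is not available, but your argument never needs it there: $d\le\ell=1$ gives $\deg(K-D)=-1<0$, contradicting $\dim L(K-D)\ge 1$ outright. Your alternative route via the birational image of $(\pi,F)$ in $(\bbC\cup\{\infty\})\times(\bbC\cup\{\infty\})$, of bidegree $(2,d)$ and arithmetic genus $d-1\ge\ell$, is also correct, treats all $\ell$ uniformly, and matches the sharpness examples (the minimal Herglotz functions of Section~\ref{s6} have degree exactly $\ell+1$); either version would serve as a self-contained proof, though for the paper the citation suffices.
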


\begin{remark}
In particular, if $F$ is analytic on $\calS$ it must be constant.
\end{remark}

\section{Beardon's Theorem} \lb{s3}

From our point of view, a theorem of Beardon \cite{Bear} plays a
critical role. To state the theorem, we need some notions. Fix $s>0$
and a Fuchsian group, $\Gamma$. The \emph{Poincar\'e series} is
given by
\begin{equation} \lb{3.1}
\sum_{\gamma\in\Gamma}\, \abs{\gamma'(0)}^s
\end{equation}
We are interested in when this series is convergent. It is a basic fact (see \cite{Katok,Rice}) that if
the series in \eqref{3.1} converges, then uniformly for $z$ in compacts of $\bbD$, the series
\begin{equation} \lb{3.2}
\sum_{\gamma\in\Gamma}\, \abs{\gamma'(z)}^s
\end{equation}
converges, as does uniformly on compacts of $\bbD$, the series
\begin{equation} \lb{3.3}
\sum_{\gamma\in\Gamma} (1-\abs{\gamma(z)})^s
\end{equation}
Indeed, convergence of \eqref{3.2} for one $z$ implies convergence
uniformly on compacts. What is also true (see, e.g.,
\cite[Sect.~9.4]{Rice}) is that if $K\subset\ol{\bbD}\setminus\calL$
is compact, then there is $C>0$ so that for all $z\in K$ and all
$\gamma\in\Gamma$,
\begin{equation} \lb{3.4}
\abs{\gamma'(z)} \leq C\abs{\gamma'(0)}
\end{equation}
so convergence of \eqref{3.1} implies convergence of \eqref{3.2}
uniformly for $z\in K$. In particular, since
$\ol{\bbD}\setminus\calL$ is connected, we see that the series
\begin{equation} \lb{3.5}
\sum_{\gamma\in\Gamma}\, \abs{\gamma(z)-\gamma(w)}^s
\end{equation}
converges uniformly for $z,w$ in compacts
$K\subset\ol{\bbD}\setminus\calL$.

Poincar\'e \cite{Poin1882} proved that for any Fuchsian group, \eqref{3.1} converges if $s=2$ and
Burnside \cite{Burn1,Burn2} proved that if the set of limit points is not all of $\partial\bbD$, then
\eqref{3.1} converges if $s=1$. Beardon proved

\begin{theorem}[Beardon \cite{Bear}]\lb{T3.1} If \,$\Gamma$ is a finitely generated Fuchsian group whose
limit points are not all of $\partial\bbD$, then there is some $s <1$ so that \eqref{3.1} converges.
\end{theorem}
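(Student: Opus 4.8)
The plan is to exploit the exponent of convergence $\delta(\Gamma)$ of the Poincar\'e series, i.e.\ the infimum of those $s$ for which \eqref{3.1} converges, and to show $\delta(\Gamma)<1$ under the stated hypotheses. The central fact I would invoke is the identity $\delta(\Gamma)=\dim_H(\calL)$ relating the exponent of convergence of a (finitely generated, geometrically finite) Fuchsian group to the Hausdorff dimension of its limit set---this is the content of Beardon's paper, building on Patterson's work for the case at hand. So the heart of the matter is a geometric/measure-theoretic statement: a finitely generated Fuchsian group whose limit set is a proper closed subset of $\partial\bbD$ has limit set of Hausdorff dimension strictly below $1$.

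The key steps, in order, are as follows. First, reduce to the case where $\Gamma$ is of the second kind (limit set $\calL\subsetneq\partial\bbD$) and finitely generated; by Ahlfors' finiteness theorem such a group is geometrically finite, so $\bbD/\Gamma$ has a finite-sided fundamental polygon $\calF$ meeting $\partial\bbD$ in finitely many free-boundary arcs. Second, realize $\calL$ via the coding coming from the side-pairing transformations: the Dirichlet polygon gives a symbolic dynamics for the action of $\Gamma$ on $\partial\bbD$, and $\calL$ is the attractor of an (infinite, but controlled) iterated function system of M\"obius maps $\gamma$ restricted to the complementary arcs. Third---this is the real work---establish a uniform contraction estimate: because $\calF$ has free sides on $\partial\bbD$, each generator, when it maps one boundary interval into another, does so with a derivative bounded away from $1$ in the relevant region; iterating, $\abs{(\gamma_{i_1}\cdots\gamma_{i_n})'}\le C\lambda^n$ with $\lambda<1$ on the ``allowed'' arcs. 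Fourth, feed this into the Poincar\'e series: grouping terms by word length and using \eqref{3.4} together with the contraction estimate shows $\sum_\gamma\abs{\gamma'(0)}^s\le C\sum_n N(n)\lambda^{sn}$ where $N(n)\le 2\ell(2\ell-1)^{n-1}$ is the word-count from \eqref{2.6}; this converges as soon as $(2\ell-1)\lambda^s<1$, i.e.\ for $s>\log(2\ell-1)/\log(1/\lambda)$, and one checks this threshold is $<1$ precisely because the presence of free boundary arcs forces $\lambda$ to be small relative to the branching $2\ell-1$ (equivalently, the ``pressure'' is negative at $s=1$). Finally, conclude that any such $s$ works, so $\delta(\Gamma)<1$.

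The main obstacle is the uniform contraction estimate in the third step: one must show that the exponent-of-convergence gain is \emph{genuinely} below $1$ and not merely $\le 1$, which is exactly where Burnside's $s=1$ result gets improved. The clean way to see it is via the transfer (Ruelle) operator / thermodynamic formalism: define the pressure function $P(s)$ for the expanding map induced on the arcs $\partial\bbD\setminus\calL$, note $P$ is strictly decreasing with $P(\delta(\Gamma))=0$, and check $P(1)<0$. The strict negativity at $s=1$ comes from the fact that the induced map is uniformly expanding with a genuine ``hole'' (the free boundary has positive length), so the Bowen--Series coding is subshift-of-finite-type with transition matrix of spectral radius $<2\ell-1$ after accounting for the contraction---equivalently, the conformal density argument of Patterson shows the Patterson--Sullivan measure at dimension $1$ cannot be a probability measure when $\calL\ne\partial\bbD$. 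I would assemble these ingredients but lean on Beardon's and Patterson's theorems for the hardest analytic input rather than reprove the thermodynamic formalism from scratch.
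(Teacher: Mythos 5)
Your proposal has a genuine gap at exactly the point you yourself flag as ``the real work.'' First, the quantitative mechanism you offer does not work: from a uniform contraction bound $\abs{\gamma'}\le C\lambda^{w(\gamma)}$ together with the word count $\#\Gamma_k=2\ell(2\ell-1)^{k-1}$ of \eqref{2.6}, you only get convergence of \eqref{3.1} for $s>\log(2\ell-1)/\log(1/\lambda)$, and nothing forces this threshold to be below $1$. The best uniform $\lambda$ is governed by the hyperbolic translation lengths of the generators, not by the branching number: for a finite gap set with very small gaps, $\lambda$ is arbitrarily close to $1$ while the branching stays $2\ell-1$, so your threshold exceeds $1$ even though the theorem remains true (the limit set then has dimension close to, but still strictly less than, $1$). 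So the claim that ``the presence of free boundary arcs forces $\lambda$ to be small relative to the branching'' is false, and the reformulation you correctly identify as the crux --- negativity of the pressure at $s=1$ --- is precisely what remains unproven in your sketch. Second, your fallback, to ``lean on Beardon's and Patterson's theorems for the hardest analytic input,'' is circular: the statement to be proven \emph{is} Beardon's theorem, and invoking $\delta(\Gamma)=\dim_H(\calL)$ (which is Patterson--Sullivan, not Beardon) merely translates the problem into showing $\dim_H(\calL)<1$, which is equivalent content rather than an input.

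For context, the paper does not prove this result either: it quotes Beardon \cite{Bear} and remarks that a simple geometric proof in the finite gap case is given in \cite{Rice}. That elementary argument repairs exactly the step above, and is what your sketch is missing. Instead of a uniform absolute contraction one uses: (i) the arcs $\gamma(\ti\calR)$, $\gamma\in\Gamma$, are pairwise disjoint in $\partial\bbD$ with length comparable (above and below) to $\abs{\gamma'(0)}$, the two-sided companion of \eqref{3.4}/\eqref{3.12}; and (ii) a uniform \emph{relative} gap estimate: each level-$k$ arc contains its $2\ell-1$ level-$(k+1)$ subarcs in total length at most $(1-\veps)$ times its own length, with $\veps>0$ uniform --- this distortion bound is the ``simple geometry.'' Then for $s<1$, H\"older gives $\sum_{j}\abs{I_j}^s\le(2\ell-1)^{1-s}(1-\veps)^s\abs{I}^s$ over the children $I_j$ of an arc $I$, which is a strict contraction of the $s$-sum as soon as $(1-s)\log(2\ell-1)<s\log\f{1}{1-\veps}$, i.e., for $s$ sufficiently close to $1$; summing over generations yields a convergent geometric series and hence convergence of \eqref{3.1} for that $s<1$. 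Your outline would become a proof if you supplied this uniform proportional-gap (equivalently, negative pressure at $s=1$) estimate rather than deferring it to the theorem being proved.
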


As Beardon noted, this is equivalent to the set of limit points
having Hausdorff dimension less than $1$. Indeed, it is known (work
later than Beardon, see \cite{Patterson, Sullivan}) that the infimum
over $s$ for which \eqref{3.1} converges is the Hausdorff dimension
of $\calL$. We note that Beardon's proof is very involved, in part
because of the need to consider issues such as elliptic and
parabolic elements that are irrelevant to our setup. The result for
our case is proven using some simple geometry in Simon \cite{Rice}.

There is an important consequence of Beardon's theorem that we need. Let
\begin{equation} \lb{3.5a}
\ti\calR=\partial\bbD\setminus \partial\calR_0
\end{equation}
that is, $\ol\calF\cap\partial\bbD$. This set consists of $2\ell$
arcs. For each $\gamma\in\Gamma$, $\gamma (\ti\calR)$ is also
$2\ell$ arcs, so
\begin{equation} \lb{3.10}
\partial\calR_k = \calL\cup\Bigl[\bigcup_{w(\gamma) > k} \gamma (\ti\calR)\Bigr]
\end{equation}

It is not hard to see that on $\ti\calR$ and its images, each $\gamma_j^\pm$, but one,
decreases sizes by a fixed amount so that ($\abs{\dott}$ is total arc length)
\begin{equation} \lb{3.11}
\abs{\gamma(\ti\calR)} \leq Ce^{-Dw(\gamma)}
\end{equation}
for some fixed constants $C, D>0$ (proven in
\cite[Sect.~9.6]{Rice}).

By \eqref{3.4}, for some constant $Q$,
\begin{equation} \lb{3.12}
\abs{\gamma(\ti\calR)} \leq Q\abs{\gamma'(0)}
\end{equation}
Hence,
\begin{equation} \lb{3.13}
\abs{\gamma(\ti\calR)} \leq \abs{Ce^{-Dw(\gamma)}}^{1-s}
\abs{Q\abs{\gamma'(0)}}^s
\end{equation}
So, by Beardon's theorem for some $s<1$,
\begin{equation} \lb{3.14}
\abs{\partial\calR_k} \leq C^{1-s} Q^s e^{-D(1-s)k} \sum_{\gamma\in\Gamma}\, \abs{\gamma'(0)}^s
\end{equation}
and thus,

\begin{theorem}\lb{T3.2} For some constants $C_0, D_0 >0$, we have
\begin{equation} \lb{3.15}
\abs{\partial\calR_k} \leq C_0 e^{-D_0 k}
\end{equation}
\end{theorem}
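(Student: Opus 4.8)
The plan is to assemble the estimates \eqref{3.10}--\eqref{3.14} into a single chain, so the proof is essentially bookkeeping once Beardon's theorem (Theorem~\ref{T3.1}) is in hand. First I would record that, by the description \eqref{3.10} of $\partial\calR_k$ together with the fact that $\calL$ has one-dimensional Lebesgue measure zero (a consequence of Theorem~\ref{T3.1}, since $\dim_H\calL<1$, as noted in the discussion following it), the total arc length satisfies
\[
\abs{\partial\calR_k} = \sum_{w(\gamma)>k} \abs{\gamma(\ti\calR)}.
\]
Thus it suffices to bound this sum.

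Next, for each $\gamma$ with $w(\gamma)>k$ I would interpolate between the two bounds on $\abs{\gamma(\ti\calR)}$ already available: the geometric decay \eqref{3.11}, $\abs{\gamma(\ti\calR)}\le Ce^{-Dw(\gamma)}$, and the comparison \eqref{3.12} with the Poincar\'e summand, $\abs{\gamma(\ti\calR)}\le Q\abs{\gamma'(0)}$, the latter being a consequence of the distortion estimate \eqref{3.4} applied to the compact set $K=\ti\calR=\ol\calF\cap\partial\bbD\subset\ol\bbD\setminus\calL$. Writing $\abs{\gamma(\ti\calR)}=\abs{\gamma(\ti\calR)}^{1-s}\abs{\gamma(\ti\calR)}^s$ and feeding \eqref{3.11} into the first factor and \eqref{3.12} into the second yields \eqref{3.13}.

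Finally, fix $s<1$ as furnished by Theorem~\ref{T3.1}, which applies because $\Gamma$ is finitely generated (by the $\ell$ hyperbolic generators $\gamma_1,\dots,\gamma_\ell$) and $\calL\subsetneq\partial\bbD$. For $w(\gamma)>k$ one has $e^{-D(1-s)w(\gamma)}\le e^{-D(1-s)k}$, so summing \eqref{3.13} over all $\gamma$ with $w(\gamma)>k$ and pulling out this factor gives
\[
\abs{\partial\calR_k}\le C^{1-s}Q^s e^{-D(1-s)k}\sum_{\gamma\in\Gamma}\abs{\gamma'(0)}^s,
\]
and the remaining Poincar\'e series converges by Theorem~\ref{T3.1}. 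Setting $C_0=C^{1-s}Q^s\sum_{\gamma\in\Gamma}\abs{\gamma'(0)}^s$ and $D_0=D(1-s)$ gives \eqref{3.15}.

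The only genuinely delicate point — and the step I would spell out most carefully — is the first one: that $\partial\calR_k$ has the same arc length as $\bigcup_{w(\gamma)>k}\gamma(\ti\calR)$, i.e.\ that the limit set $\calL$ contributes nothing. This is exactly where Beardon's theorem enters a second time, through the fact that $\calL$ has Lebesgue measure zero; everything after that is the elementary interpolation-and-sum argument outlined above.
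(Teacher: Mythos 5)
Your proposal is correct and follows essentially the same route as the paper, which assembles \eqref{3.10}--\eqref{3.14} in exactly this way (interpolating \eqref{3.11} and \eqref{3.12} with exponent $s<1$ from Beardon's theorem and summing over $w(\gamma)>k$). Your explicit remark that $\calL$ contributes nothing to the arc length because it has Lebesgue measure zero is a point the paper leaves implicit, having noted it at the end of Section~\ref{s2}, so it is a welcome clarification rather than a deviation.
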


\section{Blaschke Products and (Potential Theorist's) Green's Function} \lb{s4}

The initial elements of this section are classical; see, for example, Tsuji \cite{Tsu}. Given $w\in\bbD$,
we define $b(z,w)$ by
\begin{equation} \lb{4.1}
b(z,w) = \begin{cases}
\f{\abs{w}}{w}\, \f{w-z}{1-\bar wz} & w\neq 0 \\
\quad\; z & w=0
\end{cases}
\end{equation}
which is meromorphic in $z$ on $\bbC\cup\{\infty\}$, analytic in $z$ on $\bbD$, and is the unique bijective
map of $\bbD\to\bbD$ with
\begin{equation} \lb{4.2}
b(w,w) =0
\end{equation}
and
\begin{equation} \lb{4.3}
b(0,w)>0 \quad (w\neq 0); \qquad b'(0,w)>0 \quad (w=0)
\end{equation}
Note that $b$ is continuous in $z$ on $\ol{\bbD}$ and
\begin{equation} \lb{4.3a}
\abs{b(e^{i\theta},w)} =1
\end{equation}

The following is standard (see Rudin \cite{Rudin}):

\begin{lemma}\lb{L4.1} Let $(w_j)_{j=1}^\infty$ be a sequence of points in $\bbD$. Then
\begin{SL}
\item[{\rm{(a)}}] If
\begin{equation} \lb{4.3b}
\sum_{j=1}^\infty\, (1-\abs{w_j})=\infty
\end{equation}
then as $N\to\infty$,
\begin{equation} \lb{4.4}
\prod_{j=1}^N b(z,w_j) \to 0
\end{equation}
uniformly on compact subsets of $\bbD$.

\item[{\rm{(b)}}] If
\begin{equation} \lb{4.5}
\sum_{j=1}^\infty\, (1-\abs{w_j}) <\infty
\end{equation}
then as $N\to\infty$,
\begin{equation} \lb{4.6}
\prod_{j=1}^N b(z,w_j) \to  B(z,(w_j))
\end{equation}
uniformly on compact subsets of $\bbD$, where $B$ is analytic in
$\bbD$ and obeys
\begin{equation} \lb{4.7}
B(z, (w_j)) = 0 \;\Leftrightarrow\; z\in (w_j)
\end{equation}
Moreover, for Lebesgue a.e.\ $\theta$,
\begin{equation} \lb{4.8}
\lim_{r\uparrow 1}\, B(re^{i\theta}, (w_j)) \equiv B(e^{i\theta}, (w_j))
\end{equation}
exists obeying
\begin{equation} \lb{4.9}
\abs{B(e^{i\theta}, (w_j))} =1
\end{equation}
\end{SL}
\end{lemma}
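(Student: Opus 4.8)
The plan is to reduce everything to two pointwise estimates on the elementary factors $b(z,w)$ together with the standard convergence criterion for infinite products of analytic functions. Expanding the definition \eqref{4.1} and simplifying, one finds for $w\neq 0$ the identity
\[
1-b(z,w)=\f{(1-\abs{w})(w+\abs{w}z)}{w(1-\bar w z)},
\]
which yields $\abs{1-b(z,w)}\le 2(1-\abs{w})/(1-r)$ for $\abs{z}\le r<1$; and, from $1-\abs{b(z,w)}^2=(1-\abs{w}^2)(1-\abs{z}^2)/\abs{1-\bar w z}^2$ together with $1-t\ge\tfrac12(1-t^2)$ on $[0,1]$, one gets the matching lower bound $1-\abs{b(z,w)}\ge(1-\abs{w})(1-r)/\bigl(2(1+r)\bigr)$ on $\abs{z}\le r$. (Factors with $w=0$ equal $z$ and only shrink $\abs{\prod_j b(z,w_j)}$ on compacts; if infinitely many $w_j$ vanish both parts are trivial, and if only finitely many do one discards them, so I will assume all $w_j\neq0$.) Granting these estimates, part (a) is immediate: with $c=(1-r)/\bigl(2(1+r)\bigr)$ and $1-t\le e^{-t}$,
\[
\Bigl|\prod_{j=1}^{N}b(z,w_j)\Bigr|\le\prod_{j=1}^{N}\bigl(1-c(1-\abs{w_j})\bigr)\le\exp\Bigl(-c\sum_{j=1}^{N}(1-\abs{w_j})\Bigr)\to0
\]
uniformly on $\abs{z}\le r$ as $N\to\infty$, by \eqref{4.3b}, which is \eqref{4.4}.

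For part (b), the upper estimate gives $\sum_j\sup_{\abs{z}\le r}\abs{1-b(z,w_j)}\le\tfrac{2}{1-r}\sum_j(1-\abs{w_j})<\infty$ by \eqref{4.5}; the usual theory of infinite products of analytic functions then shows that $\prod_{j=1}^N b(\dott,w_j)$ converges uniformly on $\abs{z}\le r$, and since $r<1$ is arbitrary this proves \eqref{4.6} with $B$ analytic on $\bbD$ and $\abs{B}\le1$ (each factor maps $\bbD$ into $\bbD$). For the zeros: $b(w_k,w_k)=0$ forces $B(w_k)=0$; conversely, at $z_0\notin(w_j)$ the value $B(z_0)=\prod_j b(z_0,w_j)$ is a convergent infinite product with no vanishing factor and $\sum_j\abs{1-b(z_0,w_j)}<\infty$, hence nonzero. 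In particular $B\not\equiv0$, so Hurwitz's theorem applied on small disks about each $w_k$ shows that the order of vanishing of $B$ at a point equals the number of $w_j$ equal to that point; this gives \eqref{4.7}.

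It remains to analyze the boundary behavior, which is the one genuinely non-elementary point. Since $B\in H^\infty$ with $\norm{B}_\infty\le1$, Fatou's theorem (see \cite{Rudin}) supplies the radial limit \eqref{4.8} for a.e.\ $\theta$, with $\abs{B(e^{i\theta})}\le1$. Fix $N$ and write $B=B_N C_N$ with $B_N=\prod_{j\le N}b(\dott,w_j)$ (a finite Blaschke product, so $\abs{B_N(e^{i\theta})}\equiv1$) and $C_N=\prod_{j>N}b(\dott,w_j)$ (a Blaschke product by the case of (b) just proved, since $N$ large makes all involved $w_j$ nonzero); hence $\abs{B(e^{i\theta})}=\abs{C_N(e^{i\theta})}$ a.e. Because $\log\abs{C_N}$ is subharmonic and $\abs{C_N}\le1$, its circular means are nondecreasing in $r$ and Fatou's lemma lets one pass to the boundary, giving
\[
\f{1}{2\pi}\int_0^{2\pi}\log\abs{B(e^{i\theta})}\,d\theta=\f{1}{2\pi}\int_0^{2\pi}\log\abs{C_N(e^{i\theta})}\,d\theta\ge\log\abs{C_N(0)}=\sum_{j>N}\log\abs{w_j}.
\]
Letting $N\to\infty$, the right-hand side tends to $0$ since $\sum_j(1-\abs{w_j})<\infty$; as the integrand on the left is $\le0$, it must vanish a.e., which is \eqref{4.9}. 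The main obstacle is thus not any single hard computation but making this last argument rigorous — invoking Fatou's theorem for the existence of radial limits and the subharmonic mean-value inequality to control $\int\log\abs{C_N^*}$; everything else follows mechanically from the two $b(z,w)$ estimates and the infinite-product criterion.
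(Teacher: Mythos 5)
Your proof is correct, and it is essentially the standard argument the paper simply cites from Rudin rather than reproducing: your upper estimate on $\abs{1-b(z,w)}$ is exactly the inequality \eqref{4.9b} the paper records in its remarks, and your boundary-value argument via $\f{1}{2\pi}\int\log\abs{B(e^{i\theta})}\,d\theta\geq\sum_{j>N}\log\abs{w_j}\to 0$ is the classical one. Nothing further is needed.
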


\begin{remarks} 1. The refined form of \eqref{4.7} says that the order of the zero at some $w_j$ is the
number of times it occurs in $(w_j)$.

\smallskip
2. The proof shows that when \eqref{4.5} holds, uniformly for $\abs{z}\leq \rho <1$, we have
\begin{equation} \lb{4.9a}
\sum_{j=1}^\infty\, \abs{1-b(z,w_j)} <\infty
\end{equation}

\smallskip
3. The proof of \eqref{4.9a} follows from the simple inequality
\begin{equation} \lb{4.9b}
\abs{1-b(z,w)} \leq \f{1+\abs{z}}{\abs{1-\bar wz}}\, (1-\abs{w})
\end{equation}
which also proves that the product converges on $\bbC\setminus
\bigl[\,\ol\bbD \cup (1/\bar w_j)\bigr]$ and on any set
$K\subset\partial\bbD$ with
\begin{equation} \lb{4.9c}
\inf_{e^{i\theta}\in K,w_j}\, \abs{e^{i\theta}-w_j} >0
\end{equation}
In particular, if there is any set $K\subset\partial\bbD$ for which
\eqref{4.9c} holds, we can find such an open set and so get a
product analytic across $K$. This product is meromorphic, with poles
at the points $1/\bar w_j$.
\end{remarks}

We also need the following:

\begin{lemma}\lb{L4.2} Suppose $\gamma$ is an analytic bijection of $\bbD$ to $\bbD$. For any $z,w\in\bbD$, we have
\begin{alignat}{2}
&\text{\rm{(i)}} \qquad && \abs{b(z,w)}=\abs{b(w,z)} \lb{4.10} \\
&\text{\rm{(ii)}} \qquad && \abs{b(\gamma(z), \gamma(w))} = \abs{b(z,w)} \lb{4.11}
\end{alignat}
\end{lemma}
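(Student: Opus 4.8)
The plan is to establish both identities by direct algebraic manipulation, exploiting the explicit formula \eqref{4.1} for the Blaschke factor. For part (i), the key observation is that the modulus kills the unimodular prefactors $\abs{w}/w$ and $\abs{z}/z$, so it suffices to compare $\abs{(w-z)/(1-\bar w z)}$ with $\abs{(z-w)/(1-\bar z w)}$. The numerators have equal modulus since they differ by a sign. For the denominators, one computes $\abs{1-\bar w z}^2 = 1 - 2\Real(\bar w z) + \abs{w}^2\abs{z}^2$, which is manifestly symmetric under interchanging $z$ and $w$ (note $\Real(\bar w z) = \Real(\overline{\bar w z}) = \Real(\bar z w)$). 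Hence the two denominators also have equal modulus, and (i) follows. The degenerate cases $z=0$ or $w=0$ are immediate from \eqref{4.1}.

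For part (ii), I would first reduce to the case where $\gamma$ is one of two elementary types. Every analytic bijection of $\bbD$ onto $\bbD$ has the form $\gamma(z) = e^{i\theta} b(z,a)$ for some $a\in\bbD$ and $\theta\in\bbR$; more conveniently, $\gamma$ is a composition of a rotation $z\mapsto e^{i\theta}z$ and a single Blaschke automorphism $\varphi_a(z) = (a-z)/(1-\bar a z)$. Since composition of such maps is associative, and since the desired identity $\abs{b(\gamma(z),\gamma(w))} = \abs{b(z,w)}$ is preserved under composition of $\gamma$'s (if it holds for $\gamma_1$ and $\gamma_2$ then it holds for $\gamma_1\circ\gamma_2$, because $\abs{b(\gamma_1\gamma_2(z), \gamma_1\gamma_2(w))} = \abs{b(\gamma_2(z),\gamma_2(w))} = \abs{b(z,w)}$), it is enough to verify it for rotations and for the involutions $\varphi_a$. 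For a rotation $\gamma(z)=e^{i\theta}z$ the claim reduces to $\abs{b(e^{i\theta}z, e^{i\theta}w)} = \abs{b(z,w)}$, which again follows by inspecting \eqref{4.1}: the phase factor $e^{i\theta}$ cancels in the quotient $(e^{i\theta}w - e^{i\theta}z)/(1 - e^{-i\theta}\bar w e^{i\theta}z)$, leaving exactly $b(z,w)$ up to a unimodular constant. For $\gamma = \varphi_a$ one computes $\varphi_a(w) - \varphi_a(z)$ and $1 - \overline{\varphi_a(w)}\,\varphi_a(z)$ directly; the standard identity
\[
\varphi_a(w) - \varphi_a(z) = \f{(1-\abs{a}^2)(w-z)}{(1-\bar a w)(1-\bar a z)}, \qquad
1 - \overline{\varphi_a(w)}\,\varphi_a(z) = \f{(1-\abs{a}^2)(1-\bar w z)}{(1-\bar a w)\overline{(1 - \bar a z)}}
\]
shows that the quotient $b(\varphi_a(z),\varphi_a(w))$ equals $(w-z)/(1-\bar w z)$ times a unimodular factor, hence has the same modulus as $b(z,w)$.

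The main obstacle, such as it is, is purely bookkeeping: one must handle the unimodular prefactor $\abs{w}/w$ in \eqref{4.1} carefully and keep track of which points are zero, so as to avoid dividing by zero when $a$, $z$, or $w$ vanishes. These degenerate cases can be dispatched either by continuity (both sides of \eqref{4.10} and \eqref{4.11} are continuous in $z,w$) or by checking them separately from \eqref{4.1}. Alternatively, one can bypass the algebra entirely by invoking the Schwarz–Pick lemma: the pseudohyperbolic distance $\rho(z,w) = \abs{b(z,w)}$ (well-defined because of (i)) is the standard invariant metric on $\bbD$, and (ii) is precisely the statement that biholomorphic self-maps of $\bbD$ are isometries for it. But since the paper's style is self-contained and elementary here, I would present the direct computation as above.
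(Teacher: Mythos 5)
Your proposal is correct, but it takes a genuinely different route from the paper. You prove \eqref{4.11} by explicit computation: write the automorphism $\gamma$ as a rotation composed with an involution $\varphi_a(z)=(a-z)/(1-\bar a z)$, note that the identity is stable under composition, and verify it for each elementary factor via the standard algebraic identities. The paper instead avoids all computation: fixing $w$, it sets $h(z)=b(\gamma(z),\gamma(w))/b(z,w)$, observes that the singularity at $z=w$ is removable, that $h$ is analytic on $\bbD$ and continuous on $\ol{\bbD}$ with $\abs{h(e^{i\theta})}=1$, and applies the maximum principle to both $h$ and $1/h$ to conclude $\abs{h}\equiv 1$. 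Both arguments deliver the refinement the paper actually uses later (in the proof of Theorem~\ref{T4.3}), namely $b(\gamma(z),\gamma(w))=\eta(w,\gamma)\,b(z,w)$ with $\eta$ a unimodular constant: the maximum-principle proof because an analytic function of constant modulus is constant, yours because the unimodular factor appears explicitly (for $\varphi_a$ it is $(1-a\bar w)/(1-\bar a w)$, times the rotation phases). Your route is more elementary and yields formulas; the paper's is shorter, requires no classification of the automorphisms of $\bbD$, and no case bookkeeping at $z=0$ or $w=0$. Two small slips in your displayed identities, harmless for moduli: the numerator of $\varphi_a(w)-\varphi_a(z)$ should be $(1-\abs{a}^2)(z-w)$ rather than $(1-\abs{a}^2)(w-z)$, and the denominator of $1-\ol{\varphi_a(w)}\,\varphi_a(z)$ should be $\ol{(1-\bar a w)}\,(1-\bar a z)$, the complex conjugate of what you wrote; neither affects the conclusion $\abs{b(\varphi_a(z),\varphi_a(w))}=\abs{b(z,w)}$.
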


\begin{proof} (i) is immediate. For \eqref{4.11}, fix $w$ and let
\begin{equation} \lb{4.12}
h(z) = \f{b(\gamma(z),\gamma(w))}{b(z,w)}
\end{equation}
It is easy to see that $h$ has a removable singularity at $z=w$ and
so, it is analytic in $\bbD$ and continuous on $\ol{\bbD}$. By
\eqref{4.3a}, $\abs{h(e^{i\theta})}=1$ so, by the maximum principle,
$\abs{h(z)}\leq 1$ on $\bbD$. But $1/h$ has the same properties as
$h$, so $\abs{1/h(z)}\leq 1$, which implies that $\abs{h(z)}=1$,
that is, \eqref{4.11} holds.
\end{proof}

The following is true for any Fuchsian group whose limit points are not dense in $\partial\bbD$---but we only
care here about the $\Gamma$'s associated to finite gap sets:

\begin{theorem}\lb{T4.3} Let $\Gamma$ be the Fuchsian group of a finite gap set. For any $w\in\bbD$, the product
\begin{equation} \lb{4.13}
\prod_{\gamma\in\Gamma} b(z,\gamma(w)) \equiv B(z,w)
\end{equation}
converges for all
$z\in\bbC\cup\{\infty\}\setminus\bigl[\calL\cup\{\ol{\gamma(w)}^{-1}\}_{\gamma\in\Gamma}\bigr]$
and defines a function analytic there and meromorphic in
$\bbC\cup\{\infty\}\setminus\calL$. $B$ has simple poles at the
points $\{\ol{\gamma(w)}^{-1}\}_{\gamma\in\Gamma}$, simple zeros at
$\{\gamma(w)\}_{\gamma\in\Gamma}$ and no other zeros or poles.
Moreover,
\begin{SL}
\item[{\rm{(i)}}] For $z,w\in\bbD$,
\begin{equation} \lb{4.14}
\abs{B(z,w)} = \abs{B(w,z)}
\end{equation}

\item[{\rm{(ii)}}] Each $B(\dott,w)$ is character automorphic, that is, for every $w\in\bbD$ there is a
character, $c_w$, on $\Gamma$ so that
\begin{equation} \lb{4.15}
B(\gamma(z),w) =c_w (\gamma) B(z,w)
\end{equation}

\item[{\rm{(iii)}}] If
\begin{equation} \lb{4.16}
B(z)\equiv B(z,0)
\end{equation}
then for
$z\in\bbC\cup\{\infty\}\setminus\bigl[\calL\cup\{\ol{\gamma(0)}^{-1}\}_{\gamma\in\Gamma}\bigr]$,
\begin{equation} \lb{4.17}
\abs{B(z)}=\prod_{\gamma\in\Gamma}\, \abs{\gamma(z)}
\end{equation}

\item[{\rm{(iv)}}] For $e^{i\theta}\in\partial\bbD\setminus\calL$ {\rm{(}}with $^\prime =\partial/\partial\theta${\rm{)}},
\begin{equation} \lb{4.17a}
\abs{B'(e^{i\theta})} =\sum_{\gamma\in\Gamma}\, \abs{\gamma'(e^{i\theta})}
\end{equation}
\end{SL}
\end{theorem}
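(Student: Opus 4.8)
The plan is to assemble the statement from Beardon's theorem (Theorem~\ref{T3.1}), Lemmas~\ref{L4.1} and~\ref{L4.2}, and the description of the limit set $\calL$ as the accumulation set of an orbit.

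\textbf{Convergence and continuation.} First I would check the Blaschke condition for $(\gamma(w))_{\gamma\in\Gamma}$. By Theorem~\ref{T3.1} there is $s<1$ with $\sum_\gamma\abs{\gamma'(0)}^s<\infty$, hence by the discussion around \eqref{3.3} also $\sum_\gamma(1-\abs{\gamma(w)})^s<\infty$; since $0<s\le 1$ and $1-\abs{\gamma(w)}\in[0,1]$ we have $1-\abs{\gamma(w)}\le(1-\abs{\gamma(w)})^s$, so \eqref{4.5} holds. Then Lemma~\ref{L4.1}(b) gives convergence of \eqref{4.13} uniformly on compacts of $\bbD$ to an analytic $B(\dott,w)$ whose zeros in $\bbD$ are exactly the points $\gamma(w)$; these are simple because $\Gamma$ acts freely on $\bbD$ (fixed-point free), so $\gamma\mapsto\gamma(w)$ is injective. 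For the continuation I would use Remark~3 after Lemma~\ref{L4.1} together with \eqref{4.9b}: since $\calL$ equals the set of accumulation points of $\{\gamma(w)\}$ (definition~(iii) of $\calL$, which is base-point independent), on any compact $K\subset\partial\bbD\setminus\calL$ one has $\inf_{e^{i\theta}\in K,\gamma}\abs{e^{i\theta}-\gamma(w)}>0$, i.e.\ \eqref{4.9c} holds, so the product converges to an analytic function across $K$; likewise it converges on $\bbC\cup\{\infty\}\setminus\overline{\bbD}$ away from the reflected points $\overline{\gamma(w)}^{-1}$, where the single factor $b(\dott,\gamma(w))$ contributes a simple pole. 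Since the poles $\overline{\gamma(w)}^{-1}$ accumulate only on $\calL$, $B(\dott,w)$ is meromorphic on $\bbC\cup\{\infty\}\setminus\calL$ with precisely the stated zeros and poles (and none on $\partial\bbD\setminus\calL$, where $\abs{B}=1$ by (iii)).

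\textbf{Parts (i)--(iii).} Each is a re-indexing of $\Gamma$ inside the product via Lemma~\ref{L4.2}. For (i), $\abs{b(z,\gamma(w))}\overset{\eqref{4.10}}{=}\abs{b(\gamma(w),z)}\overset{\eqref{4.11}}{=}\abs{b(w,\gamma^{-1}(z))}$ (applying \eqref{4.11} with the bijection $\gamma^{-1}$), so $\prod_\gamma\abs{b(z,\gamma(w))}=\prod_\gamma\abs{b(w,\gamma^{-1}(z))}=\abs{B(w,z)}$. For (iii), \eqref{4.11} with the bijection $\gamma$ and the normalization $b(u,0)=u$ give $\abs{b(z,\gamma(0))}=\abs{b(\gamma^{-1}(z),0)}=\abs{\gamma^{-1}(z)}$, hence $\abs{B(z)}=\prod_\gamma\abs{\gamma^{-1}(z)}=\prod_\gamma\abs{\gamma(z)}$. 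For (ii), \eqref{4.11} with the bijection $\gamma_0$ gives $\abs{b(\gamma_0(z),\gamma(w))}=\abs{b(z,\gamma_0^{-1}\gamma(w))}$, so $\abs{B(\gamma_0(z),w)}=\abs{B(z,w)}$ on $\bbD$; moreover the zero sets of $z\mapsto B(\gamma_0(z),w)$ and $z\mapsto B(z,w)$ coincide with the same (simple) multiplicities, since both are $\{\gamma(w)\}_{\gamma\in\Gamma}$ and $\Gamma$ is a group. Therefore $B(\gamma_0(z),w)/B(z,w)$ is analytic and non-vanishing on $\bbD$ with modulus identically $1$, hence a unimodular constant $c_w(\gamma_0)$; the cocycle identity $c_w(\gamma_1\gamma_2)=c_w(\gamma_1)c_w(\gamma_2)$ then follows by applying \eqref{4.15} twice.

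\textbf{Part (iv).} On $\bbD$, logarithmic differentiation of the convergent product \eqref{4.13} with $w=0$ gives $B'/B=\sum_\gamma \partial_z b(\dott,\gamma(0))/b(\dott,\gamma(0))$, and by the analyticity across $\partial\bbD\setminus\calL$ above this persists there. At $e^{i\theta}\in\partial\bbD\setminus\calL$ one has $\abs{B(e^{i\theta})}=1$ (by (iii)) and $\abs{b(e^{i\theta},\gamma(0))}=1$; for $g$ analytic near and unimodular on the circle, $\partial_\theta\arg g(e^{i\theta})$ is real and equals $\abs{\partial_z g(e^{i\theta})}$ in absolute value, and for the orientation-preserving maps here it is positive. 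Summing the argument functions and differentiating termwise, $\abs{B'(e^{i\theta})}=\partial_\theta\arg B(e^{i\theta})=\sum_\gamma\partial_\theta\arg b(e^{i\theta},\gamma(0))=\sum_\gamma\abs{\partial_z b(e^{i\theta},\gamma(0))}$; a direct computation from \eqref{4.1} yields $\abs{\partial_z b(e^{i\theta},\gamma(0))}=(1-\abs{\gamma(0)}^2)/\abs{e^{i\theta}-\gamma(0)}^2=\abs{(\gamma^{-1})'(e^{i\theta})}$ (the last step because $\gamma^{-1}$ is the M\"obius self-map of $\bbD$ taking $\gamma(0)$ to $0$), and re-indexing gives $\sum_\gamma\abs{\gamma'(e^{i\theta})}$. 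The termwise differentiation is justified because $\sum_\gamma\abs{\gamma'(z)}$ converges uniformly on compacts of $\overline{\bbD}\setminus\calL$: by \eqref{3.4} it is bounded by $C\sum_\gamma\abs{\gamma'(0)}$, which is finite since $\abs{\gamma'(0)}\le 1$ forces $\abs{\gamma'(0)}\le\abs{\gamma'(0)}^s$ and $\sum_\gamma\abs{\gamma'(0)}^s<\infty$ by Theorem~\ref{T3.1}. The step needing the most care is the convergence-and-continuation bookkeeping in the first paragraph; once $B(\dott,w)$ is known to be analytic across $\partial\bbD\setminus\calL$ with the asserted zero/pole structure, (i)--(iv) are essentially formal.
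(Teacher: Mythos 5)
Your proof is correct and, in its essentials, follows the same route as the paper: convergence and meromorphic continuation come from Beardon/Burnside plus Lemma~\ref{L4.1} and the bounds \eqref{4.9b}, \eqref{4.9c} (with $\calL$ identified as the accumulation set of the orbit $\{\gamma(w)\}$), parts (i) and (iii) are the re-indexing arguments via Lemma~\ref{L4.2}, and (iv) rests on the identity between the radial derivative of the modulus and the angular derivative of the argument for functions unimodular on the circle, with termwise differentiation justified by the convergent majorant $C\sum_\gamma\abs{\gamma'(0)}$. The one place you genuinely deviate is (ii): the paper tracks the unimodular constants $\eta(w,\gamma)$ of \eqref{4.19} through finite subproducts and shows the product of phases converges, whereas you note that $z\mapsto B(\gamma_0(z),w)/B(z,w)$ is analytic, zero-free (identical zero sets with the same simple multiplicities) and of modulus one on $\bbD$, hence a unimodular constant by the maximum principle; both rest on Lemma~\ref{L4.2}, yours is a clean alternative, and the paper's version has the minor advantage of exhibiting $c_w(\gamma)$ explicitly as a limit of phases. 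Two small touch-ups: in (iii) you only verify \eqref{4.17} for $z\in\bbD$, while the claim is for all $z\in\bbC\cup\{\infty\}\setminus\bigl[\calL\cup\{\ol{\gamma(0)}^{-1}\}_{\gamma\in\Gamma}\bigr]$ --- the paper finishes with the reflection identity \eqref{4.15x} and by noting both sides are $1$ on $\partial\bbD\setminus\calL$ (alternatively, observe that $b(z,\gamma(0))$ is a unimodular constant times $\gamma^{-1}(z)$ identically on the sphere, which is in effect what you use in (iv)); and your parenthetical ``$\abs{B}=1$ by (iii)'' on $\partial\bbD\setminus\calL$ is stated only for $w=0$, so for general $w$ you should instead cite the local uniform convergence of $\sum_\gamma\abs{1-b(\cdot,\gamma(w))}$ near such boundary points, which already gives a nonvanishing, pole-free limit there.
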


\begin{remarks} 1. We will see below (Theorem~\ref{T4.4}) that $c_0(\gamma)$ is not the identity.

\smallskip
2. When $z\in\{\ol{\gamma(0)}^{-1}\}_{\gamma\in\Gamma}$, both sides
of \eqref{4.17} are infinite.
\end{remarks}

\begin{proof} By Theorem~\ref{T3.1} (or Burnside's theorem), for any $w\in\bbD$,
\begin{equation} \lb{4.18}
\sum_{\gamma\in\Gamma}\, \abs{1-\gamma(w)} <\infty
\end{equation}
So, by Lemma~\ref{L4.1} and \eqref{4.9b}, the product converges where claimed and has the claimed
analytic/zero/pole properties.

(i) is immediate from \eqref{4.10}. By the proof of \eqref{4.11}, there exists $\eta(w,\gamma)\in\partial\bbD$
so that
\begin{equation} \lb{4.19}
b(\gamma(z),\gamma(w)) =\eta(w,\gamma) b(z,w)
\end{equation}
Thus, for any finite subset, $G\subset\Gamma$,
\begin{align}
\prod_{\gamma'\in G} b(\gamma(z), \gamma'(w))
&= \prod_{\gamma'\in G} b(\gamma(z), \gamma(\gamma^{-1} \gamma'(w))) \notag \\
&= \prod_{\gamma'\in G} \eta(\gamma^{-1} \gamma'(w),\gamma)
\prod_{\gamma''\in\gamma^{-1}(G)} b(z,\gamma''(w)) \lb{4.20}
\end{align}

If $G_n\subset G_{n+1}$ with $\cup_n G_n=\Gamma$ and $\gamma$ is
fixed, $\gamma^{-1}(G_n)\subset \gamma^{-1} (G_{n+1})$ and $\cup_n
\gamma^{-1}(G_n)=\Gamma$, so the left side of \eqref{4.20} and the
last factor on the right converge to $B(\gamma(z),w)$ and $B(z,w)$,
respectively. Thus, the product of $\eta$'s converges to some
$c_w(\gamma)\in\partial\bbD$, that is, \eqref{4.15} holds. From
\eqref{4.15}, it is easy to see that $c_w(\gamma\gamma')=c_w(\gamma)
c_w(\gamma')$. That proves (ii).

To get \eqref{4.17}, suppose first $z\in\bbD$. Then by \eqref{4.14},
\[
\abs{B(z)} =\abs{B(z,0)} = \abs{B(0,z)} = \biggl|\, \prod_{\gamma\in\Gamma} \gamma(z)\biggr|
\]
proving \eqref{4.17} for $z\in\bbD$.

Since for $z\in\bbD$,
\begin{equation} \lb{4.15x}
\ol{B(1/\bar z)}=B(z)^{-1} \qquad \ol{\gamma(1/\bar z)} = \gamma(z)^{-1}
\end{equation}
(on account of $\abs{B(e^{i\theta})}=\abs{\gamma(e^{i\theta})}=1$
and the reflection principle), we get \eqref{4.17} on
$\bbC\cup\{\infty\}\setminus\bigl[\,\ol{\bbD}\cup\{\ol{\gamma(0)}^{-1}\}_{\gamma\in\Gamma}\bigr]$.
Finally, on $\partial\bbD\setminus\calL$, both sides of \eqref{4.17}
are $1$. This completes (iii).

To prove (iv), we note that if $g$ is an analytic function on $\bbD$ with $\abs{g(z)}<1$ on $\bbD$ and
$I\subset\partial\bbD$ is an open interval so that $g$ has an analytic continuation across $I$ with
$\abs{g(e^{i\theta})}=1$ on $I$, then
\begin{equation} \lb{4.22}
\f{\partial}{\partial\theta}\, \abs{g(e^{i\theta})} =0
\end{equation}
so, by Cauchy--Riemann equations,
\begin{equation} \lb{4.23}
\left. \f{\partial}{\partial r}\, \arg (g(re^{i\theta}))\right|_{r=1} =0
\end{equation}
and
\begin{equation} \lb{4.22x}
\f{\partial}{\partial\theta}\, \arg (g(e^{i\theta})) = \left.
\f{\partial}{\partial r}\, \abs{g(re^{i\theta})}\right|_{r=1}
>0
\end{equation}
Hence, with $^\prime = \f{\partial}{\partial\theta}$,
\begin{equation} \lb{4.23x}
\abs{g'(e^{i\theta})} = \left. \f{\partial}{\partial r}\, \abs{g(re^{i\theta})}\right|_{r=1}
= \biggl| \f{dg}{dz} \, (e^{i\theta})\biggr|
\end{equation}
and
\begin{equation} \lb{4.24}
g'(e^{i\theta}) = \abs{g'(e^{i\theta})} g(e^{i\theta})
\end{equation}

In \eqref{4.22x}, we have strict positivity by the same argument that shows boundary values of Herglotz
functions are strictly monotone.

Let $G$ be a finite subset of $\Gamma$ and
\begin{equation} \lb{4.25}
B_G(z) =\prod_{\gamma\in G} b(z,\gamma(0))
\end{equation}
Then
\begin{equation} \lb{4.26}
\left. \f{\partial}{\partial r}\, \abs{B_G(z)}\right|_{z=e^{i\theta}} =
\sum_{\gamma\in G} \, \left. \f{\partial}{\partial r} \, \abs{b(z,\gamma(0))}\right|_{z=e^{i\theta}}
\end{equation}
by Leibnitz's rule and $\abs{b(e^{i\theta},\gamma(0))}=1$. If
$z_0\in\partial\bbD\setminus\calL$, $B_G(z)\to B(z)$ for $z$ in a
neighborhood of $z_0$, so derivatives converge. Since $\f{\partial}
{\partial r} \abs{B_G(z)} = \abs{\f{d}{dz} B_G(z)}$, the
$\f{\partial}{\partial r}$ derivatives converge. The terms in the
sum are positive, so the sum over all of $\Gamma$ is absolutely
convergent and \eqref{4.26} extends to the limit. By
\eqref{4.22x}--\eqref{4.23x}, we get \eqref{4.17a}.
\end{proof}

We emphasize for later use that \eqref{4.22x}--\eqref{4.23x} imply
\begin{equation} \lb{4.25x}
\f{\partial}{\partial\theta}\, \arg(\gamma (e^{i\theta})) = \abs{\gamma'(e^{i\theta})} >0
\end{equation}

Recall that the (real-valued) \emph{potential theoretic Green's
function}, $G_\fre$, is uniquely determined by requiring
$G_\fre(z)-\log\abs{z}$ to be harmonic on
$\bbC\cup\{\infty\}\setminus\fre$, and for quasi-every $x\in\fre$,
\begin{equation}
\lim_{\substack{z\to x \\ z\notin \fre }} G_\fre (z)=0.
\end{equation}
In fact, when $\fre$ has the form \eqref{1.1}--\eqref{1.3}, $G_\fre$
can be chosen globally continuous on $\bbC$ with
\begin{equation} \lb{4.27x}
G_\fre\restriction \fre =0
\end{equation}
Moreover, near infinity,
\begin{equation} \lb{4.26x}
G_\fre(z) =\log\abs{z} - \log (\ca(\fre)) + O(z^{-1})
\end{equation}
and, by the reflection principle, $G_\fre(z)$ is real analytic in
$x$ and $\abs{y}$ near any $z_0\in\fre^\intt$. For $x\in\fre^\intt$,
we have
\begin{equation} \lb{4.27}
\rho_\fre(x) = \lim_{\veps\downarrow 0}\, \frac{1}{\pi}\f{\partial
G_\fre}{\partial y}\, (x+i\veps)
\end{equation}
which we will write as
\begin{equation} \lb{4.28}
\frac{1}{\pi} \f{\partial}{\partial n}\, G_\fre (x+i0)
\end{equation}
the normal derivative in the positive direction.

\begin{theorem}\lb{T4.4} Let \,$\Gamma$ be the Fuchsian group of a finite gap set, $\fre$. Then for all $z\in
\bbD\setminus\{0\}$,
\begin{equation} \lb{4.30}
\abs{B(z)} = e^{-G_\fre (\x(z))}
\end{equation}
Moreover,
\begin{SL}
\item[{\rm{(i)}}] If $x_\infty$ is given by
\begin{equation} \lb{4.30a}
\x(z) = \f{x_\infty}{z} + O(1)
\end{equation}
near $z=0$, then, also near $z=0$,
\begin{equation} \lb{4.31}
B(z) = \f{\ca(\fre)}{x_\infty} \, z+O(z^2)
\end{equation}

\item[{\rm{(ii)}}] The character, $c_0$, of $B(z)$ is given by
\begin{equation} \lb{4.32}
c_0 (\gamma_j) = \exp(2\pi i \rho_\fre ([\alpha_1, \beta_j]))
\end{equation}

\item[{\rm{(iii)}}] At any $x_0\in \{\alpha_j,\beta_j\}_{j=1}^{\ell+1}$, $G_\fre$ has a square root zero
in the sense that
\begin{align}
\lim_{x\uparrow\alpha_j}\, G_\fre(x) (\alpha_j-x)^{-1/2} &= a_j \lb{4.33} \\
\lim_{x\downarrow \beta_j}\, G_\fre (x) (x-\beta_j)^{-1/2} &= b_j \lb{4.34}
\end{align}
for nonzero $a_j,b_j$.
\end{SL}
\end{theorem}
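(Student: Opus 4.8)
The plan is to establish the basic identity \eqref{4.30} first and then read off (i)--(iii) from it, using the structure of $\x$ near $z=0$ and near the band edges. To prove \eqref{4.30}, I would show that
\[
u(z):=\log\abs{B(z)}+G_\fre(\x(z))
\]
is identically zero on $\bbD$. It is automorphic, since $\x$ is automorphic and, by Theorem~\ref{T4.3}(ii), the character of $B$ is unitary so $\abs{B}$ is automorphic. It is harmonic on all of $\bbD$: $\log\abs{B}$ has a $+\log\abs{z-\gamma(0)}$ singularity at each simple zero $\gamma(0)$ of $B$, while $G_\fre(\x(z))$ has a matching $-\log\abs{z-\gamma(0)}$ singularity there (because $\x$ has a simple pole at $\gamma(0)$ and $G_\fre(w)=\log\abs{w}-\log\ca(\fre)+O(1/w)$ near $\infty$ by \eqref{4.26x}), so all singularities cancel. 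It is bounded: by automorphy it suffices to bound it on $\ol\calF$; since $\calL\subset\partial\calR_0$ is disjoint from $\ti\calR=\ol\calF\cap\partial\bbD$, the compact set $\ol\calF$ avoids $\calL$, so $\x$ is analytic near $\ol\calF\setminus\{0\}$ and $B$ analytic and zero-free there, making $\log\abs{B}$ and $G_\fre\circ\x$ individually bounded away from $0$, while near $0$ their logarithmic singularities cancel. Finally $u$ vanishes on $\partial\bbD\setminus\calL$: there $\abs{B}=1$ by \eqref{4.3a}, and $\x$ carries $\partial\bbD\setminus\calL$ (which by \eqref{3.10} is $\bigcup_{\gamma\in\Gamma}\gamma(\ti\calR)$) into $\fre$, so $G_\fre\circ\x=0$ by \eqref{4.27x}. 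Since $\calL$ has Lebesgue measure zero (Theorem~\ref{T3.1}), $u$ is a bounded harmonic function with a.e.\ vanishing boundary values, hence $u\equiv0$ by the Poisson representation; this is \eqref{4.30}. (Equivalently, \eqref{4.30} states that $G_\fre\circ\x$ equals $\sum_{\gamma\in\Gamma}\bigl(-\log\abs{b(\cdot,\gamma(0))}\bigr)$, the sum over the orbit of $0$ of Green's functions of $\bbD$.)

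For (i), I would substitute $\x(z)=x_\infty/z+O(1)$ into \eqref{4.26x} to get $G_\fre(\x(z))=-\log\abs{z}+\log\abs{x_\infty}-\log\ca(\fre)+O(z)$, so \eqref{4.30} gives $\abs{B(z)}=\abs{z}\,\ca(\fre)\abs{x_\infty}^{-1}(1+O(z))$ near $0$. Since $B$ has a simple zero at $0$, $B(z)=B'(0)z+O(z^2)$ with $\abs{B'(0)}=\ca(\fre)/\abs{x_\infty}$; and $B'(0)=\prod_{\gamma\neq1}b(0,\gamma(0))>0$ by the normalizations \eqref{4.3} (the product converging by \eqref{4.18}), while $x_\infty>0$ by the normalization \eqref{1.20}. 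Hence $B'(0)=\ca(\fre)/x_\infty$, which is \eqref{4.31}. For (iii), let $z_0\in\partial\bbD$ with $\x(z_0)=\alpha_j$; by the unfolding in Section~\ref{s2} ($\x'(z_0)=0\neq\x''(z_0)$) one has $\x(z)-\alpha_j=c(z-z_0)^2+O((z-z_0)^3)$ with $c\neq0$. Near $z_0$, $B$ is analytic with $\abs{B(z_0)}=1$ (as $\abs{B}=1$ on $\partial\bbD\setminus\calL$ and $B$ is continuous at $z_0$), so \eqref{4.30} gives $G_\fre(\x(z))=-\Real[\lambda(z-z_0)]+O((z-z_0)^2)$ with $\lambda=B'(z_0)/B(z_0)$. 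Because $G_\fre(\x(z))$ vanishes identically along the real-analytic boundary arc through $z_0$ that $\x$ maps onto $\fre_j$, the linear term must vanish along that arc, i.e.\ $\lambda(z-z_0)$ is purely imaginary in its tangent direction; substituting the orthogonal (gap) direction $z-z_0\sim\pm i\sqrt{(\alpha_j-w)/c}$, $w=\x(z)$, into the expansion then produces $G_\fre(w)=a_j\sqrt{\alpha_j-w}+O(\alpha_j-w)$, with $a_j\neq0$ because $\lambda=0$ would force $\partial_nG_\fre/\pi=\rho_\fre$ to vanish at $\alpha_j{+}i0$, contradicting \eqref{4.27} and the explicit formula \eqref{2.3b} (whose numerator does not vanish on $\fre$). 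This is \eqref{4.33}, and \eqref{4.34} comes out the same way at a point $z_0$ with $\x(z_0)=\beta_j$.

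For (ii), I would argue that $c_0(\gamma_j)=e^{i\Delta_j}$ is unimodular (since $\abs{B}$ is automorphic), where $\Delta_j$ is the increment of $\arg B$ along a path in $\bbD$ from any $z$ to $\gamma_j(z)$. Such a path projects under $\x$ to a closed loop in $\calS_+$ in the homotopy class of $\ti\gamma_j$, which by the construction in Section~\ref{s2} encircles $\fre_1\cup\dots\cup\fre_j$. Since $\log\abs{B}=-G_\fre\circ\x$, $\arg B$ is a branch of the harmonic conjugate of $-G_\fre\circ\x$, so $\Delta_j$ equals minus the period of the (multivalued) harmonic conjugate of $G_\fre$ around that loop; computing that period as $\oint(\partial G_\fre/\partial n)\,ds$ over a loop hugging $\fre_1\cup\dots\cup\fre_j$ and using \eqref{4.27} (the intervening gaps carrying no $\rho_\fre$-mass) gives $2\pi\sum_{k=1}^j\rho_\fre(\fre_k)=2\pi\,\rho_\fre([\alpha_1,\beta_j])$. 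Keeping the orientations straight yields $c_0(\gamma_j)=\exp(2\pi i\,\rho_\fre([\alpha_1,\beta_j]))$, i.e.\ \eqref{4.32}.

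The heart of the argument is \eqref{4.30}, and its one delicate point, which I expect to be the main obstacle, is the boundedness of $u$: it rests on the geometric fact that the Dirichlet fundamental set $\calF$ stays clear of the measure-zero limit set $\calL$, so that $\x$ (and hence $G_\fre\circ\x$) is controlled up to the boundary. Given \eqref{4.30}, parts (i) and (iii) are short local computations, and (ii) is the classical period computation for the harmonic conjugate of a Green's function — there the only subtlety is the orientation bookkeeping that picks out the sign $+2\pi i\,\rho_\fre$ rather than its conjugate.
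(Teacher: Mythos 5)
Your proposal is correct and follows essentially the same route as the paper: identify $-\log\abs{B}$ with $G_\fre\circ\x$ by harmonic-function/boundary-value considerations, then get (i) from the expansion of $\x$ at $0$ together with \eqref{4.26x}, (ii) from the period of the harmonic conjugate of $G_\fre$ (i.e.\ the change of $\arg B$ along the lift of $\ti\gamma_j$, computed via the normal derivative $\partial G_\fre/\partial n=\pi\rho_\fre$ over the first $j$ bands), and (iii) from the local picture $\x'(z_0)=0$, $\x''(z_0)\neq 0$ at band edges. The only deviations are cosmetic: you clinch \eqref{4.30} upstairs as a bounded harmonic function on $\bbD$ with a.e.\ vanishing boundary values (using that $\calL$ has measure zero), whereas the paper descends $\abs{B}$ to $\bbC\cup\{\infty\}\setminus\fre$ and invokes the defining properties of $G_\fre$ there; and in (iii) you derive the needed nonvanishing ($B'(z_0)/B(z_0)\neq 0$) from the explicit formula \eqref{2.3b} rather than from $\abs{B'(e^{i\theta})}>0$ as in \eqref{4.17a}.
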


\begin{proof} By \eqref{4.15}, $\abs{B(z)}$ is automorphic, so there exists a function $h$ on $\bbC\cup
\{\infty\}\setminus\fre$ such that
\begin{equation} \lb{4.35}
\abs{B(z)} =h(\x(z))
\end{equation}
Since $\x$ is analytic and $\x B$ is nonvanishing and analytic in a
neighborhood of $\ol\calF$, $\log(h(x)) + \log|x|$ is harmonic on
$\bbC\cup\{\infty\}\setminus\fre$. By $\abs{B(z)}=1$ on
$\ol\calF\cap\partial \bbD$, $\log (h(x))\to 0$ as $x\to\fre$. Thus,
$-\log(h(x))$ is $G_\fre(x)$, proving \eqref{4.30}.

\eqref{4.31} is immediate from \eqref{4.30a} and \eqref{4.26x}.
\eqref{4.33} and \eqref{4.34} follow from \eqref{4.30}, $B'(z)\neq
0$ on $\ol\calF\cap\partial\bbD$, and the fact that on $\x^{-1}
(\{\alpha_j,\beta_j\}_{j=1}^{\ell+1})$, we have $\x'(z) =0$,
$\x''(z)\neq 0$. That leaves \eqref{4.32}.

Consider the generator, $\gamma_\ell$, whose action takes $0$ into the endpoint of the curve in the top of
Figure~2. Since
\begin{equation} \lb{4.36}
B(\gamma_\ell (0)) = c_0 (\gamma_\ell) B(0)
\end{equation}
we see that
\begin{equation} \lb{4.37}
\arg(c_0 (\gamma_\ell)) = \int_0^{\gamma_\ell(0)}
\negthickspace\negthickspace\negthickspace
\negthickspace\negthickspace\negthickspace\negthickspace\negthickspace\negthickspace
\negthickspace\circlearrowleft \quad\, \f{d}{dz}\, \arg(B(z))\, dz
\end{equation}

$G_\fre(z)$ is harmonic on $\bbC\setminus\fre$ so that, locally, it
is the real part of an analytic function, $\ti G_\fre (z)$, but that
function has a multivalued imaginary part. Thus, $e^{-\ti
G_\fre(z)}\equiv E(z)$ has a multivalued argument. Clearly,
\begin{equation} \lb{4.38}
B(z)=E(\x(z))
\end{equation}
and the change of argument in \eqref{4.37} is given by the change of argument of $E(z)$ over the curve
in the bottom of Figure~2. This is given by a sum of change of argument around curves surrounding each band,
since in the gaps and in $(-\infty,\alpha_1)$, there is cancellation between top and bottom.

By a Cauchy--Riemann equation, $$\f{\partial}{\partial x} \arg(E(z))
=\f{\partial}{\partial y} \log \abs{E(z)} =\f{\partial
G_\fre}{\partial n}$$ The normal derivatives on top and bottom of a
band have opposite sign, so given the opposite directions,
\begin{align}
\arg (c_0(\gamma_\ell))
&= \sum_{j=1}^\ell \int_{\alpha_j}^{\beta_j} 2\, \f{\partial G_\fre(x+i0)}{\partial n}\,dx \lb{4.39} \\
&= 2\pi \sum_{j=1}^\ell \int_{\alpha_j}^{\beta_j} \rho_\fre(x)\, dx
= 2\pi \rho_\fre ([\alpha_1, \beta_\ell]) \lb{4.40}
\end{align}
which is \eqref{4.32} for $j=\ell$. \eqref{4.40} follows from \eqref{4.27}. The argument for general $j=1,
\dots, \ell-1$ is similar.
\end{proof}

\begin{corollary}\lb{C4.5} $B(z)^p$ is automorphic if and only if
\begin{equation} \lb{4.42}
\rho_\fre ([\alpha_j,\beta_j]) = \f{q_j}{p}
\end{equation}
for integers $q_j$.
\end{corollary}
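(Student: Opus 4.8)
The plan is to deduce the corollary immediately from the character formula in Theorem~\ref{T4.4}(ii). First I would note that $B(z)^p$ has character $c_0^p$, and that a character automorphic function with character $c$ is automorphic exactly when $c$ is the trivial character; since a character of $\Gamma$ is determined by its values on the generators $\gamma_1,\dots,\gamma_\ell$, this means $B^p$ is automorphic iff $c_0(\gamma_j)^p = 1$ for each $j = 1,\dots,\ell$. By \eqref{4.32}, $c_0(\gamma_j)^p = \exp\bigl(2\pi i\, p\,\rho_\fre([\alpha_1,\beta_j])\bigr)$, so the condition is equivalent to $p\,\rho_\fre([\alpha_1,\beta_j]) \in \bbZ$ for all $j = 1,\dots,\ell$.

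Next I would rewrite $\rho_\fre([\alpha_1,\beta_j])$ in terms of the harmonic measures of the individual bands. Since the equilibrium measure $d\rho_\fre$ is supported on $\fre$ and has no atoms, the gaps contribute nothing, so $\rho_\fre([\alpha_1,\beta_j]) = \sum_{i=1}^{j}\rho_\fre([\alpha_i,\beta_i])$. A finite collection of real numbers has all of its partial sums in $\bbZ$ if and only if each summand lies in $\bbZ$; applying this to the numbers $p\,\rho_\fre([\alpha_i,\beta_i])$, the conditions $p\,\rho_\fre([\alpha_1,\beta_j]) \in \bbZ$ $(j=1,\dots,\ell)$ are equivalent to $p\,\rho_\fre([\alpha_j,\beta_j]) \in \bbZ$ $(j=1,\dots,\ell)$, which is \eqref{4.42} with $q_j := p\,\rho_\fre([\alpha_j,\beta_j])$. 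Finally, because $\sum_{j=1}^{\ell+1}\rho_\fre([\alpha_j,\beta_j]) = \rho_\fre(\bbR) = 1$, integrality of $p\,\rho_\fre([\alpha_j,\beta_j])$ for $j\le\ell$ automatically forces it for $j = \ell+1$, so \eqref{4.42} may be read over all $\ell+1$ bands without changing its content.

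There is essentially no serious obstacle here: all the substance is already contained in Theorem~\ref{T4.4}(ii). The only points requiring care are bookkeeping ones — that $\Gamma$ has $\ell$ generators while there are $\ell+1$ bands (reconciled by the normalization $\rho_\fre(\bbR)=1$ and the support property of $d\rho_\fre$), and the elementary passage between ``all partial sums integral'' and ``all summands integral'' via consecutive differences.
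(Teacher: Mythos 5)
Your proof is correct and follows the same route as the paper: the paper's proof is the single observation that $B^p$ is automorphic iff $c_0(\gamma_j)^p=1$ for $j=1,\dots,\ell$, which ``given \eqref{4.32} is equivalent to \eqref{4.42}.'' You merely spell out the bookkeeping the paper leaves implicit — that $\rho_\fre([\alpha_1,\beta_j])$ is the cumulative sum of the band measures and that integrality of all partial sums is equivalent to integrality of each summand (with the $(\ell+1)$-st band handled by $\rho_\fre(\bbR)=1$) — which is a correct and harmless elaboration.
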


\begin{remark} We will eventually see (Corollary~\ref{C6.3}) that this relates periodic $\fre$ to periodic
Jacobi matrices.
\end{remark}

\begin{proof} $B(z)^p$ is automorphic if and only if $c_0(\gamma_j)^p =1$ for $j=1, 2, \dots, \ell$ and this,
given \eqref{4.32}, is equivalent to \eqref{4.42}.
\end{proof}

\begin{corollary} \lb{C4.6} Let $\fre$ be a finite gap set and $d\rho_\fre$ its equilibrium measure. Then
\begin{equation} \lb{4.43}
\int_{\partial\bbD} f(\x(e^{i\theta}))\, \f{d\theta}{2\pi} = \int_\fre f(x)\, d\rho_\fre(x)
\end{equation}
for all continuous $f$ on $\fre$ and, if integrals are allowed to be
infinite, for any positive measurable $f$ on $\fre$. In particular,
$f\in L^p (\fre,d\rho_\fre)$ if and only if $f\circ\x\in L^p
(\partial\bbD, \f{d\theta}{2\pi})$ so the Szeg\H{o} conditions,
\begin{equation} \lb{4.44}
\int_{\partial\bbD} \log(f(\x(e^{i\theta})))\, \f{d\theta}{2\pi} >-\infty
\end{equation}
and
\begin{equation} \lb{4.45}
\int_\fre \log(f(x))\, d\rho_\fre(x) >-\infty
\end{equation}
are equivalent for $f\in L^1 (\fre, d\rho_\fre)$.
\end{corollary}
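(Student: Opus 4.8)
The plan is to reduce everything to one geometric fact: the covering map $\x$ pushes the normalized Lebesgue measure $\f{d\theta}{2\pi}$ on $\partial\bbD$ forward to the equilibrium measure $d\rho_\fre$. Granting this, \eqref{4.43} and all its consequences become instances of the change-of-variables formula for push-forward measures. To prove the push-forward identity I would first recall the classical fact (see \cite{Ran,Tsu,EqMC}) that $d\rho_\fre$ is precisely the harmonic measure of $\calS_+=\bbC\cup\{\infty\}\setminus\fre$ at the point $\infty\in\calS_+$ --- this is the very reason the numbers $\rho_\fre(\fre_j)$ are called harmonic measures --- so that for $f\in C(\fre)$, if $u$ denotes the bounded solution of the Dirichlet problem on $\calS_+$ with boundary data $f$, then $u(\infty)=\int_\fre f\,d\rho_\fre$. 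Such a $u$ exists because every boundary point of $\calS_+$ is regular: each point of $\fre=\partial\calS_+$ is an endpoint of a nondegenerate segment contained in $\fre$, which is more than enough for regularity.

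Next I would pull this back through $\x$. By the construction in Section~\ref{s2}, $\x\colon\bbD\to\calS_+$ satisfies $\x(0)=\infty$ (by \eqref{1.20}) and extends continuously to $\ol{\bbD}\setminus\calL$ with $\x(\partial\bbD\setminus\calL)\subseteq\fre$, and by Theorem~\ref{T3.1} the limit set $\calL\subset\partial\bbD$ has Lebesgue measure zero (indeed Hausdorff dimension $<1$). Hence, for $f\in C(\fre)$ with Dirichlet extension $u$, the function $v:=u\circ\x$ is \emph{bounded} and harmonic on $\bbD$ and extends continuously to $\partial\bbD\setminus\calL$ with value $f\circ\x$ there; being bounded, $v$ is the Poisson integral of its nontangential boundary values, which off the null set $\calL$ coincide with $f\circ\x$. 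Therefore
\[
\int_{\partial\bbD} f(\x(e^{i\theta}))\,\f{d\theta}{2\pi} \;=\; v(0) \;=\; u(\x(0)) \;=\; u(\infty) \;=\; \int_\fre f\,d\rho_\fre .
\]
Since both sides are positive linear functionals on $C(\fre)$, the Riesz representation theorem gives that $\x$ carries $\f{d\theta}{2\pi}$ to $d\rho_\fre$ (the exceptional set $\calL$ being Lebesgue-null, the left-hand integral is insensitive to how $\x$ is defined on it).

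With the push-forward identity in hand, the rest follows mechanically. The change-of-variables formula gives $\int_\fre f\,d\rho_\fre=\int_{\partial\bbD}(f\circ\x)\,\f{d\theta}{2\pi}$ for every nonnegative measurable $f$ (both sides allowed to be $+\infty$) and for every $f\in L^1(\fre,d\rho_\fre)$; applied to $\abs{f}^p$ this yields $\norm{f}_{L^p(\fre,d\rho_\fre)}=\norm{f\circ\x}_{L^p(\partial\bbD)}$, hence the $L^p$ equivalence. For the Szeg\H{o} conditions, let $f\in L^1(\fre,d\rho_\fre)$, so also $f\circ\x\in L^1(\partial\bbD)$; we may assume $f>0$ a.e.\ (otherwise both \eqref{4.44} and \eqref{4.45} fail). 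Writing $\log f=\log^+ f-\log^- f$ and using $\log^+ t\le t$ for $t\ge 0$, we get $\log^+ f\in L^1(d\rho_\fre)$ and $\log^+(f\circ\x)\in L^1(\partial\bbD)$, so \eqref{4.45} is equivalent to $\int_\fre\log^- f\,d\rho_\fre<\infty$ and \eqref{4.44} to $\int_{\partial\bbD}\log^-(f\circ\x)\,\f{d\theta}{2\pi}<\infty$; since $\log^-(f\circ\x)=(\log^- f)\circ\x$ with $\log^- f\ge 0$ measurable, the push-forward identity makes these two integrals equal, so \eqref{4.44}$\,\Leftrightarrow\,$\eqref{4.45}.

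The step I expect to be delicate is the claim, in the second paragraph, that $v=u\circ\x$ is a genuinely \emph{bounded} harmonic function on $\bbD$ whose boundary function equals $f\circ\x$ almost everywhere: boundedness is exactly what legitimizes the Poisson representation, since an unbounded harmonic function with prescribed a.e.\ boundary values could carry extra singular mass concentrated on the exceptional set $\calL$. This is why I would route the argument through the Dirichlet problem on $\calS_+$ (where $u$, being a solution for continuous data, is automatically bounded) rather than work directly with $\log\abs{B(z)}=-G_\fre(\x(z))$, which by itself is unbounded near $\x^{-1}(\infty)$. One should also confirm the boundary behavior $\x(\partial\bbD\setminus\calL)\subseteq\fre$ used above; this is part of the reflection-principle continuation recorded in Section~\ref{s2}. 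Everything else --- the identification of $d\rho_\fre$ with harmonic measure at $\infty$, solvability and regularity of the Dirichlet problem on $\calS_+$, Fatou's theorem, and the change of variables for push-forward measures --- is standard.
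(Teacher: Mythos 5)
Your argument is correct, but it is a genuinely different route from the paper's. The paper proves \eqref{4.43} by unfolding: it writes $\partial\bbD$ as the union of the $\Gamma$-translates of the fundamental arcs $\ti\calR$, changes variables to get $\sum_{\gamma\in\Gamma}\abs{\gamma'(e^{i\theta})}$, identifies this sum with $\abs{B'(e^{i\theta})}$ via Theorem~\ref{T4.3}(iv), and then converts $\abs{B'}$ into the equilibrium density using $\abs{B}=e^{-G_\fre\circ\x}$ (Theorem~\ref{T4.4}) together with \eqref{4.27}--\eqref{4.28} and the fact that $\x\restriction\ti\calR$ is a two-fold cover of $\fre$. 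You instead identify $d\rho_\fre$ with harmonic measure of $\calS_+$ at $\infty$, solve the Dirichlet problem for continuous data (legitimate, since every point of $\fre$ lies on a nondegenerate subinterval and hence is regular), pull the solution back through $\x$, and use the Poisson representation of bounded harmonic functions plus the fact that $\calL$ has zero Lebesgue measure to evaluate at $z=0$; the measurable/$L^p$/Szeg\H{o} statements then follow from the push-forward formalism exactly as in the paper's ``standard approximation arguments.'' Your route avoids the derivative identity \eqref{4.17a} and the explicit density formula entirely, at the price of importing the classical identification of equilibrium measure with harmonic measure at infinity and Fatou's theorem; the paper's computation is more self-contained within the Blaschke-product machinery it needs anyway (and exhibits the density $\rho_\fre(x)$ directly), while yours is shorter and makes transparent that the statement is just conformal invariance of harmonic measure under the covering map modulo the null set $\calL$. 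The point you flag as delicate --- that $v=u\circ\x$ must be \emph{bounded} so that no singular mass can hide on $\calL$ --- is indeed the right thing to worry about, and your handling of it (boundedness of the Perron solution, continuity up to $\partial\bbD\setminus\calL$, $\abs{\calL}=0$) is sound.
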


\begin{proof} It suffices to prove \eqref{4.43} for continuous functions, $f$, and then use standard
approximation arguments. Recall that $\ti\calR$ is given by
\eqref{3.5a} and consists of $2\ell$ arcs. Except for endpoints, it
is a fundamental domain for the action of $\Gamma$ on
$\partial\bbD$, so
\begin{align}
\int_{\partial\bbD} f(\x(e^{i\theta}))\, \f{d\theta}{2\pi}
&=\sum_{\gamma\in\Gamma} \int_{\gamma (\ti\calR)} f(\x(e^{i\theta}))\, \f{d\theta}{2\pi} \lb{4.46} \\
&=\sum_{\gamma\in\Gamma}\, \int_{\ti\calR} f(\x(e^{i\theta})) \abs{\gamma'(e^{i\theta})}\,
\f{d\theta}{2\pi} \lb{4.47}
\end{align}
by a change of variables and the invariance of $\x$ under $\Gamma$,
that is, $\x\circ\gamma =\x$. Thus, by \eqref{4.17a},
\begin{align}
\int_{\partial\bbD} f(\x(e^{i\theta}))\, \f{d\theta}{2\pi}
&= \int_{\ti\calR} f(\x(e^{i\theta})) \abs{B'(e^{i\theta})}\, \f{d\theta}{2\pi} \lb{4.48} \\
&= \int_{\ti\calR} f(\x(e^{i\theta})) \biggl| \f{\partial}{\partial n}\, e^{-G_\fre (\x(e^{i\theta}))}\biggr|
\, \f{d\x(e^{i\theta})}{d\theta}\, \f{d\theta}{2\pi} \lb{4.49}
\end{align}
where we use \eqref{4.30}, $\abs{B'(e^{i\theta})}= \left.
\f{d}{dr}\abs{B(re^{i\theta})}\right|_{r=1}$, and the chain rule to
go from $d\theta$ to $d\x$ derivatives. $\x$ on $\ti\calR$ is a
two-fold cover of $\fre$, so using $\f{\partial}{\partial n}
e^{-G_\fre(\x)} = -\f{\partial}{\partial n} G_\fre$ (since $G_\fre$
is $0$ on $\fre$) and \eqref{4.27}--\eqref{4.28}, we get
\begin{equation}
\lb{4.50} \int_{\partial\bbD} f(\x(e^{i\theta})) \,
\f{d\theta}{2\pi} = 2\int_\fre \pi\rho_\fre(x) f(x)\, \f{dx}{2\pi} =
\int_\fre f(x)\, d\rho_\fre(x)
\end{equation}
\end{proof}

That concludes what we need about Blaschke products in this paper, but we put in some results on products of
Blaschke products which will be critical in later papers in this series.

\begin{theorem}\lb{T4.7} Let $(w_k)_{k=1}^\infty$ be a sequence in $\calF$. Then
\begin{equation} \lb{4.52}
\sum_{k=1}^\infty\, (1-\abs{w_k})<\infty \;\Leftrightarrow\;
\sum_{k=1}^\infty \, (1-\abs{B(w_k)}) <\infty
\end{equation}
Moreover,
\begin{SL}
\item[{\rm{(i)}}] If $\sum_{k=1}^\infty (1-\abs{w_k})=\infty$, then $\prod_{k=1}^N B(z,w_k)$ converges to $0$
uniformly on compact subsets of $\bbD$.

\item[{\rm{(ii)}}] If
\begin{equation} \lb{4.52a}
\sum_{k=1}^\infty \, (1-\abs{w_k}) <\infty
\end{equation}
then for all $z\in\bbD$,
\begin{equation} \lb{4.53}
\sum_{k=1}^\infty \, \abs{1-B(z,w_k)} <\infty
\end{equation}
uniformly on compact subsets of $\bbD$, so $\prod_{k=1}^N B(z,w_k)$ converges to an analytic limit vanishing
if and only if $z\in\{\gamma(w_k)\}_{\gamma\in\Gamma,\, k=1, \dots}$.
\end{SL}
\end{theorem}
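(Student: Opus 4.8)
The plan is to reduce the entire statement to the two-sided bound
\[
1-\abs{w}\;\leq\;1-\abs{B(w)}\;\leq\;C\,(1-\abs{w})\qquad(w\in\calF),
\]
valid with a constant $C=C(\fre)$. For the lower bound, \eqref{4.17} gives $\abs{B(w)}=\prod_{\gamma\in\Gamma}\abs{\gamma(w)}=\abs{w}\prod_{\gamma\neq1}\abs{\gamma(w)}\leq\abs{w}$, since each $\gamma$ maps $\bbD$ into itself. For the upper bound I would write $1-\abs{B(w)}=1-\prod_{\gamma}\abs{\gamma(w)}\leq\sum_{\gamma\in\Gamma}(1-\abs{\gamma(w)})$, then use the automorphism identity $1-\abs{\gamma(w)}^2=\abs{\gamma'(w)}(1-\abs{w}^2)$ to bound $1-\abs{\gamma(w)}\leq 2\abs{\gamma'(w)}(1-\abs{w})$, and finally invoke \eqref{3.4} together with the case $s=1$ of \eqref{3.1} (Burnside's theorem) to get $\sum_{\gamma}\abs{\gamma'(w)}\leq C_0\sum_{\gamma}\abs{\gamma'(0)}<\infty$; this gives $\sum_{\gamma\in\Gamma}(1-\abs{\gamma(w)})\leq C\,(1-\abs{w})$ for all $w\in\calF$, which I will reuse in part~(ii). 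The only genuinely nonroutine point is that \eqref{3.4} is being applied with $K=\ol\calF$ (closure taken in $\ol{\bbD}$), so one must check $\ol\calF\subseteq\ol{\bbD}\setminus\calL$: since $\calL=\bigcap_k\ol\calR_k\subseteq\ol\calR_0$ and $\calL\subseteq\partial\bbD$, \eqref{2.10} gives $\calL\subseteq\partial\calR_0$, whereas $\ol\calF\cap\partial\bbD=\ti\calR=\partial\bbD\setminus\partial\calR_0$ by \eqref{3.5a}, so $\ol\calF$ and $\calL$ are disjoint compacta and hence a positive distance apart. I regard this geometric separation as the main obstacle; everything downstream is bookkeeping.

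Granting the displayed bound, the equivalence \eqref{4.52} is immediate: $\sum_k(1-\abs{B(w_k)})<\infty$ forces $\sum_k(1-\abs{w_k})<\infty$ via the lower bound, and the reverse implication follows from the upper bound. For part~(i), I would observe that on $\bbD$ one has $\abs{B(z,w_k)}=\prod_{\gamma}\abs{b(z,\gamma(w_k))}\leq\abs{b(z,w_k)}$, because the $\gamma=1$ factor is $b(z,w_k)$ and every other factor has modulus at most $1$; hence $\prod_{k=1}^N\abs{B(z,w_k)}\leq\prod_{k=1}^N\abs{b(z,w_k)}$, and since $\sum_k(1-\abs{w_k})=\infty$, Lemma~\ref{L4.1}(a) shows the right-hand product tends to $0$ uniformly on compacts of $\bbD$, hence so does $\prod_{k=1}^N B(z,w_k)$.

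For part~(ii), assume \eqref{4.52a} and fix $\rho<1$. For $\abs{z}\leq\rho$, inequality \eqref{4.9b} together with $\abs{1-\overline{\gamma(w_k)}\,z}\geq 1-\rho$ gives $\abs{1-b(z,\gamma(w_k))}\leq\tfrac{1+\rho}{1-\rho}(1-\abs{\gamma(w_k)})$; summing over $\gamma\in\Gamma$ and invoking $\sum_{\gamma}(1-\abs{\gamma(w_k)})\leq C(1-\abs{w_k})$ from the key estimate yields $\sum_{\gamma\in\Gamma}\abs{1-b(z,\gamma(w_k))}\leq C'(1-\abs{w_k})$ with $C'=C(1+\rho)/(1-\rho)$. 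For $k$ large enough that $C'(1-\abs{w_k})\leq\tfrac12$, the elementary inequality $\bigl|\prod_{\gamma}(1+a_\gamma)-1\bigr|\leq\exp\bigl(\sum_{\gamma}\abs{a_\gamma}\bigr)-1$ applied with $a_\gamma=b(z,\gamma(w_k))-1$ gives $\abs{1-B(z,w_k)}\leq 2C'(1-\abs{w_k})$, while the finitely many remaining indices contribute at most $2$ each; this proves \eqref{4.53} with the bound uniform on $\abs{z}\leq\rho$, hence uniform on compacts of $\bbD$. Standard infinite-product theory then shows $\prod_{k=1}^N B(z,w_k)$ converges uniformly on compacts of $\bbD$ to an analytic function whose zero set is the union of the zero sets of the factors, which by Theorem~\ref{T4.3} is precisely $\{\gamma(w_k)\}_{\gamma\in\Gamma,\,k\geq1}$.
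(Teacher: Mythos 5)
Your proof is correct, but it reaches the two nontrivial points by a different mechanism than the paper. For the comparability of $1-\abs{w}$ and $1-\abs{B(w)}$ on $\calF$, the paper argues softly: $B$ extends analytically across the free arcs $\ol\calF\cap\partial\bbD$ with $\abs{B(e^{i\theta})}=1$ and $\abs{B'(e^{i\theta})}\neq 0$ there (by \eqref{4.22x}), so a compactness argument gives the two-sided bound \eqref{4.54} in one stroke. You instead get the easy direction from $\abs{B(w)}\leq\abs{w}$ (via \eqref{4.17}) and the hard direction from the quantitative orbit estimate $\sum_{\gamma\in\Gamma}(1-\abs{\gamma(w)})\leq C(1-\abs{w})$ on $\calF$, proved with the Schwarz--Pick identity $1-\abs{\gamma(w)}^2=\abs{\gamma'(w)}(1-\abs{w}^2)$, the distortion bound \eqref{3.4}, and Burnside's theorem. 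That same estimate then hands you part (ii) directly, since summing over $k$ gives exactly the double Blaschke condition \eqref{4.56} with an explicit bound, and your per-$k$ estimate $\abs{1-B(z,w_k)}\leq C'(1-\abs{w_k})$ is even a quantitative refinement of \eqref{4.53}; the paper reaches \eqref{4.56} by a different reduction, using the Dirichlet-domain property \eqref{2.6c} to get $\inf_{\gamma,k}\abs{\gamma(w_k)}>0$, rewriting \eqref{4.56} as $\prod_k\abs{B(w_k)}>0$ via \eqref{4.17}, and then feeding \eqref{4.52} back in. So your route is more quantitative and lets a single estimate serve both \eqref{4.52} and (ii), at the price of invoking \eqref{3.4} on a compact set touching $\partial\bbD$; the paper's route is softer and leans on structure it has already set up (\eqref{2.6c}, \eqref{4.17}) rather than on orbit sums. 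Part (i) is identical in both. The one point to flag is the separation $\ol\calF\cap\calL=\emptyset$: your identification $\ol\calF\cap\partial\bbD=\partial\bbD\setminus\partial\calR_0$ glosses over the $4\ell$ arc endpoints (which lie in $\ol\calF$ but also in $\partial\calR_0$), exactly as the paper's own \eqref{3.5a} does; since these endpoints are the points of $\partial\calF\cap\partial\bbD$ where $\x(z_0)\in\{\alpha_j,\beta_j\}_{j=1}^{\ell+1}$, which Section~\ref{s2} already places in $\bbC\cup\{\infty\}\setminus\calL$, the separation you need is genuine and your application of \eqref{3.4} is legitimate.
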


\begin{remarks} 1. If $\{\gamma_k\}$ is a countable set of distinct elements in $\Gamma$ and $w_k=\gamma_k(0)$, then $\sum_{k=1}^\infty
(1-\abs{w_k})<\infty$ (by Burnside or Beardon), but
$\abs{B(z,\gamma_k(0))}=\abs{B(z)}$ and $\prod_{k=1}^N B(z,w_k)$
converges to $0$ uniformly on compact subsets of $\bbD$. Thus, the
condition $w_k\in\calF$ cannot be replaced by $w_k\in\bbD$ in (ii).

\smallskip
2. As in the case of Theorem~\ref{T4.3}, we can prove convergence on
open subsets of $\bbC\cup\{\infty\}\setminus\bigl[\calL\cup
\{\ol{\gamma(w_k)}^{-1}\}_{\gamma\in\Gamma,\, k=1, \dots}\bigr]$
with poles at $\{\ol{\gamma (w_k)}^{-1}\}_{\gamma\in\Gamma,\, k=1,
\dots}$.
\end{remarks}

\begin{proof} $B$ is analytic in a neighborhood of $\ol\calF\cap\bbD$, as noted in \eqref{4.22x},
$\abs{B' (e^{i\theta})}\neq 0$, and, of course, $\abs{B(e^{i\theta})} =1$. Thus, for nonzero constants,
$c,d$, and for all $w\in\calF$,
\begin{equation} \lb{4.54}
c(1-\abs{B(w)}) \leq 1-\abs{w} \leq d(1-\abs{B(w)})
\end{equation}
from which \eqref{4.52} is immediate. (Notice that \eqref{4.54} only holds on $\calF$, not on $\bbD$, and
is where the condition $w_k\in\calF$ is used.)

To prove (i), we need only note that
\begin{equation} \lb{4.55}
\abs{B(z,w_k)} \leq \abs{b(z,w_k)}
\end{equation}
and use Lemma~\ref{L4.1}(a).

To prove (ii), we note that $\prod_{k=1}^\infty B(z,w_k)$ is a
product of Blaschke products, so to prove \eqref{4.53}, it suffices
to prove that
\begin{equation} \lb{4.56}
\sum_{\gamma\in\Gamma, k} (1-\abs{\gamma(w_k)}) <\infty
\end{equation}
Since $\sum_{\gamma\in\Gamma} (1-\abs{\gamma(0)})<\infty$ and zero
occurs at most finitely often in the sequence $(w_k)_{k=1}^\infty$
if \eqref{4.52a} holds, we can suppose that no $w_k$ is zero, in
which case, since $w_k\in \calF$ implies (by \eqref{2.6c})
\begin{equation} \lb{4.57x}
\abs{w_k} = \inf_{\gamma\in\Gamma}\, \abs{\gamma(w_k)}
\end{equation}
we have
\begin{equation} \lb{4.57}
\inf_{\gamma\in\Gamma, k}\, \abs{\gamma(w_k)} >0
\end{equation}
This implies that \eqref{4.56} is equivalent to
\begin{equation} \lb{4.58}
\prod_{\gamma\in\Gamma, k}\, \abs{\gamma(w_k)} >0
\end{equation}
which, by \eqref{4.17}, is equivalent to
\begin{equation} \lb{4.59}
\prod_k \, \abs{B(w_k)} >0
\end{equation}
Now, \eqref{4.59} is implied by
\begin{equation} \lb{4.60}
\sum_k\, (1-\abs{B(w_k)}) <\infty
\end{equation}
As we have seen, \eqref{4.52a} implies \eqref{4.60}, and thus
\eqref{4.56}.
\end{proof}

We are especially interested in the case where $w_k$ is determined
by $w_k\in\calF$ and $\x(w_k)$ real (so $w_k\in [\cup_{j=1}^\ell
C_\ell^+]\cup(-1,1)$). In that case,

\begin{proposition} \lb{P4.8} Let $(x_k)_{k=1}^\infty$ be a sequence in $\bbR\setminus\fre$ and let
$w_k\in\calF$  be uniquely determined by
\begin{equation} \lb{4.61}
\x(w_k)=x_k
\end{equation}
Then the following are equivalent:
\begin{alignat}{2}
&\text{\rm{(i)}} \qquad && \sum_{k=1}^\infty (1-\abs{w_k}) <\infty \lb{4.62} \\
&\text{\rm{(ii)}} \qquad && \sum_{k=1}^\infty G_\fre(x_k) <\infty \lb{4.63} \\
&\text{\rm{(iii)}} \qquad && \sum_{k=1}^\infty \dist(x_k,\fre)^{1/2} <\infty \lb{4.64}
\end{alignat}
\end{proposition}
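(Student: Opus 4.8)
The plan is to prove the chain (i) $\Leftrightarrow$ (ii) $\Leftrightarrow$ (iii), using Theorems~\ref{T4.4} and~\ref{T4.7}. First I would observe that since each $x_k$ is finite while $\x(0)=\infty$, no $w_k$ equals $0$, so \eqref{4.30} applies to every $w_k$ and gives $\abs{B(w_k)}=e^{-G_\fre(x_k)}$, i.e.\ $G_\fre(x_k)=-\log\abs{B(w_k)}$ with $0<\abs{B(w_k)}\le 1$. Convergence of either $\sum_k G_\fre(x_k)$ or $\sum_k(1-\abs{B(w_k)})$ forces $\abs{B(w_k)}\to 1$, and for $t$ near $1$ one has $1-t\le -\log t\le C(1-t)$; hence these two series converge together. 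Since $(w_k)\subset\calF$, Theorem~\ref{T4.7} (the equivalence \eqref{4.52}) identifies convergence of $\sum_k(1-\abs{B(w_k)})$ with that of $\sum_k(1-\abs{w_k})$, which establishes (i) $\Leftrightarrow$ (ii).

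For (ii) $\Leftrightarrow$ (iii) I would first record that each of the two series, having nonnegative terms, forces its terms to $0$. Since $G_\fre$ is continuous, vanishes precisely on $\fre$, and tends to $+\infty$ at infinity by \eqref{4.26x}, it is bounded below by a positive constant on $\{x:\dist(x,\fre)\ge\delta\}$ for every $\delta>0$; thus $G_\fre(x_k)\to 0$ forces $\dist(x_k,\fre)\to 0$, while $\dist(x_k,\fre)^{1/2}\to 0$ forces the same trivially. So in either direction we may discard finitely many terms and assume every remaining $x_k$ lies within a fixed small $\delta$ of $\fre$. Because $\fre$ is a finite union of intervals separated by gaps of positive length, for $\delta$ small each such $x_k$ (lying off $\fre$) is within $\delta$ of a \emph{unique} endpoint in $\{\alpha_j,\beta_j\}_{j=1}^{\ell+1}$, and there $\dist(x_k,\fre)$ equals $\alpha_j-x_k$ or $x_k-\beta_j$ accordingly.

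The key input is then Theorem~\ref{T4.4}(iii): the one-sided limits \eqref{4.33}--\eqref{4.34} are finite and nonzero. Combining these over the finitely many endpoints (and shrinking $\delta$ if necessary) produces constants $0<c<C<\infty$ such that $c\,\dist(x,\fre)^{1/2}\le G_\fre(x)\le C\,\dist(x,\fre)^{1/2}$ for all $x\in\bbR\setminus\fre$ with $\dist(x,\fre)<\delta$. Applying this termwise to the tail of $(x_k)$ shows $\sum_k G_\fre(x_k)<\infty \Leftrightarrow \sum_k\dist(x_k,\fre)^{1/2}<\infty$, which is (ii) $\Leftrightarrow$ (iii). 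The only genuinely nontrivial step is this last one --- assembling the two-sided bound $G_\fre(x)\asymp\dist(x,\fre)^{1/2}$ near $\fre$ from the one-sided endpoint asymptotics of Theorem~\ref{T4.4}(iii); the rest is routine bookkeeping with convergent series of nonnegative terms together with the elementary comparison of $-\log t$ and $1-t$ near $t=1$.
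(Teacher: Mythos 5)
Your proof is correct and follows essentially the same route as the paper: (i) $\Leftrightarrow$ (ii) via \eqref{4.30}, the equivalence \eqref{4.52} of Theorem~\ref{T4.7}, and the elementary comparison of $1-t$ with $-\log t$; and (ii) $\Leftrightarrow$ (iii) via the square-root vanishing \eqref{4.33}--\eqref{4.34} of Theorem~\ref{T4.4}(iii). You simply spell out the routine tail/endpoint bookkeeping that the paper leaves implicit.
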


\begin{proof} By \eqref{4.33}--\eqref{4.34}, we have (ii) $\Leftrightarrow$ (iii). By \eqref{4.52}, we have
that (i) is equivalent to
\begin{equation} \lb{4.65}
\sum_{k=1}^\infty (1-\abs{B(w_k)}) <\infty
\end{equation}
which, by \eqref{4.30} and \eqref{4.61}, is equivalent to
\begin{equation} \lb{4.66}
\sum_{k=1}^\infty\, \abs{1-e^{-G_\fre(x_k)}} <\infty
\end{equation}
In turn, \eqref{4.66} is easily seen to be equivalent to
\eqref{4.63}.
\end{proof}

Finally, we need to discuss alternating Blaschke products. We will
discuss a case with points approaching the top of a gap (or
$\alpha_1$). A similar result holds for approach to a $\beta_j$.

\begin{theorem}\lb{T4.9} Suppose $(\zeta_k)_{k=1}^\infty$, $(\rho_k)_{k=1}^\infty$ obey, for some $j$,
\begin{equation} \lb{4.67}
\beta_{j-1} <\zeta_1 < \rho_1 < \zeta_2 < \rho_2 < \cdots < \alpha_j
\end{equation}
{\rm{(}}where $\beta_0\equiv -\infty${\rm{)}} and
\begin{equation} \lb{4.68}
\lim_{k\to\infty} \zeta_k = \alpha_j
\end{equation}
Let $\{z_k\}\cup\{p_k\} \subset\calF$ and $a_j\in\ol\calF$ be given
by
\[
\x(z_k)=\zeta_k \qquad \x(p_k)=\rho_k \qquad \x(a_j)=\alpha_j
\]
Then as $N\to\infty$,
\begin{equation} \lb{4.69}
\prod_{k=1}^N \f{B(z,z_k)}{B(z,p_k)} \to B_\infty (z)
\end{equation}
uniformly in $z$ on compact subsets of
\begin{equation} \lb{4.70}
\bbC\cup\{\infty\}\setminus [\calL\cup \{\gamma(p_k), 
\gamma(z_k^{-1})\}_{\gamma\in\Gamma,\, k=1, \dots} \cup
\{\gamma(a_j)\}_{\gamma\in\Gamma}]
\end{equation}
to a function which is analytic on the set in \eqref{4.70} with
simple poles at points in
$\{\gamma(p_k), 
\gamma(z_k^{-1})\}_{\gamma\in\Gamma,\, k=1,\dots}$ and with zeros
only at the points $\{\gamma(p_k^{-1}),
\gamma(z_k)\}_{\gamma\in\Gamma,\,
k=1,\dots}$.

Moreover,
\begin{equation} \lb{4.71}
z\in\partial\bbD\setminus
[\calL\cup\{\gamma(a_j)\}_{\gamma\in\Gamma}] \;\Rightarrow\;
\abs{B_\infty(z)} =1
\end{equation}
and for some constant $C$ {\rm{(}}$\fre$-dependent{\rm{)}},
\begin{equation} \lb{4.72}
z\in\calF \Rightarrow \abs{\arg(B_\infty(z))}\leq C
\end{equation}
if $\arg(B_\infty)$ is determined by requiring one value of
$\arg(B_\infty(0))$ to be zero.

In addition, if we remove the arc of $C_{j-1}^+$ $($or segment of
$[-1,0)$ if $j=1$$)$ that runs from $z_1$ to $a_j$ and all its
images under $\gamma\in\Gamma$, we get a region, $\calB$, free of
zeros and poles of $B_\infty$, on which
\begin{equation} \lb{4.73x}
z\in\calB\setminus \calR_{n+1} \Rightarrow
\abs{\arg(B_\infty(z))}\leq (2n+1)C
\end{equation}
\end{theorem}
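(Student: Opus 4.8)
The plan is to treat the four assertions in turn, using the convergence in \eqref{4.69} as the foundation for the others.

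\emph{Convergence, analyticity, zeros and poles.} Fix a compact $K$ inside the region \eqref{4.70}, so $K$ is a positive distance from $\calL$, from $\{\gamma(a_j)\}_{\gamma\in\Gamma}$, and from the orbit points excluded in \eqref{4.70} (in particular from the poles $\gamma(p_k)$ and from the poles of the factors $b(z,\gamma(z_k))$). Writing
\[
\f{B(z,z_k)}{B(z,p_k)}=\prod_{\gamma\in\Gamma}\f{b(z,\gamma(z_k))}{b(z,\gamma(p_k))},
\]
I would first record the elementary estimate that, for $z$ bounded away from $w_2$ and from the pole of $b(\,\cdot\,,w_1)$, one has $|1-b(z,w_1)/b(z,w_2)|\le C|w_1-w_2|$ (the difference $b(z,w_1)-b(z,w_2)$ carries $w_1-w_2$ as a factor; in the real-normalized case it is literally $(w_2-w_1)(1-z^2)$ divided by the two linear denominators). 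Then the product converges on $K$ once $\sum_{k,\gamma}|\gamma(z_k)-\gamma(p_k)|<\infty$. Since $a_j\in\partial\bbD\setminus\calL$, all $z_k,p_k$ lie in one fixed compact subset of $\ol\bbD\setminus\calL$, so \eqref{3.4} gives $|\gamma'|\le C|\gamma'(0)|$ there and hence $\sum_\gamma|\gamma(z_k)-\gamma(p_k)|\le C|z_k-p_k|\sum_\gamma|\gamma'(0)|$, the last sum being finite by Burnside's theorem. Finally $\sum_k|z_k-p_k|<\infty$: because $\x$ has a simple branch point at $a_j$ ($\x(z)-\alpha_j\sim c(z-a_j)^2$), the distances $|z_k-a_j|$ are comparable to $\dist(\zeta_k,\fre)^{1/2}$, and since $\zeta_1<\rho_1<\zeta_2<\cdots\to\alpha_j$ the numbers $|z_1-a_j|>|p_1-a_j|>|z_2-a_j|>\cdots\to0$ are monotone, so $\sum_k|z_k-p_k|$ is dominated by a telescoping sum and is finite. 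Uniform convergence on compacts of \eqref{4.70} follows, $B_\infty$ is analytic there, and a Hurwitz argument combined with the factorization locates the zeros and poles; the meromorphic extension across $\partial\bbD\setminus[\calL\cup\{\gamma(a_j)\}]$ goes exactly as in Remark~2 after Theorem~\ref{T4.7}.

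\emph{Modulus on the boundary, \eqref{4.71}.} If $e^{i\theta}\notin\calL\cup\{\gamma(a_j)\}$ then $e^{i\theta}$ is bounded away from the closure of $\{\gamma(z_k),\gamma(p_k)\}_{\gamma,k}$ (whose only accumulation points lie in $\{\gamma(a_j)\}\cup\calL$), so the product converges there; each partial product has modulus $1$ on $\partial\bbD$ since $|b(e^{i\theta},w)|=1$, hence so does the limit.

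\emph{The argument bound \eqref{4.72}.} Since $b(0,w)=|w|>0$, each factor $\psi_k:=B(\,\cdot\,,z_k)/B(\,\cdot\,,p_k)$ has $\psi_k(0)>0$; choosing the branch with $\arg\psi_k(0)=0$ and continuing over the simply connected set $\calF^\intt$ (on which $\psi_k$ is analytic and nonvanishing) gives $\arg B_\infty=\sum_k\arg\psi_k$ with $\arg B_\infty(0)=0$. For $z$ at distance $\ge\delta$ from $a_j$, the estimates above yield $\sum_k|\arg\psi_k(z)|\le C_\delta\sum_k|z_k-p_k|$, uniformly. The remaining, and main, difficulty is to control $\arg B_\infty$ as $z\to a_j$ inside $\calF$, where the series fails to be absolutely convergent. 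Here I would transport $B_\infty|_{\calF^\intt}$ to the slit domain $\Omega=\bbC\cup\{\infty\}\setminus[\alpha_1,\beta_{\ell+1}]$ via the biholomorphism $\x|_{\calF^\intt}$: the image $\Psi$ is analytic and nonvanishing on $\Omega$, has modulus $1$ on the bands $\fre_i\subset\partial\Omega$, and has boundary zeros $\zeta_k$ and boundary poles $\rho_k$ which interlace on the single gap $(\beta_{j-1},\alpha_j)$ and accumulate only at the corner $\alpha_j$. Away from $\alpha_j$ the boundary data of $\arg\Psi$ are bounded and continuous (at the other band/gap endpoints $\x$ behaves like a square root but $B_\infty$ is locally analytic and nonvanishing, so $\arg\Psi$ stays bounded there); near $\alpha_j$ the interlacing of the boundary zeros and poles, together with the square-root local behavior of $\x$ and the telescoping bound $\sum_k|z_k-p_k|<\infty$, forces consecutive factors to cancel in argument and $|\Psi|\to1$, so $\arg\Psi$ remains bounded. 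A harmonic-function/Poisson argument then bounds $\arg\Psi$ throughout $\Omega$, which is \eqref{4.72}. I expect this estimate near $\alpha_j$ to be the hard part of the theorem.

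\emph{The bound on images, \eqref{4.73x}.} Because $B_\infty$ is character automorphic, $B_\infty(\gamma(z))=c(\gamma)B_\infty(z)$ with $c$ the character of $B_\infty$; transporting \eqref{4.72} by $\gamma$ shows that on $\gamma(\calF)$ the continued value of $\arg B_\infty$ stays within $C$ of $\arg c(\gamma)$, and crossing one copy of $\calF$ changes the continued argument by at most $\approx 2C$, giving $|\arg c(\gamma)|\lesssim C\,w(\gamma)$. A point of $\calB\setminus\calR_{n+1}$ lies in $\gamma(\calB)$ for some $\gamma$ with $w(\gamma)\le n+1$, and since all zeros, poles and accumulation points of $B_\infty$ were excised in forming $\calB$, its continued argument differs from the $\calF$-value by at most $n+1$ such steps; bookkeeping the constants yields $|\arg B_\infty(z)|\le(2n+1)C$.
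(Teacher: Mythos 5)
Your treatment of the convergence in \eqref{4.69}, of the location of zeros and poles, and of \eqref{4.71} is essentially the paper's own route: a factor-wise Lipschitz estimate $\abs{1-b(z,\gamma(\zeta_1))/b(z,\gamma(\zeta_2))}\leq C\abs{\gamma(\zeta_1)-\gamma(\zeta_2)}$, summed over $\Gamma$ via \eqref{3.4} and Burnside/Beardon, combined with $\sum_k\abs{z_k-p_k}<\infty$ from the interlaced ordering along $C_{j-1}^+$ (the paper simply bounds this telescoping sum by the arclength of the orthocircle). The problem is \eqref{4.72}, which you yourself identify as the main difficulty and then do not prove. Your plan — transport $B_\infty$ to the slit plane, observe that the boundary zeros $\zeta_k$ and poles $\rho_k$ interlace and accumulate only at $\alpha_j$, assert that "consecutive factors cancel in argument" near $\alpha_j$, and then invoke a Poisson/harmonic-majorant argument — is exactly the step that is missing. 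The asserted cancellation is the whole content of the estimate, and the Poisson step is not available for free: to represent or bound the harmonic function $\arg\Psi$ by its boundary data near the accumulation point you need an a priori integrability or boundedness statement of precisely the type \eqref{4.72} provides; in this paper the $H^p$ control (e.g.\ in the proof of Theorem~\ref{T7.5}) is \emph{deduced from} \eqref{4.72}, not the other way around, so your sketch risks circularity and in any case leaves the crucial bound unestablished.

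The idea the paper uses, and which your proposal lacks, is a total-variation argument that exploits the alternating structure globally rather than pairwise: since the $z_k,p_k$ interlace along the single arc $C_{j-1}^+$ (or $[-1,0)$), one has $\abs{\arg B_\infty(z)}=\bigl|\sum_k[\arg B(z,z_k)-\arg B(z,p_k)]\bigr|\leq \Var_{C_{j-1}^+}\bigl(\arg B(z,\dott)\bigr)$, and this variation is bounded \emph{uniformly for} $z\in\calF$ by decomposing $\arg B(z,w)$ into the arguments of the individual Blaschke factors, i.e.\ terms of the form $\arg(z-\gamma(w))$ and $\arg\bigl(z-\ol{\gamma(w)}^{-1}\bigr)$: the variation of $\arg(w-z)$ as $w$ runs over a circle is at most $2\pi$ always, and at most $4\,\mathrm{radius}/\dist$ for distant circles, and $\sum_{\gamma\in\Gamma}\mathrm{radius}(\gamma(C_{j-1}^+))<\infty$ by Beardon's theorem (this is \eqref{4.79}--\eqref{4.86}). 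That bound is uniform right up to $a_j$, which is exactly where your series estimates fail. Once \eqref{4.72} is in hand, your derivation of \eqref{4.73x} from character automorphy (variation at most $2C$ over each $\gamma(\calF)$, and a point of $\calB\setminus\calR_{n+1}$ is reached after crossing at most $n+1$ translates) agrees with the paper; but as written your argument for \eqref{4.72} is a plan, not a proof.
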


\begin{remarks}
1. If $z_k\in C_j^+$ for some $j$, then $\gamma_j^{-1}(z_k)=\bar
z_k$, so $\{\gamma(z_k^{-1})\}_{\gamma\in\Gamma} = \{\gamma({\bar
z_k^{-1}})\}_{\gamma\in\Gamma}$, which is why we do not need to put
complex conjugates in \eqref{4.70}.

\smallskip
2. The analog of this result for $\fre=[-2,2]$ is from Simon \cite{S288}.
\end{remarks}

\smallskip
\noindent{\it Sketch} (see \cite{Rice} for details). One first shows
that if $\zeta,\omega$ run through a compact set, $Q$, in a single
${C_j^+}$ (including the endpoints) or $[-1,0)$ or $(0,1]$ and $z$
through a compact subset, $K$\!, of $\bbC \setminus\calL\cup
[\{\gamma (Q)\}_{\gamma\in\Gamma}\cup
\{\gamma(Q^{-1})\}_{\gamma\in\Gamma}]$, then there is $C<\infty$ so
that for all $\gamma\in\Gamma$, $\zeta,\omega\in Q$ and $z\in K$\!,
\begin{equation} \lb{4.73}
\abs{b(z,\gamma(\zeta))-b(z,\gamma(\omega))} \leq C\abs{\gamma(\zeta)-\gamma(\omega)}
\end{equation}
This comes from looking at the three parts of
\begin{equation} \lb{4.74}
b(z,w) = \f{\abs{w}}{w}\f{w-z}{1-\bar w z}
\end{equation}
From this and telescoping, one gets
\begin{equation} \lb{4.75}
\abs{B(z,\zeta)-B(z,\omega)} \leq C \sum_{\gamma\in\Gamma}\, \abs{\gamma(\zeta)-\gamma(\omega)}
\leq C_1 \abs{\zeta-\omega}
\end{equation}

Since
\begin{equation} \lb{4.76}
\inf_{\substack{z\in K \\ w\in Q}}\, \abs{B(z,w)} >0
\end{equation}
\eqref{4.75} implies that
\begin{equation} \lb{4.77}
\biggl| 1-\f{B(z,\zeta)}{B(z,\omega)}\biggr| \leq C_2 \abs{\zeta-\omega}
\end{equation}
which leads to the convergence of \eqref{4.69} if we note that
\begin{equation} \lb{4.78}
\sum_{k=1}^\infty\, \abs{z_k-p_k} <\infty
\end{equation}
since the sum is bounded by the arclength of $C_j^+$ (or by $1$ if
$\alpha_j=\alpha_1$). This easily leads to all the statements except
those about $\arg(B_\infty)$.

For any smooth function, $f$, on a circle $C=\{z\mid z=z_0 + re^{i\theta}\}$, define
\begin{equation} \lb{4.79}
\Var_C(f) = \int_0^{2\pi} \biggl| \f{d}{d\theta}\, f(z)\biggr|\, d\theta
\end{equation}
For $w$ outside $C$, let
\begin{equation} \lb{4.80}
f_w(z) = \arg(w-z)
\end{equation}
Then this $\arg$ is increasing on one arc between the tangents to $C$ from $w$ and decreasing on the other, so
\begin{equation} \lb{4.81}
\Var_C(f_w) =2\times \text{angle between tangents}
\end{equation}
This shows that
\begin{equation} \lb{4.82}
\Var_C(f_w) \leq 2\pi
\end{equation}
and
\begin{equation} \lb{4.83}
\Var_C(f_w) \leq \f{4\text{ radius}(C)}{\dist(w,C)}
\end{equation}

Since $\arg(B(z,w))$ is built out of such $\arg(z-\gamma(w))$ and
$\arg(z-\ol{\gamma(w)}^{-1})$, and the radii of the circles
containing $\gamma(C_j^+)$ decrease so fast that, by Beardon's
theorem,
\begin{equation} \lb{4.84}
\sum_{\gamma\in\Gamma} \text{radius}(\gamma(C_j^+)) <\infty
\end{equation}
we find, uniformly for $z\in\calF$, that
\begin{equation} \lb{4.85}
\Var_{C_j^+} (\arg(B(z,\dott))) \leq C_0
\end{equation}
for some finite constant $C_0$.

Consider $\arg(B_\infty(z;\{z_k\},\{p_k\}))$ as $p_k$ is changed
from $z_k$ to its final value. At $p_k\equiv z_k$, this $\arg$ is
$0$ and the total change is bounded by the variation of
$\arg(B(z,w))$ as $w$ varies over $C_{j-1}^+$. We conclude that on
$\calF$,
\begin{equation} \lb{4.86}
\abs{\arg(B_\infty(z))} \leq C_0
\end{equation}
proving \eqref{4.72}. Since $B_\infty$ is character automorphic, the
variation of $\arg(B_\infty(z))$ over any $\gamma(\calF)$ is at most
$2C_0$, from which \eqref{4.73x} is immediate.

\section{Theta Functions and Abel's Theorem} \lb{s5}

Given a general compact Riemann surface, $\calS$, of genus $\ell$,
one can construct a natural map, $\frA$, called the \emph{Abel map}
from $\calS$ to a $2\ell$-dimensional real torus, called the Jacobi
variety, realized as $\bbC^\ell /\bbL$ where $\bbL$ is a
$2\ell$-dimensional lattice. Once a base point in $\calS$ is fixed,
the group structure comes into play. The theory of meromorphic
functions---essentially, which finite subsets of $\calS$ can occur
as zeros and poles---is described using $\frA$ via a result called
\emph{Abel's theorem}.

As we will see in the next section, certain $m$-functions of Jacobi
matrices with $\sigma_\ess(J)=\fre$ define meromorphic functions on
the Riemann surface, $\calS$, constructed in Section~\ref{s2}. Their
zeros and poles lie only at $\infty_+$, $\infty_-$, or in the sets
$G_j$ of \eqref{2.5a}. $\frA$ takes $\cup_j G_j$ into an
$\ell$-dimensional torus inside the $2\ell$-dimensional Jacobi
variety (the real part of the Jacobi variety), which is also a
subgroup with a suitable choice of base point. The traditional
construction of the isospectral torus
(\cite{DubMatNov,FlMcL,Krich1,McvM1,vMoer}) uses this general theory
of the Abel map and Abel's theorem.

Here, following Sodin--Yuditskii \cite{SY}, we restrict ourselves to
meromorphic functions with poles and zeros only at $\infty_+$,
$\infty_-$, and in $\cup_j G_j$. In that case everything can be made
explicit in a way that the real part of the Jacobi variety becomes
just the $\ell$-dimensional torus, $\Gamma^*$, of characters for the
Fuchsian group, $\Gamma$. The key is the definition of some natural
functions on $\bbC\cup\{\infty\}\setminus\calL$ parametrized by
points in $\cup_{j=1}^\ell \ti C_j^+$ (defined below). Our
construction is motivated by the one in \cite{SY} but is more
explicit.

We will need to use a fundamental set of the action of $\Gamma$ on
$\bbC\cup\{\infty\}\setminus\calL$,
\begin{equation} \lb{5.2}
\ti\calF = (\ol\calF \cup \ol\calF^{-1}) \setminus
\bigcup_{j=1}^\ell \ti C_j^-
\end{equation}
where closure is taken in $\bbC$, and $\ti C_j^\pm$ are the
\emph{complete orthocircles} (obtained by adding the two missing
points on $\partial\bbD$ to $C_j^\pm\cup (C_j^\mp)^{-1}$,
respectively); see Figure~4 below.
$\ti\calF^\intt$ will denote its interior, this is a fundamental
region. $\ti\calF$ is then $\ti\calF^\intt$ with $\cup_{j=1}^\ell
\ti C_j^+$ added.

\begin{center}
\begin{figure}[h]
\includegraphics[scale=0.4]{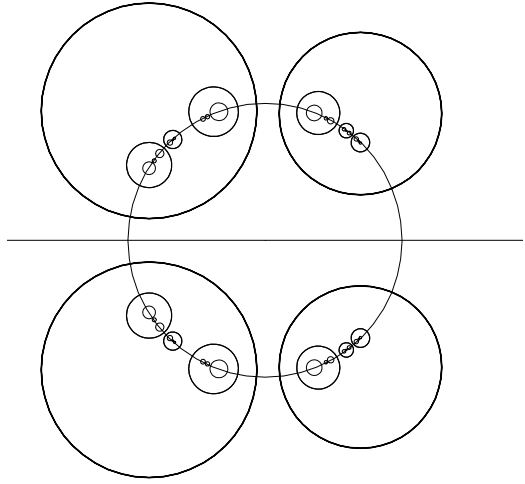}
\caption{Complete orthocircles}
\end{figure}
\end{center}

$\x^\sharp$ maps $\ti C_j^+$ onto $G_j$ bijectively. We will use
$\zeta_1, \dots, \zeta_\ell$ for the unique points on $\ti C_1^+,
\dots, \ti C_\ell^+$ that map to $\beta_1, \dots, \beta_\ell$, that
is,
\begin{equation} \lb{5.1}
\zeta_j\in\ti C_j^+ \cap \partial\bbD \qquad
\x^\sharp(\zeta_j)=\beta_j
\end{equation}

We need the following lemma:

\begin{lemma} \lb{L5.1} Let $f$ be a character automorphic meromorphic function on $\bbC\cup\{\infty\}\setminus\calL$.
Suppose
\begin{SL}
\item[{\rm{(i)}}] $f$ has no zeros or poles in $\ti\calF^\intt$ {\rm{(}}so in $\ti\calF$, the only zeros and poles
are on $\cup_{j=1}^\ell \ti C_j^+${\rm{)}}.
\item[{\rm{(ii)}}] Every zero or pole of $f$ has even order.
\item[{\rm{(iii)}}] If $D_j$ is a counterclockwise contour that is just outside $\ti C_j^+$ {\rm{(}}say, a circle
with the same center but a slightly larger radius{\rm{)}}, then
\begin{equation} \lb{5.4}
\f{1}{2\pi i} \int_{D_j}
\negthickspace\negthickspace\negthickspace\negthickspace\negthickspace\negthickspace\circlearrowleft
\,\, \f{f'(z)}{f(z)}\,  dz =0
\end{equation}
\item[{\rm{(iv)}}]
\begin{equation} \lb{5.5}
f(0) >0
\end{equation}
\end{SL}
Then there is a {\rm{(}}unique{\rm{)}} character automorphic
function, $g$, which we will denote as $\sqrt{f}$, with
\begin{equation} \lb{5.6a}
g(0)>0
\end{equation}
and so that for all $z\in\bbC\cup\{\infty\}\setminus\calL$,
\begin{equation} \lb{5.6}
 g(z)^2 = f(z)
\end{equation}
\end{lemma}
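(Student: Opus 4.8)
The plan is to construct $g=\sqrt{f}$ by building a local square root everywhere and checking it patches together into a single-valued character automorphic function. The hypotheses are exactly what is needed to rule out monodromy obstructions.

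\textit{First}, I would show $f$ has a well-defined square root locally. Away from zeros and poles this is automatic on simply connected neighborhoods; at a zero or pole, hypothesis (ii) says the order is even, say $2m$, so in a local coordinate $t$ we have $f(t)=t^{2m}u(t)$ with $u$ analytic and nonvanishing, and $t^m\sqrt{u(t)}$ is a local square root (analytic resp.\ meromorphic). So the sheaf of square roots of $f$ is a $2$-to-$1$ covering over all of $\bbC\cup\{\infty\}\setminus\calL$.

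\textit{Second}, I would show this covering is trivial, i.e.\ $\sqrt{f}$ exists globally on $\bbC\cup\{\infty\}\setminus\calL$. The obstruction is the monodromy homomorphism from $\pi_1$ of the punctured sphere (punctures at $\calL$ and at the zeros/poles of $f$) to $\bbZ/2$. A loop is trivial for monodromy iff the change in $\arg f$ around it is a multiple of $4\pi$; equivalently iff $\frac{1}{2\pi i}\oint f'/f\,dz$ is even. For small loops around a zero/pole this integral is the order, which is even by (ii). For a loop $D_j$ encircling the zeros/poles that sit on $\ti C_j^+$ inside $\ti\calF$, hypothesis (iii) gives that this integral is $0$, hence even. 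Since $\bbC\cup\{\infty\}\setminus\calL$ with the zeros/poles removed has its $\pi_1$ generated (after abelianization, which is all monodromy to $\bbZ/2$ sees) by small loops around the individual singular points together with loops like the $D_j$'s around the clusters on each $\ti C_j^+$ and their $\Gamma$-translates, and the $D_j$-type integrals vanish by (iii) while the small-loop integrals are even by (ii), the monodromy representation is trivial. Hence a global square root exists; I would fix the sign by (iv), choosing the branch with $g(0)>0$ (note $f(0)>0$ by (v), and $0$ is not a zero or pole). This gives uniqueness as well, since any two global square roots differ by a sign which is constant by connectedness.

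\textit{Third}, I would verify $g$ is character automorphic. For each $\gamma\in\Gamma$, $g\circ\gamma$ is also a global square root of $f\circ\gamma=c(\gamma)f$, so $g(\gamma(z))^2=c(\gamma)g(z)^2$, whence $g(\gamma(z))/g(z)$ is a square root of the constant $c(\gamma)$, and by connectedness it is itself a constant $\hat c(\gamma)$ with $\hat c(\gamma)^2=c(\gamma)$. The cocycle identity $\hat c(\gamma\gamma')=\hat c(\gamma)\hat c(\gamma')$ follows from evaluating $g(\gamma\gamma'(z))$ two ways. Thus $g$ is character automorphic with character $\hat c$, a square root of $c$.

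The main obstacle is the \textit{second} step: making precise that the $D_j$-condition (iii) together with the even-order condition (ii) genuinely kills \emph{all} of the monodromy, i.e.\ identifying a generating set for $\pi_1$ (mod $2$) of the sphere punctured at $\calL\cup\{$zeros/poles$\}$ and checking the monodromy vanishes on each generator. One has to argue that, using character automorphy, it suffices to control the $D_j$'s in the single fundamental region $\ti\calF$ rather than all their $\Gamma$-translates: the change of $\arg$ of $g$ along a translated contour $\gamma(D_j)$ equals that along $D_j$ since $g\circ\gamma$ is a constant multiple of $g$. And one must handle loops around $\calL$ itself --- but $\calL$ carries no zeros or poles of $f$, so small loops there contribute trivial monodromy, and a loop around a whole boundary cluster in $\calL$ can be pushed to a combination of the $D_j$-type contours and point-loops. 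Once the generating set is pinned down this is routine, but it is the step where the hypotheses are actually used in full and deserves care.
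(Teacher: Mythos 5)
Your argument is correct in outline, but it follows a genuinely different route from the paper. The paper works locally and constructively: using (iii) it defines a primitive $h$ of $f'/f$ on the fundamental region $\ti\calF^\intt$, sets $g=\exp(\tfrac12 h)$ there, continues $g$ across $\cup_j\ti C_j^+$ using the evenness (ii), and then propagates $g$ tile by tile, choosing for each generator $\gamma_j$ a single square root $c_g(\gamma_j)$ of $c_f(\gamma_j)$ (consistent by connectedness of a suitable overlap set); since $\Gamma$ is free, these choices extend to a character with no relations to check, and character automorphy comes out of the construction itself. You instead argue globally: local square roots exist everywhere by (ii), and the obstruction to a global one is the $\bbZ/2$ monodromy on $\bbC\cup\{\infty\}\setminus\calL$ with the zeros and poles removed, which you kill on a generating set of loops. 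What the paper's route buys is that it never has to identify the first homology of this infinitely connected domain; what yours buys is a cleaner conceptual statement of why (ii) and (iii) are exactly the right hypotheses.

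Two points in your step 2 need repair before the argument closes. First, the invariance of the argument change along a translated contour $\gamma(D_j)$ must be derived from $f\circ\gamma=c_f(\gamma)f$, i.e.\ from the $\Gamma$-invariance of $d\log f=(f'/f)\,dz$, not from ``$g\circ\gamma$ is a constant multiple of $g$'' --- that presupposes the $g$ you are constructing. Second, the generating contours are not only literal $\Gamma$-translates of the $D_j$: any loop in the domain stays a positive distance from $\calL$, hence lies in the complement of the finitely many residual disks of some level $k$ (together with finitely many zeros and poles), and the homology there is generated by small loops around those zeros/poles plus one contour around each residual disk. Such a residual disk may be a $\Gamma$-image of the disk bounded by $\ti C_j^-$, or equivalently the relevant contour is the image of a curve just \emph{inside} $\ti C_j^+$; its period of $f'/f$ is then not \eqref{5.4} itself but differs from it by $2\pi i$ times the total order of the zeros and poles lying on $\ti C_j^+$ --- nonzero in general, but even by (ii), which is all the mod-$2$ argument needs. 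With these two points made explicit your proof is complete; your steps 1 and 3 are fine as written, and step 3 is essentially the same argument the paper uses to produce the character of $g$.
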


\begin{proof} By \eqref{5.4}, we can define a single-valued function $h(z)$ on $\ti\calF^\intt$ by
\begin{equation} \lb{5.7}
h(z) = \log(f(0)) + \int_0^z \!
\negthickspace\negthickspace\negthickspace\negthickspace\negthickspace\circlearrowleft
\,\, \f{f'(w)}{f(w)}\,  dw
\end{equation}
with any contour in $\ti\calF^\intt$ used. Then
\begin{equation} \lb{5.8}
g(z) = \exp(\tfrac12\, h(z))
\end{equation}
obeys \eqref{5.6a}--\eqref{5.6} and is defined and analytic on
$\ti\calF^\intt$.

Since all poles and zeros on $\ti C_j^+$ are of even order, $g(z)$
can be meromorphically continued to a neighborhood, $N$, of the
closure of $\ti\calF$. Then for each $j$,
$S_j\equiv\{z\in\ti\calF^\intt \mid \gamma_j(z)\in N\}$ is open and
nonempty, and by decreasing $N$, one can suppose each $S_j$ is
connected.

If $z\in S_j$, we have $g(\gamma_j(z))^2 = c_f(\gamma_j) g(z)^2$.
Hence, by continuity and connectedness, there is a single square
root, $c_g(\gamma_j)$, so that
\begin{equation} \lb{5.9}
z\in S_j \;\Rightarrow\; g(\gamma_j(z)) = c_g (\gamma_j) g(z)
\end{equation}
We can use this to extend $g$ to $\cup_j \gamma_j (\ti\calF)$ and
also to $\cup_j \gamma_j^{-1}(\ti\calF)$, and so \eqref{5.9} holds
for all $z$ with $z$ and $\gamma_j(z)$ in the domain of current
definition. In this way, one gets a character automorphic
continuation of $g$ to $\bbC\cup\{\infty\}\setminus\calL$.
\end{proof}

\begin{lemma} \lb{L5.2} Let $\zeta\in\ti C_j^+$ for some $j$. Then
\begin{equation} \lb{5.10}
f(z) = \f{\x(z)-\x(\zeta)}{\x(z)-\x(\zeta_j)}\,\eta(z)\eta(0)^{-1}
\end{equation}
where
\begin{equation} \lb{5.10a}
\eta(z) = \begin{cases}
B(z,\zeta) & \text{if } \zeta\in\bbD \\
\;\quad 1 & \text{if } \zeta\in\partial\bbD \\
B(z,\bar\zeta^{-1})^{-1} &\text{if } \zeta\in\bbC\setminus\ol{\bbD}
\end{cases}
\end{equation}
obeys properties {\rm{(i)--(iv)}} of Lemma~\ref{L5.1}. If
$\zeta\neq\zeta_j$, $f$ has double zeros at
$\{\gamma(\zeta)\}_{\gamma\in\Gamma}$, double poles at
$\{\gamma(\zeta_j)\}_{\gamma\in\Gamma}$, and is otherwise finite and
nonvanishing.
\end{lemma}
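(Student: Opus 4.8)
\smallskip
\noindent\emph{Proof proposal.} The plan is to verify the four hypotheses of Lemma~\ref{L5.1} together with the zero/pole count, using only the structure of $\x$ described in Section~\ref{s2} and the properties of the products $B(\cdot,w)$ in Theorem~\ref{T4.3}. First I would record that $f$ is meromorphic and character automorphic on $\bbC\cup\{\infty\}\setminus\calL$: the factor $(\x(z)-\x(\zeta))/(\x(z)-\x(\zeta_j))$ is automorphic since $\x\circ\gamma=\x$, while $\eta$ is character automorphic by Theorem~\ref{T4.3}(ii) when $\zeta\in\bbD$, is $\equiv1$ when $\zeta\in\partial\bbD$, and is the reciprocal of a character automorphic function when $\zeta\in\bbC\setminus\ol\bbD$ (note $\bar\zeta^{-1}\in\bbD$ in that case). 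Property~(iv) is then immediate: since $\x(0)=\infty$, both $(\x(z)-\x(\zeta))/(\x(z)-\x(\zeta_j))$ and $\eta(z)/\eta(0)$ tend to $1$ as $z\to0$, so $f(0)=1>0$; here $\eta(0)$ is a genuine nonzero number because $0$ lies neither in $\{\gamma(\zeta)\}_{\gamma}$ nor among the poles of $\eta$.

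Next I would compute the zeros and poles of $f$, which delivers both property~(ii) and the last sentence of the lemma. Suppose $\zeta\neq\zeta_j$ (if $\zeta=\zeta_j$ then $f\equiv1$). If $\zeta\notin\partial\bbD$, then $\x(\zeta)$ lies in the \emph{open} gap $(\beta_j,\alpha_{j+1})$, so it is not a branch value and by Section~\ref{s2} every solution of $\x(z)=\x(\zeta)$ is a simple zero; the solution set is $\{\gamma(\zeta)\}_\gamma\cup\{\gamma(\hat\zeta)\}_\gamma$, where $\hat\zeta\in\ti C_j^+$ is the unique point with $\x^\sharp(\hat\zeta)=\tau(\x^\sharp(\zeta))$, and using $\x^\sharp(1/z)=\tau(\x^\sharp(z))$ together with $\gamma_j^{-1}\zeta=\bar\zeta$ for $\zeta\in C_j^+$ (Remark~1 after Theorem~\ref{T4.9}) one identifies $\hat\zeta=1/\bar\zeta$ in both cases $\zeta\in\bbD$ and $\zeta\in\bbC\setminus\ol\bbD$. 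By contrast $\x(\zeta_j)=\beta_j$ is a branch value, so $\x-\x(\zeta_j)$ has \emph{double} zeros at $\{\gamma(\zeta_j)\}_\gamma$. Meanwhile, in both non-boundary cases $\eta$ has simple zeros at $\{\gamma(\zeta)\}_\gamma$ and simple poles at $\{\gamma(1/\bar\zeta)\}_\gamma$ (directly from Theorem~\ref{T4.3} when $\zeta\in\bbD$; by reciprocation when $\zeta\in\bbC\setminus\ol\bbD$). Assembling the three factors: at $\gamma(\zeta)$ one gets a double zero, at $\gamma(\zeta_j)$ a double pole, at $\gamma(1/\bar\zeta)$ the simple zero of $\x-\x(\zeta)$ cancels the simple pole of $\eta$, and at the poles $\{\gamma(0)\}_\gamma$, $\{\gamma(\infty)\}_\gamma$ of $\x$ the numerator and denominator poles cancel; nothing else contributes. (The remaining boundary case $\zeta\in\ti C_j^+\cap\partial\bbD$, $\zeta\neq\zeta_j$, has $\eta\equiv1$ and $\x(\zeta)=\alpha_{j+1}$ a branch value, and is handled the same way with $\hat\zeta=\zeta$.) This is the last sentence of the lemma and shows every zero or pole has order $2$, i.e.\ (ii). Property~(i) then follows: all zeros and poles lie in $\bigcup_\gamma\gamma(\ti C_j^+)$ with orbit representatives $\zeta,\zeta_j\in\ti C_j^+$, and since $\ti\calF$ is a fundamental set with $\ti C_j^+\subset\ti\calF\setminus\ti\calF^\intt$, the orbits $\{\gamma(\zeta)\}$ and $\{\gamma(\zeta_j)\}$ meet $\ti\calF$ only at $\zeta$, resp.\ $\zeta_j$, neither of which lies in $\ti\calF^\intt$.

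The crux is property~(iii), where the naive argument principle is not available: $D_j$ encircles the orthodisk bounded by $\ti C_j^+$, which contains a piece of the limit set $\calL$, and near that piece infinitely many zeros and poles of $f$ accumulate, so $f$ is not even defined throughout the region enclosed by $D_j$. I would argue by continuity in $\zeta$ instead. Fix $D_j$ close enough to $\ti C_j^+$ to avoid $\calL$ and every orthocircle $\gamma(\ti C_i^\pm)$ with $(\gamma,i)\neq(1,j)$; this is possible because these orthocircles are pairwise disjoint, $\ti C_j^+$ is disjoint from all of them, and $\calL$ is a positive distance from $\ti C_j^+$ (the two $\partial\bbD$-endpoints of $\ti C_j^+$ are $\x$-preimages of band endpoints, hence lie in the domain $\bbC\cup\{\infty\}\setminus\calL$ of the extended $\x$, and $\calL$ is closed). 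Now $\tfrac{1}{2\pi i}\int_{D_j}f'/f\,dz$ is simply the winding number of the closed curve $f|_{D_j}$ about the origin, an integer. Let $\zeta$ run along a path in the connected set $\ti C_j^+$ from $\zeta_j$ to the given point and write $f=f_\zeta$; by the previous paragraph the zeros and poles of $f_\zeta$ all lie on the orthocircles $\gamma(\ti C_j^+)$, and the ones on $\ti C_j^+$ itself, namely $\zeta$ and $\zeta_j$, lie strictly inside $D_j$, so none ever meet $D_j$, and $f_\zeta$ depends jointly continuously on $(z,\zeta)\in D_j\times\ti C_j^+$ by the uniform convergence of the Blaschke products in Theorem~\ref{T4.3}. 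Hence the winding number is continuous, and therefore constant, in $\zeta$; at $\zeta=\zeta_j$ we have $f\equiv1$ and it equals $0$, so it is $0$ for the given $\zeta$, which is \eqref{5.4}.

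The main obstacle is precisely property~(iii)/\eqref{5.4}: the contour $D_j$ encloses part of $\calL$ together with infinitely many cancelling zero/pole pairs of $f$, so one cannot invoke the argument principle directly and must instead exploit the pairing of zeros with poles via the homotopy-in-$\zeta$ argument, reducing the computation to the trivial case $f\equiv1$.
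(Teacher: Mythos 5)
Your proposal is correct, and its zero/pole bookkeeping (giving (i), (ii), (iv) and the final sentence) matches the paper's: both identify the second-sheet preimages of $\x(\zeta)$ with the orbit of $\bar\zeta^{-1}$, so that $\eta$ cancels those simple zeros and doubles the ones at $\{\gamma(\zeta)\}_{\gamma\in\Gamma}$, while the branch point at $\beta_j$ produces the double poles at $\{\gamma(\zeta_j)\}_{\gamma\in\Gamma}$. Where you genuinely diverge is property (iii), i.e., \eqref{5.4}. The paper factors $f=(f/\eta)\cdot\eta$ and checks \eqref{5.4} for each piece separately: for $f/\eta$ it uses that this function is real on $\partial\bbD$ together with a conjugate-symmetry/contour-deformation argument, and for $\eta$ it approximates $B(z,\zeta)$ (or its reciprocal) by finite Blaschke subproducts, each of which has equally many simple zeros and poles inside $D_j$, and passes to the limit. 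You instead treat $f$ as a whole and deform the parameter $\zeta$ along $\ti C_j^+$ from $\zeta_j$ (where $f\equiv 1$) to its final position, using that the integral is an integer winding number, that the zeros and poles of $f_\zeta$ stay on $\bigcup_\gamma \gamma(\ti C_j^+)$ and hence off $D_j$, and that $f_\zeta$ varies continuously, so the winding number is constantly $0$. This is a softer, more topological route: it avoids the realness trick and the separate treatment of $\eta$, and it makes transparent why the answer is $0$ (it is $0$ in the degenerate case $\zeta=\zeta_j$). What it costs is the continuity input: the only point you gloss is the joint continuity of $(z,\zeta)\mapsto f_\zeta(z)$ as $\zeta$ crosses $\partial\bbD$, where the formula for $\eta$ switches between $B(z,\zeta)$, $1$, and $B(z,\bar\zeta^{-1})^{-1}$; this needs $B(z,\zeta)\to 1$ as $\zeta\to\partial\bbD\setminus\calL$ locally uniformly in $z$ away from the zeros and poles, which does not follow just from the fixed-$w$ convergence in Theorem~\ref{T4.3} but is exactly the Lipschitz estimate \eqref{4.75} in the sketch of Theorem~\ref{T4.9} (all Blaschke factors equal $1$ when the second argument lies on $\partial\bbD$). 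With that citation supplied, and with the (correct, though only loosely justified) geometric remark that all image circles other than $\ti C_j^+$ itself stay either inside the closed disk bounded by $\ti C_j^+$ or inside the other first-generation complete orthodisks, your argument is complete and is a legitimate alternative to the paper's proof of (iii).
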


\begin{remarks} 1. If $\zeta=\zeta_j$, $f\equiv 1$.

\smallskip
2. When $z\in\{\gamma(0)\}_{\gamma\in\Gamma}$, $\x(z)=\infty$ and
the first factor in \eqref{5.10} is interpreted as $1$. Thus,
because of $\eta(0)^{-1}$, we have $f(0)=1$.
\end{remarks}

\begin{proof} (i) and (iv) are obvious. Moreover, if $\zeta\neq\zeta_j$ then $f(z)/\eta(z)$ has double poles
at $\{\gamma(\zeta_j)\}_{\gamma\in\Gamma}$ (since $\x'(z)=0$,
$\x''(z)\neq 0$ at such points), double zeros at
$\{\gamma(\zeta)\}_{\gamma\in\Gamma}$ if $\zeta$ is the other point
on $\ti C_j^+$ in $\partial\bbD$ (i.e., if
$\x^\sharp(\zeta)=\alpha_j$), and simple zeros at
$\{\gamma(\zeta)\}_{\gamma\in\Gamma}\cup\{\gamma(\bar\zeta^{-1})\}_{\gamma\in\Gamma}$
if $\zeta\notin\partial\bbD$ (since $\x'(z)\neq 0$ at such points
and $\x(\zeta)$ is real). Thus, there are  precisely the claimed
zeros/poles for $f$ since $\eta$ cancels the zeros at
$\{\gamma(\bar{\zeta}^{-1})\}_{\gamma\in\Gamma}$ and doubles the
zeros at $\{\gamma(\zeta)\}_{\gamma\in\Gamma}$. This proves (ii).

To prove (iii), we need only check \eqref{5.4} if $f$ is replaced by
$\eta$ or by $f/\eta$. $f/\eta$ is real on $\partial\bbD$, so if $q$
is the composition of this function and a conformal map of $\bbC$
taking $\bbR$ to $\partial\bbD$, $q$ is real on the set of points in
its domain which lie on $\bbR$. So
\begin{equation} \lb{5.10b}
\f{1}{2\pi i} \int_{\ti D}
\negthickspace\negthickspace\negthickspace\negthickspace\negthickspace\circlearrowleft
\; q(z) \, dz =0
\end{equation}
for any conjugate symmetric curve, and so by contour deformation, for $D_j$ and $(f/\eta)'/(f/\eta)$.

For $\eta$, we note that if $f$ in \eqref{5.4} is replaced by a
finite product $\prod_{\gamma\in G} b(z, \gamma(\zeta))$, the
integral is zero since the finite product is meromorphic inside
$D_j$ with an equal number of (simple) zeros and (simple) poles. By
taking limits, \eqref{5.4} holds for $B(z,\zeta)$, and by
$(1/g)'/(1/g)=-g'/g$ for $B(z,\bar\zeta^{-1})^{-1}$. Thus,
\eqref{5.4} holds for $f$.
\end{proof}

\begin{definition} Let $y\in G_j$ for some $j$ and let $\zeta$ be the unique point in $\ti C_j^+$ with
\begin{equation} \lb{5.11}
\x^\sharp(\zeta)=y
\end{equation}
We define $\Theta(\,\cdot\,;y)$ to be the character automorphic
function
\begin{equation} \lb{5.12}
\Theta(z;y) = \biggl[\biggl( \f{\x(z)-
\x(\zeta)}{\x(z)-\x(\zeta_j)}\biggr)\,
\eta(z)\eta(0)^{-1}\biggr]^{1/2}
\end{equation}
and denote by $\frA(y)\in\Gamma^*$ its character. Moreover, we
define $\frA(\infty)$ to be the character of $B(z)$.
\end{definition}

By the lemma, $\Theta(\,\cdot\,;y)$ is indeed a character
automorphic function on $\bbC\cup\{\infty\}\setminus\calL$ with
simple zeros at $\{\gamma(\zeta)\}_{\gamma\in\Gamma}$ and simple
poles at $\{\gamma(\zeta_j)\}_{\gamma\in\Gamma}$ (and otherwise
nonvanishing and finite). By definition,
\begin{equation} \lb{5.14a}
\Theta(0;y)=1
\end{equation}

We also define
\begin{equation} \lb{5.13a}
\ti\frA \colon \bbG  \to \Gamma^*
\end{equation}
by
\begin{equation} \lb{5.14}
\ti\frA(y_1, \dots, y_\ell) = \frA(y_1) \cdots \frA(y_\ell)
\end{equation}
using the product in $\Gamma^*$. Here
\begin{equation} \lb{5.14b}
\bbG=G_1\times\cdots\times G_\ell.
\end{equation}

\begin{theorem} \lb{T5.3} The map $\ti\frA$ of \eqref{5.13a}--\eqref{5.14} is a real analytic homeomorphism of
$\ell$-dimensional tori.
\end{theorem}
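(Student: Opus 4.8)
The plan is to prove, in order, that $\ti\frA$ is (1) real analytic, (2) injective, and (3) surjective and a homeomorphism; since $\bbG=G_1\times\cdots\times G_\ell$ and $\Gamma^*$ are both compact, connected, real-analytic $\ell$-dimensional tori, these three facts give the theorem. The essential content is step (2), which is Abel's theorem in the present setup; it is driven entirely by the degree bound in Theorem~\ref{T2.2}.

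For step (1): parametrize each circle $G_j$ real analytically. Since $\x^\sharp$ is a local biholomorphism on all of $\bbC\cup\{\infty\}\setminus\calL$ (including, after the unfolding of the branch points discussed in Section~\ref{s2}, at the two points of $\ti C_j^+\cap\partial\bbD$), it restricts to a real-analytic diffeomorphism of $\ti C_j^+$ onto $G_j$, so $\zeta=\zeta(y):=(\x^\sharp)^{-1}(y)\in\ti C_j^+$ depends real analytically on $y$. Next, the product $\prod_{\gamma}b(z,\gamma(\zeta))$ defining $B(z,\zeta)$ converges locally uniformly jointly in $(z,\zeta)$ — this follows from \eqref{4.9b} together with Beardon's theorem \eqref{4.18}, which hold uniformly for $\zeta$ in a neighborhood of $\ti C_j^+$ — so $B(z,\zeta)$ is analytic where defined, and hence the character $c_\zeta(\gamma_k)=B(\gamma_k(z_0),\zeta)/B(z_0,\zeta)$ (for fixed $z_0$) is analytic in $\zeta$. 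By \eqref{5.12} and Lemma~\ref{L5.1}, $\Theta(\,\cdot\,;y)$ is the square root of a product of such $B$-factors times an automorphic (hence characterless) function of $\x$, with branch pinned by $\Theta(0;y)=1>0$; therefore its character $\frA(y)$ is real analytic in $y$, and taking products in $\Gamma^*$ shows $\ti\frA$ is real analytic on $\bbG$.

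For step (2): suppose $\ti\frA(y_1,\dots,y_\ell)=\ti\frA(y_1',\dots,y_\ell')$. Then
\[
F(z)=\frac{\prod_{k=1}^\ell\Theta(z;y_k)}{\prod_{k=1}^\ell\Theta(z;y_k')}
\]
is meromorphic on $\bbC\cup\{\infty\}\setminus\calL$ with trivial character, i.e.\ automorphic, so by the correspondence following \eqref{2.15} it is $\hat F\circ\x^\sharp$ for a unique meromorphic $\hat F$ on $\calS$. Since $\x^\sharp$ is automorphic, the function on $\calS$ induced by $\Theta(\,\cdot\,;y_k)$ has a single simple zero at $y_k$ and a single simple pole at $\beta_k$ (and is $\equiv1$ when $y_k=\beta_k$), the ``unfolding'' of the branch points guaranteeing that the zero is still simple when $y_k\in\{\beta_k,\alpha_{k+1}\}$. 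Hence $\hat F$ has divisor $\sum_k(y_k)-\sum_k(y_k')$ — poles over the $\beta_k$ cancel — with all zeros and poles simple, supported in $\bigcup_jG_j$, and $\hat F$ finite and nonzero at $\infty_\pm$. If $(y_k)\neq(y_k')$, then $\hat F$ is nonconstant with at most $\ell$ poles, i.e.\ $\deg\hat F\le\ell$, so Theorem~\ref{T2.2} forces $\hat F\circ\tau\equiv\hat F$, whence $\hat F=G\circ\pi$ for some rational $G$. Choose $k_0$ with $y_{k_0}\neq y_{k_0}'$; then $\hat F$ (or $1/\hat F$) has a simple zero at some point $y_*\in G_{k_0}$ that is not cancelled by any other index. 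If $y_*$ is a branch point, $G\circ\pi$ vanishes there to even order — contradiction; if $y_*$ is not a branch point, $\pi^{-1}(\pi(y_*))=\{y_*,\tau(y_*)\}$ with $\tau(y_*)\in G_{k_0}$ (as $\tau$ preserves $G_{k_0}$), so $G\circ\pi$ would have a second zero in $G_{k_0}$ that $\hat F$ does not have — again a contradiction. Hence $(y_k)=(y_k')$, and since the $G_k$ are pairwise disjoint, $y_k=y_k'$ for each $k$.

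For step (3): $\ti\frA$ is a continuous injection between $\ell$-dimensional manifolds, hence open by invariance of domain; its image is therefore open, and also compact and so closed, in the connected torus $\Gamma^*$, hence all of $\Gamma^*$. A continuous open bijection is a homeomorphism, and combined with step (1) this proves $\ti\frA$ is a real-analytic homeomorphism of $\ell$-tori. (One can go further and show $d\ti\frA$ is everywhere nonsingular, by writing it as a matrix of periods of normalized differentials of the third kind and invoking the Riemann bilinear relations, so that $\ti\frA$ is in fact a real-analytic diffeomorphism; this is not needed for the stated result.) The main obstacle is step (2): choosing the right automorphic $F$, descending it to $\calS$, tracking multiplicities at the branch points through the unfolding of $\x^\sharp$, and then extracting the contradiction from the degree inequality in Theorem~\ref{T2.2}.
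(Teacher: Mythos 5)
Your proposal is correct and follows essentially the same route as the paper: real analyticity from the explicit construction of $\Theta$ and \eqref{5.15}, injectivity by descending the automorphic quotient to $\calS$ and playing the degree bound of Theorem~\ref{T2.2} against the $\tau$-invariance forced on a function with a single simple zero in one gap (your branch-point/non-branch-point dichotomy is exactly the paper's parenthetical argument). The only variation is the last step, where you get surjectivity and the homeomorphism property from invariance of domain plus compactness rather than the paper's appeal to degree theory for smooth one-one maps; both are standard and equally valid here.
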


\begin{remark} By real analytic functions, we do not mean real-valued but functions of real parameters
which are given locally by convergent series of those parameters---they are, of course, $C^\infty$.
\end{remark}

\begin{proof} By construction, $\Theta(z;y)$, as a map of $\bbD \times \cup_{j=1}^\ell G_j$ to $\bbC$, is
jointly real analytic. Since
\begin{equation} \lb{5.15}
\frA(y)(\gamma) = \f{\Theta(\gamma(0);y)}{\Theta(0;y)}
\end{equation}
$\frA$ and so $\ti\frA$ are real analytic maps.

Suppose $\vy=(y_1,\ldots,y_\ell)$ and $\ve{w}=(w_1,\ldots,w_\ell)$
lie in $\bbG$ and $\ti\frA(\vy)=\ti\frA(\ve{w})$. Then
\begin{equation} \lb{5.16}
f(z) =\prod_{j=1}^\ell \f{\Theta(z;y_j)}{\Theta(z;w_j)}
\end{equation}
is automorphic since the characters cancel. Hence, there is a unique
meromorphic function $F$ on $\calS$ so that
\begin{equation} \lb{5.16a}
f(z) = F(\x^\sharp(z))
\end{equation}

Let $m$ be the number of $j$ with $y_j\neq w_j$. $F$ has exactly $m$
poles and $m$ zeros, all simple (the poles/zeros where $y_j=w_j$
cancel), and so has degree $m\leq\ell$. If $m\neq 0$, $F\circ
\tau\not\equiv F$ since there is a gap with a single simple zero
(and if $F\circ\tau=F$, $F$ has either two zeros or a double zero at
a branch point). Thus, if $m\neq 0$, we get a contradiction to
Theorem~\ref{T2.2}. It follows that $m=0$, that is, $\vy=\ve{w}$ and
$\ti\frA$ is one-one.

Any smooth one-one map between two smooth, orientable compact
manifolds of the same dimension has degree $\pm 1$, and so is also
surjective (see \cite{FoGang,GuiPo,KrWu,Lloyd,Mil,Spi79} for
expositions of degree theory).
\end{proof}

We saw above that Theorem~\ref{T2.2} is the key to the proof of Theorem~\ref{T5.3}. It is also very powerful in
connection with Theorem~\ref{T5.3} as the following theorems show:

\begin{theorem}\lb{T5.4} Let $f$ be a character automorphic function on $\bbC\cup\{\infty\}\setminus\calL$ with no
zeros or poles. Then $f$ is constant.
\end{theorem}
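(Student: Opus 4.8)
The statement to prove is Theorem~\ref{T5.4}: a character automorphic function $f$ on $\bbC\cup\{\infty\}\setminus\calL$ with no zeros or poles must be constant. The natural strategy is to produce from $f$ a genuinely automorphic (character-trivial) analytic function on $\bbC\cup\{\infty\}\setminus\calL$, push it down to an analytic function on the compact surface $\calS$ via the correspondence around \eqref{2.15}, and invoke the remark after Theorem~\ref{T2.2} that any analytic function on $\calS$ is constant. The obstacle is that $f$ itself is only character automorphic, not automorphic, so one first has to kill the character.

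First I would dispose of the character. The character $c_f \in \Gamma^*$ of $f$ is a point in the $\ell$-dimensional torus of characters. By Theorem~\ref{T5.3}, the Abel map $\ti\frA \colon \bbG \to \Gamma^*$ is a homeomorphism of tori, so there is a point $\vy = (y_1,\dots,y_\ell)\in\bbG$ with $\ti\frA(\vy) = c_f^{-1}$; equivalently, $\prod_{j=1}^\ell \frA(y_j)$ is the character $c_f^{-1}$. Then the product
\begin{equation*}
g(z) = f(z)\,\prod_{j=1}^\ell \Theta(z;y_j)
\end{equation*}
has trivial character and so is automorphic on $\bbC\cup\{\infty\}\setminus\calL$. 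By the analogue of \eqref{2.15} for $\x^\sharp$ (the one-one correspondence between meromorphic automorphic functions on $\bbC\cup\{\infty\}\setminus\calL$ and meromorphic functions on $\calS$), there is a meromorphic $G$ on $\calS$ with $g = G\circ\x^\sharp$.

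Next I would locate the zeros and poles of $G$. Since $f$ has none, the only zeros and poles of $g$ come from the $\Theta(\,\cdot\,;y_j)$: by the discussion after \eqref{5.12}, $\Theta(\,\cdot\,;y_j)$ has simple zeros at $\{\gamma(\zeta^{(j)})\}_{\gamma\in\Gamma}$ (where $\x^\sharp(\zeta^{(j)})=y_j$) and simple poles at $\{\gamma(\zeta_j)\}_{\gamma\in\Gamma}$, and nothing else. Hence $G$ on $\calS$ has at most the $\ell$ simple zeros $y_1,\dots,y_\ell\in\bbG$ (one in each gap $G_j$, and only those $j$ with $y_j\neq\beta_j$ actually contribute) and the $\ell$ points $\x^\sharp(\zeta_j)=\beta_j$ as simple poles. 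So $G$ is a meromorphic function on $\calS$ of degree $m\le\ell$. Now I would run exactly the argument from the proof of Theorem~\ref{T5.3}: if $m\neq 0$, then because at least one gap contributes a single simple zero of $G$, we cannot have $G\circ\tau\equiv G$ (that would force either two zeros or a double zero at a branch point in that gap); but then Theorem~\ref{T2.2} says $\deg G \ge \ell+1 > \ell \ge m$, a contradiction. Therefore $m=0$, meaning every $y_j=\beta_j$ and every $\Theta(\,\cdot\,;y_j)\equiv 1$ (Remark~1 after Lemma~\ref{L5.2}), so $g=f$ and $G$ is a constant, whence $f$ is constant.

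**Where the difficulty lies.** The mechanical content is light; the two things to get right are (i) correctly reading off the divisor of $g$ from the product of $\Theta$'s — in particular checking that no extra zeros or poles sneak in at $\{\gamma(0)\}$, at $\infty_\pm$, or at the branch points, which follows from \eqref{5.14a} and the zero/pole description after \eqref{5.12} — and (ii) the parity/branch-point argument ruling out $G\circ\tau\equiv G$, which is identical to the one already used for Theorem~\ref{T5.3} and depends on the fact that $\bbG=G_1\times\cdots\times G_\ell$ places at most one simple zero of $G$ in each gap. Once surjectivity of $\ti\frA$ (Theorem~\ref{T5.3}) and the degree bound in Theorem~\ref{T2.2} are in hand, there is no real obstacle; the proof is essentially a corollary of Theorems~\ref{T2.2} and~\ref{T5.3}.
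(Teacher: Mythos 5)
Your proof is correct and follows essentially the same route as the paper: use the surjectivity of $\ti\frA$ from Theorem~\ref{T5.3} to absorb the character into a product of $\Theta$'s, push the resulting automorphic function down to $\calS$, and rule out $m\neq 0$ via the degree bound of Theorem~\ref{T2.2}. The only cosmetic difference is that you multiply by $\Theta$'s carrying the character $c_f^{-1}$ where the paper divides by $\Theta$'s carrying $c_f$, which is the same argument.
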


\begin{proof} Let $c_f\in\Gamma^*$ be the character of $f$. By Theorem~\ref{T5.3},
find $\vy\in\bbG$ with $\ti\frA(\vy)=c_f$. Let
\begin{equation} \lb{5.17}
h(z) = \f{f(z)}{\prod_{j=1}^\ell \Theta(z;y_j)}
\end{equation}
Then $h$ is automorphic, so there is $H$ meromorphic on $\calS$ with
\begin{equation} \lb{5.18}
h(z)=H(\x^\sharp(z))
\end{equation}
$H$ has degree $m$ where $m= \#\{j\mid y_j\neq \beta_j\}$. By
Theorem~\ref{T2.2}, $m=0$, that is, $\vy=(\beta_1, \dots,
\beta_\ell)$ so $\ti\frA(\vy)=1$ and $f$ is automorphic. But then
\begin{equation} \lb{5.19}
f(z)=F(\x^\sharp(z))
\end{equation}
with $F$ analytic on $\calS$, and therefore $f$ is constant.
\end{proof}

\begin{corollary}\lb{C5.5} Let $\zeta\in\ti C_j^+$ and suppose $h$ is a character automorphic meromorphic function with
zeros only at $\{\gamma(\zeta)\}_{\gamma\in\Gamma}$ and poles only
at $\{\gamma(\zeta_j)\}_{\gamma\in\Gamma}$, all simple. If $h(0)=1$,
then
\begin{equation} \lb{5.20}
h(z) = \Theta(z;\x^\sharp(\zeta))
\end{equation}
Moreover,
\begin{equation} \lb{5.24a}
\Theta(\bar z;y) = \ol{\Theta(z;y)}
\end{equation}
and, in particular, $\Theta(\dott;y)$ is real and strictly positive on $\bbR$.
\end{corollary}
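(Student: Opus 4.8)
The plan is to obtain both assertions from the uniqueness built into Theorem~\ref{T5.4}, in the same spirit as the proof of Theorem~\ref{T5.3}. \emph{Step 1: $h=\Theta(\dott;\x^\sharp(\zeta))$.} Put $y=\x^\sharp(\zeta)$. By the Definition and Lemma~\ref{L5.2}, $\Theta(\dott;y)$ is character automorphic with simple zeros exactly on $\{\gamma(\zeta)\}_{\gamma\in\Gamma}$, simple poles exactly on $\{\gamma(\zeta_j)\}_{\gamma\in\Gamma}$, and $\Theta(0;y)=1$. I would then form
\[
F(z)=\frac{h(z)}{\Theta(z;y)} .
\]
A ratio of character automorphic functions is character automorphic, and since $h$ and $\Theta(\dott;y)$ have the same (simple) zeros and poles, $F$ has no zeros or poles in $\bbC\cup\{\infty\}\setminus\calL$. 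By Theorem~\ref{T5.4}, $F$ is constant, and $F(0)=h(0)/\Theta(0;y)=1$; hence $h=\Theta(\dott;y)$. (If $\zeta=\zeta_j$ the hypotheses just say $h$ is zero/pole free with $h(0)=1$, and the same argument gives $h\equiv 1=\Theta(\dott;\beta_j)$.)

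\emph{Step 2: $\Theta(\bar z;y)=\ol{\Theta(z;y)}$.} Set $g(z)=\ol{\Theta(\bar z;y)}$; the idea is to verify that $g$ satisfies the hypotheses of Step~1 with the \emph{same} data, so that $g=\Theta(\dott;y)$. By \eqref{2.4}, complex conjugation $c(z)=\bar z$ intertwines $\x$ with complex conjugation on $\calS_+$, so conjugation normalizes $\Gamma$: for $\gamma\in\Gamma$, $c\gamma c$ is a holomorphic deck transformation of $\x$, hence $c\gamma c\in\Gamma$, and because the elements of $\Gamma$ are M\"obius maps the identity $\ol{\gamma(z)}=(c\gamma c)(\bar z)$ holds for all $z$. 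Consequently
\[
g(\gamma(z))=\ol{\Theta\bigl((c\gamma c)(\bar z);y\bigr)}=\ol{\frA(y)(c\gamma c)}\;\ol{\Theta(\bar z;y)}=\ol{\frA(y)(c\gamma c)}\,g(z),
\]
so $g$ is character automorphic. Its zeros and poles, all simple, lie on $\{\ol{\gamma(\zeta)}\}_{\gamma\in\Gamma}=\{\gamma(\bar\zeta)\}_{\gamma\in\Gamma}$ and $\{\gamma(\bar\zeta_j)\}_{\gamma\in\Gamma}$; to identify these with the orbits of $\zeta$ and $\zeta_j$ one must check $\bar\zeta\sim_\Gamma\zeta$ and $\bar\zeta_j\sim_\Gamma\zeta_j$. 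Since $\x(\zeta),\x(\zeta_j)\in[\beta_j,\alpha_{j+1}]\subset\bbR$ and $\x(\bar w)=\ol{\x(w)}$, we get $\x(\bar\zeta)=\x(\zeta)$ and $\x(\bar\zeta_j)=\x(\zeta_j)$; splitting into the cases $\zeta\in C_j^+\subset\bbD$, $\zeta\in\partial\bbD$, $\zeta\in(C_j^-)^{-1}\subset\bbC\cup\{\infty\}\setminus\ol\bbD$ (each of these sets being $\Gamma$-invariant), and using $\x^\sharp|_{\bbD}=\x$, \eqref{2.12}, and that $\tau$ exchanges the two sheets away from branch points, one finds $\x^\sharp(\bar\zeta)=\x^\sharp(\zeta)=y$ and $\x^\sharp(\bar\zeta_j)=\beta_j=\x^\sharp(\zeta_j)$, whence $\bar\zeta\sim_\Gamma\zeta$ and $\bar\zeta_j\sim_\Gamma\zeta_j$ by \eqref{1.21} (applied to $\x^\sharp$). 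Finally $g(0)=\ol{\Theta(0;y)}=1$, so Step~1 yields $g=\Theta(\dott;y)$, i.e.\ \eqref{5.24a}. (Alternatively one can compute directly from \eqref{5.12}, using that $\x(\zeta),\x(\zeta_j)$ are real, the relation $\ol{B(\bar z,\bar w)}=B(z,w)$, and the fact that $B(\dott,w)$ depends only on the $\Gamma$-orbit of $w$.)

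\emph{Step 3: reality and positivity on $\bbR$.} For $z\in\bbR$, \eqref{5.24a} gives $\Theta(z;y)=\ol{\Theta(z;y)}$, so $\Theta(\dott;y)$ is real-valued on $\bbR$. It has no real zeros or poles: such a point lies on the $\Gamma$-orbit of $\zeta$ or of $\zeta_j$, and by the trichotomy of Step~2, if it were real it would lie in $(-1,1)$, in $\{\pm1\}$, or in $\bbR\setminus[-1,1]$, and in each case its $\x$-image would lie in $(\bbR\cup\{\infty\})\setminus[\alpha_1,\beta_{\ell+1}]$, contradicting that this image equals $\x(\zeta)$ or $\x(\zeta_j)$, which lies in $[\beta_j,\alpha_{j+1}]\subset[\alpha_1,\beta_{\ell+1}]$ for $1\le j\le\ell$. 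Since also $\calL\subset\partial\bbD$ misses $\pm1$ (the two points at which $\x$ extends with values $\alpha_1,\beta_{\ell+1}$), $\Theta(\dott;y)$ is real, continuous and nonvanishing on the connected set $\bbR\cup\{\infty\}$, hence of one sign, and $\Theta(0;y)=1>0$ forces that sign to be positive.

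The one genuinely delicate step is the orbit identification in Step~2 — that conjugation normalizes $\Gamma$ and, above all, that $\bar\zeta$ (resp.\ $\bar\zeta_j$) lies in the same $\Gamma$-orbit as $\zeta$ (resp.\ $\zeta_j$), which forces the split on the location of $\zeta$ relative to $\ol\bbD$ and a look at which sheet of $\calS$ each point projects onto; everything else is a routine appeal to Theorem~\ref{T5.4} and \eqref{2.4}.
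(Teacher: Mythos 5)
Your proof is correct and takes essentially the same route as the paper, whose entire argument is to apply Theorem~\ref{T5.4} to $h(z)/\Theta(z;\x^\sharp(\zeta))$ (and, implicitly, to the conjugated function for \eqref{5.24a}); you simply supply the orbit identifications $\bar\zeta\sim_\Gamma\zeta$, $\bar\zeta_j\sim_\Gamma\zeta_j$ in detail. One small imprecision in Step~3: at $p=\pm1$ the image $\x(p)$ is $\alpha_1$ or $\beta_{\ell+1}$, which does lie in $[\alpha_1,\beta_{\ell+1}]$, but the contradiction survives because these endpoints belong to no closed gap $[\beta_j,\alpha_{j+1}]$, $1\le j\le\ell$.
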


\begin{remark} Thus, $\Theta$ and so $\frA$ are unique.
\end{remark}

\begin{proof} Apply Theorem~\ref{T5.4} to $h(z)/\Theta(z;\x^\sharp(\zeta))$.
\end{proof}

\begin{definition} By a \emph{divisor}, we mean a finite subset $\Delta\subset \cup_{j=1}^\ell G_j$ and the assignment of
a nonzero integer $n_x$ to each $x\in\Delta$ plus an assignment of
an integer $n_+$ to $\infty_+$ and
\begin{equation} \lb{5.21}
n_- = -n_+
\end{equation}
to $\infty_-$. We write the divisor formally as
\begin{equation} \lb{5.22}
n_+ \delta_{\infty_+} + n_- \delta_{\infty_-} + \sum_{x\in\Delta}
n_x \delta_x
\end{equation}
\end{definition}

\begin{definition} By a \emph{special meromorphic function}, we mean a meromorphic function on $\calS$ so that
\begin{SL}
\item[(i)] All zeros and poles lie in $[\cup_{j=1}^\ell G_j]\cup\{\infty_\pm\}$.
\item[(ii)] If $n_\pm$ are the order of the zeros and poles at $\infty_\pm$ ($n_\pm >0$ means a zero of order $n_\pm$,
$n_\pm<0$ means a pole of order $-n_\pm$), then \eqref{5.21} holds.
\end{SL}
\end{definition}

\begin{definition} The divisor, $\delta(F)$, of a special meromorphic function $F$ is given by \eqref{5.22} where
$\Delta$ is the set of zeros and poles of $F$, and $n_x$ is the
order of the zero/pole at $x$.
\end{definition}

Notice that we have chosen a base point $\zeta_j$ that depends on
which $G_j$ the point $y$ lies in. For our later applications, where
for each $j$,
\begin{equation} \lb{5.22a}
\sum_{x\in G_j}  n_x =0
\end{equation}
that is very convenient. But when \eqref{5.22a} does not hold, we
will need a factor to move the base point to a fixed point, say
$\zeta_1$. So we define
\begin{equation} \lb{5.22b}
\Psi_j(z) = \sqrt{\f{\x(z)-\x(\zeta_j)}{\x(z)-\x(\zeta_1)}}
\end{equation}
where we can take special roots by Lemma~\ref{L5.1}, verifying
\eqref{5.4} as we did for $(\x(z)-\x(\zeta)) /(\x(z)-\x(\zeta_j))$.

We also need to define $\frA_j$ to be the character of the character
automorphic function $\Psi_j$. For $x\in G_j$, we let
\begin{equation} \lb{5.22c}
\frA^\sharp(x) =\frA_j \frA(x)
\end{equation}

\begin{theorem}[Abel's theorem for special meromorphic functions] \lb{T5.6} A divisor is the divisor of a
special meromorphic function $F$ if and only if
\begin{alignat}{2}
&\text{\rm{(a)}} \qquad && \sum_{x\in\Delta} n_x =0 \lb{5.23} \\
&\text{\rm{(b)}} \qquad && \frA(\infty)^{n_+} \prod_{x\in\Delta}
\frA^\sharp(x)^{n_x} =1 \lb{5.24}
\end{alignat}
the identity element of $\Gamma^*$.
\end{theorem}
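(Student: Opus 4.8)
The plan is to associate to every divisor an explicit character automorphic ``model function'' on $\bbC\cup\{\infty\}\setminus\calL$ and then play it against two rigidity facts: the degree count on $\calS$ (Theorem~\ref{T2.2}) and the statement that a character automorphic function on $\bbC\cup\{\infty\}\setminus\calL$ with no zeros or poles is constant (Theorem~\ref{T5.4}). Concretely, for $x\in G_j$ put $\Theta^\sharp(z;x)=\Psi_j(z)\,\Theta(z;x)$; this is character automorphic with character $\frA^\sharp(x)=\frA_j\frA(x)$ by \eqref{5.22c}. Combining the zero/pole data of $\Psi_j$ (a simple zero on $\{\gamma(\zeta_j)\}_{\gamma\in\Gamma}$, a simple pole on $\{\gamma(\zeta_1)\}_{\gamma\in\Gamma}$) with that of $\Theta(\cdot;x)$ (a simple zero on $(\x^\sharp)^{-1}(x)$, a simple pole on $\{\gamma(\zeta_j)\}_{\gamma\in\Gamma}$) shows that $\Theta^\sharp(\cdot;x)$ has divisor ``simple zero on $(\x^\sharp)^{-1}(x)$ minus simple pole on $\{\gamma(\zeta_1)\}_{\gamma\in\Gamma}$'', with the understanding that when $x=\beta_1$ these two loci coincide and $\Theta^\sharp(\cdot;\beta_1)\equiv 1$, while for $x=\beta_j$ ($j\ge 2$) one has $\Theta^\sharp(\cdot;\beta_j)=\Psi_j$ (recall $(\x^\sharp)^{-1}(\beta_j)=\{\gamma(\zeta_j)\}_{\gamma\in\Gamma}$).

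Given a divisor $n_+\delta_{\infty_+}+n_-\delta_{\infty_-}+\sum_{x\in\Delta}n_x\delta_x$, set
\[
g(z)=B(z)^{n_+}\prod_{x\in\Delta}\Theta^\sharp(z;x)^{n_x}.
\]
By multiplicativity of characters and the fact that $\frA(\infty)$ is the character of $B$, the character of $g$ is $c_g=\frA(\infty)^{n_+}\prod_{x\in\Delta}\frA^\sharp(x)^{n_x}$. A direct tally of zeros and poles, noting that $B^{n_+}$ accounts for order $n_+$ on $(\x^\sharp)^{-1}(\infty_+)=\{\gamma(0)\}_{\gamma\in\Gamma}$ and order $n_-=-n_+$ on $(\x^\sharp)^{-1}(\infty_-)$ and nothing else, shows that the order of $g$ along $\{\gamma(\zeta_1)\}_{\gamma\in\Gamma}$ equals $-\sum_{x\in\Delta}n_x$. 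Hence, once \eqref{5.23} holds, $g$ is a meromorphic function on $\bbC\cup\{\infty\}\setminus\calL$ whose divisor is exactly the $\x^\sharp$-pullback of the given one.

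For necessity, suppose $F$ is a special meromorphic function with the given divisor; we may take $F$ nonconstant, the constant case being trivial. By Theorem~\ref{T2.2}, $F$ has equally many zeros and poles counted with multiplicity, i.e.\ $n_++n_-+\sum_{x\in\Delta}n_x=0$, which is \eqref{5.23} since $n_-=-n_+$. With \eqref{5.23} in hand, $g$ has the same divisor as $f:=F\circ\x^\sharp$, and $f$ is automorphic. Therefore $f/g$ is character automorphic with no zeros or poles, hence a nonzero constant $c$ by Theorem~\ref{T5.4}; comparing $f(\gamma(z))/g(\gamma(z))=f(z)/(c_g(\gamma)g(z))$ with $f(z)/g(z)=c$ forces $c_g(\gamma)=1$ for all $\gamma$, i.e.\ $c_g=1$, which is \eqref{5.24}. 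For sufficiency, assume the divisor obeys \eqref{5.23} and \eqref{5.24}. Then $g$ has exactly the prescribed divisor and, by \eqref{5.24}, $c_g=1$, so $g$ is automorphic; by the analog of \eqref{2.15} for $\x^\sharp$ there is a meromorphic $F$ on $\calS$ with $g=F\circ\x^\sharp$. Since $\x^\sharp$ is a covering map, the zeros and poles of $F$ are the $\x^\sharp$-images of those of $g$, all lying in $[\cup_{j=1}^\ell G_j]\cup\{\infty_\pm\}$ and realizing the divisor \eqref{5.22}; thus $F$ is special with $\delta(F)$ the given divisor.

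The step I expect to be the main obstacle is the zero/pole bookkeeping for $g$: pinning down the normalizations of the square roots in $\Psi_j$ and $\Theta(\cdot;x)$ via Lemma~\ref{L5.1}, checking that $B^{n_+}$ contributes precisely $n_+\delta_{\infty_+}+n_-\delta_{\infty_-}$ with no extraneous zeros or poles, and handling the boundary cases $x=\beta_j$ (especially $x=\beta_1$, where the ``zero'' and ``pole'' loci of $\Theta^\sharp$ collapse). Everything after that is a short application of Theorems~\ref{T2.2} and \ref{T5.4}.
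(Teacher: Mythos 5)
Your proof is correct and follows essentially the same route as the paper: your function $g$ is exactly the right-hand side of \eqref{5.25}, and both directions run the same way---compare $F\circ\x^\sharp$ with $g$, invoke Theorem~\ref{T5.4} to see the ratio is constant (hence the characters agree), and conversely use triviality of the character to descend $g$ to a meromorphic $F$ on $\calS$. The only cosmetic difference is that you make the $\zeta_1$/$\zeta_j$ zero--pole bookkeeping and the degree argument for \eqref{5.23} explicit, which the paper compresses into a remark.
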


\begin{remarks} 1. The proof provides an explicit formula for $F$, namely,
\begin{equation} \lb{5.25}
F(\x^\sharp(z)) = B(z)^{n_+} \prod_{j=1}^\ell \prod_{x\in\Delta\cap
G_j} \Theta(z;x)^{n_x} \Psi_j (z)^{n_x}
\end{equation}

\smallskip
2. Notice that if \eqref{5.22a} holds, we can drop the $\Psi_j$
factors from \eqref{5.25} and change $\frA^\sharp$ to $\frA$ in
\eqref{5.24}.
\end{remarks}

\begin{proof} \eqref{5.23} is an expression of the constancy of degree, that is, that the number of zeros
of $F$ is equal, counting multiplicities, to the number of poles. So
really we need to prove, assuming \eqref{5.23}, that a divisor is a
$\delta(F)$ if and only if \eqref{5.24} holds.

If the divisor obeys \eqref{5.24}, then the right side of
\eqref{5.25} is automorphic, so \eqref{5.25} holds for some
meromorphic $F$ with the proper zeros and poles. Hence, the divisor
is a $\delta(F)$. Because of the $\Psi_j$ factors, the poles of
$\Theta(z;x)$ at $\zeta_j$ are all moved to $\zeta_1$ and the poles
at $\zeta_1$ cancel each other by \eqref{5.23}.

On the other hand, if $F$ is meromorphic and $\delta(F)$ is its
divisor, then letting $f(z)$ be the right side of \eqref{5.25},
\begin{equation} \lb{5.26}
\f{F(\x^\sharp(z))}{f(z)}\equiv h(z)
\end{equation}
is character automorphic with no zeros and poles (again, the poles
and zeros at $\zeta_j$ cancel because of the $\Psi_j$'s and at
$\zeta_1$ by \eqref{5.23}). By Theorem~\ref{T5.4}, $h$ is constant,
so automorphic. Thus, since $F(\x^\sharp(z))$ is automorphic, so is
$f$, which implies \eqref{5.24}.
\end{proof}

\section{The Isospectral Torus} \lb{s6}

Once one has the Abel map and Abel's theorem, the construction of
the isospectral torus along the lines pioneered for KdV
\cite{DubMatNov,McvM1} is straightforward (see \cite[Ch.~5]{Rice}
for an exposition of the original papers \cite{FlMcL,Krich1,vMoer})
but in the covering map guise has a more explicit feel. For
additional discussions of the isospectral torus, see
\cite{Batch,Bulla,GesHol,Geszt,Teschl}.

\begin{definition} A \emph{minimal Herglotz function} for $\fre$ is a meromorphic function $m$ on $\calS$ with
degree precisely $\ell+1$ and which obeys
\begin{SL}
\item[(i)]
\begin{equation} \lb{6.1}
z\in\calS_+ \cap\bbC_+ \;\Rightarrow\; \Ima m(z) >0
\end{equation}
\item[(ii)] Near $\infty_+$,
\begin{equation} \lb{6.2}
m(z) = -\f{1}{z} + O(z^{-2})
\end{equation}
\item[(iii)] $m(z)$ has a pole at $\infty_-$.
\end{SL}
\end{definition}

In the usual way (see \cite{Rice}), $m\restriction\calS_+\cap\bbC_+$
determines a probability measure, $d\mu$, with
\begin{equation} \lb{6.3}
m(z) = \int \f{d\mu(x)}{x-z}
\end{equation}
for $z\in\calS_+\setminus\bbR$. Moreover, the continuity properties
of $m$ as one approaches $\fre$ (and the fact that we will see that
all poles of $m$ are simple) implies that
\begin{equation} \lb{6.4}
d\mu(x)=w(x)dx+d\mu_\s(x)
\end{equation}
where $w$ is real analytic on $\fre^\intt$ and nonvanishing there,
and $d\mu_\s$ is a pure point measure with pure points only in the
open gaps $\cup_{j=1}^\ell (\beta_j,\alpha_{j+1})$ and at most one
pure point per gap.

Condition (iii) may seem ad hoc. We mention now that one can show
(\cite[Ex.~5.13.4]{Rice}) that if (iii) is dropped, the
once-stripped $m$-function, $m_1$, (i.e., $m(z)=\calM(a_1, b_1,
m_1(z))$ in terms of \eqref{1.23b}) obeys condition (iii). Thus, the
extra possibilities allowed if (iii) is dropped result from taking
the Jacobi matrix of a minimal Herglotz function as we have defined
it and extending by one row and column, with the ``wrong'' values of
$a_0$ or $b_0$.

The main elements of the theory are:
\begin{SL}
\item[(i)] The minimal Herglotz functions are in one-one correspondence with $\bbG$, and so form an $\ell$-dimensional
torus, $\calT_\fre$.
\item[(ii)] The correspondence is that the coordinates of $(y_1, \dots, y_\ell)\in\bbG$ are the positions of $\ell$ of the poles of
$m$---the last pole is at $\infty_-$. The zeros are then determined
via the Abel map.
\item[(iii)] The Abel map ``linearizes'' coefficient stripping (i.e., the map \eqref{1.23b}) since the zeros of
$m$ are the poles of the once-stripped $m$-function, $m_1$.
Explicitly,
\begin{equation} \lb{6.5}
\ti\frA \colon\calT_\fre \to\Gamma^*
\end{equation}
and coefficient stripping corresponds to multiplying by the inverse
of the character of $B$.
\item[(iv)] The linearization shows that
the corresponding Jacobi parameters, $\{a_n,b_n\}_{n=1}^\infty$, are
almost periodic sequences with almost periods given by the harmonic
measures $\{\rho_\fre([\alpha_1,\beta_j])\}_{j=1}^\ell$. In
particular, one has periodicity if and only if these numbers are all
rational.
\item[(v)] The construction provides explicit formulae for $m$ and, thus, $a_1,b_1$ (and so, via the Abel map,
$a_n,b_n$) in terms of theta functions and the logarithmic
capacity of $\fre$.
\item[(vi)] Uniformly on $\calT_\fre$, there are bounds on the weight $w$ in \eqref{6.4} of the form
\begin{equation} \lb{6.5a}
C\sqrt{|R(x)|} \leq w(x) \leq D \sqrt{|R(x)|^{-1}}
\end{equation}
where $0<C, D<\infty$.
\end{SL}

\smallskip
We begin by recalling what one can get without using the covering or
Abel maps.

\begin{theorem}\lb{T6.1}
\begin{SL}
\item[{\rm{(i)}}] Every minimal Herglotz function has exactly one simple pole in each gap, one at $\infty_-$
and no others.
\item[{\rm{(ii)}}] For every choice $(y_1, \dots, y_\ell)\in\bbG$, there is exactly one minimal Herglotz function
with poles exactly at $y_1, \dots, y_\ell$ {\rm{(}}and $\infty_-${\rm{)}}.
\item[{\rm{(iii)}}] For every minimal Herglotz function, the once-stripped Herglotz function is also a minimal
Herglotz function.
\item[{\rm{(iv)}}] Every minimal Herglotz function has one zero in each gap, one at $\infty_+$ and no others.
\end{SL}
\end{theorem}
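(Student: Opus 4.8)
The plan is to prove the four parts in the order (i), (iv), (iii), (ii), using two structural facts about a minimal Herglotz function $m$ throughout. First, since $m$ has a simple zero at $\infty_+$ but a pole at $\infty_-=\tau(\infty_+)$, it is not $\tau$-invariant, so $\deg m\geq\ell+1$ by Theorem~\ref{T2.2} and hence $\deg m=\ell+1$. Second, condition~(i) of the definition gives the Herglotz representation $m(z)=\int d\mu(x)/(x-z)$ on $\calS_+$ with $\mu\geq0$, normalized to $\mu(\bbR)=1$ by the expansion at $\infty_+$; and the antiholomorphic involution $\sigma(w,z)=(\bar w,\bar z)$ of $\calS$, whose fixed-point set is the union of the $\ell+1$ disjoint ``real ovals'' — the gap circles $G_1,\dots,G_\ell$ and the circle through $\infty_\pm$ — satisfies $m(\sigma(\xi))=\overline{m(\xi)}$ (true on $\calS_+$, hence on all of $\calS$ by the identity theorem), so $m$ is real-valued on each real oval; moreover $m-m\circ\tau$ is meromorphic and not identically $0$, so the two sheet-values of $m$ over each band interior are distinct.

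For (i) and (iv): since $m=\int d\mu/(x-z)$ is analytic on $\calS_+\setminus\bbR$ and takes non-real boundary values from either side of every band interior, all $\ell+1$ poles of $m$ — and, $-1/m$ being Herglotz as well, all $\ell+1$ zeros — lie on the real ovals, with one pole at $\infty_-$ and one zero at $\infty_+$. I would then show that $m$ restricted to a gap circle $G_j$ is, up to orientation, a homeomorphism onto $\bbR\cup\{\infty\}$, hence carries exactly one simple pole and one simple zero. On the $\calS_+$-arc of $G_j$, $m$ is Herglotz, hence strictly increasing; the square-root local behaviour of $m$ at the band-edge branch points $\beta_j,\alpha_{j+1}$ — with the sign of the square-root coefficient fixed by Herglotz monotonicity — forces $m$ strictly decreasing on the $\calS_-$-arc near each endpoint; and an interior critical point of $m$ on the $\calS_-$-arc is excluded because the extra zeros or poles it would create cannot be accommodated within $\deg m=\ell+1$. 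Summing the $\ell$ gap poles and the pole at $\infty_-$ exhausts the degree, so there are no others; in particular $\mu$ has no point mass outside the open gaps and all poles and zeros are simple. Part (iv) falls out of the same homeomorphism, or alternatively by applying the argument of (i) to $-1/m$, whose poles are the zeros of $m$.

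For (iii): put $b_1=\int x\,d\mu$, $a_1^2=\int x^2\,d\mu-b_1^2>0$, and define $m_1$ by $m=\calM(a_1,b_1,m_1)$ as in \eqref{1.23b}, i.e.\ $m_1(z)=a_1^{-2}\bigl(b_1-z-1/m(z)\bigr)$. Then $m_1$ is meromorphic on $\calS$; from $m(z)=-z^{-1}-b_1z^{-2}+\cdots$ near $\infty_+$ the singularities of $b_1-z$ and $-1/m$ cancel, leaving $m_1(z)=-z^{-1}+O(z^{-2})$ there, while at $\infty_-$, where $m$ has its simple pole and $-1/m$ a simple zero, $m_1$ inherits the simple pole of $b_1-z$. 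The finite poles of $m_1$ are exactly the $\ell$ finite zeros of $m$ (one per gap, by (iv)), so $\deg m_1=\ell+1$. Finally $m_1$ is Herglotz on $\calS_+\cap\bbC_+$ because it is the $m$-function of the once-stripped Jacobi matrix, whose essential spectrum is again $\fre$; hence $m_1$ is a minimal Herglotz function.

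For (ii): uniqueness follows from (i), since if $m,m'$ are minimal Herglotz functions with poles exactly at $y_1,\dots,y_\ell$ and $\infty_-$, both lie in the Riemann--Roch space of $D=\sum_j\delta_{y_j}+\delta_{\infty_-}$; as the degree-$\ell$ divisor $\sum_j\delta_{y_j}$ (one point on each gap oval) is non-special — cf.\ the injectivity part of Theorem~\ref{T5.3} — this space has dimension $2$, so $m'=a+bm$, and the normalization at $\infty_+$ forces $a=0$, $b=1$. Existence I would get by the classical explicit construction (\cite{FlMcL,Krich1,vMoer}, cf.\ \cite[Ch.~5]{Rice}): the $\tau$-symmetric functions $T=m+m\circ\tau$ and $N=m\,m\circ\tau$ are rational in $z$ with real coefficients, with poles only at $\infty$ and the $\pi(y_j)$, and $T^2-4N=cR(z)/\prod_j(z-\pi(y_j))^2$ with $c>0$; solving the quadratic,
\[
m(z)=\tfrac12\left(T(z)+\frac{\varepsilon\,\sqrt{R(z)}}{\prod_{j=1}^\ell(z-\pi(y_j))}\right),
\]
and the requirements that $m$ be holomorphic at the other-sheet copies of $y_1,\dots,y_\ell$ and have the correct two-term expansion at $\infty_+$ determine the numerator of $T$. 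The step I expect to be the main obstacle is the remaining positivity verification $\Ima m>0$ on $\calS_+\cap\bbC_+$ — equivalently, in (i)/(iv), the ruling out of a gap with no pole. In the explicit picture this reduces to checking that $\Ima m(x+i0)$ has a fixed sign on $\fre^\intt$, which works precisely because exactly one $\pi(y_j)$ lies in each gap, so that $\prod_j(x-\pi(y_j))$ changes sign from band to band in step with $\sqrt{R(x+i0)}$ on $\calS_+$; in the abstract picture it is the global bookkeeping of zeros and poles against $\deg m=\ell+1$ that must be handled with care.
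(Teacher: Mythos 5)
Your plan for (i) and (iv) rests on the claim that $m$ maps each gap circle $G_j$ homeomorphically onto $\bbR\cup\{\infty\}$, and that claim is not established. The Herglotz representation gives monotonicity only on the $\calS_+$ arc of $G_j$; on the $\calS_-$ arc you have nothing, and your proposed exclusion of interior critical points there does not work as stated: a critical point of $m$ with real critical value creates extra solutions of $m=c$ for nearby real $c$, not extra zeros or poles of $m$ itself, so the bare equality $\deg m=\ell+1$ is not contradicted; and since the $G_j$ are homologically nontrivial, you also cannot count poles by the argument principle along them. For the same reason, your opening assertion that all $\ell+1$ poles (and zeros) lie on the real ovals is unjustified at that stage: the representation $m=\int d\mu/(x-z)$ and your symmetry $m\circ\sigma=\overline{m}$ exclude poles on $\calS_+\setminus\bbR$ and on the open bands, but say nothing about $\calS_-$ away from the real points; that exclusion only follows from the degree count once one already knows there is at least one pole in every gap, which is precisely the missing step. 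The paper gets this step algebraically: any degree-$(\ell+1)$ function with $m\circ\tau\not\equiv m$ has the form \eqref{6.6}, reality on the gaps (both sheets) makes $p,a$ real, and then \eqref{6.9} together with the alternation of the sign of $\Ima\sqrt{R(x+i0)}$ from band to band and $\Ima m(x+i0)\geq 0$ forces $a$ to have an odd number of zeros in every gap; with the pole condition at $\infty_-$ this pins $\deg a=\ell$, one zero per gap, each producing a pole of $m$ on one of the two sheets, and the degree count finishes (i). Part (iv) is then obtained from (iii) and (i) applied to $m_1$ via \eqref{6.14}, with no separate localization of the zeros needed. Your proposal contains no substitute for this sign-alternation argument.

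The second gap is one you flag yourself: in the existence half of (ii) you stop at ``the remaining positivity verification $\Ima m>0$.'' That verification is where the paper does real work: after the interpolation determines $p$, one must choose the sign of the constant $c$ in $a(z)=c\prod_{j}(z-\pi(y_j))$ so that $\Ima m(x+i0)>0$ on the bands of $\calS_+$, check that the residues at the poles lying in $\calS_+$ are positive as in \eqref{6.13}, and then apply the maximum principle to the harmonic function $\Ima m$ on $\calS_+\cap\bbC_+$ to obtain \eqref{6.1}, finally rescaling $c$ to achieve \eqref{6.2}. Without this, (ii) is not proved. The parts of your proposal that do go through in outline---the coefficient-stripping argument for (iii), and the Riemann--Roch uniqueness in (ii) (non-specialty of a divisor with one point per gap oval does follow from Theorem~\ref{T2.2}, essentially as in the injectivity part of Theorem~\ref{T5.3})---are close to, or cleanly replace, the paper's arguments; but the theorem's core, the localization in (i)/(iv) and the positivity in (ii), remains unproven.
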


\begin{remark} Of course, (i) and (ii) set up a one-one correspondence between $\calT_\fre$, the set of minimal
Herglotz functions, and $\bbG$. We will often refer to $\bbG$ as the
\emph{isospectral torus}.
\end{remark}

\begin{proof}[Sketch] (See \cite[Ch.~5]{Rice} for details.) (i) \ Every minimal degree meromorphic function, $m$,
on $\calS$ with $m\circ\tau\not\equiv m$ has the form
\begin{equation} \lb{6.6}
m(z) = \f{p(z)+\sqrt{R(z)}}{a(z)}
\end{equation}
where $R(z)$ is given by \eqref{2.1b}, and $p,a$ have degree at most
$\ell+1$. Since $m$ has a zero at $\infty_+$, $p(z)$ must cancel the
leading $O(z^{\ell+1})$ term in $\sqrt{R(z)}$ at $\infty_+$, so
\begin{equation} \lb{6.7}
\deg(p) =\ell+1
\end{equation}

Because this cancellation takes place at $\infty_+$, it does not at
$\infty_-$ (since $\sqrt{R}$ flips sign but $p$ does not). For $m$
to have a simple pole at $\infty_-$, we must have
\begin{equation} \lb{6.8}
\deg(a)=\ell
\end{equation}

$m$ is real on each $[\beta_j,\alpha_{j+1}]$ so, by analyticity, on
the entire gap $G_j$. Thus, $m(z)$ is real on $\bbR\setminus\fre$ on
both sheets. Since $\sqrt{R(z)}$ is real on $\bbR\setminus\fre$, we
conclude first that $a$ is real and then that $p$ is real. In
particular, on $\ol\calS_+\cap\fre$,
\begin{equation} \lb{6.9}
\Ima m(x+i0)=\f{\Ima \sqrt{R(x)}}{a(x)}
\end{equation}

Since $R$ has two zeros between bands, $\Ima \sqrt{R(x)}$ changes
sign between successive bands. As $\Ima m(x+i0)\geq 0$, $a$ must
change signs between bands, that is, have an odd number of zeros in
each gap. Since, by \eqref{6.8}, $a$ has only $\ell$ zeros and there
are $\ell$ gaps, $a$ has one zero per gap.

If $a$ has a zero at $x_0\in(\beta_j,\alpha_{j+1})$, then
\begin{equation}
(p+\sqrt{R})-(p-\sqrt{R})=2\sqrt{R}\neq 0
\end{equation}
at $x_0$, so on one sheet or the other, $m$ must have a pole. If $a$
has a zero at $x_0\in\{\beta_j,\alpha_{j+1}\}$, the numerator is at
best $O((x-x_0)^{1/2})$ and the denominator is $O((x-x_0))$. So
again, $m$ has a pole at $x_0$.

Thus, $m$ has at least one pole per gap. So, since $\deg(m)=\ell+1$
and $m$ has a pole at $\infty_-$, $m$ has exactly one simple pole in
each gap.

\smallskip
(ii) \ Write $y_j=(\pi(y_j),\sigma_j)$ if $\pi(y_j)\in
(\beta_j,\alpha_{j+1})$ with $\sigma_j=1$ (resp.\ $-1$) if
$y_j\in\calS_+$ (resp.\ $\calS_-$). Since $a$ has to vanish at
$\pi(y_j)$ to get a pole at $y_j$, we see that $m$ has a pole at
$y_j$ and not at $\tau (y_j)$ if and only if $a(\pi(y_j))=0$ and
\begin{equation} \lb{6.10}
p(\pi(y_j)) = \sigma_j \sqrt{R(y_j)}
\end{equation}

If $\pi(y_j)\in \{\beta_j,\alpha_{j+1}\}$, then to avoid a double
pole, $p(\pi(y_j))=0$, that is, \eqref{6.10} still holds (since
$\sqrt{R(y_j)}=0$, it does not matter that $\sigma_j$ is undefined).

At $\infty_+$, $a(z)$ is $O(z^\ell)$. Thus, for $m(z)$ to vanish at
$\infty_+$, we must have
\begin{equation} \lb{6.11}
p(z) + \sqrt{R(z)} = O(z^{\ell-1})
\end{equation}
near $\infty_+$. Since $\sqrt{R(z)} =O(z^{\ell+1})$, \eqref{6.11}
determines the top two coefficients (with the top one nonzero) and
then, by standard polynomial interpolation, the $\ell$ conditions
\eqref{6.10} determine the remaining $\ell$ coefficients of $p$.

We have thus proven that given $\vy=(y_1,\ldots,y_\ell)\in\bbG$,
there is a meromorphic function of degree precisely $\ell+1$ with
poles at $y_1, \dots, y_\ell$ and $\infty_-$, and a zero at
$\infty_+$. Moreover, it is unique up to a single overall
constant---for the above determines $p$ and
\begin{equation} \lb{6.12}
a(z)= c \prod_{j=1}^\ell (z-\pi(y_j))
\end{equation}
for some constant $c$.

The fact that $a$ changes sign in each gap shows that the sign of
$c$ in \eqref{6.12} can be picked so that
\begin{equation} \lb{6.13x}
\Ima m(x+i0) >0
\end{equation}
on all bands in $\calS_+$. Keeping track of the argument of
$\sqrt{R(z)}$ as one crosses a branch point shows that with this
choice at each $y_j\in\calS_+$ such that
$\pi(y_j)\in(\beta_j,\alpha_{j+1})$ and $\sigma_j >0$,
\begin{equation} \lb{6.13}
m(z) = \f{c_j}{\pi(y_j)-z} + \Oh(1)
\end{equation}
with $c_j >0$. Thus, any limit point in the values of $\Ima m(z)$ as
$z$ approaches $\bbR$ is nonnegative. Since $\Ima m(z) \to 0$ at
$\infty_+$, the maximum principle applied to the harmonic function
$\Ima m(z)$ on $\calS_+\cap\bbC_+$ shows that \eqref{6.1} holds.

Any function obeying \eqref{6.1} with real boundary values on
$\bbR\setminus\fre$ has \eqref{6.2} holding up to a positive
constant. We can thus adjust $c$ in \eqref{6.12} so that \eqref{6.2}
holds. We have thus proven that there exists precisely one
meromorphic Herglotz function with poles at $y_1,\ldots,y_\ell$.

\smallskip
(iii) \ $m$ and the once-stripped function, $m_1$, are related by
\begin{equation} \lb{6.14}
m(z) = \f{1}{-z+b_1 -a_1^2 m_1(z)}
\end{equation}
where $(a_1,b_1)$ are (and can be) chosen so that \eqref{6.2} holds
for $m_1$. It is always true, of course, that $m_1$ obeys
\eqref{6.1} and \eqref{6.2}.

\eqref{6.14} sets up a one-one correspondence between poles of $m_1$
in $\calS\setminus \{\infty_\pm\}$ and zeros of $m$ there. Since $m$
has degree $\ell+1$, it has $\ell+1$ zeros and only a simple zero at
$\infty_+$ by \eqref{6.2}. Thus, $m$ has precisely $\ell$ zeros in
$\calS\setminus\{\infty_\pm\}$.

Therefore, $m_1$ has exactly $\ell$ poles on
$\calS\setminus\{\infty_\pm\}$ and, obviously, no pole at
$\infty_+$. Since $m(z)$ has a pole at $\infty_-$, \eqref{6.14}
shows that near $\infty_-$,
\begin{equation} \lb{6.15}
a_1^2 m_1(z) = -z+b_1 + O(z^{-1})
\end{equation}
that is, $m_1$ has a simple pole at $\infty_-$. Thus, $m_1$ has
degree exactly $\ell+1$ and we have proven condition (iii) in the
definition of a minimal Herglotz function.

\smallskip
(iv) \ Since $m_1$ has a pole on each $G_j$, by \eqref{6.14}, $m$
has a zero on each $G_j$. There is a zero at $\infty_+$, and this
accounts for all $\ell+1$.
\end{proof}

The above construction also lets us prove \eqref{6.5a}:

\begin{theorem}\lb{T6.1A}
\begin{SL}
\item[{\rm{(i)}}] There are constants, $A,B$, so that uniformly in $\vy\in\bbG$, we have {\rm{(}}with $c$ the
constant in \eqref{6.12}{\rm{)}}
\begin{equation} \lb{6.16a}
Ac^{-1} \sqrt{|R(x)|} \leq w_{\vec y}(x) \leq Bc^{-1}
\sqrt{|R(x)|^{-1}}
\end{equation}
and the residues $c_j$ of \eqref{6.13} obey
\begin{equation} \lb{6.16b}
0\leq c_j \leq Bc^{-1}
\end{equation}

\item[{\rm{(ii)}}] Uniformly in $\vy\in\bbG$, the constant $c$ in \eqref{6.12} is bounded and bounded away from zero.
Moreover, uniformly in $x\in\fre$ and $\vy\in\bbG$, \eqref{6.5a}
holds.
\end{SL}
\end{theorem}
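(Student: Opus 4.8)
The plan is to make everything explicit via the representation of the minimal Herglotz function produced in the proof of Theorem~\ref{T6.1}, namely $m(z)=(p(z)+\sqrt{R(z)})/a(z)$ with $a(z)=c\prod_{j=1}^\ell(z-\pi(y_j))$ (take $c>0$ as fixed in \eqref{6.12}) and $\pi(y_j)$ in the closed $j$-th gap $[\beta_j,\alpha_{j+1}]$, where $R$ is given by \eqref{2.1b}. On the interior of a band $\fre_k$, \eqref{6.9} gives $w_{\vy}(x)=\tfrac1\pi\Ima m(x+i0)=\tfrac1{\pi c}\,|R(x)|^{1/2}\big/\prod_{j=1}^\ell|x-\pi(y_j)|$, since $R(x)<0$ there and hence $|\Ima\sqrt{R(x)}|=|R(x)|^{1/2}$. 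Thus both halves of \eqref{6.16a} are purely elementary estimates on the rational function $R(x)\big/\prod_j(x-\pi(y_j))$, to be proved uniformly in $x\in\fre$ and $\vy\in\bbG$.

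The lower bound in \eqref{6.16a} is immediate: $x$ and each $\pi(y_j)$ lie in $[\alpha_1,\beta_{\ell+1}]$, so $\prod_j|x-\pi(y_j)|\le(\diam(\fre))^\ell$, giving $w_{\vy}(x)\ge Ac^{-1}|R(x)|^{1/2}$ with $A=(\pi(\diam(\fre))^\ell)^{-1}$. The upper bound amounts to $|R(x)|\le\pi B\prod_{j=1}^\ell|x-\pi(y_j)|$, and this is the one real point. Here I would observe that for $x\in\fre_k$ the two factors of $R$ that can be small, $x-\alpha_k$ and $\beta_k-x$, are dominated by factors of $\prod_j|x-\pi(y_j)|$: since $\pi(y_{k-1})\le\alpha_k\le x$ one has $x-\alpha_k\le|x-\pi(y_{k-1})|$, and since $\pi(y_k)\ge\beta_k\ge x$ one has $\beta_k-x\le|x-\pi(y_k)|$, with the obvious modification when $k=1$ or $k=\ell+1$ (only one neighbouring gap, and the remaining endpoint factor of $R$ is simply bounded by $\diam(\fre)$). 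Every other factor of $R$ is $\le\diam(\fre)$, and each $|x-\pi(y_i)|$ with $i\ne k-1,k$ is bounded below by $\dist(\fre_k,[\beta_i,\alpha_{i+1}])>0$ (a constant depending only on $\fre$, since $\fre_k$ and the closed $i$-th gap are disjoint); cancelling the common factors $|x-\pi(y_{k-1})|\,|x-\pi(y_k)|$ yields the claim with $B$ depending only on $\fre$.

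For the residues, a direct computation of the residue at $y_j\in\calS_+$ with $\pi(y_j)$ interior, using \eqref{6.10} for the numerator and \eqref{6.12} for $a'$, gives $c_j=2\sqrt{R(\pi(y_j))}\big/\bigl(c\prod_{i\ne j}|\pi(y_j)-\pi(y_i)|\bigr)$; nonnegativity $c_j\ge0$ was already established (see \eqref{6.13x} and the sign analysis in the proof of Theorem~\ref{T6.1}). Since $R(\pi(y_j))\le(\diam(\fre))^{2\ell+2}$ while $\prod_{i\ne j}|\pi(y_j)-\pi(y_i)|\ge\prod_{i\ne j}\dist([\beta_j,\alpha_{j+1}],[\beta_i,\alpha_{i+1}])>0$ uniformly, this yields \eqref{6.16b}; note that as $\pi(y_j)$ approaches a band edge the numerator tends to $0$ and the denominator stays bounded away from $0$, consistent with the square-root vanishing of $c_j$.

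Finally, for (ii) I would show that $c=c(\vy)$ is continuous and nonvanishing on the compact torus $\bbG$, hence bounded and bounded away from $0$. Nonvanishing is $\deg a=\ell$, forced by condition (iii) in the definition of a minimal Herglotz function. For continuity, recall from the proof of Theorem~\ref{T6.1} that $p$ is the unique polynomial whose top two coefficients are fixed by $p(z)+\sqrt{R(z)}=O(z^{\ell-1})$ at $\infty_+$ and which satisfies the $\ell$ interpolation conditions $p(\pi(y_j))=\sigma_j\sqrt{R(y_j)}$; the nodes $\pi(y_1),\dots,\pi(y_\ell)$ always lie in disjoint gaps and so are distinct, making this linear system uniformly nonsingular, and the coefficient $c$ (which equals minus the coefficient of $z^{\ell-1}$ in $p(z)+\sqrt{R(z)}$, from $a(z)m(z)=p(z)+\sqrt{R(z)}$ and $m(z)\sim-z^{-1}$) then depends continuously on $\vy\in\bbG$, continuity persisting across the loci $\sigma_j=\pm1$ because $\sqrt{R(y_j)}$ vanishes there. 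Combining the resulting uniform two-sided bound on $c$ with part (i) gives \eqref{6.5a}. The main obstacle is the estimate $|R(x)|\le\pi B\prod_j|x-\pi(y_j)|$ of the second paragraph — specifically, verifying it survives the limit in which a pole $\pi(y_j)$ collides with a band endpoint, which is exactly where both $w_{\vy}$ and $\sqrt{|R|^{-1}}$ blow up.
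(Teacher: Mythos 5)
Your proposal is correct, and part (i) is in substance the paper's own argument: the paper bounds $\abs{a(x)}$ below by $c\cdot\mathrm{const}\cdot\dist(x,\bbR\setminus\fre)$ (nearest zero of $a$ at distance at least $\dist(x,\bbR\setminus\fre)$, the others at a fixed distance) and bounds $\abs{R(x)}$ above and below by constants times $\dist(x,\bbR\setminus\fre)$, which is exactly your factor-by-factor comparison $x-\alpha_k\le\abs{x-\pi(y_{k-1})}$, $\beta_k-x\le\abs{x-\pi(y_k)}$ in slightly different packaging (and it does survive a pole sitting at a band edge, as you note); the residue bound via \eqref{6.9} is identical. Where you genuinely diverge is part (ii). The paper gets the two-sided bound on $c$ in two lines from the normalization $\mu_{\vec y}(\bbR)=1$: integrating \eqref{6.16a} over $\fre$ and adding \eqref{6.16b} sandwiches $1$ between $Ac^{-1}\int_\fre\sqrt{\abs{R}}\,dx$ and $Bc^{-1}\bigl(\ell+\int_\fre\sqrt{\abs{R}^{-1}}\,dx\bigr)$, the last integral being finite because the zeros of $R$ are simple; this needs no continuity statement at all. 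You instead prove that $\vy\mapsto c(\vy)$ is continuous and nonvanishing on the compact torus $\bbG$, via uniqueness of the representation \eqref{6.6}, the uniformly nonsingular interpolation system \eqref{6.10}--\eqref{6.11} (nodes confined to disjoint closed gaps), the identification of $c$ with minus the $z^{\ell-1}$ coefficient of $p+\sqrt{R}$ forced by \eqref{6.2}, and $\deg a=\ell$ from condition (iii). That route is longer and puts the burden on justifying uniqueness of \eqref{6.6} and uniform invertibility of the Vandermonde system (both of which you do address, including continuity across the loci where $\sigma_j$ flips), but it buys a genuine by-product the paper's argument does not give at this point: explicit continuity of $c$ (and of $p$) in $\vy$, which is in the spirit of the continuity/real-analyticity in $\vy$ the paper exploits later. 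Both arguments then deduce \eqref{6.5a} from \eqref{6.16a} in the same way, so there is no gap.
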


\begin{proof} (i) \ Since $a$ has the form \eqref{6.12}, we have
\begin{equation} \lb{6.16c}
\sup_{x\in\fre}\, \abs{a(x)} \leq c(\beta_{\ell+1} -\alpha_1)^\ell
\end{equation}
which, given \eqref{6.9} and the relation
\begin{equation} \lb{6.16d}
w_{\vec y}(x)=\frac{1}{\pi} \Ima m(x+i0)
\end{equation}
implies the first inequality in \eqref{6.16a}.

Next, note that if we fix $x\in\fre$, the distance to the nearest
zero of $a$ is at least $\dist(x,\bbR\setminus\fre)$ and the
distance to all the other zeros of $a$ at least $\f12 \min_j
\abs{\beta_j-\alpha_j}$. Thus, for $x\in\fre$,
\begin{equation} \lb{6.16e}
\abs{a(x)} \geq c\bigl(\tfrac12 \min_j\,
\abs{\beta_j-\alpha_j}\bigr)^{\ell-1} \dist (x,\bbR\setminus\fre)
\end{equation}
On the other hand, for $x\in\fre$,
\begin{equation} \lb{6.16f}
|R(x)|\leq (\beta_{\ell+1}-\alpha_1)^{2\ell+1}
\dist(x,\bbR\setminus\fre)
\end{equation}
and
\begin{equation} \lb{6.16g}
|R(x)|\geq \bigl(\tfrac12 \min_j\,
\abs{\beta_j-\alpha_j}\bigr)^{2\ell+1} \dist(x, \bbR\setminus\fre)
\end{equation}
We get the second inequality in \eqref{6.16a} from
\eqref{6.16e}--\eqref{6.16g}, \eqref{6.9}, and \eqref{6.16d}.

By \eqref{6.9}, the residue $c_j$ in \eqref{6.13} is given by
\begin{align}
c_j &= 2c^{-1} \sqrt{\abs{R(y_j)}}\, \prod_{k\neq j} \abs{y_k-y_j}^{-1} \lb{6.16.h} \\
&\leq 2c^{-1} (\beta_{\ell+1}-\alpha_1)^{\ell+1} \bigl(\min_j
\abs{\beta_j-\alpha_j}\bigr)^{-\ell} \lb{6.16i}
\end{align}

\smallskip
(ii) \ By \eqref{6.16a} and \eqref{6.16b},
\begin{align} \lb{6.16j}
\notag Ac^{-1} \int_\fre \sqrt{|R(x)|}\, dx &\leq \int_\fre w(x)\,
dx + \sum_j c_j \\ &\leq Bc^{-1} \biggl[ \ell + \int_\fre
\sqrt{|R(x)|^{-1}}\, dx \biggr]
\end{align}
The total weight of the measure is $1$, so we get the claimed upper
and lower bounds on $c$. Given those, \eqref{6.16a} yields
\eqref{6.5a}.
\end{proof}

Given $\vy\in\bbG$, we denote by $m_{\vec y}$ the associated minimal
Herglotz function. The once-stripped $m$-function is also a minimal
Herglotz function and thus corresponds to some $\ve{w}\in\bbG$. We
define a map $U:\bbG\to\bbG$ by
\begin{equation} \lb{6.16x}
U(\vy)=\ve{w}
\end{equation}
so that $m_{U(\vy)}$ is the once-stripped $m$-function.

Now we can combine Theorem~\ref{T6.1} with the Abel map:
\begin{theorem}\lb{T6.2}
Suppose $\vy=(y_1,\ldots,y_\ell)\in\bbG$ and let $U$ be defined by
\eqref{6.16x}.
\begin{SL}
\item[{\rm{(i)}}] With $\ti\frA$ defined in \eqref{5.13a}--\eqref{5.14}, we have
\begin{equation} \lb{6.16}
\ti\frA (U(\vy))=\ti\frA(\vy)\frA(\infty)^{-1}
\end{equation}

\item[{\rm{(ii)}}] Let
\begin{equation} \lb{6.28a}
M_{\vec y}(z) = -m_{\vec y}(\x(z))
\end{equation}
Then we have that
\begin{equation} \lb{6.17}
M_{\vec y}(z) = \f{B(z)}{\ca(\fre)} \prod_{j=1}^\ell
\f{\Theta(z;U(\vy)_j)}{\Theta(z;y_j)}
\end{equation}
\end{SL}
\end{theorem}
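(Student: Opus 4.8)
The plan is to prove (i) by recognizing $m_{\vy}$ as a special meromorphic function and invoking Abel's theorem, and then to deduce (ii) by exhibiting both sides of \eqref{6.17} as automorphic functions on $\bbC\cup\{\infty\}\setminus\calL$ with the same divisor on $\calS$, fixing the normalizing constant by a local expansion at $z=0$.

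For part (i), I would first collect from Theorem~\ref{T6.1} that $m_{\vy}$ has a simple zero at $\infty_+$ (by \eqref{6.2}), a simple pole at $\infty_-$, exactly one simple pole per gap (located at $y_j\in G_j$), and exactly one simple zero per gap; by \eqref{6.14} the latter zeros are the poles of the once-stripped $m$-function, which by the definition \eqref{6.16x} of $U$ sit at the points $U(\vy)_j\in G_j$. Hence $m_{\vy}$ is a special meromorphic function in the sense of Section~\ref{s5}, with divisor $\delta_{\infty_+}-\delta_{\infty_-}+\sum_{j=1}^\ell\bigl(\delta_{U(\vy)_j}-\delta_{y_j}\bigr)$; in particular $n_+=1$ and $\sum_{x\in G_j}n_x=0$ for each $j$. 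Abel's theorem (Theorem~\ref{T5.6}), in the simplified form of Remark~2 following it, then forces $\frA(\infty)\prod_{j=1}^\ell\frA(U(\vy)_j)\prod_{j=1}^\ell\frA(y_j)^{-1}=1$, which is \eqref{6.16} once one unwinds the definition of $\ti\frA$.

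For part (ii), write $f(z)$ for the right-hand side of \eqref{6.17}. Since the character of $B$ is $\frA(\infty)$ and those of $\Theta(\,\cdot\,;U(\vy)_j)$ and $\Theta(\,\cdot\,;y_j)$ are $\frA(U(\vy)_j)$ and $\frA(y_j)$, part (i) makes the total character of $f$ trivial, so $f$ is automorphic and therefore $f=F\circ\x^\sharp$ for a unique meromorphic $F$ on $\calS$. Using the zero/pole information for $B$ (Theorem~\ref{T4.3}) and for $\Theta(\,\cdot\,;y)$ (Lemma~\ref{L5.2} together with the definition of $\Theta$), and noting that the poles of $\Theta(\,\cdot\,;U(\vy)_j)$ and $\Theta(\,\cdot\,;y_j)$ at $\{\gamma(\zeta_j)\}_{\gamma\in\Gamma}$ cancel in their ratio, I would check that on the fundamental set $\ti\calF$ the function $f$ has simple zeros at $0$ and at the point of $\ti C_j^+$ lying over $U(\vy)_j$, simple poles at $\infty$ and at the point of $\ti C_j^+$ lying over $y_j$, and no other zeros or poles (with the evident cancellation when such a point is a gap endpoint). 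Because $\x^\sharp$ is an unbranched local biholomorphism, $F$ then has divisor $\delta_{\infty_+}-\delta_{\infty_-}+\sum_j\bigl(\delta_{U(\vy)_j}-\delta_{y_j}\bigr)$, i.e.\ the same divisor as $-m_{\vy}$. Hence $F/(-m_{\vy})$ is analytic and nonvanishing on the compact surface $\calS$, so it is constant by the Remark after Theorem~\ref{T2.2}; equivalently $f=c\,M_{\vy}$ for a scalar $c$, with $M_{\vy}$ as in \eqref{6.28a}.

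To finish I would determine $c$ from the germ at $z=0$. By \eqref{4.30a} and \eqref{4.31}, near $z=0$ one has $\x(z)=x_\infty/z+O(1)$ and $B(z)=(\ca(\fre)/x_\infty)\,z+O(z^2)$, while $\Theta(0;y)=1$ by \eqref{5.14a} with each $\Theta(\,\cdot\,;y)$ analytic and nonzero near $0$; thus $f(z)=(z/x_\infty)+O(z^2)$. On the other hand $M_{\vy}(z)=-m_{\vy}(\x(z))$ and $m_{\vy}(w)=-1/w+O(w^{-2})$ near $\infty_+$ by \eqref{6.2}, so $M_{\vy}(z)=(z/x_\infty)+O(z^2)$ as well, and comparing leading coefficients gives $c=1$, which is \eqref{6.17}. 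I expect this last normalization step to be the main obstacle, in the sense that it is where signs and the factor $\ca(\fre)$ must be tracked with care: a bare substitution into the explicit Abel formula \eqref{5.25} produces $B(z)\prod_j\Theta(z;U(\vy)_j)/\Theta(z;y_j)$, which differs from $M_{\vy}(z)$ precisely by the constant $-1/\ca(\fre)$, and only the local expansion above reconciles the two.
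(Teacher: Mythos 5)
Your proposal is correct and takes essentially the same route as the paper: identify the divisor of $m_{\vec y}$ on $\calS$ and apply Abel's theorem (Theorem~\ref{T5.6}, with no $\Psi_j$ factors since \eqref{5.22a} holds) for part (i), then match divisors to get \eqref{6.17} up to a constant and fix the constant from the expansions \eqref{6.2} and \eqref{4.31} at $z=0$, exactly as in the paper's proof. One small slip in your closing aside: the bare Abel product $B(z)\prod_j\Theta(z;U(\vy)_j)/\Theta(z;y_j)$ equals $\ca(\fre)\,M_{\vec y}(z)$, i.e.\ the relating constant is $+1/\ca(\fre)$, not $-1/\ca(\fre)$, as your own computation that both $f$ and $M_{\vec y}$ are $z/x_\infty+O(z^2)$ near $z=0$ already shows.
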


\begin{remark} Since
\[
m(z) = -\f{1}{z} - \f{b_1}{z2} - \f{a_12 + b_12}{z3}+ O(z^{-4})
\]
\eqref{6.17} implies an explicit formula for $b_1$ and $a_1$ in
terms of theta functions.
\end{remark}

\begin{proof} $M_{\vec y}(z)$ is a meromorphic function with divisor
\begin{equation} \lb{6.19}
\delta_{\infty_+}-\delta_{\infty_-}+
\sum_{j=1}^\ell\bigl(\delta_{U(\vy)_j}-\delta_{y_j}\bigr)
\end{equation}
so \eqref{6.16} is just \eqref{5.24}.

By \eqref{5.25} (note that \eqref{5.22a} holds so there are no
$\Psi_j$ factors), we have \eqref{6.17} with $\ca(\fre)$ replaced by
a constant.

Since
\begin{equation}
m_{\vec y}(\x(z))=-\f{1}{\x(z)} + O(z2) \lb{6.20} = -\f{z}{x_\infty}
+ O(z2)
\end{equation}
and \eqref{4.31} holds, the constant is $\ca(\fre)$.
\end{proof}

\begin{corollary}\lb{C6.3} Under the map $\ti\frA$ from $\calT_\fre$ to $\Gamma^*$,
$\{U^n(\vy)\}_{n=0}^\infty$ is mapped to the ``equally spaced''
orbit $\{\ti\frA(\vy)\frA(\infty)^{-n}\}_{n=0}^\infty$ in
$\Gamma^*$. In particular,
\begin{equation} \lb{6.17x}
n\to m_{U^n(\vy)} \qquad n\to \{a_n, b_n\}
\end{equation}
are almost periodic sequences {\rm{(}}indeed, real analytic
quasiperiodic sequences{\rm{)}} with almost periods
$\{\rho_\fre([\alpha_1,\beta_j])\}_{j=1}^\ell$. These sequences are
periodic with period $p$ for one point in $\calT_\fre$ if and only
if they are for all points, and that holds if and only if
\eqref{4.42} holds.
\end{corollary}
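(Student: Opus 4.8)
The plan is to assemble this corollary from Theorems~\ref{T5.3}, \ref{T6.1}, \ref{T6.2}, and \ref{T4.4} together with Corollary~\ref{C4.5}; no new estimate is required. First, iterating \eqref{6.16} gives, by induction on $n$,
\begin{equation} \lb{c63a}
\ti\frA(U^n(\vy)) = \ti\frA(\vy)\,\frA(\infty)^{-n},
\end{equation}
which is the asserted ``equally spaced'' orbit. To identify the almost periods, realize $\Gamma^*$ as $\bbR^\ell/\bbZ^\ell$ via $c\mapsto\bigl(\tfrac{1}{2\pi}\arg c(\gamma_j)\bigr)_{j=1}^\ell$. By Theorem~\ref{T4.4}(ii), $\frA(\infty)=c_0$ with $c_0(\gamma_j)=\exp(2\pi i\rho_\fre([\alpha_1,\beta_j]))$, so multiplication by $\frA(\infty)^{-1}$ is in these coordinates the rigid rotation by $-\omega$, where $\omega=\bigl(\rho_\fre([\alpha_1,\beta_j])\bigr)_{j=1}^\ell$.

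Next I would argue that $n\mapsto m_{U^n(\vy)}$ and $n\mapsto\{a_n,b_n\}$ are samples of fixed real analytic functions on the torus along the orbit $\theta_0-n\omega$, $\theta_0=\ti\frA(\vy)$. The stripping map $U\colon\bbG\to\bbG$ is real analytic, since $U(\ve w)$ is the tuple of (simple) zeros of $m_{\ve w}$ in the open gaps and these depend real analytically on $\ve w$ (by \eqref{6.17}, or from the construction in the proof of Theorem~\ref{T6.1}); hence so is each $U^{n-1}$. The parameters $a_1(\ve w),b_1(\ve w)$ are real analytic on $\bbG$, being read off from the low-order Taylor coefficients of $M_{\ve w}$ at $z=0$ (the remark following Theorem~\ref{T6.2}), and coefficient stripping gives $a_n=a_1\circ U^{n-1}$, $b_n=b_1\circ U^{n-1}$. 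Since $\ti\frA$ is a real analytic homeomorphism of tori (Theorem~\ref{T5.3}), \eqref{c63a} exhibits the $U$-orbit of $\vy$ as the $\ti\frA^{-1}$-image of the arithmetic progression $\theta_0-n\omega$; composing, both $n\mapsto m_{U^n(\vy)}$ and $n\mapsto\{a_n,b_n\}$ are real analytic quasiperiodic with frequency module generated by $\omega$, i.e.\ with the stated almost periods $\{\rho_\fre([\alpha_1,\beta_j])\}_{j=1}^\ell$.

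For the periodicity dichotomy, fix $p$. If $U^p(\vy)=\vy$, then $a_{n+p}=a_1(U^{n+p-1}(\vy))=a_1(U^{n-1}(\vy))=a_n$ and likewise $b_{n+p}=b_n$, so the sequence is $p$-periodic. Conversely, if $\{a_n,b_n\}_{n=1}^\infty$ is $p$-periodic, then the $p$-times stripped sequence coincides with the original, so $m_{U^p(\vy)}$ and $m_{\vy}$ are the $m$-functions of the same half-line Jacobi matrix and hence agree; comparing poles in the gaps and using the injectivity in Theorem~\ref{T6.1}(ii), $U^p(\vy)=\vy$. Applying the injective $\ti\frA$ to \eqref{c63a}, $U^p(\vy)=\vy$ is equivalent to $\frA(\infty)^{p}=1$ in $\Gamma^*$, a condition independent of $\vy$; hence $p$-periodicity holds at one point of $\calT_\fre$ iff at all. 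Finally $\frA(\infty)^p=1$ means $c_0(\gamma_j)^p=1$ for $j=1,\dots,\ell$, i.e.\ $B^p$ is automorphic, which by Corollary~\ref{C4.5} is exactly \eqref{4.42}.

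I do not expect a genuine obstacle: the content lives in Theorems~\ref{T5.3}, \ref{T6.1}, \ref{T6.2}. The two places needing a careful line are the real analytic dependence of $U$ and of $(a_1,b_1)$ on the torus coordinate (which, with the real analyticity of $\ti\frA^{-1}$, upgrades ``almost periodic'' to ``real analytic quasiperiodic''), and the converse half of the periodicity equivalence, where one uses that the $m$-function is determined by the half-line Jacobi parameters.
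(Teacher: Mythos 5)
Your proposal is correct and follows essentially the same route as the paper: iterate \eqref{6.16} to get the equally spaced orbit, identify the rotation frequencies via Theorem~\ref{T4.4}(ii)/\eqref{4.32}, use real analyticity of $\Theta$, of $(a_1,b_1)$ in $\vy$, and of $\ti\frA$ to upgrade to real analytic quasiperiodicity, and reduce the periodicity statement to $\frA(\infty)^p=1$ and Corollary~\ref{C4.5}. The only difference is that you spell out the converse half of the periodicity equivalence (that $p$-periodic Jacobi parameters force $U^p(\vy)=\vy$ via the $m$-function), a step the paper leaves implicit.
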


\begin{remark}
By a \emph{quasiperiodic sequence}, $X_n$, we mean a sequence given
by
\begin{equation} \lb{6.18x}
X_n = x(e^{in\omega_1}, \dots, e^{in\omega_k})
\end{equation}
where $x$ is a continuous function on the $k$-torus
$(\partial\bbD)^k$. It is called \emph{real analytic} if $x$ is real
analytic. $(\omega_1, \dots,\omega_k)$ are called the \emph{almost
periods}.
\end{remark}

\begin{proof} \eqref{6.16} immediately implies that
\begin{equation} \lb{6.19x}
\ti\frA(U^n(\vy)) = \ti\frA(\vy) \frA(\infty)^{-n}
\end{equation}
so the orbit is as claimed.

Realize $\Gamma^*$ as $(\partial\bbD)^\ell$ by
\begin{equation} \lb{6.20x}
c \sim (c(\gamma_1), \dots, c(\gamma_\ell))
\end{equation}
Then, by \eqref{4.32},
\begin{equation} \lb{6.21}
\ti\frA(U^n(\vy))_j = \ti\frA(\vy)_j e^{-2\pi i \rho_\fre
([\alpha_1,\beta_j])n}
\end{equation}
which, given that $\ti\frA$ is real analytic and $\Theta(z;y)$ (and
so, $m,a_1,b_1$) are real analytic in the $y$'s, proves that the
sequences \eqref{6.17x} are almost periodic, indeed, real analytic
quasiperiodic.

For the final statement, note that, by \eqref{6.19x}, periodicity
for one or for all $\ti\frA(\vy)$ is equivalent to
$\frA(\infty)^p=1$. Now use Corollary~\ref{C4.5}.
\end{proof}

\section{Raw Jost Functions and the Jost Isomorphism} \lb{s7}

Recall that if $d\mu$ is a measure on $\bbR$ of the form \eqref{6.4}
so that off $[-2,2]$, $d\mu$ only has pure points,
$\{x_j\}_{j=1}^N$, ($N$ finite or infinite) with
\begin{equation} \lb{7.1a}
\sum_{j=1}^N \, (\abs{x_j}-2)^{1/2} <\infty
\end{equation}
and so that $w$ obeys a Szeg\H{o} condition,
\begin{equation} \lb{7.1b}
\int_{-2}^2 (4-x^2)^{-1/2} \log(w(x)) >-\infty
\end{equation}
one defines (see \cite{Jost1,KS,PY,OPUC2}) the Jost function, $u(z)$, on $\bbD$ by
\begin{equation} \lb{7.2}
u(z)=z B_\infty(z) \exp\biggl( \int
\f{z+e^{i\theta}}{z-e^{i\theta}}\, \log\biggl[ \f{\sin\theta}{\Ima
M(e^{i\theta})} \biggr] \, \f{d\theta}{4\pi}\biggr)
\end{equation}
where $M(e^{i\theta})$ is the boundary value of
\begin{equation} \lb{7.3}
M(z)= -m(z+z^{-1})
\end{equation}
(expressible in terms of $w$ via $\Ima M(e^{i\theta})=\pi w
(2\cos\theta)$) and $B_\infty$ is the Blaschke product of
$b(z,\zeta_j)$ with $\zeta_j\in\bbD$, $\zeta_j+\zeta_j^{-1}=x_j$ (by
\eqref{7.1a} and Lemma~\ref{L4.1}, this is a convergent Blaschke
product). In this section and the next, we will begin to discuss the
analog for elements of the isospectral torus, but in a way that
connects up to definitions that work in much greater generality and
will be the key to later papers in this series.

Surprisingly, our initial definition will be equal, up to a constant, to an object that is based on the
theta function formulae of the last section, and we will find there is a representation of the form \eqref{7.2}.
We regard this as one of the more interesting discoveries in the present paper.

Because there is only one natural choice for the reference measure
on $[-2,2]$, it is somewhat obscured that \eqref{7.2} involves not
only $d\mu$ but a reference measure which in \eqref{7.2} is
\begin{equation} \lb{7.5}
d\mu_0(x) = \f{1}{2\pi}\, \sqrt{4-x^2}\, \chi_{[-2,2]}(x)\, dx
\end{equation}
the measure of the free Jacobi matrix with $a_n\equiv 1$, $b_n\equiv
0$. After a change of variables via $x=2\cos\theta$ and scaling,
$\sqrt{4-x^2}$ turns into $\sin\theta$, which is where that factor
in \eqref{7.2} comes from.

When one shifts to  multiple gap situations, there is also a
reference measure needed. Our eventual choice will be to use a
particular point on the isospectral torus---and the next section
will explain change of reference measure. In this section, the
reference measure will be a different measure on the isospectral
torus, so we will call the resulting object the ``raw'' Jost
function.

As a bonus, we will also see that for any point on the isospectral
torus,
\begin{equation} \lb{7.6}
n\to \f{a_1 \cdots a_n}{\ca(\fre)^n}
\end{equation}
is an almost periodic sequence.

Let $\vy_0=(\beta_1,\beta_2, \dots, \beta_\ell)$ be the point in
$\bbG$ which serves as the base point for our $\Theta$'s. Given
$\vy=(y_1,\ldots,y_\ell)\in\bbG$, we let $\xi_j$ be the unique point
on $\ti C_j^+$ such that
\begin{equation} \lb{7.6x}
\x^\sharp(\xi_j)=y_j, \quad j=1,\ldots,\ell
\end{equation}
Moreover, we denote by $w_{\vec y}(x)$ the weight of the measure
$d\mu_{\vec y}$ associated to the $m$-function, $m_{\vec y}$, in the
isospectral torus.

For each $\vy\in\bbG$, we define a function on $\bbD$, the \emph{raw
Jost function}, by
\begin{equation} \lb{7.7}
\calR(z;\vy) = \prod_{\{j\,\mid\,\abs{\xi_j}<1\}} B(z,\xi_j) \exp
\biggl(\f{1}{4\pi} \int \f{e^{i\theta}+z}{e^{i\theta}-z} \, \log
\biggl[ \f{w_{\vec{y}_0}(\x(e^{i\theta}))}{w_{\vec y}
(\x(e^{i\theta}))}\biggr]\, d\theta\biggr)
\end{equation}
Here we use the estimate \eqref{6.5a} to be sure that
\begin{equation} \lb{7.8}
\int \log (w_{\vec y} (\x(e^{i\theta}))\, \f{d\theta}{2\pi} >-\infty
\end{equation}
for all $\vy\in\bbG$.

We will call the $\exp(\,\cdots)$ factor in \eqref{7.7}, the
\emph{Szeg\H{o} part}, and the first factor, the \emph{Blaschke
part}. It is easy to see that each is continuous in $\vy$, so
$\calR(z;\vy)$ is also continuous in $\vy$. The Blaschke factor is
only piecewise $C^1$ and not $C^1$ because whenever $\xi_j$ moves
from inside $\bbD$ to outside, a factor disappears. We have
$B(0,\xi_j)=\abs{\xi_j}$ and
\begin{equation} \lb{7.9}
\xi_j \to \begin{cases}
\abs{\xi_j} & \text{if } \abs{\xi_j} \leq 1 \\
1 & \text{if } \abs{\xi_j} \geq 1
\end{cases}
\end{equation}
has a discontinuous derivative as $\abs{\xi_j}$ passes from below
$1$ to above. Nevertheless, we will see below that for many cases
(where the harmonic measures
$\{\rho_\fre([\alpha_1,\beta_j])\}_{j=1}^\ell$ obey a Diophantine
condition), $\vy\mapsto\calR(z;\vy)$ is real analytic. It is an
interesting open question if this is always true!

We need one more piece of notation. Each $\vy\in\bbG$ determines a
unique $m_{\vec y}$ and, thereby, a unique two-sided Jacobi matrix,
$\ti J_{\vec y}$. Its Jacobi parameters will be denoted $\{a_n(\vy),
b_n(\vy)\}_{n=-\infty}^\infty$. Here is the main theorem of this
section:

\begin{theorem}\lb{T7.1} There exists a continuous, everywhere strictly positive function, $\varphi$, on $\bbG$
so that
\begin{equation} \lb{7.10}
\calR(z;\vy) = \varphi(\vy) \prod_{j=1}^\ell \Theta (z;y_j)
\end{equation}
Moreover, $\varphi$ obeys
\begin{gather}
\varphi(\vy_0) =1 \lb{7.11} \\
\f{a_1(\vy)}{\ca(\fre)} = \f{\varphi(U(\vy))}{\varphi(\vy)}
\lb{7.12}
\end{gather}
\end{theorem}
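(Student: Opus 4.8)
The plan is to prove \eqref{7.10} first, identifying $\varphi(\vy)$ as the constant value of $\calR(\,\cdot\,;\vy)\big/\prod_{j=1}^\ell\Theta(\,\cdot\,;y_j)$, and then to read off \eqref{7.11} and \eqref{7.12}. For \eqref{7.10} I would proceed in three steps: (1) show $\calR(\,\cdot\,;\vy)$ is a character automorphic function on $\bbD$; (2) show it continues to a meromorphic character automorphic function on $\bbC\cup\{\infty\}\setminus\calL$ whose zeros and poles coincide with those of $\prod_j\Theta(\,\cdot\,;y_j)$; and (3) conclude by Theorem~\ref{T5.4} that the quotient $g(z)=\calR(z;\vy)/\prod_j\Theta(z;y_j)$ has no zeros or poles, hence is constant. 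Evaluating at $z=0$, where $\prod_j\Theta(0;y_j)=1$ by \eqref{5.14a}, gives $g\equiv\varphi(\vy):=\calR(0;\vy)$; this is manifestly continuous in $\vy$ (as already observed in \S\ref{s7}) and strictly positive, since the Blaschke factors contribute $B(0,\xi_j)=\abs{B(\xi_j)}>0$ and the Szeg\H{o} factor a positive exponential.

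Step (1) is routine: the Blaschke part $\prod_{\{j\,:\,\abs{\xi_j}<1\}}B(z,\xi_j)$ is character automorphic by Theorem~\ref{T4.3}(ii), and (writing $S_{\vy}$ for the Szeg\H{o} factor in \eqref{7.7}) $S_{\vy}$ is outer with boundary modulus $\bigl(w_{\vy_0}(\x(e^{i\theta}))/w_{\vy}(\x(e^{i\theta}))\bigr)^{1/2}$, which is $\Gamma$-invariant since $\x\circ\gamma=\x$; an outer function with $\Gamma$-invariant boundary modulus is character automorphic, because $S_{\vy}(\gamma(z))/S_{\vy}(z)$ is outer with boundary modulus $1$, hence a unimodular constant. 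For step (2): across $\partial\bbD\setminus\calL$ the factors $B(z,\xi_j)$, being convergent products of rational functions, continue meromorphically (cf.\ the remarks after Lemma~\ref{L4.1} and Theorem~\ref{T4.3}); on the arcs of $\ti\calR$, over which $\x$ is a finite cover of $\fre^\intt$, the weights $w_{\vy_0}$ and $w_{\vy}$ are real analytic and nonvanishing by \eqref{6.4}/Theorem~\ref{T6.1A}, so $S_{\vy}$ continues across those portions by the reflection principle exactly as $\x$ and $B$ were continued in Sections~\ref{s2} and \ref{s4}. At the finitely many preimages of band edges one uses that $\x$ has an order-two branch point there and that $w_{\vy}$ blows up like $\dist(\,\cdot\,,\fre)^{-1/2}$ exactly when the Dirichlet datum $y_j$ sits at that edge and vanishes like $\dist(\,\cdot\,,\fre)^{1/2}$ otherwise (from \eqref{6.9}, \eqref{6.12}, and Theorem~\ref{T6.1A}); comparing these with the simple zeros $\{\gamma(\xi_j)\}$ and simple poles $\{\gamma(\zeta_j)\}$ of $\prod_j\Theta(\,\cdot\,;y_j)$ (recall $\Theta(\,\cdot\,;\beta_j)\equiv1$) shows the divisors agree on $\bbD$ and on $\partial\bbD\setminus\calL$, and the $\Gamma$-action propagates both functions, with matching divisors, to all of $\bbC\cup\{\infty\}\setminus\calL$.

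Given \eqref{7.10}, \eqref{7.11} is immediate: for $\vy_0=(\beta_1,\dots,\beta_\ell)$ every $\xi_j$ equals $\zeta_j\in\partial\bbD$, so the Blaschke part of \eqref{7.7} is the empty product and the Szeg\H{o} part is $\exp(0)=1$, i.e.\ $\calR(\,\cdot\,;\vy_0)\equiv1$, while $\prod_j\Theta(\,\cdot\,;\beta_j)\equiv1$. For \eqref{7.12}, coefficient stripping $m_{\vy}=\calM(a_1(\vy),b_1(\vy),m_{U(\vy)})$ yields, on taking imaginary parts at $x+i0$, the weight identity $w_{\vy}(x)=a_1(\vy)^2\,\abs{m_{\vy}(x+i0)}^2\,w_{U(\vy)}(x)$; since $m_{\vy}(\x(e^{i\theta}))=-M_{\vy}(e^{i\theta})$ and the Blaschke part of $\calR$ is unimodular a.e.\ on $\partial\bbD$, this reads
\[
\abs{\calR(e^{i\theta};U(\vy))}^2 = a_1(\vy)^2\,\abs{M_{\vy}(e^{i\theta})}^2\,\abs{\calR(e^{i\theta};\vy)}^2\qquad\text{a.e.}
\]
Substituting \eqref{7.10} on both sides and \eqref{6.17} for $\abs{M_{\vy}(e^{i\theta})}$ (using $\abs{B(e^{i\theta})}=1$ off $\calL$), all the factors $\abs{\Theta(e^{i\theta};y_j)}$ and $\abs{\Theta(e^{i\theta};U(\vy)_j)}$ cancel, leaving $\varphi(U(\vy))=\bigl(a_1(\vy)/\ca(\fre)\bigr)\varphi(\vy)$.

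The main obstacle is step (2): verifying that $\calR(\,\cdot\,;\vy)$ genuinely continues past $\partial\bbD\setminus\calL$ with precisely the divisor of $\prod_j\Theta(\,\cdot\,;y_j)$. All of the difficulty sits at the (finitely many, modulo $\Gamma$) band-edge preimages, where the Szeg\H{o} part of $\calR$ fails to be smooth, and one must confirm that its singular behavior there---dictated by the square-root edge behavior of the isospectral-torus weights established in Section~\ref{s6}---cancels the poles and zeros that $\prod_j\Theta(\,\cdot\,;y_j)$ has at those points, so that $g$ is analytic and nonvanishing there and Theorem~\ref{T5.4} applies. The remaining algebra, together with the continuity and positivity of $\varphi$, is then straightforward.
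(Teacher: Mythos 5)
Your overall strategy (identify the divisor of $\calR(\,\cdot\,;\vy)$ with that of $\prod_j\Theta(\,\cdot\,;y_j)$ and invoke Theorem~\ref{T5.4}) is genuinely different from the paper's, but its central step (2) has a real gap, and it sits exactly where you wave it through. The elements of $\Gamma$ preserve $\bbD$ and preserve the exterior of $\ol{\bbD}$ separately, so knowing that the divisors of $\calR$ and $\prod_j\Theta$ agree on $\bbD$ and on $\partial\bbD\setminus\calL$ does \emph{not} propagate, via the $\Gamma$-action, to the exterior region: the exterior is the second sheet of $\calS$ and carries independent information. That is precisely where the nontrivial cancellations live. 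If $\xi_j\in\bbD$ (gap datum $y_j\in\calS_+$), the Blaschke factor $B(z,\xi_j)$ has poles at $\{\gamma(\bar\xi_j^{-1})\}_{\gamma\in\Gamma}$, which lie strictly outside $\ol{\bbD}$ (over $\tau(y_j)$), and these must be killed by zeros of the analytically continued Szeg\H{o} factor; if $\xi_j$ lies outside $\ol{\bbD}$ ($y_j\in\calS_-$), there is no Blaschke factor at all and the continued Szeg\H{o} factor alone must produce the required exterior zeros at $\{\gamma(\xi_j)\}$. Your local analysis of square-root edge behavior of $w_{\vec y}$ at band-edge preimages only controls the continuation on and near $\partial\bbD$; it says nothing about zeros of the continued outer function at points a fixed distance outside the circle, which are global functionals of the boundary data (note also that the reflection formula $\ol{S(1/\bar z)}=S(z)^{-1}$ is unavailable here because $\abs{S}\neq1$ on $\partial\bbD$, so there is no cheap symmetry to fall back on). Establishing these exterior zeros is essentially equivalent to the identity \eqref{7.10} itself, so as written the argument is circular at its key point. (Your derivations of \eqref{7.11} and of \eqref{7.12} from \eqref{7.10} via the stripping identity for the weights are fine, but they presuppose \eqref{7.10}.)

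For contrast, the paper never attempts a direct divisor computation for $\calR$. It first proves the step-by-step sum rule \eqref{7.17} of Theorem~\ref{T7.5} (a Poisson--Jensen representation whose applicability rests on the argument bounds coming from Beardon's theorem and M.~Riesz), combines it with the theta-function formula \eqref{6.17} to get Corollary~\ref{C7.6}, and then obtains \eqref{7.10} and \eqref{7.12} first along the orbit $\{U^n(\vy_0)\}$ (where $\calR=\ti\Theta=1$ at $\vy_0$), next for all $\vy$ by density of that orbit when the harmonic measures are rationally independent, and finally for general $\fre$ by a second continuity argument in the gap edges using Totik's theorem and Theorem~\ref{T2.1}. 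The statement of Corollary~\ref{C7.2}---the divisor facts you are trying to prove directly---is in the paper a \emph{consequence} of \eqref{7.10}, not an ingredient. If you want to salvage your route, you would need an independent argument pinning down the exterior zeros of the continued Szeg\H{o} factor (e.g., by relating its boundary values to $M_{\vec y}$ and continuing through the bands, which is in effect the sum-rule computation), at which point you have reproduced the paper's mechanism.
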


\begin{remarks} 1. In \eqref{7.12}, $U$ is the map from \eqref{6.16x}.

\smallskip
2. If the harmonic measures
$\{\rho_\fre([\alpha_1,\beta_j])\}_{j=1}^\ell$ are rationally
independent, the orbit $\{U^n(\vy_0)\}_{n=0}^\infty$ is dense in
$\Gamma^*$ and \eqref{7.11}--\eqref{7.12} determine $\varphi$
uniquely. In general, $\varphi$ is continuous in
$\{\alpha_j,\beta_j\}_{j=1}^\ell$, so this, in principle, determines
it uniquely.

\smallskip
3. It is useful to define, for $\vy=(y_1,\ldots,y_\ell)\in\bbG$,
\begin{equation} \lb{7.13}
\ti\Theta(z;\vy) = \prod_{j=1}^\ell \Theta(z;y_j)
\end{equation}
\end{remarks}

We want to note some interesting corollaries before we turn to the proof:

\begin{corollary}\lb{C7.2} For each $\vy\in\bbG$, $\calR(z;\vy)$ has a meromorphic continuation to $\bbC\cup
\{\infty\}\setminus\calL$ with poles, all simple, only at
$\{\gamma(\zeta_j)\}_{\gamma\in\Gamma,\, j=1, \dots,\ell}$, and
zeros, all simple, at $\{\gamma(\xi_j)\}_{\gamma\in\Gamma,\, j=1,
\dots, \ell}$, where $\zeta_j$ and $\xi_j$ are given by \eqref{5.1}
and \eqref{7.6x}, respectively.
\end{corollary}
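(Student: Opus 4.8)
The plan is to derive this corollary as an essentially immediate consequence of Theorem~\ref{T7.1} together with the structural properties of the theta functions $\Theta(\,\cdot\,;y)$ recorded after Lemma~\ref{L5.2}. First I would invoke Theorem~\ref{T7.1} to write, for the fixed $\vy=(y_1,\ldots,y_\ell)\in\bbG$,
\[
\calR(z;\vy) = \varphi(\vy)\,\ti\Theta(z;\vy) = \varphi(\vy)\prod_{j=1}^\ell \Theta(z;y_j).
\]
Since $\varphi(\vy)$ is, for this fixed $\vy$, a strictly positive constant independent of $z$, it is nonzero and therefore affects neither the domain of meromorphy nor the location or order of any zero or pole. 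Hence the entire content of the corollary reduces to locating the zeros and poles of the finite product $\prod_{j=1}^\ell \Theta(z;y_j)$.

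Next I would recall, directly from the definition of $\Theta$ and Lemma~\ref{L5.2}, that for each fixed $j$ the factor $\Theta(z;y_j)$ is a character automorphic function, meromorphic on all of $\bbC\cup\{\infty\}\setminus\calL$, with simple zeros exactly at $\{\gamma(\xi_j)\}_{\gamma\in\Gamma}$ and simple poles exactly at $\{\gamma(\zeta_j)\}_{\gamma\in\Gamma}$, and otherwise finite and nonvanishing. Here one must match notation: the point denoted $\zeta$ in the definition of $\Theta(\,\cdot\,;y)$ is, for $y=y_j$, the unique point of $\ti C_j^+$ with $\x^\sharp(\,\cdot\,)=y_j$, which is precisely $\xi_j$ as defined in \eqref{7.6x}; and $\zeta_j$ is the base point from \eqref{5.1}. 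This identification is the only (trivial) piece of bookkeeping, and with it in hand the zero/pole structure of each individual factor is exactly as claimed.

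The one substantive point to check is that forming the product over $j=1,\dots,\ell$ produces no cancellation, so that all the zeros and poles listed persist and remain simple. For this I would argue that the $\Gamma$-orbits in question are pairwise disjoint: since $\x^\sharp$ is $\Gamma$-automorphic (\eqref{1.21} holds for $\x^\sharp$ on $\calS$, as established in Section~\ref{s2}), each orbit $\{\gamma(\xi_j)\}_{\gamma\in\Gamma}$ collapses under $\x^\sharp$ to the single point $y_j\in G_j$, and each orbit $\{\gamma(\zeta_j)\}_{\gamma\in\Gamma}$ collapses to $\beta_j\in\ol{G_j}$; because the closed gaps $\ol{G_j}$ are mutually disjoint (a consequence of \eqref{1.3}), the zero-orbits of $\Theta(\,\cdot\,;y_i)$ are disjoint from the zero- and pole-orbits of $\Theta(\,\cdot\,;y_j)$ whenever $i\neq j$, and likewise the pole-orbits are mutually disjoint. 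Therefore the product has simple zeros at $\{\gamma(\xi_j)\}_{\gamma\in\Gamma,\,j=1,\dots,\ell}$, simple poles at $\{\gamma(\zeta_j)\}_{\gamma\in\Gamma,\,j=1,\dots,\ell}$, and no other zeros or poles, and the same holds after multiplication by the nonzero constant $\varphi(\vy)$. I do not expect any real obstacle here: the genuine work is carried by Theorem~\ref{T7.1}, and the remainder is the disjointness observation plus matching the two notations for the relevant points on the orthocircles $\ti C_j^+$.
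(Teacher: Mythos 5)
Your proof is correct and takes essentially the same route as the paper, which derives Corollary~\ref{C7.2} directly from \eqref{7.10} (its proof is literally ``Obvious from \eqref{7.10}''); your notational matching of $\xi_j$ with the $\zeta$ in the definition of $\Theta(\,\cdot\,;y_j)$ and the orbit-disjointness check are just the routine details left implicit there. The only point worth the same implicit reading you give the rest is the degenerate case $y_j=\beta_j$, where $\Theta(z;y_j)\equiv 1$ and the listed zero and pole on the orbit of $\zeta_j$ simply cancel.
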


\begin{remark} Thus, the Szeg\H{o} part cancels the poles of
$B(z,\xi_j)$ at $\{\gamma(\bar{\xi}_j^{-1})\}_{\gamma\in\Gamma}$
for $j$ with $\abs{\xi_j}<1$.

\end{remark}

\begin{proof} Obvious from \eqref{7.10}.
\end{proof}

Define the \emph{raw Jost isomorphism} from $\bbG$ to $\Gamma^*$ by
\begin{equation} \lb{7.14}
\calJ_r(\vy) = \ti\frA(\vy)
\end{equation}
By Theorem~\ref{T5.3}, it is a real analytic homeomorphism. The
following corollary of Theorem~\ref{T7.1} is so important, we call
it a theorem.

\begin{theorem}\lb{T7.3} For each $\vy\in\bbG$, $\calR(\,\cdot\,;\vy)$ is a character automorphic function and its character
is $\calJ_r(\vy)$. $\calJ_r$ is a real analytic bijection between
the isospectral torus and $\Gamma^*$.
\end{theorem}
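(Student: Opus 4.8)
The plan is to obtain this theorem as a direct consequence of Theorem~\ref{T7.1} and Theorem~\ref{T5.3}; no fresh analysis is needed once the representation \eqref{7.10} is available. First I would recall that \eqref{7.10} says $\calR(z;\vy) = \varphi(\vy)\,\ti\Theta(z;\vy)$ with $\ti\Theta(z;\vy) = \prod_{j=1}^\ell \Theta(z;y_j)$ and with $\varphi(\vy)>0$ a scalar independent of $z$. By the definition of $\Theta$ in Section~\ref{s5}, each $\Theta(\,\cdot\,;y_j)$ is character automorphic on $\bbC\cup\{\infty\}\setminus\calL$ with character $\frA(y_j)$; a product of character automorphic functions is again character automorphic with the product of the characters, so $\ti\Theta(\,\cdot\,;\vy)$ has character $\prod_{j=1}^\ell \frA(y_j) = \ti\frA(\vy)$ by \eqref{5.14}.

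Next I would note that multiplying a character automorphic function by a nonzero $z$-independent constant preserves character automorphy and leaves the character unchanged: $g(\gamma(z)) = c(\gamma) g(z)$ forces $\varphi(\vy)\,g(\gamma(z)) = c(\gamma)\,\varphi(\vy)\,g(z)$. Hence $\calR(\,\cdot\,;\vy)$ is character automorphic — on $\bbD$ directly from \eqref{7.7}, and on all of $\bbC\cup\{\infty\}\setminus\calL$ via the meromorphic continuation in Corollary~\ref{C7.2} — with character $\ti\frA(\vy)$, which is exactly $\calJ_r(\vy)$ by the definition \eqref{7.14}. This disposes of the first assertion.

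Finally, since $\calJ_r = \ti\frA$ by \eqref{7.14}, and Theorem~\ref{T5.3} asserts that $\ti\frA$ is a real analytic homeomorphism of the $\ell$-dimensional tori $\bbG$ and $\Gamma^*$ — and $\bbG$ is what we call the isospectral torus (the Remark after Theorem~\ref{T6.1}) — we conclude that $\calJ_r$ is a real analytic bijection between the isospectral torus and $\Gamma^*$. The real content of this statement is therefore entirely contained in Theorem~\ref{T7.1}; the main obstacle lies in proving \eqref{7.10}--\eqref{7.12}, not in this deduction, which is a short corollary.
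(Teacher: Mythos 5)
Your proposal is correct and follows the paper's own route: the paper likewise proves Theorem~\ref{T7.3} as an immediate consequence of the representation \eqref{7.10}, the fact that $\Theta(\,\cdot\,;y_j)$ has character $\frA(y_j)$ so that $\ti\Theta(\,\cdot\,;\vy)$ has character $\ti\frA(\vy)=\calJ_r(\vy)$, and Theorem~\ref{T5.3} for the real analytic bijectivity. Your closing observation that all the real work is in establishing \eqref{7.10}--\eqref{7.12} matches the paper's remark that, without \eqref{7.10}, the difficult point would be the bijectivity of $\vy\mapsto$ character of $\calR(\,\cdot\,;\vy)$.
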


\begin{remark} The Blaschke part of $\calR$ is character automorphic, and we will show directly in
Lemma~\ref{L8.1} that the Szeg\H{o} part is, too.  What is difficult
without \eqref{7.10} is that the map from $\vy$ to the character of
$\calR(\dott;\vy)$ is a bijection. That this map is a bijection will
be critical in our proof of Szeg\H{o} asymptotics in \cite{CSZ2}.
\end{remark}

\begin{proof} Immediate from \eqref{7.10} and the definition of $\ti\frA$.
\end{proof}

\begin{corollary}\lb{C7.4} For each $\vy\in\bbG$, the sequence
\begin{equation} \lb{7.15}
n\to \f{a_1(\vy)\cdots a_n(\vy)}{\ca(\fre)^n}
\end{equation}
is bounded and almost periodic, also bounded away from $0$. The
upper and lower bounds are bounded uniformly in $\vy$.
\end{corollary}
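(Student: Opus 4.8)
The plan is to reduce the statement to the explicit formula \eqref{7.12} together with the compactness of $\bbG$, so that almost no new work is needed beyond what has already been set up. First I would record that, by the definition of the coefficient-stripping map $U$ in \eqref{6.16x}, iterating the stripping operation gives $a_{k+1}(\vy)=a_1(U^k(\vy))$ for every $k\ge 0$. Combining this with \eqref{7.12} applied at the point $U^k(\vy)\in\bbG$ yields a telescoping product:
\begin{equation*}
\f{a_1(\vy)\cdots a_n(\vy)}{\ca(\fre)^n}
= \prod_{k=0}^{n-1} \f{a_1(U^k(\vy))}{\ca(\fre)}
= \prod_{k=0}^{n-1} \f{\varphi(U^{k+1}(\vy))}{\varphi(U^k(\vy))}
= \f{\varphi(U^n(\vy))}{\varphi(\vy)}.
\end{equation*}

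Next, since $\varphi$ is continuous and everywhere strictly positive on the compact torus $\bbG$ (Theorem~\ref{T7.1}), there are constants $0<m\le M<\infty$, depending only on $\fre$, with $m\le\varphi\le M$ on all of $\bbG$. Hence
\begin{equation*}
\f{m}{M}\le \f{a_1(\vy)\cdots a_n(\vy)}{\ca(\fre)^n}\le \f{M}{m}
\end{equation*}
for every $\vy\in\bbG$ and every $n$, which gives boundedness, boundedness away from $0$, and the fact that these bounds are uniform in $\vy$.

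For almost periodicity I would invoke Corollary~\ref{C6.3}: under the real analytic homeomorphism $\ti\frA\colon\bbG\to\Gamma^*$, the orbit $\{U^n(\vy)\}_{n\ge0}$ is carried to the ``equally spaced'' orbit $\{\ti\frA(\vy)\frA(\infty)^{-n}\}_{n\ge0}$ on the torus $\Gamma^*\cong(\partial\bbD)^\ell$, whose coordinates rotate by the angles $\{2\pi\rho_\fre([\alpha_1,\beta_j])\}_{j=1}^\ell$ in view of \eqref{4.32}. Therefore
\begin{equation*}
n\longmapsto \f{a_1(\vy)\cdots a_n(\vy)}{\ca(\fre)^n}
= \f{1}{\varphi(\vy)}\,\bigl(\varphi\circ\ti\frA^{-1}\bigr)\!\left(\ti\frA(\vy)\frA(\infty)^{-n}\right)
\end{equation*}
is a fixed continuous function on $(\partial\bbD)^\ell$ evaluated along a linear orbit, i.e.\ a quasiperiodic — in particular almost periodic — sequence in the sense of \eqref{6.18x}, with almost periods $\{\rho_\fre([\alpha_1,\beta_j])\}_{j=1}^\ell$.

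The only point requiring a bit of care — and it is a mild one — is that $\varphi$ is known only to be continuous, not real analytic (cf.\ the remark following Theorem~\ref{T7.1}), so one obtains a continuous, hence almost periodic, sequence, but not necessarily the real analytic quasiperiodic sequences that appear in Corollary~\ref{C6.3}; this is exactly what the statement claims. Everything else is a routine consequence of the telescoping identity and compactness.
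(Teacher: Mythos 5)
Your argument is correct and is essentially the paper's own proof: the telescoping identity $\f{a_1(\vy)\cdots a_n(\vy)}{\ca(\fre)^n}=\f{\varphi(U^n(\vy))}{\varphi(\vy)}$ is exactly \eqref{7.15a}, and the paper likewise concludes by viewing this as the continuous function $\varphi\circ\ti\frA^{-1}$ on $\Gamma^*$ evaluated along the orbit $\ti\frA(\vy)\frA(\infty)^{-n}$, which is bounded and bounded away from zero. Your added remarks on uniformity via compactness and on continuity versus real analyticity of $\varphi$ are consistent with the paper and do not change the argument.
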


\begin{remark} That $(a_1 \cdots a_n)^{1/n}\to \ca(\fre)$ is a general fact about regular measures, and
each $d\mu_{\vec y}$ is regular by a theorem of Widom \cite{Wid} and
Van Assche \cite{vA} (see \cite{StT,EqMC}). This more subtle result
is a special case of a theorem of Widom \cite{Widom} (see also
\cite{Apt}).
\end{remark}

\begin{proof} By \eqref{7.12},
\begin{equation} \lb{7.15a}
\f{a_1(\vy) \cdots a_n(\vy)}{\ca(\fre)^n} =
\f{\varphi(U^n(\vy))}{\varphi(\vy)}
\end{equation}
is given by the values of a continuous function (namely,
$\varphi\circ\ti\frA^{-1}$) on $\Gamma^*$ along the orbit
$\ti\frA(\vy)\frA(\infty)^{-n} $. This function is bounded and
bounded away from $0$.
\end{proof}

One key to the proof of Theorem~\ref{T7.1} is a nonlocal
step-by-step sum rule. Such sum rules began with Killip--Simon
\cite{KS} for $[-2,2]$, formalized by Simon \cite{S288}, and one
version was found for periodic Jacobi matrices by
Damanik--Killip--Simon \cite{DKSppt}. The extension of the sum rules
to the covering map context, which relies on Beardon's theorem, is a
major theme in this paper and especially in the second paper
\cite{CSZ2} in this series. Here is the version for the isospectral
torus:

\begin{theorem} \lb{T7.5} Let $m_{\vec y}$ be the $m$-function for a point $\vy\in\bbG$.
Define for $z\in\bbD$,
\begin{equation} \lb{7.16}
M_{\vec y}(z)=-m_{\vec y}(\x(z))
\end{equation}
Then for all $z\in\bbD$,
\begin{equation} \lb{7.17}
a_1(\vy) M_{\vec y}(z) = B(z)\f{\calR(z;U(\vy))}{\calR(z;\vy)}
\end{equation}
\end{theorem}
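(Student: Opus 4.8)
\emph{The approach.} The cleanest route is to read off \eqref{7.17} from two facts already available: the explicit theta-function formula for $M_{\vec y}$ in Theorem~\ref{T6.2}(ii) and the factorization of the raw Jost function in Theorem~\ref{T7.1}. With $\ti\Theta(z;\vy)=\prod_{j=1}^\ell\Theta(z;y_j)$ as in \eqref{7.13}, formula \eqref{6.17} reads $M_{\vec y}(z)=\ca(\fre)^{-1}B(z)\,\ti\Theta(z;U(\vy))/\ti\Theta(z;\vy)$, while \eqref{7.10} says $\calR(z;\vy)=\varphi(\vy)\,\ti\Theta(z;\vy)$ with $\varphi$ continuous and strictly positive and, by \eqref{7.12}, $a_1(\vy)/\ca(\fre)=\varphi(U(\vy))/\varphi(\vy)$. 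So both sides of \eqref{7.17} are expressed through $B$ and the $\Theta$'s.

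\emph{The computation.} Substituting, for $z\in\bbD$,
\begin{align*}
a_1(\vy)\,M_{\vec y}(z)
&=\f{a_1(\vy)}{\ca(\fre)}\,B(z)\,\f{\ti\Theta(z;U(\vy))}{\ti\Theta(z;\vy)}
=\f{\varphi(U(\vy))}{\varphi(\vy)}\,B(z)\,\f{\ti\Theta(z;U(\vy))}{\ti\Theta(z;\vy)} \\
&=B(z)\,\f{\varphi(U(\vy))\,\ti\Theta(z;U(\vy))}{\varphi(\vy)\,\ti\Theta(z;\vy)}
=B(z)\,\f{\calR(z;U(\vy))}{\calR(z;\vy)},
\end{align*}
which is \eqref{7.17}. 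Since \eqref{6.17} and \eqref{7.10} already hold as identities of meromorphic functions on $\bbD$ and the $\varphi$'s are scalars, the manipulation is legitimate wherever the denominators are nonzero, hence throughout $\bbD$.

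\emph{The main obstacle.} The real issue is logical order: Theorem~\ref{T7.5} is advertised as an input to the proof of Theorem~\ref{T7.1}, so a self-contained argument should avoid \eqref{7.10}. To obtain \eqref{7.17} independently one would instead: (a) compute the divisors on $\bbD$ of the two sides directly and check they coincide, using that coefficient stripping, $M_{\vec y}(z)^{-1}=\x(z)-b_1(\vy)-a_1(\vy)^2 M_{U(\vy)}(z)$, turns the poles of $m_{\vec y}$ in the gaps into the poles of $m_{U(\vy)}$ at the next step — so the zeros of $\calR(z;U(\vy))$ match the extra zeros of $M_{\vec y}$, the factor $B$ supplies a zero at each $\gamma(0)$ matching the zero of $M_{\vec y}$ over $\infty_+$, and the reference poles at the $\zeta_j$ common to $\calR(z;U(\vy))$ and $\calR(z;\vy)$ cancel in the ratio; (b) note both sides are character automorphic, so Theorem~\ref{T5.4} forces their quotient to be constant once the characters are shown to agree, and that matching is precisely the Abel-map linearization of coefficient stripping in Theorem~\ref{T6.2}(i); and (c) pin the constant by letting $z\to0$: by \eqref{4.31} and \eqref{4.30a}, $B(z)=(\ca(\fre)/x_\infty)z+O(z^2)$; by \eqref{6.2}, $M_{\vec y}(z)=z/x_\infty+O(z^2)$; and $\calR(0;\vy)=\varphi(\vy)$ since $\Theta(0;y)=1$, so comparison of leading coefficients reproduces \eqref{7.12}. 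The one genuinely nontrivial ingredient of this second route is that the Szeg\H{o} part of $\calR$ is character automorphic with the correct character, which needs a separate Poisson-integral computation of its transformation law under $\Gamma$; either way the theorem rests on the Abel/theta machinery assembled around Theorem~\ref{T7.1}.
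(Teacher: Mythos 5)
Your main computation is circular within the paper's logical structure, and you flag this yourself but do not repair it: Theorem~\ref{T7.1} is obtained in the paper from Corollary~\ref{C7.6}, which is precisely \eqref{6.17} combined with the identity \eqref{7.17} you are asked to prove, and no independent proof of \eqref{7.10}/\eqref{7.12} is available; so substituting them cannot serve as a proof of Theorem~\ref{T7.5}. The sketched alternative (a)--(c) has genuine gaps at exactly the hard points. In (b) you need the characters of the two sides to agree, i.e., you need the character of $\calR(\,\cdot\,;U(\vy))/\calR(\,\cdot\,;\vy)$ to be $\frA(\infty)^{-1}$; but identifying the character of $\calR(\,\cdot\,;\vy)$ with $\ti\frA(\vy)$ is Theorem~\ref{T7.3}, which the paper deduces \emph{from} \eqref{7.10} --- the remark after Theorem~\ref{T7.3} explicitly says this is ``what is difficult without \eqref{7.10}.'' Theorem~\ref{T6.2}(i) concerns the theta products, not $\calR$, so it does not supply that matching. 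Also, Theorem~\ref{T5.4} applies to functions on $\bbC\cup\{\infty\}\setminus\calL$, whereas $\calR$ is defined by \eqref{7.7} only on $\bbD$; its meromorphic continuation across $\partial\bbD$ is Corollary~\ref{C7.2}, again downstream of \eqref{7.10}. Working only on $\bbD$, matching zeros/poles and unimodular boundary values do not force the quotient to be constant (a singular inner factor must be excluded), and (c) cannot pin the constant to $1$, since $\calR(0;\vy)=\varphi(\vy)$ and \eqref{7.12} are themselves parts of Theorem~\ref{T7.1}; at best you would get \eqref{7.17} up to an undetermined positive constant.

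The paper's actual proof consists of exactly the steps your sketch omits: it sets $B_\infty^{(\vy)}=B\,B^{(U(\vy))}/B^{(\vy)}$ (ratio of the Blaschke parts of $\calR$) and $h=a_1(\vy)M_{\vec y}/B_\infty^{(\vy)}$, shows $\abs{\arg h}$ is bounded on $\calF$ (the alternating-product bound as in \eqref{4.72} plus $\abs{\arg M_{\vec y}}\leq\pi$), propagates the bound to $\bbD_k$ by character automorphy, and then uses \eqref{2.11} with Beardon's theorem in the form \eqref{3.15} and M.~Riesz to conclude $\log h\in\bigcap_{p<\infty}H^p$; the resulting Poisson representation \eqref{7.26}, combined with the boundary form of coefficient stripping \eqref{7.28}--\eqref{7.31} and $\Ima M_{\vec y}(e^{i\theta})=\pi w_{\vec y}(\x(e^{i\theta}))$, identifies the outer factor with the ratio of the Szeg\H{o} parts in \eqref{7.7}, giving \eqref{7.17} with the constant already correct. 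Any self-contained proof must do some version of this Hardy-space/boundary-value work directly from the definition \eqref{7.7}; your proposal never uses \eqref{7.7} beyond its value at $z=0$, so the essential content of the theorem is missing.
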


\begin{proof} Let $B^{(\vy)}(z)$ be the Blaschke part of $\calR$, that is, the Blaschke product in \eqref{7.7}.
Define
\begin{equation} \lb{7.18}
B_\infty^{(\vy)} (z)=B(z)\f{B^{(U(\vy))}(z)}{B^{(\vy)}(z)}
\end{equation}
Then $B^{(\vy)}_\infty(z)$ has zeros and poles precisely at all the
zeros and poles of $M_{\vec y}(z)$ inside $\bbD$, so
\begin{equation} \lb{7.19}
h(z) = \f{a_1(\vy) M_{\vec y}(z)}{B_\infty^{(\vy)}(z)}
\end{equation}
is analytic and nonvanishing in $\bbD$.

The same argument that led to \eqref{4.72} shows that
\begin{equation} \lb{7.20}
z\in\calF \Rightarrow \abs{\arg(B_\infty^{(\vy)}(z))}\leq C
\end{equation}
for some constant $C$. Moreover,
\begin{equation} \lb{7.21}
z\in\calF \Rightarrow \abs{\arg(M_{\vec y}(z))}\leq \pi
\end{equation}
so
\begin{equation} \lb{7.22}
z\in\calF \Rightarrow \abs{\arg(h(z))} \leq C_1
\end{equation}
and, in particular, $\arg(h)$ varies by at most $2C_1$ over $\calF$.

$B^{(\vy)}_\infty$ is character automorphic and $M_{\vec y}$ is
automorphic, so $h(z)$ is character automorphic. Thus, $\arg(h(z))$
varies by at most $2C_1$ over any $\gamma(\calF)$, which means that
\begin{equation} \lb{7.23}
z\in\bbD_k \Rightarrow \abs{\arg(h(z))} \leq (2k+1) C_1
\end{equation}
Thus, by \eqref{2.11}, for any $r$,
\begin{equation} \lb{7.24}
\{\theta\mid \abs{\Ima \log(h(re^{i\theta}))} \geq (2k+1) C_1\}
\subset\partial\calR_k
\end{equation}
So, by Beardon's theorem in the form \eqref{3.15}, for any $p<\infty$,
\begin{equation} \lb{7.25x}
\sup_r \int  \abs{\Ima \log(h(re^{i\theta}))}^p\, \f{d\theta}{2\pi} < \infty
\end{equation}
By M.~Riesz's theorem (see Rudin \cite{Rudin}),
\begin{equation} \lb{7.25}
\log(h)\in\bigcap_{p<\infty} H^p (\bbD)
\end{equation}
so by the standard representation for $H^p$ functions, $p\geq 1$
(see, e.g., \cite{Rudin}), we get
\begin{equation} \lb{7.26}
a_1(\vy) M_{\vec y}(z) = B_\infty^{(\vy)}(z) \exp \biggl(
\f{1}{2\pi} \int \f{e^{i\theta}+z}{e^{i\theta}-z}\, \log
\bigl(a_1(\vy)\abs{M_{\vec y}(e^{i\theta})}\bigr)\, d\theta \biggr)
\end{equation}
where we used that for a.e.\ $\theta$,
$\abs{B_\infty^{(\vy)}(e^{i\theta})}=1$, so
\begin{equation} \lb{7.27}
\log \abs{h(e^{i\theta})} = \log \abs{M_{\vec y}(e^{i\theta})}
\end{equation}

Taking boundary values in
\begin{equation} \lb{7.28}
M_{\vec y}(z)^{-1} = \x(z) -b_1 - a_1^2 M_{U(\vy)}(z)
\end{equation}
we see that
\begin{equation} \lb{7.29}
\f{\Ima M_{\vec y}(e^{i\theta})}{\abs{M_{\vec y}(e^{i\theta})}^2} =
a_1^2 \Ima M_{U(\vy)} (e^{i\theta})
\end{equation}
or
\begin{align}
\log(a_1(\vy)\abs{M_{\vec y}(e^{i\theta})}) &= \f12 \log\biggl[
\f{\Ima M_{\vec y}(e^{i\theta})}
{\Ima M_{U(\vy)}(e^{i\theta})}\biggr] \lb{7.30} \\
&= \f12 \log \biggl[ \f{\Ima M_{\vec y}(e^{i\theta})}{\Ima
M_{\vec{y}_0} (e^{i\theta})}\biggr] - \f12 \log \biggl[ \f{\Ima
M_{U(\vy)}(e^{i\theta})}{\Ima M_{\vec{y}_0} (e^{i\theta})}\biggr]
\lb{7.31}
\end{align}
Since $\Ima M_{\vec y}(e^{i\theta})= \pi w_{\vec y} (\x
(e^{i\theta}))$, \eqref{7.26} plus \eqref{7.31} and the definition
\eqref{7.2} implies \eqref{7.17}.
\end{proof}

\begin{corollary}\lb{C7.6} For any $\vy\in\bbG$, we have
\begin{equation} \lb{7.32}
\f{\calR(z;U(\vy))}{\ti\Theta(z;U(\vy))} = \f{a_1(\vy)}{\ca(\fre)}
\, \f{\calR(z;\vy)}{\ti\Theta(z;\vy)}
\end{equation}
\end{corollary}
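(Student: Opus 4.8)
The plan is to simply combine the step-by-step sum rule of Theorem~\ref{T7.5} with the explicit theta-function formula for $M_{\vec y}$ from Theorem~\ref{T6.2}(ii), and then cancel a common Blaschke factor. First I would recall that Theorem~\ref{T7.5} gives, for all $z\in\bbD$,
\begin{equation} \lb{7.32a}
a_1(\vy)\, M_{\vec y}(z) = B(z)\,\f{\calR(z;U(\vy))}{\calR(z;\vy)},
\end{equation}
while Theorem~\ref{T6.2}(ii), together with the notation \eqref{7.13}, can be rewritten as
\begin{equation} \lb{7.32b}
M_{\vec y}(z) = \f{B(z)}{\ca(\fre)}\, \f{\ti\Theta(z;U(\vy))}{\ti\Theta(z;\vy)}.
\end{equation}

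Next I would substitute \eqref{7.32b} into the left-hand side of \eqref{7.32a}, obtaining
\begin{equation} \lb{7.32c}
\f{a_1(\vy)}{\ca(\fre)}\, B(z)\, \f{\ti\Theta(z;U(\vy))}{\ti\Theta(z;\vy)} = B(z)\,\f{\calR(z;U(\vy))}{\calR(z;\vy)}.
\end{equation}
Since $B(z)$ is a nonzero meromorphic function on $\bbC\cup\{\infty\}\setminus\calL$ (Theorem~\ref{T4.3}, \ref{T4.4}), it is not identically zero, so it may be cancelled from both sides of \eqref{7.32c} as an identity of meromorphic functions. This yields
\begin{equation} \lb{7.32d}
\f{\calR(z;U(\vy))}{\calR(z;\vy)} = \f{a_1(\vy)}{\ca(\fre)}\, \f{\ti\Theta(z;U(\vy))}{\ti\Theta(z;\vy)},
\end{equation}
and rearranging \eqref{7.32d} by moving $\ti\Theta(z;U(\vy))$ to the left and $\calR(z;\vy)$ to the right gives precisely \eqref{7.32}.

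There is no serious obstacle here: the statement is a purely formal consequence of the two previously established identities, and the only point that requires a word is the legitimacy of dividing by $B(z)$, which is immediate since $B$ vanishes only on the discrete set $\{\gamma(0)\}_{\gamma\in\Gamma}$. One could alternatively phrase the whole argument as an application of Theorem~\ref{T7.1} and Corollary~\ref{C6.3}, but deriving it directly from Theorems~\ref{T7.5} and \ref{T6.2} is the cleanest route and keeps the corollary logically downstream of exactly the results it needs.
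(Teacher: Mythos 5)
Your proof is correct and is essentially the paper's own argument: the paper also obtains \eqref{7.32} by combining \eqref{6.17} (i.e., Theorem~\ref{T6.2}(ii)) with the sum rule \eqref{7.17} of Theorem~\ref{T7.5} and cancelling $B(z)$. One small caution about your closing aside: deriving the corollary from Theorem~\ref{T7.1} would be circular, since the paper's proof of Theorem~\ref{T7.1} itself relies on \eqref{7.32}, so your chosen direct route is indeed the right one.
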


\begin{proof} Immediate from \eqref{6.17} and \eqref{7.17}.
\end{proof}

\begin{proof}[Proof of Theorem~\ref{T7.1}] For $\vy_0$, we have
\begin{equation} \lb{7.33}
\ti\Theta(z;\vy_0) = \calR(z;\vy_0) =1
\end{equation}
Thus, by \eqref{7.32}, we have \eqref{7.10} for $\vy=U^n(\vy_0)$
with
\[
\varphi(U^n (\vy_0)) = \f{a_1 (\vy_0) \cdots a_n
(\vy_0)}{\ca(\fre)^n}
\]
since
\begin{equation} \lb{7.34}
a_1 (U^n(\vy_0)) = a_{n+1} (\vy_0)
\end{equation}
Thus, \eqref{7.12} also holds for $\vy=U^n (\vy_0)$.

Suppose now $\{\alpha_j,\beta_j\}_{j=1}^{\ell+1}$ are such that
$\{\rho_\fre ([\alpha_1, \beta_j])\}_{j=1}^\ell$ are rationally
independent. Then $S_0\equiv \{U^n (\vy_0)\}$ is dense in the
isospectral torus. Define
\begin{equation} \lb{7.35}
\varphi(\vy) \equiv \f{\calR(z=0;\vy)}{\Theta(z=0;\vy)}
\end{equation}
Since $\calR$ is continuous in $\vy$, so is $\varphi$. As
\eqref{7.10} holds on the dense set $S_0$ and both sides are
continuous, \eqref{7.10} holds for all $\vy\in\bbG$. Similarly, both
sides of \eqref{7.12} are continuous and as \eqref{7.12} holds on
$S_0$, it holds in general.

To handle general $\{\alpha_j,\beta_j\}_{j=1}^{\ell+1}$, we just
repeat the continuity argument ``at a higher level.'' Fix $\alpha_1,
\dots, \alpha_{\ell+1}$ and $\beta_{\ell+1}$ and vary $(\beta_1,
\dots, \beta_\ell)$. By a theorem of Totik \cite{Totik01}, the map
$(\beta_1, \dots, \beta_\ell)\to \{\rho_\fre ([\alpha_j,
\beta_j])\}_{j=1}^\ell$ is a $C^\infty$ local bijection so the set
of $\beta$'s with rationally independent $\{\rho_\fre ([\alpha_j,
\beta_j])\}_{j=1}^\ell$ is dense. By conveniently labelling $\bbG$
(say, measure angles in  $\ti C_j^+$ about its center starting at
the point closest to $0$) in a way that is independent of
$\{\alpha_j,\beta_j\}_{j=1}^{\ell+1}$, all objects, that is,
$\calR(z;\vy)$ and $\Theta(z;\vy)$, are continuous in
$(\beta_1,\ldots,\beta_\ell)$ according to Theorem \ref{T2.1}.
Therefore, by repeating the above argument, we get \eqref{7.10} and
\eqref{7.12} by continuity.
\end{proof}

Finally, we want to note that sometimes $\varphi$ and so $\calR$ are
real analytic in $\vy$. We say that $\fre$ obeys a \emph{Diophantine
condition} if there are a constant, $C$, and an integer, $k$, so
that for $(n_1, \dots, n_\ell)\neq (0,\dots, 0)$ in $\bbZ^\ell$,
\begin{equation} \lb{7.36}
\biggl| \, \sum_{j=1}^\ell n_j \rho_\fre ([\alpha_1, \beta_j])\biggr| \geq C(1+\abs{n})^{-k}
\end{equation}
As is well known, given Totik's theorem quoted above, Lebesgue a.e.\ $\{\alpha_j,\beta_j\}_{j=1}^{\ell+1}$ lead
to a Diophantine $\fre$.

\begin{theorem}\lb{T7.7} If $\fre$ is Diophantine, the function $\varphi$ of \eqref{7.10} is real analytic in
$\vy$ and thus, $\calR(z;\vy)$ is real analytic in $\vy$.
\end{theorem}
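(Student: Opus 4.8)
The plan is to read \eqref{7.12} as a cohomological (small-divisor) equation over a Diophantine rotation and solve it by a Fourier argument. Since $\varphi$ is everywhere strictly positive by Theorem~\ref{T7.1}, set $\phi=\log\varphi$; then \eqref{7.12} becomes
\[
\phi(U(\vy))-\phi(\vy)=g(\vy),\qquad g(\vy):=\log\!\Big(\tfrac{a_1(\vy)}{\ca(\fre)}\Big).
\]
First I would verify that $g$ is real analytic on $\bbG$. By the remark after Theorem~\ref{T6.2}, $a_1(\vy)^2$ and $b_1(\vy)$ are extracted from the low-order Taylor coefficients at $z=0$ of the right side of \eqref{6.17}; since $\Theta(z;y)$ is jointly real analytic in $(z,y)$ (proof of Theorem~\ref{T5.3}), while $B$ and $\ca(\fre)$ depend only on $\fre$, and $\vy\mapsto U(\vy)$ is real analytic---its coordinates are the simple zeros of $m_{\vy}$ in the gaps (Theorem~\ref{T6.1}), which move real-analytically by the implicit function theorem---it follows that $a_1(\vy)>0$, hence $g$, is real analytic. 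The same explicit description \eqref{6.6}--\eqref{6.12} of $m_{\vy}$ together with the implicit function theorem shows, moreover, that $U$ is a real analytic diffeomorphism of $\bbG$.

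Next I would promote the real analytic homeomorphism $\ti\frA\colon\bbG\to\Gamma^*$ of Theorem~\ref{T5.3} to a diffeomorphism. By \eqref{6.16}, $\ti\frA$ conjugates $U$ to the rotation $T(\xi)=\xi\,\frA(\infty)^{-1}$ on $\Gamma^*$; differentiating $\ti\frA\circ U=T\circ\ti\frA$ and using $DT\equiv\mathrm{id}$ gives $D\ti\frA|_{U(\vy)}\circ DU|_{\vy}=D\ti\frA|_{\vy}$, so, $DU$ being invertible, the closed set $Z$ on which $D\ti\frA$ fails to be invertible obeys $U(Z)=Z$. When $\fre$ is Diophantine the numbers $\rho_\fre([\alpha_1,\beta_j])$ are rationally independent, so by Corollary~\ref{C6.3} every $U$-orbit is dense in $\bbG$; a nonempty closed invariant set would then be all of $\bbG$, which is impossible because a real analytic homeomorphism of $\ell$-tori has degree $\pm1$ and hence nonzero Jacobian somewhere. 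Thus $Z=\varnothing$ and $\ti\frA$ is a real analytic diffeomorphism with real analytic inverse.

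Now transfer everything to $\bbT^\ell=\bbR^\ell/\bbZ^\ell$: identify $\Gamma^*\cong(\partial\bbD)^\ell$ via \eqref{6.20x} and write $\xi_j=e^{2\pi it_j}$, so that by \eqref{4.32} the map $T$ becomes translation by $-\omega$ with $\omega=(\rho_\fre([\alpha_1,\beta_1]),\dots,\rho_\fre([\alpha_1,\beta_\ell]))$. Set $h:=\phi\circ\ti\frA^{-1}$ and $f:=g\circ\ti\frA^{-1}$; both are functions on $\bbT^\ell$, $f$ real analytic and $h$ continuous, and the equation becomes $h(t-\omega)-h(t)=f(t)$. Real analyticity of $f$ yields $|\hat f_k|\le Ce^{-\delta|k|}$ for some $\delta>0$; integrating the equation forces $\hat f_0=0$; and then $\hat h_k=\hat f_k/(e^{-2\pi ik\cdot\omega}-1)$ for $k\ne0$, with $\hat h_0$ fixed by the normalization $\varphi(\vy_0)=1$ of \eqref{7.11} (note $\ti\frA(\vy_0)$ is the identity, so $\vy_0$ corresponds to $t=0$). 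Read as a lower bound on $\dist(k\cdot\omega,\bbZ)$ (equivalently, with $1$ adjoined to the list of harmonic measures, which is the version actually needed and holds for a.e.\ $\fre$), the Diophantine hypothesis \eqref{7.36} gives $|e^{-2\pi ik\cdot\omega}-1|\ge c(1+|k|)^{-\kappa}$, so $|\hat h_k|\le C'(1+|k|)^{\kappa}e^{-\delta|k|}$ and $\sum_k\hat h_k e^{2\pi ik\cdot t}$ converges on a complex strip to a real analytic function; this function solves the same cohomological equation, and any two continuous solutions differ by a $T$-invariant continuous function, hence (orbits being dense) by a constant, which the normalization makes zero. Therefore $h$ is real analytic, so $\phi=h\circ\ti\frA$ is real analytic on $\bbG$, hence so is $\varphi=e^{\phi}$; and finally $\calR(z;\vy)=\varphi(\vy)\,\ti\Theta(z;\vy)$ is real analytic in $\vy$ since $\ti\Theta$ is jointly real analytic.

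The step I expect to be the main obstacle is the small-divisor analysis in the last paragraph: one must make sure the Diophantine condition is used in the form that actually controls the divisors $1-e^{-2\pi ik\cdot\omega}$ (i.e.\ in terms of $\dist(k\cdot\omega,\bbZ)$, which requires either interpreting \eqref{7.36} with $1$ among the frequencies or invoking the a.e.\ statement), and that the polynomial loss $(1+|k|)^{\kappa}$ does not spoil the exponential decay, so that one recovers real analyticity and not merely $C^\infty$ smoothness. A secondary point needing care is the promotion of $U$ and of $\ti\frA$ to the real analytic category, which is what permits the $\bbG$ and $\bbT^\ell$ descriptions to be identified real-analytically.
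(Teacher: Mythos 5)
Your proposal follows essentially the same route as the paper: take logarithms to turn \eqref{7.12} into the cohomological equation $S(U(\vy))-S(\vy)=L(\vy)$, pass to Fourier coefficients on the character torus where $U$ acts as the translation by the harmonic measures, and use the Diophantine condition to bound the small divisors $e^{2\pi i k\cdot\omega}-1$, noting that the polynomial loss does not destroy the exponential decay, so $S$ and hence $\varphi=e^{S}$ are real analytic. The additional care you supply---real analyticity of $L$ in the $\Gamma^*$ coordinates, analytic invertibility of $\ti\frA$, and reading \eqref{7.36} as a lower bound on $\dist(k\cdot\omega,\bbZ)$---fills in steps the paper treats implicitly, but the core argument is identical.
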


\begin{proof} $\Theta$ is real analytic in $y$, so by \eqref{7.10}, the statement about $\varphi$ implies that for
$\calR$.

Let
\begin{equation} \lb{7.37}
L(\vy) =\log \biggl( \f{a_1(\vy)}{\ca(\fre)}\biggr)
\end{equation}
Then
\begin{equation} \lb{7.38}
S(\vy)\equiv \log(\varphi(\vy))
\end{equation}
obeys
\begin{equation} \lb{7.39}
S(U(\vy)) - S(\vy) =L(\vy)
\end{equation}
Since $L$ is real analytic on the torus, its Fourier coefficients,
$l_{(n_1, \dots, n_\ell)}$, obey
\begin{equation} \lb{7.40}
\abs{l_{(n_1, \dots, n_\ell)}} \leq Ce^{-D\abs{n}}
\end{equation}
for some $C,D >0$. By \eqref{7.39}, the Fourier coefficients
$s_{(n_1, \dots, n_\ell)}$ for $S$ obey
\begin{equation} \lb{7.41}
(e^{in\cdot\omega} -1) s_{(n_1, \dots, n_\ell)} = l_{(n_1, \dots,
n_\ell)}
\end{equation}
where
\begin{equation} \lb{7.42}
n\cdot \omega =\sum_{j=1}^\ell n_j \rho_\fre ([\alpha_1, \beta_j])
\end{equation}
\eqref{7.41} implies $\ell_0=0$ and \eqref{7.40}--\eqref{7.41}
together with \eqref{7.36} imply that for any $\veps >0$,
\begin{equation} \lb{7.43}
\abs{s_{(n_1, \dots, n_\ell)}} \leq C_\veps e^{-(D-\veps)\abs{n}}
\end{equation}
This implies that $S(\vy)$ and so $\varphi(\vy)= e^{S(\vy)}$ are
real analytic.
\end{proof}

\section{Change of Reference Measure in Jost Functions} \lb{s8}

As is well known, OPRL obey a difference equation, \eqref{1.6}. We
will also be interested in other solutions. Since we have labelled
the Jacobi parameters starting at $n=1$, it will be useful to label
solutions that way too, that is, to look for solutions of
\begin{equation} \lb{8.1}
a_n u_{n+1} + (b_n-z) u_n + a_{n-1} u_{n-1} =0
\end{equation}
for $n=1,2, \dots$ where $a_0$ is often picked to be $1$, but in the
isospectral torus will be the natural two-sided almost periodic
$a_0$. Note that
\begin{equation} \lb{8.2}
u_n = p_{n-1}(z)
\end{equation}
$n=1,2, \dots$ is a solution of \eqref{8.1} with $p_{-1}=0$ (so $a_0$ is unimportant).

For any bounded Jacobi matrix, $J$, and $z\in\bbC_+$, there is a
unique solution of \eqref{8.1} which is $\ell^2$ at infinity, unique
up to an overall constant (see, e.g., \cite{Rice}). One natural
choice is the Weyl solution,
\begin{equation} \lb{8.3}
W_n(z) = \jap{\delta_n, (z-J)^{-1} \delta_1}
\end{equation}
It obeys \eqref{8.1} for $n=2,3,\dots$. At $n=1$, one has
\begin{equation} \lb{8.4}
a_1 W_2 + (b_1-z) W_1= -1
\end{equation}
(since $(J-z)(J-z)^{-1}\delta_1 =\delta_1$), and so \eqref{8.1} holds at $n=1$ if we define
\begin{equation} \lb{8.5}
W_0 = a_0^{-1}
\end{equation}

One defect of this solution is that, of course, $W_n(z)$ has a pole
at each discrete eigenvalue of $J$. To get around this, Jost had the
idea (for continuum Schr\"odinger operators) of multiplying $W_n(z)$
by a $z$-dependent constant $u_0(z)$ with a zero at the eigenvalues.
Then $u_0(z)W_n(z)$ can have a removable singularity at such $z$'s.
Our raw Jost functions have zeros at the $\xi_j$'s, so when we look
at solutions in the next two sections, the poles in the Weyl
solutions will be cancelled by these zeros. But we are going to want
to continue these solutions up to the bands also, and our raw Jost
functions have poles at the $\zeta_j$'s. These poles are not
intrinsic to the $y$'s, but come from the $w_{\vec{y}_0}$ term in
\eqref{7.7}. We thus want to consider modifying that.

\begin{definition} By the \emph{Szeg\H{o} class} for $\fre$, we mean the set of measures $d\mu$ of the form \eqref{6.4} where $w$
is supported on $\fre$ and obeys the Szeg\H{o} condition
\eqref{4.45} (with $f=w$), and so that outside $\fre$, $d\mu_\s$
only has countably many pure points $\{x_j\}_{j=1}^N$ obeying a
Blaschke condition of the form \eqref{4.64}.
\end{definition}

We are now prepared to define the Jost functions with reference
measure $d\mu_1$.

\begin{definition} Suppose $d\mu, d\mu_1$ are two measures in the Szeg\H{o} class for $\fre$. The \emph{Jost function},
$\J(z;\mu,\mu_1)$, with reference measure $d\mu_1$, is the
meromorphic function on $\bbD$ defined by
\begin{equation} \lb{8.6}
\J(z;\mu,\mu_1) = \f{\prod_{j=1}^N B(z,z_j)}{\prod_{j=1}^{N_1}
B(z,z_j^{(1)})} \, \exp \biggl( \f{1}{4\pi} \int
\f{e^{i\theta}+z}{e^{i\theta}-z} \, \log \biggl[
\f{w_1(\x(e^{i\theta}))}{w(\x(e^{i\theta}))}\biggr]\, d\theta\biggr)
\end{equation}
where $\{z_j\}_{j=1}^N$ are the points in $\calF$ with $\x(z_j)=x_j$
(and similarly for $z_j^{(1)}$ and $x_j^{(1)}$).
\end{definition}

Notice that, by Proposition~\ref{P4.8} and condition \eqref{4.64},
the Blaschke products converge. By Corollary~\ref{C4.6} and
\eqref{4.45}, the integral in \eqref{8.6} converges. Notice that if
$d\mu_1$ has pure points in the gaps, then $\J$ has poles in $\bbD$.
For this reason, we will normally consider only $d\mu_1$'s with no
such pure points, but since we want to consider the entire
isospectral torus later in this section, we allow for the
possibility.

We are heading towards proving that $\J$ is character automorphic. The key is

\begin{lemma} \lb{L8.1} Suppose $f$ is a real-valued function on $\partial\bbD$ so that
\begin{equation} \lb{8.7}
\int \abs{f(e^{i\theta})}\, \f{d\theta}{2\pi} <\infty
\end{equation}
and for all $\gamma\in\Gamma$,
\begin{equation} \lb{8.8}
f(\gamma(e^{i\theta})) = f(e^{i\theta})
\end{equation}
Define for $z\in\bbD$,
\begin{equation} \lb{8.9}
S_f(z) =\exp\biggl( \int \f{e^{i\theta}+z}{e^{i\theta}-z}\, f(e^{i\theta})\, \f{d\theta}{2\pi}\biggr)
\end{equation}
Then $S_f$ is character automorphic.
\end{lemma}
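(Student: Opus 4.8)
The plan is to reduce everything to the conformal invariance of harmonic measure on $\bbD$. First I would check that $S_f$ is well defined, analytic, and nowhere vanishing on $\bbD$: for each fixed $\theta$ the integrand $\tfrac{e^{i\theta}+z}{e^{i\theta}-z}f(e^{i\theta})$ is analytic in $z$, and on any compact $K\subset\bbD$ it is bounded in modulus by $C_K\abs{f(e^{i\theta})}$, which is integrable by \eqref{8.7}; hence $g(z):=\int\tfrac{e^{i\theta}+z}{e^{i\theta}-z}f(e^{i\theta})\tfrac{d\theta}{2\pi}$ is analytic on $\bbD$ and $S_f=e^{g}$ is analytic and nonvanishing. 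Since $\Real\tfrac{e^{i\theta}+z}{e^{i\theta}-z}=\tfrac{1-\abs{z}^2}{\abs{e^{i\theta}-z}^2}$, this also shows $\log\abs{S_f(z)}=\Real g(z)=u(z)$, the Poisson integral of $f$, so that proving $S_f$ character automorphic will come down to showing $u\circ\gamma=u$.

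The heart of the argument is the identity $u(\gamma(z))=u(z)$ for every $\gamma\in\Gamma$ and $z\in\bbD$. For this I would invoke the standard transformation law of the Poisson kernel under a M\"obius automorphism $\gamma$ of $\bbD$: under the substitution $e^{i\theta}=\gamma(e^{i\psi})$ one has $d\theta=\abs{\gamma'(e^{i\psi})}\,d\psi$ (this is \eqref{4.25x}) together with
\[
\frac{1-\abs{\gamma(z)}^2}{\abs{\gamma(e^{i\psi})-\gamma(z)}^2}\,\abs{\gamma'(e^{i\psi})} = \frac{1-\abs{z}^2}{\abs{e^{i\psi}-z}^2},
\]
which is exactly the statement that harmonic measure for $\bbD$ is invariant under conformal automorphisms. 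Applying this change of variables and then the invariance hypothesis \eqref{8.8}, $f\circ\gamma=f$, gives
\[
u(\gamma(z)) = \int \frac{1-\abs{\gamma(z)}^2}{\abs{e^{i\theta}-\gamma(z)}^2}\,f(e^{i\theta})\,\frac{d\theta}{2\pi} = \int \frac{1-\abs{z}^2}{\abs{e^{i\psi}-z}^2}\,f(\gamma(e^{i\psi}))\,\frac{d\psi}{2\pi} = u(z).
\]
Here \eqref{8.8} is understood to hold Lebesgue-a.e.\ on $\partial\bbD$, which is harmless since $\gamma$ restricted to $\partial\bbD$ is a real-analytic diffeomorphism.

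With $u\circ\gamma=u$ in hand, $\abs{S_f(\gamma(z))}=\abs{S_f(z)}$ for all $z\in\bbD$, so $h_\gamma(z):=S_f(\gamma(z))/S_f(z)$ is analytic and nonvanishing on $\bbD$ with $\abs{h_\gamma}\equiv 1$; applying the maximum principle to $h_\gamma$ and to $1/h_\gamma$ shows $h_\gamma$ is a constant $c(\gamma)$ with $\abs{c(\gamma)}=1$, i.e.\ $S_f(\gamma(z))=c(\gamma)S_f(z)$. The cocycle relation $c(\gamma\gamma')=c(\gamma)c(\gamma')$ then follows by comparing $S_f((\gamma\gamma')(z))=c(\gamma\gamma')S_f(z)$ with $S_f(\gamma(\gamma'(z)))=c(\gamma)S_f(\gamma'(z))=c(\gamma)c(\gamma')S_f(z)$, so $S_f$ is character automorphic. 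The only genuine content is the Poisson-kernel identity in the middle step (equivalently, conformal invariance of harmonic measure); the rest is routine, and I expect no real obstacle beyond being careful that the a.e.\ statement \eqref{8.8} suffices to push $f\circ\gamma=f$ through the integral.
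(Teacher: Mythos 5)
Your argument is correct, but it proves the lemma by a genuinely different route than the paper. You establish the interior identity $\log\abs{S_f(\gamma(z))}=\log\abs{S_f(z)}$ directly, for every $z\in\bbD$ and every $L^1$ function $f$, by the M\"obius transformation law of the Poisson kernel (conformal invariance of harmonic measure, using $d\theta=\abs{\gamma'(e^{i\psi})}\,d\psi$ as in \eqref{4.25x} together with $1-\abs{\gamma(z)}^2=\abs{\gamma'(z)}(1-\abs{z}^2)$ and $\abs{\gamma(w)-\gamma(z)}^2=\abs{\gamma'(w)}\,\abs{\gamma'(z)}\,\abs{w-z}^2$); then $h_\gamma=S_f\circ\gamma/S_f$ is analytic with $\abs{h_\gamma}\equiv 1$ on $\bbD$, hence constant, and the cocycle identity gives the character. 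The paper instead first assumes $f$ bounded, shows $\log h_\gamma\in H^\infty(\bbD)$ via the two-sided bound \eqref{8.13}--\eqref{8.15}, and deduces $\Real\log h_\gamma\equiv 0$ from the a.e.\ nontangential boundary values \eqref{8.16}--\eqref{8.18} together with the invariance hypothesis \eqref{8.8}; the general $L^1$ case is then handled by approximating $f$ in $L^1$ by bounded invariant functions and using compactness of $\Gamma^*$ to see that a locally uniform limit of character automorphic functions is character automorphic. Your approach buys a one-step proof valid for all $L^1$ data, with no truncation, no boundary-value machinery, and no limiting/compactness argument; the cost is the explicit kernel computation, which the paper's softer $H^\infty$ argument avoids. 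Your handling of the a.e.\ nature of \eqref{8.8} is also correct: $\gamma$ restricted to $\partial\bbD$ is a real-analytic diffeomorphism, so it preserves Lebesgue null sets and the invariance passes through the integral. One cosmetic remark: once $\abs{h_\gamma}\equiv 1$ on all of $\bbD$, the maximum modulus principle (or open mapping theorem) applied to $h_\gamma$ alone already forces constancy; the appeal to $1/h_\gamma$ is unnecessary, though harmless.
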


\begin{remark} If
\begin{equation} \lb{8.10}
f(e^{i\theta}) = \log \biggl[ \f{\Ima
w_{\vec{y}_0}(\x(e^{i\theta}))}{\Ima w_{\vec
y}(\x(e^{i\theta}))}\biggr]
\end{equation}
where $\vy$ has no points in $\bbD$, then $S_f$ is a raw Jost
function which, if $\vy\neq \vy_0$, is not automorphic but only
character automorphic (by Theorem~\ref{T7.3}). Thus, $S_f$ may have
a nontrivial character.
\end{remark}

\begin{proof} Suppose first there is $a>0$ so that
\begin{equation} \lb{8.11}
-a\leq f(e^{i\theta})\leq a
\end{equation}
for all $\theta$. Then, since
\begin{equation} \lb{8.12}
\int \Real \biggl( \f{ e^{i\theta}+z}{e^{i\theta}-z}\biggr)\, \f{d\theta}{2\pi} =1
\end{equation}
with positive integrand, we have
\begin{equation} \lb{8.13}
e^{-a} \leq \abs{S_f(z)} \leq e^a
\end{equation}
In particular, if $\gamma\in\Gamma$, then
\begin{equation} \lb{8.14}
h(z) \equiv \f{S_f(\gamma(z))}{S_f(z)}
\end{equation}
is analytic and
\begin{equation} \lb{8.15}
e^{-2a} \leq \abs{h(z)} \leq e^{2a}
\end{equation}
so one can define $\log(h)$ on $\bbD$ and it belongs to
$H^\infty(\bbD)$.

By \eqref{8.9}, for Lebesgue a.e.\ $\theta$,
\begin{equation} \lb{8.16}
\lim_{r\uparrow 1}\, \abs{S_f(re^{i\theta})} =e^{f(e^{i\theta})}
\end{equation}
Since $\gamma$ maps $\bbD$ to $\bbD$ and boundary values are nontangential limits, for Lebesgue a.e.\ $\theta$,
\begin{equation} \lb{8.17}
\lim_{r\uparrow 1}\, \abs{S_f(\gamma(re^{i\theta}))} = e^{f(\gamma(e^{i\theta}))} = e^{f(e^{i\theta})}
\end{equation}
by the hypothesis \eqref{8.8}.

It follows that for a.e.\ $\theta$,
\begin{equation} \lb{8.18}
\lim_{r\uparrow 1}\, \Real (\log(h(re^{i\theta}))) = \lim_{r\uparrow 1}\, \log\abs{h(re^{i\theta})} =0
\end{equation}
Since $\log\abs{h}$ is a bounded harmonic function, $\Real \log
(h(z))=0$, so $\log(h(z))=i\psi_\gamma$ for some real $\psi_\gamma$,
that is,
\begin{equation} \lb{8.19}
S_f(\gamma(e^{i\theta})) = e^{i\psi_\gamma} S_f(e^{i\theta})
\end{equation}
As usual, this implies that $\gamma\to e^{i\psi_\gamma}$ is a
character, and so $S_f$ is character automorphic.

If $f$ does not obey \eqref{8.11}, it is easy to write it as an $L^1$ limit of functions that do. Thus, $S_f$
is a uniform (on compact subsets of $\bbD$) limit of character automorphic functions. By the compactness of
$\Gamma^*$, it is easy to see that any such limit is character automorphic.
\end{proof}

\begin{theorem}\lb{T8.2} For any $d\mu, d\mu_1$ in the Szeg\H{o} class, $\J(z;\mu,\mu_1)$ is a character
automorphic function.
\end{theorem}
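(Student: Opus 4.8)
The plan is to decompose $\J(z;\mu,\mu_1)$ into a Blaschke part and a Szeg\H{o} part and handle each separately. The Szeg\H{o} part of \eqref{8.6} is exactly $S_f(z)$ in the sense of \eqref{8.9} with
\[
f(e^{i\theta}) = \tfrac12 \log\biggl[\f{w_1(\x(e^{i\theta}))}{w(\x(e^{i\theta}))}\biggr].
\]
This $f$ is $\Gamma$-invariant because $\x\circ\gamma = \x$, so \eqref{8.8} holds; and it lies in $L^1(\partial\bbD,\f{d\theta}{2\pi})$ because both $w$ and $w_1$ satisfy the Szeg\H{o} condition \eqref{4.45}, which by Corollary~\ref{C4.6} is equivalent to $\log(w\circ\x)$ and $\log(w_1\circ\x)$ being in $L^1(\partial\bbD,\f{d\theta}{2\pi})$. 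Hence Lemma~\ref{L8.1} applies directly and the Szeg\H{o} part is character automorphic.

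Next I would treat the Blaschke part $\prod_{j=1}^N B(z,z_j) \big/ \prod_{j=1}^{N_1} B(z,z_j^{(1)})$. Each $B(z,w)$ with $w\in\bbD$ is character automorphic by Theorem~\ref{T4.3}(ii), so each finite partial product is character automorphic, with character equal to the (pointwise) product of the characters $c_{z_j}$ divided by the product of the $c_{z_j^{(1)}}$. The point masses obey the Blaschke-type condition \eqref{4.64}, so by Proposition~\ref{P4.8} (equivalence of \eqref{4.62} and \eqref{4.64}) we have $\sum_j (1-\abs{z_j}) < \infty$, and likewise for $z_j^{(1)}$; hence by Theorem~\ref{T4.7}(ii) the infinite products $\prod_j B(z,z_j)$ and $\prod_j B(z,z_j^{(1)})$ converge uniformly on compact subsets of $\bbD$ to analytic functions, the denominator product being nonvanishing away from the $\{\gamma(z_j^{(1)})\}$. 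Therefore the quotient converges locally uniformly (away from those poles) to a meromorphic function on $\bbD$.

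Finally, I would pass the character-automorphy through the limit. Since $\Gamma^*$ is compact and the finite partial products are character automorphic, for each fixed $\gamma\in\Gamma$ the constants $c^{(N)}(\gamma)$ relating $F_N(\gamma(z))$ to $F_N(z)$ (where $F_N$ is the $N$-th partial product of the full Blaschke quotient) converge to a limit $c(\gamma)\in\Gamma^*$, and passing to the limit in $F_N(\gamma(z)) = c^{(N)}(\gamma) F_N(z)$ gives that the limiting Blaschke quotient is character automorphic. (Alternatively, one observes that $\prod_j B(z,z_j)$ is itself character automorphic directly, by the same telescoping argument used in the proof of Theorem~\ref{T4.3}(ii), applied to the enlarged ``sequence'' $\{\gamma(z_j)\}_{\gamma,j}$ as in the proof of Theorem~\ref{T4.7}.) A product of two character automorphic functions is character automorphic, so $\J(z;\mu,\mu_1)$ is character automorphic, with character the product of the characters of the Blaschke and Szeg\H{o} parts. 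The only mild subtlety --- the "main obstacle" --- is the interchange of the infinite product with the character relation; this is handled cleanly by the compactness of $\Gamma^*$ together with the locally uniform convergence guaranteed by Theorem~\ref{T4.7}.
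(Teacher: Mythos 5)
Your proposal is correct and follows essentially the same route as the paper, whose proof simply cites the definition \eqref{8.6}, the character automorphy \eqref{4.15} of the individual Blaschke factors, and Lemma~\ref{L8.1} for the Szeg\H{o} part. The extra details you supply (convergence of the infinite Blaschke products via Proposition~\ref{P4.8} and Theorem~\ref{T4.7}, and passing the character relation through the locally uniform limit) are exactly what the paper leaves implicit.
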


\begin{remark} For $d\mu, d\mu_1$ both in the isospectral torus, this follows from Theorem~\ref{T7.3} and
Theorem~\ref{T8.3} below.
\end{remark}

\begin{proof} Immediate from \eqref{8.6}, \eqref{4.15}, and Lemma~\ref{L8.1}.
\end{proof}

\begin{theorem}[Change of reference measure in $\J$] \lb{T8.3} Let $d\mu, d\mu_1, d\mu_2$ be three measures
in the Szeg\H{o} class. Then for all $z\in\bbD$,
\begin{equation} \lb{8.20}
\J(z;\mu,\mu_1) = \f{\J(z;\mu,\mu_2)}{\J(z;\mu_1,\mu_2)}
\end{equation}
In particular, for $\vy, \vy_1\in\bbG$, we have
\begin{equation} \lb{8.21}
\J(z;\mu_{\vec y},\mu_{\vec{y}_1}) =
\f{\calR(z;\vy)}{\calR(z;\vy_1)}
\end{equation}
\end{theorem}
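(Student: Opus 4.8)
The plan is to prove \eqref{8.20} by unwinding the defining formula \eqref{8.6}, and then to deduce \eqref{8.21} by showing that the raw Jost function $\calR(\,\cdot\,;\vy)$ is nothing but $\J(\,\cdot\,;\mu_{\vec y},\mu_{\vec y_0})$, the Jost function with the base-point measure $d\mu_{\vec y_0}$ as reference; then \eqref{8.21} is just the special case $\mu_2=\mu_{\vec y_0}$ of \eqref{8.20} (alternatively one can compare $\calR(z;\vy)/\calR(z;\vy_1)$ with \eqref{8.6} directly, but the reduction to \eqref{8.20} is cleaner).

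For \eqref{8.20} I would write out $\J(z;\mu,\mu_2)$, $\J(z;\mu_1,\mu_2)$ and $\J(z;\mu,\mu_1)$ from \eqref{8.6} and treat the Szeg\H{o} and Blaschke factors separately. In the exponential factor the three integrands are $\log[w_2/w]$, $\log[w_2/w_1]$ and $\log[w_1/w]$, so additivity of $\log$ gives the identity once one knows all three integrals converge absolutely, which follows from Corollary~\ref{C4.6} and the Szeg\H{o} condition \eqref{4.45} for $\mu,\mu_1,\mu_2$. In the Blaschke factor the point is that $\prod_j B(z,z_j)$, $\prod_j B(z,z_j^{(1)})$ and $\prod_j B(z,z_j^{(2)})$ each converge on their own: by the Blaschke condition \eqref{4.64} for the corresponding measure, Proposition~\ref{P4.8} gives $\sum_j(1-|z_j|)<\infty$ for the associated $\calF$-points, and Theorem~\ref{T4.7}(ii) then applies since the points lie in $\calF$. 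Hence the factor $\prod_j B(z,z_j^{(2)})$, common to numerator and denominator of $\J(z;\mu,\mu_2)/\J(z;\mu_1,\mu_2)$, cancels, leaving exactly $\prod_j B(z,z_j)/\prod_j B(z,z_j^{(1)})$, the Blaschke factor of $\J(z;\mu,\mu_1)$. This step is purely formal apart from the convergence bookkeeping.

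For \eqref{8.21} I would first record that each $d\mu_{\vec y}$ lies in the Szeg\H{o} class, so that $\J(\,\cdot\,;\mu_{\vec y},\mu_{\vec y_1})$ is even defined: Theorem~\ref{T6.1A} gives the lower bound $w_{\vec y}(x)\geq C\sqrt{|R(x)|}$ from \eqref{6.5a}, whence \eqref{4.45} holds (using that $\log|R|$ is $d\rho_\fre$-integrable, the equilibrium density having only inverse square-root singularities at the band edges), and $d\mu_{\vec y}$ has at most $\ell$ mass points, so \eqref{4.64} is trivial. Next, comparing \eqref{7.7} with \eqref{8.6} taken with $d\mu=d\mu_{\vec y}$ and $d\mu_1=d\mu_{\vec y_0}$, the two Szeg\H{o} factors are literally the same. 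The substantive step is matching the Blaschke factors, i.e.\ showing $\prod_{\{j\,\mid\,|\xi_j|<1\}}B(z,\xi_j)=\prod_{j=1}^N B(z,z_j)$: by Theorem~\ref{T6.1} the poles of $m_{\vec y}$ are $y_1,\dots,y_\ell$ (one in each $G_j$) and $\infty_-$, and by \eqref{6.3}, \eqref{6.13} the mass points of $d\mu_{\vec y}$ are precisely the $\pi(y_j)$ with $y_j\in\calS_+$ and $\pi(y_j)\in(\beta_j,\alpha_{j+1})$; since $\x^\sharp$ carries $\bbD$ into $\calS_+$, $\partial\bbD\setminus\calL$ into $\pi^{-1}(\fre)$ and $\{|z|>1\}\setminus\calL$ into $\calS_-$ (via \eqref{2.12}), the point $\xi_j\in\ti C_j^+$ with $\x^\sharp(\xi_j)=y_j$ lies in $C_j^+\subset\bbD$ exactly when $y_j\in\calS_+$, i.e.\ $|\xi_j|<1$ exactly for the $j$ whose gap carries a mass point; and for such $j$, since $\x$ maps $C_j^+$ onto $(\beta_j,\alpha_{j+1})$ with $C_j^+\subset\calF$ and $\x(\xi_j)=\pi(y_j)$, the point $\xi_j$ is the unique $\calF$-point over that mass point, i.e.\ $\xi_j=z_j$. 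Finally, since $\vy_0=(\beta_1,\dots,\beta_\ell)$ consists only of branch points, $d\mu_{\vec y_0}$ has no mass points, so the denominator Blaschke product of $\J(z;\mu_{\vec y},\mu_{\vec y_0})$ is empty; hence $\calR(z;\vy)=\J(z;\mu_{\vec y},\mu_{\vec y_0})$, and \eqref{8.21} follows by applying \eqref{8.20} with $\mu=\mu_{\vec y}$, $\mu_1=\mu_{\vec y_1}$, $\mu_2=\mu_{\vec y_0}$.

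I do not expect a genuine obstacle here: the whole argument is the geometric dictionary between poles of $m_{\vec y}$ on the physical sheet $\calS_+$ and points $\xi_j\in\bbD$, already built into the definitions of $\x^\sharp$ and $\calF$ in Sections~\ref{s2} and \ref{s5}. If anything deserves care it is the (entirely routine) verification that $d\mu_{\vec y}$ is in the Szeg\H{o} class, for which the uniform bound \eqref{6.5a} of Theorem~\ref{T6.1A} is exactly what is needed, together with correctly sorting out which $\xi_j$ lie inside $\bbD$.
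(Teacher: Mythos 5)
Your proposal is correct and follows essentially the same route as the paper: the paper's (one-line) proof of \eqref{8.20} is exactly the separation and recombination of the Blaschke and Szeg\H{o} factors in \eqref{8.6}, with \eqref{8.21} resting on the identification $\calR(z;\vy)=\J(z;\mu_{\vec y},\mu_{\vec{y}_0})$. Your write-up simply makes explicit what the paper leaves implicit---the convergence bookkeeping, the Szeg\H{o}-class membership of $d\mu_{\vec y}$ via \eqref{6.5a}, and the fact that the $\xi_j$ with $\abs{\xi_j}<1$ are precisely the $\calF$-points over the mass points of $d\mu_{\vec y}$.
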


\begin{remark} By ``all $z\in\bbD$,'' we either mean except for the discrete set of poles and zeros or else
in the sense of meromorphic functions.
\end{remark}

\begin{proof} In \eqref{8.6}, the Blaschke products and the $\log[w_1]$, $\log[w]$ factors can be separated
out and cancelled and recombined.
\end{proof}

\begin{corollary}\lb{C8.4} For any Szeg\H{o} class measure, $d\mu_1$, the character $\calJ_{\mu_1}(\vy)$ of
$\J(z;\mu_{\vec y},\mu_1)$ defines a real analytic bijection of
$\bbG$ and $\Gamma^*$.
\end{corollary}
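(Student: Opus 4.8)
The plan is to reduce everything to the already-established raw Jost isomorphism of Theorem~\ref{T7.3} via the change-of-reference-measure formula \eqref{8.21}. First I would observe that by \eqref{8.21} (the special case of Theorem~\ref{T8.3} with $d\mu=d\mu_{\vec y}$, $d\mu_2=d\mu_{\vec{y}_0}$ the base point, and $d\mu_1$ arbitrary in the Szeg\H{o} class), one has for each $\vy\in\bbG$
\begin{equation}
\J(z;\mu_{\vec y},\mu_1) = \f{\calR(z;\vy)}{\J(z;\mu_1,\mu_{\vec y}_0)^{-1}}\cdot\J(z;\mu_1,\mu_{\vec y}_0)^{-1},
\end{equation}
more simply, $\J(z;\mu_{\vec y},\mu_1)=\calR(z;\vy)/\calR(z;\mu_1)$ when $\mu_1$ itself lies on the torus, and in general $\J(z;\mu_{\vec y},\mu_1)=\J(z;\mu_{\vec y},\mu_{\vec y}_0)\,\J(z;\mu_{\vec y}_0,\mu_1)=\calR(z;\vy)\,g(z)$ where $g(z)=\J(z;\mu_{\vec y}_0,\mu_1)$ is a single character automorphic function independent of $\vy$. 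Since the character of a product is the product of characters in $\Gamma^*$ (an abelian group), the character $\calJ_{\mu_1}(\vy)$ of $\J(z;\mu_{\vec y},\mu_1)$ equals $\calJ_r(\vy)\cdot c_g$, where $\calJ_r(\vy)=\ti\frA(\vy)$ is the raw Jost character from \eqref{7.14} and $c_g\in\Gamma^*$ is a fixed character (that of $g$).

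Next I would invoke Theorem~\ref{T7.3}, which asserts that $\calJ_r=\ti\frA\colon\bbG\to\Gamma^*$ is a real analytic bijection. Translation by the fixed element $c_g$ is a real analytic bijection of the torus $\Gamma^*$ onto itself (it is just multiplication in the group, which is real analytic with real analytic inverse $c\mapsto c\,c_g^{-1}$). Hence $\calJ_{\mu_1}=(\,\cdot\,c_g)\circ\calJ_r$ is a composition of two real analytic bijections, so it is itself a real analytic bijection of $\bbG$ onto $\Gamma^*$. That $\J(z;\mu_{\vec y},\mu_1)$ is genuinely character automorphic (so that ``character'' makes sense) is exactly Theorem~\ref{T8.2}.

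I expect the only point requiring care — not so much an obstacle as a bookkeeping check — is the well-definedness and $\vy$-independence of the ``extra'' factor $g(z)=\J(z;\mu_{\vec y}_0,\mu_1)$: one must make sure that Theorem~\ref{T8.3} applies with $d\mu_1$ in the full Szeg\H{o} class (possibly with mass points in gaps, hence $g$ only meromorphic), and that in the cocycle identity \eqref{8.20} the pole/zero bookkeeping is consistent so that $\calR(z;\vy)\,g(z)$ really has the meromorphic function $\J(z;\mu_{\vec y},\mu_1)$ as its value in the sense of the Remark following Theorem~\ref{T8.3}. Once that is in hand, the character identity $\calJ_{\mu_1}(\vy)=\ti\frA(\vy)\,c_g$ is immediate from multiplicativity of characters, and the conclusion follows. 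No new estimates are needed; this corollary is purely a transport of Theorem~\ref{T7.3} along the group translation induced by the change of reference measure.
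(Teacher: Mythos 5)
Your argument is essentially the paper's own proof: via the cocycle identity \eqref{8.20} (with \eqref{8.21} and $\calR(z;\vy_0)=1$) one gets $\calJ_{\mu_1}(\vy)=\calJ_r(\vy)\,\calJ_{\mu_1}(\vy_0)$, and then the conclusion follows from Theorem~\ref{T7.3} together with the fact that multiplication by a fixed character is a real analytic bijection of $\Gamma^*$. Aside from the garbled first display (which you immediately correct to the right factorization $\J(z;\mu_{\vec y},\mu_1)=\calR(z;\vy)\,\J(z;\mu_{\vec{y}_0},\mu_1)$), the proposal is correct and takes the same route.
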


\begin{proof} By \eqref{8.20},
\begin{equation} \lb{8.22}
\calJ_{\mu_1}(\vy) = \calJ_r(\vy) \calJ_{\mu_1}(\vy_0)
\end{equation}
Since $\vy\mapsto\calJ_r(\vy)$ is a bijection of $\bbG$ and
$\Gamma^*$ (by Theorem~\ref{T7.3}) and $\chi\mapsto \chi
\calJ_{\mu_1}(\vy_1)$ is a bijection of $\Gamma^*$, $\vy\mapsto
\calJ_{\mu_1}(\vy)$ is a bijection.
\end{proof}

\begin{corollary}\lb{C8.5} For any $\vy,\vy_1\in\bbG$,
\begin{equation} \lb{8.23}
\J(z;\mu_{\vec y},\mu_{\vec{y}_1}) = \varphi(\vy)
\varphi(\vy_1)^{-1}\,\f{ \ti\Theta(z;\vy)}{\ti\Theta(z;\vy_1)}
\end{equation}
where $\varphi$ is the function of Theorem~\ref{T7.1} and
$\ti\Theta$ is given by \eqref{7.13}.
\end{corollary}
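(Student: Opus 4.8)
The plan is to combine two results already in hand: the reference-measure identity \eqref{8.21} from Theorem~\ref{T8.3}, which expresses $\J(z;\mu_{\vec y},\mu_{\vec{y}_1})$ as the ratio $\calR(z;\vy)/\calR(z;\vy_1)$ of raw Jost functions, with the factorization \eqref{7.10} of Theorem~\ref{T7.1}, namely $\calR(z;\vy) = \varphi(\vy)\prod_{j=1}^\ell\Theta(z;y_j)$. Using the abbreviation $\ti\Theta(z;\vy)=\prod_{j=1}^\ell\Theta(z;y_j)$ introduced in \eqref{7.13}, the latter reads $\calR(z;\vy)=\varphi(\vy)\,\ti\Theta(z;\vy)$, and likewise with $\vy$ replaced by $\vy_1$.

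First I would invoke \eqref{8.21} to write
\[
\J(z;\mu_{\vec y},\mu_{\vec{y}_1}) = \f{\calR(z;\vy)}{\calR(z;\vy_1)}.
\]
Then, substituting \eqref{7.10} in both numerator and denominator, the right-hand side becomes
\[
\f{\varphi(\vy)\,\ti\Theta(z;\vy)}{\varphi(\vy_1)\,\ti\Theta(z;\vy_1)} = \varphi(\vy)\,\varphi(\vy_1)^{-1}\,\f{\ti\Theta(z;\vy)}{\ti\Theta(z;\vy_1)},
\]
which is exactly \eqref{8.23}. There is really no obstacle here; the only point worth a word is that $\varphi$ is everywhere strictly positive on $\bbG$ (Theorem~\ref{T7.1}), so the division by $\varphi(\vy_1)$ is legitimate, and that the identity is understood in the sense of meromorphic functions on $\bbD$ (as in the remark to Theorem~\ref{T8.3}), the poles and zeros on both sides matching by construction of $\calR$ and $\Theta$.
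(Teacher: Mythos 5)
Your proposal is correct and matches the paper's own argument, which also deduces \eqref{8.23} immediately by combining \eqref{8.21} with the factorization \eqref{7.10}. The extra remarks about positivity of $\varphi$ and the meromorphic interpretation are harmless and consistent with the remark following Theorem~\ref{T8.3}.
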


\begin{proof} Immediate from \eqref{7.10} and \eqref{8.21}.
\end{proof}

Since we want to make the poles of $\J$ as far from $\calS_+$ as
possible, we define $\ve{w}$ to be the point on $\bbG$ whose
coordinates $(w_1, \dots, w_\ell)$ have points $\ti\zeta_1, \dots,
\ti\zeta_\ell$ in $\ti C_j^+$ with $\x^\sharp(\ti\zeta_j)=w_j$ and
\begin{equation} \lb{8.24}
\abs{\ti\zeta_j} = \max_{\zeta\in\ti C_j^+}\, \abs{\zeta}
\end{equation}

\begin{definition} Let $d\nu$ be the measure in $\calT_\fre$ associated to $\ve{w}$. For any $d\mu$ in the
Szeg\H{o} class, we define the \emph{Jost function}, $u(z;\mu)$, by
\begin{equation} \lb{8.25}
u(z;\mu) = \J(z;\mu,\nu)
\end{equation}
For $\vy\in\bbG$, we use $u(z;\vy)$ for $u(z;\mu_{\vec y})$.
\end{definition}

$u(z;\mu)$ will play a major role in the later papers of this
series. $u(z;\vy)$ will concern us in the rest of this paper. We
begin by noting

\begin{theorem}\lb{T8.6} There is a neighborhood, $N$\!, of $\ol\calF$ {\rm{(}}closure and neighborhood in
$\bbC${\rm{)}} so that each $u(\,\cdot\,;\vy)$ is analytic in $N$
and $u$ is uniformly bounded on $N$ and in $\vy\in\bbG$.
\end{theorem}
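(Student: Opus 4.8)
The plan is to read $u$ off the formulas already proved, pin down its poles relative to $\ol\calF$, get the uniform bound first on $\bbD$ by a maximum-principle argument, and then extend it to a full complex neighbourhood by joint continuity and compactness. First I would write, using \eqref{8.25}, \eqref{8.21} and \eqref{7.10},
\[
u(z;\vy)=\J(z;\mu_{\vec y},\nu)=\f{\calR(z;\vy)}{\calR(z;\ve w)}=\f{\varphi(\vy)}{\varphi(\ve w)}\,\f{\ti\Theta(z;\vy)}{\ti\Theta(z;\ve w)}.
\]
By Corollary~\ref{C7.2}, applied to both $\vy$ and $\ve w$, the simple poles of $\calR(\,\cdot\,;\vy)$ at $\{\gamma(\zeta_j)\}_{\gamma\in\Gamma,\,j}$ are cancelled by those of $\calR(\,\cdot\,;\ve w)$, so $u(\,\cdot\,;\vy)$ is meromorphic on $\bbC\cup\{\infty\}\setminus\calL$ with poles, all simple, only at $\{\gamma(\ti\zeta_j)\}_{\gamma\in\Gamma,\,j=1,\dots,\ell}$. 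The choice \eqref{8.24} forces $\abs{\ti\zeta_j}>1$ (a complete orthocircle crosses $\partial\bbD$ transversally, so has points of modulus both above and below $1$, and its unique maximizer cannot sit on $\partial\bbD$), hence $\ti\zeta_j\notin\ol\bbD$; since each $\gamma\in\Gamma$ maps $\bbC\cup\{\infty\}\setminus\ol\bbD$ to itself, the whole pole set lies outside $\ol\bbD\supset\ol\calF$ and accumulates only on $\calL$. As $\ol\calF\cap\partial\bbD$ is the union of the arcs $\ti\calR$, whose endpoints are preimages of the band-edges $\alpha_j,\beta_j$ and hence lie in the domain of the extended $\x$, i.e.\ not in $\calL$, the compact set $\ol\calF$ is at positive distance from $\calL\cup\{\gamma(\ti\zeta_j)\}_{\gamma,j}$. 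So I may fix an open $N\supset\ol\calF$ with $\ol N$ compact and disjoint from $\calL\cup\{\gamma(\ti\zeta_j)\}$; then every $u(\,\cdot\,;\vy)$ is analytic on $N$.

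Next I would prove the uniform bound on $\bbD$. From \eqref{7.7}, on $\partial\bbD\setminus\calL$ the Blaschke part of $\calR(\,\cdot\,;\vy)$ has modulus $1$ and the Szeg\H{o} part has modulus $\bigl(w_{\vec{y}_0}/w_{\vec y}\bigr)^{1/2}\circ\x$, so a.e.\ $\abs{u(e^{i\theta};\vy)}=\bigl(w_{\ve w}(\x(e^{i\theta}))/w_{\vec y}(\x(e^{i\theta}))\bigr)^{1/2}$. Because $\ve w$ was chosen so that each $\pi(w_j)$ lies in the \emph{open} gap $(\beta_j,\alpha_{j+1})$, the polynomial $a$ in \eqref{6.12} (for $\ve w$) is bounded away from $0$ on $\fre$, so \eqref{6.9} gives $w_{\ve w}(x)\le c_1\sqrt{\abs{R(x)}}$ there; and \eqref{6.16a}, together with the uniform upper bound on $c$ from Theorem~\ref{T6.1A}(ii), gives $w_{\vec y}(x)\ge c_2\sqrt{\abs{R(x)}}$ uniformly in $\vy$. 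Hence $\abs{u(e^{i\theta};\vy)}\le M_0:=\sqrt{c_1/c_2}$ a.e., uniformly in $\vy$. Since $u(\,\cdot\,;\vy)$ is analytic in $\bbD$ (all its poles are outside $\ol\bbD$) and factors as $u=B^{(\vy)}\Sigma^{(\vy)}$ with $B^{(\vy)}$ the contractive Blaschke part of $\calR(\,\cdot\,;\vy)$ and $\Sigma^{(\vy)}$ outer with $\abs{\Sigma^{(\vy)}(e^{i\theta})}=\abs{u(e^{i\theta};\vy)}$ a.e., the Poisson representation of $\log\abs{\Sigma^{(\vy)}}$ yields $\abs{u(z;\vy)}\le M_0$ for all $z\in\bbD$ and all $\vy$.

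To promote this to a bound on $N$, I would show that $(z,\vy)\mapsto u(z;\vy)$ is jointly continuous on $N\times\bbG$; then it is bounded on the compact set $\ol{N'}\times\bbG$ for any $N'$ with $\ol\calF\subset N'\Subset N$, and $N'$ is the neighbourhood asserted by the theorem. Away from the discrete set $\{\gamma(\zeta_j)\}_{\gamma,j}$ this is immediate from the joint real-analyticity of $\Theta(z;y)$ (proof of Theorem~\ref{T5.3}), its meromorphic continuation in $z$, and the continuity of $\varphi$. At a point $z_*=\gamma(\zeta_j)\in\ol\calF$, both $\Theta(\,\cdot\,;y_j)$ and $\Theta(\,\cdot\,;w_j)$ have simple poles; reducing by the character relation $\Theta(\gamma(z);y)=c_y(\gamma)\Theta(z;y)$ to the case $z_*=\zeta_j$ and using Lemma~\ref{L5.2}, the common factor $(\x(z)-\x(\zeta_j))^{-1}$ — a double pole of each squared theta since $\x(\zeta_j)=\beta_j$ is a branch point — cancels in the ratio, the residue of $\Theta(\,\cdot\,;w_j)$ at $z_*$ is a fixed nonzero number, and that of $\Theta(\,\cdot\,;y_j)$ is bounded and continuous in $y_j$ (the surviving factors $\x(z)-\x(\ti\zeta_j)$, $\eta_{\ti\zeta_j}(z)$ and $\eta_{\xi_j}(0)$ staying bounded away from $0$ and $\infty$ near $z_*$, because $\pi(w_j)$ is interior to the gap and $G_\fre$ is bounded on $[\beta_j,\alpha_{j+1}]$, cf.\ \eqref{4.30}). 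Hence the ratio — and so $u$ — extends continuously across each $z_*$, uniformly in $\vy$, which gives the required joint continuity.

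I expect this last step, the behaviour of $u$ near the $\Gamma$-orbit of the band-edge preimages $\zeta_j$ where the individual theta factors blow up and the cancellations must be controlled uniformly in $\vy$, to be the only genuine difficulty; the first two steps are essentially bookkeeping with \eqref{8.24}, Corollary~\ref{C7.2}, \eqref{6.16a}, Theorem~\ref{T6.1A}, and the maximum principle. As an alternative to the joint-continuity argument one can keep the maximum-principle bound, use the reflection identity $\abs{u(1/\bar z;\vy)}=\abs{u(z;\vy)}^{-1}$ — valid because the Szeg\H{o}-part integrand is even in $\theta$, as $\x(e^{-i\theta})=\x(e^{i\theta})$ — together with a uniform lower bound for $\abs{u}$ on $\ti\calR$ away from its endpoints to cover the exterior sliver of $N$, and dispatch neighbourhoods of those endpoints by the same residue computation.
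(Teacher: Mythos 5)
Your skeleton is the paper's own: the paper proves this theorem in one line by reading everything off \eqref{8.23}/\eqref{8.26}, i.e.\ $u(z;\vy)=\varphi(\vy)\varphi(\ve{w})^{-1}\,\ti\Theta(z;\vy)/\ti\Theta(z;\ve{w})$, whose poles lie only at $\{\gamma(\ti\zeta_j)\}$. Your first step (those poles lie outside $\ol\bbD$, accumulate only on $\calL$, and $\ol\calF$ is at positive distance from $\calL$, hence a pole-free neighborhood $N$) is exactly that argument, and your boundary-value bound $\abs{u}\le M_0$ on $\bbD$ via \eqref{6.5a} and the Poisson representation is correct and a nice supplement. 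The trouble is in the step you yourself identify as the crux: the $\vy$-uniform control of $u$ on the part of $N$ on and beyond $\partial\bbD$.

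After cancelling the common factor $(\x(z)-\beta_j)^{-1}$, the square of $\Theta(z;y_j)/\Theta(z;w_j)$ still contains the pair $(\x(z)-\x(\xi_j))\,\eta_{\xi_j}(z)$, and your list of ``surviving factors bounded away from $0$ and $\infty$ near $z_*$'' omits precisely these. They are not uniformly harmless: for $\xi_j\in C_j^+$, $\eta_{\xi_j}=B(\,\cdot\,,\xi_j)$ has a pole at the reflected point $\ol{\xi_j}^{-1}$, which enters every fixed neighborhood of $z_*=\zeta_j$ as $y_j\to\beta_j$; only the product stays bounded, because the zero of $\x(z)-\x(\xi_j)$ at $\ol{\xi_j}^{-1}$ cancels that pole, and this needs a quantitative estimate uniform in $y_j$ (e.g.\ write $\x(z)-\x(\xi)=F(z,\xi)(z-\xi)(z-\ol{\xi}^{-1})$ with $F$ bounded via a maximum-principle/Cauchy estimate near the critical point of $\x$, and use $\abs{b(z,\xi)}=\abs{z-\xi}/(\abs{\xi}\,\abs{z-\ol{\xi}^{-1}})$, so that $\abs{(\x(z)-\x(\xi))B(z,\xi)}\le C\abs{z-\xi}^2$ there). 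Comparing residues at the single point $z_*$ only gives the value of the ratio at $z_*$, not a locally uniform bound, so ``extends continuously across $z_*$, uniformly in $\vy$'' does not follow as written. The same moving zero--pole collision occurs near the preimages of the $\alpha_{j+1}$'s (arc endpoints of $\ol\calF\cap\partial\bbD$ that are not in the orbit of $\zeta_j$), where you call joint continuity ``immediate''; the joint real analyticity quoted from the proof of Theorem~\ref{T5.3} is stated only on $\bbD\times\cup_j G_j$ and does not cover exterior points near these endpoints. Finally, your fallback is unsound: $\abs{u(1/\bar z;\vy)}=\abs{u(z;\vy)}^{-1}$ holds for the Schwarz-integral formula \eqref{8.6} evaluated at $\abs{z}>1$, which is a \emph{different} function from the analytic continuation of $u$ across $\partial\bbD$ (they agree only if $\abs{u}\equiv1$ on the boundary, i.e.\ $w_{\ve{w}}=w_{\vec y}$); the continuation is the theta quotient, and it is the theta quotient that must be bounded. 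The theorem is true and your plan is repairable by the cancellation estimate above, but as written the uniformity step fails.
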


\begin{proof} Obvious from \eqref{8.23}, which says that
\begin{equation} \lb{8.26}
u(z;\vy) = \varphi(\vy) \varphi(\ve{w})^{-1}\,
\f{\ti\Theta(z;\vy)}{\ti\Theta(z;\ve{w})}
\end{equation}
and the fact that the ratio of $\ti\Theta$'s has poles only at
$\{\gamma(\ti\zeta_j)\}_{\gamma\in\Gamma,\, j=1,\dots,\ell}$.
\end{proof}

\begin{proposition}\lb{P8.7} For $z\in\bbR$, $u(z;\vy)>0$.
\end{proposition}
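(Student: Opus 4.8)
The plan is to read the result straight off the explicit product formula \eqref{8.26} established in the proof of Theorem~\ref{T8.6}, namely
\[
u(z;\vy) = \varphi(\vy)\,\varphi(\ve{w})^{-1}\,\f{\ti\Theta(z;\vy)}{\ti\Theta(z;\ve{w})}
= \varphi(\vy)\,\varphi(\ve{w})^{-1}\,\prod_{j=1}^\ell \f{\Theta(z;y_j)}{\Theta(z;w_j)},
\]
so that the proposition reduces to checking positivity of each of the three ingredients on the right. First I would observe that the scalar prefactor $\varphi(\vy)\varphi(\ve{w})^{-1}$ is a positive real number: by Theorem~\ref{T7.1}, $\varphi$ is a continuous, everywhere strictly positive function on $\bbG$, so both $\varphi(\vy)$ and $\varphi(\ve{w})$ are positive.

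Next I would invoke Corollary~\ref{C5.5}: the identity \eqref{5.24a}, $\Theta(\bar z;y)=\ol{\Theta(z;y)}$, together with the assertion following it, tells us that each $\Theta(\,\cdot\,;y)$ is real and strictly positive (hence finite and nonvanishing) on $\bbR$. Applying this with $y=y_j$ and with $y=w_j$ for $j=1,\dots,\ell$, every factor $\Theta(z;y_j)/\Theta(z;w_j)$ is, for real $z$, a quotient of two positive reals and hence positive, so the product over $j$ is positive. Multiplying by the positive prefactor yields $u(z;\vy)>0$ for $z\in\bbR$, as claimed. If one wants to be careful about the domain: $u(\,\cdot\,;\vy)$ is analytic on a neighborhood of $\ol\calF\supset(-1,1)$ by Theorem~\ref{T8.6}, and the only poles of $\ti\Theta(\,\cdot\,;\ve{w})$ sit at $\{\gamma(\ti\zeta_j)\}_{\gamma\in\Gamma,\,j}$, whose $\x$-images lie in the closed gaps $[\beta_j,\alpha_{j+1}]\subset[\alpha_1,\beta_{\ell+1}]$ and therefore are disjoint from $\x((-1,1))=\bbR\cup\{\infty\}\setminus[\alpha_1,\beta_{\ell+1}]$; so no poles intervene on the real axis.

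There is essentially no obstacle to overcome at this stage: the positivity of $\Theta(\,\cdot\,;y)$ on $\bbR$ (and, with it, the absence of zeros and poles of $\Theta$ there) was already secured in Corollary~\ref{C5.5}, while the substantive facts that $\varphi$ is a genuine positive function and that \eqref{8.26} holds are packaged in Theorems~\ref{T7.1} and~\ref{T8.6}. In other words, all the real work — the construction of $\varphi$ via the step-by-step sum rule and the change-of-reference-measure identity — has been done beforehand, and this proof is simply the bookkeeping that combines those pieces.
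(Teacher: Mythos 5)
Your proof is correct and is essentially the paper's own argument: the paper likewise reads positivity directly off \eqref{8.26}, using that $\varphi$ is strictly positive (Theorem~\ref{T7.1}) and that each $\Theta(\,\cdot\,;y)$ is real and strictly positive on $\bbR$ (Corollary~\ref{C5.5}). Your extra remarks about the location of the poles of $\ti\Theta(\,\cdot\,;\ve{w})$ are harmless elaboration already subsumed in the positivity statement of Corollary~\ref{C5.5}.
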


\begin{proof} Follows from \eqref{8.26} and the facts that $\varphi$
is positive and that $\Theta$ is positive on $\bbR$.
\end{proof}

\section{Jost Solutions} \lb{s9}

One big benefit of the covering map formalism is that it provides
explicit information about solutions of \eqref{8.1} for $J$ in the
isospectral torus and, thereby, of ground states, spectral
theorist's Green's function, etc. We begin by moving the Weyl
solutions, \eqref{8.3}, to $\bbD$:

\begin{definition} For $z\in\bbD$, the \emph{Weyl solution} is defined by
\begin{equation} \lb{9.1}
W_n(z) =\jap{\delta_n, (\x(z)-J)^{-1}\delta_1}
\end{equation}
\end{definition}

For the case $\fre=[-2,2]$, this function is studied in Section~13.9
of \cite{OPUC2}. The proof of Proposition 13.9.3 of \cite{OPUC2} is
purely algebraic and immediately extends to our context:

\begin{theorem}\lb{T9.1} Suppose $J$ is the Jacobi matrix of any OPRL with
$\sigma_\ess(J)=\fre$ and define $M$ by
\begin{equation}
M(z)=-m(\x(z))
\end{equation}
for $z\in\bbD$. Let $J^{(n)}$ be the $n$-times stripped Jacobi
matrix, that is,
\begin{equation} \lb{9.2}
a_j^{(n)} =a_{n+j} \qquad b_j^{(n)} =b_{n+j}
\end{equation}
and denote by $M^{(n)}$ its $m$-function on $\bbD$. Then
\begin{equation} \lb{9.3}
W_n(z) =M(z) (a_1 M^{(1)}(z))\cdots (a_{n-1}M^{(n-1)}(z))
\end{equation}
\end{theorem}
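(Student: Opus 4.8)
The plan is to prove \eqref{9.3} by induction on $n$, using the recursion that links $W_n$ to $W_{n-1}$ and the coefficient-stripping identity for $M$-functions. First I would recall that the Weyl solution $W_n(z) = \jap{\delta_n, (\x(z)-J)^{-1}\delta_1}$ satisfies the difference equation \eqref{8.1} for $n\geq 2$, together with the boundary relations \eqref{8.4}--\eqref{8.5} at $n=1$. In particular, $W_1(z) = \jap{\delta_1, (\x(z)-J)^{-1}\delta_1} = m(\x(z)) = -M(z)$; wait---one must check the sign convention. Since $m(w) = \jap{\delta_1, (J-w)^{-1}\delta_1} = \int d\mu(x)/(x-w)$ and $M(z) = -m(\x(z))$, we have $\jap{\delta_1,(\x(z)-J)^{-1}\delta_1} = -m(\x(z)) = M(z)$, so $W_1(z) = M(z)$, which matches \eqref{9.3} for $n=1$ (the product over $j$ from $1$ to $0$ being empty).

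For the inductive step, the key is the standard resolvent identity expressing $\jap{\delta_n,(\x(z)-J)^{-1}\delta_1}$ in terms of the once-stripped matrix. Concretely, writing $J$ in block form with the first row/column split off, one gets that for $n\geq 2$,
\begin{equation}
W_n(z) = \jap{\delta_n, (\x(z)-J)^{-1}\delta_1} = a_1\, M(z)\, \jap{\delta_{n-1}, (\x(z)-J^{(1)})^{-1}\delta_1},
\end{equation}
which is precisely the content of Proposition 13.9.3 of \cite{OPUC2}: the factor $M(z) = \jap{\delta_1,(\x(z)-J)^{-1}\delta_1}$ captures the ``return to site $1$'' and $a_1$ is the hopping amplitude, while the remaining inner product is the Weyl solution $W_{n-1}^{(1)}(z)$ for the stripped matrix $J^{(1)}$. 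Applying the inductive hypothesis to $J^{(1)}$ gives
\begin{equation}
W_{n-1}^{(1)}(z) = M^{(1)}(z)\,(a_1^{(1)} M^{(2)}(z))\cdots(a_{n-2}^{(1)} M^{(n-1)}(z)),
\end{equation}
and since $a_j^{(1)} = a_{j+1}$ and $(M^{(1)})^{(k)} = M^{(k+1)}$, substituting back and collecting the factor $a_1 M(z)$ yields \eqref{9.3}.

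The main obstacle---and really the only non-bookkeeping point---is establishing the stripping identity $W_n(z) = a_1 M(z) W_{n-1}^{(1)}(z)$ cleanly at the level of resolvent matrix elements, keeping the sign conventions for $M$ versus $m$ straight and handling the $n=1$ boundary case \eqref{8.4}--\eqref{8.5} consistently. As the authors note, this is purely algebraic: it uses only the block decomposition of $(\x(z)-J)$ and the Schur complement formula, with no analytic input, so it transfers verbatim from the $\fre=[-2,2]$ case treated in \cite{OPUC2}. Once that identity is in hand, the induction is immediate. I would therefore structure the writeup as: (i) record $W_1 = M$; (ii) state and prove (or cite) the one-step stripping identity; (iii) run the induction.
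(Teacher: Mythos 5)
Your proof is correct, and it is essentially the argument the paper invokes: the paper gives no proof of its own but cites Proposition~13.9.3 of \cite{OPUC2} as a ``purely algebraic'' fact that transfers verbatim, and your Schur-complement/block-decomposition derivation of the one-step stripping identity $W_n = a_1 M\, W_{n-1}^{(1)}$, followed by induction with $W_1 = M$, is exactly that algebraic argument spelled out.
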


\begin{definition} Suppose $d\mu$ lie in the Szeg\H{o} class. Then the \emph{Jost solution} is defined for $z\in\bbD$ by
\begin{equation} \lb{9.4}
u_n(z) =u(z;\mu) W_n(z)
\end{equation}

We focus here on the case $d\mu=d\mu_{\vec y}$ for $\vy\in\bbG$, in
which case we use the notation $u_n(z;\vy)$.
\end{definition}

\begin{theorem}\lb{T9.2} For $n\geq 1$, we have
\begin{equation} \lb{9.5}
u_n(z;\vy) = a_n^{-1} B(z)^n u(z;U^n(\vy))
\end{equation}
If \eqref{9.5} is used to define $u_n$ for all $n\in\bbZ$, then
\begin{SL}
\item[{\rm{(i)}}] $u_n$ solves \eqref{8.1}.
\item[{\rm{(ii)}}] There is a neighborhood, $N$\!, of $\ol\calF$ so that $u_n(z;\vy)$ has an analytic
continuation to $N$ and is real analytic in $\vy$.
\item[{\rm{(iii)}}] $B(z)^{-n} u_n(z;\vy)$ is almost periodic in $n$. Indeed, uniformly for $z\in N$
and $\vy\in\bbG$, it is real analytic quasiperiodic.
\end{SL}
\end{theorem}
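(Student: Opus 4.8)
The plan is to establish the closed form \eqref{9.5} first and then to obtain (i)--(iii) from it by combining it with the analyticity and continuity facts already proved for $u$, $B$, and the Jacobi parameters. To prove \eqref{9.5} I would start from Theorem~\ref{T9.1}, i.e.\ $W_n(z)=M_{\vec y}(z)\prod_{j=1}^{n-1}a_j(\vy)M_{U^j(\vec y)}(z)$, and feed in the step-by-step sum rule \eqref{7.17}: applying it at the stripped data $U^{j}(\vy)$ and using $a_1(U^{j}(\vy))=a_{j+1}(\vy)$ gives $a_{j+1}(\vy)M_{U^j(\vec y)}(z)=B(z)\calR(z;U^{j+1}(\vy))/\calR(z;U^{j}(\vy))$. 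Substituting this (and the $j=0$ instance for $M_{\vec y}$ itself) into the product, the factors $a_j(\vy)/a_{j+1}(\vy)$ telescope to $a_1(\vy)/a_n(\vy)$ and the factors $\calR(z;U^{j+1}(\vy))/\calR(z;U^{j}(\vy))$ telescope to $\calR(z;U^n(\vy))/\calR(z;\vy)$, leaving $W_n(z)=a_n(\vy)^{-1}B(z)^n\calR(z;U^n(\vy))/\calR(z;\vy)$. Since $u(z;\vy)=\J(z;\mu_{\vec y},\mu_{\ve w})=\calR(z;\vy)/\calR(z;\ve w)$ by \eqref{8.25} and \eqref{8.21}, multiplying through and cancelling $\calR(z;\vy)$ gives \eqref{9.5} for $n\ge1$; for $n=0$ both sides equal $a_0(\vy)^{-1}u(z;\vy)$ by \eqref{8.5}, so \eqref{9.5} is consistent with \eqref{9.4} wherever both apply.

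For (i), the structural observation is the cocycle identity $u_{n+k}(z;\vy)=B(z)^k u_n(z;U^k(\vy))$, immediate from \eqref{9.5} together with $a_m(U^k(\vy))=a_{m+k}(\vy)$ and the analogous relation for $b$. Using it, the difference equation \eqref{8.1} at site $m$ for the sequence $n\mapsto u_n(z;\vy)$ is exactly $B(z)^{m-1}$ times \eqref{8.1} at site $1$ for $n\mapsto u_n(z;U^{m-1}(\vy))$; since $B(z)^{m-1}\ne0$ it suffices to verify \eqref{8.1} at site $1$ for every $\vy'\in\bbG$. There $u_0(z;\vy')=a_0(\vy')^{-1}u(z;\vy')$, while $u_1,u_2$ equal $u(z;\vy')$ times $W_1,W_2$, so the left-hand side factors as $u(z;\vy')\bigl(a_1(\vy')W_2+(b_1(\vy')-\x(z))W_1+1\bigr)$, which vanishes by the inhomogeneous Weyl relation \eqref{8.4} at $n=1$. (For $n\ge1$ this merely recovers that $W_n$ is a Weyl solution; the content of (i) is that \eqref{9.5} continues it coherently to $n\le0$.)

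For (ii) and (iii) everything is read off \eqref{9.5} in the form $u_n(z;\vy)=a_n(\vy)^{-1}B(z)^n u(z;U^n(\vy))$. By Theorem~\ref{T8.6}, $u(\,\cdot\,;\vy')$ extends analytically to a fixed neighborhood of $\ol\calF$, uniformly bounded in $\vy'$; by Theorems~\ref{T4.3}--\ref{T4.4}, $B$ is analytic there as well, since its poles $\ol{\gamma(0)}^{-1}$ lie outside $\ol\bbD$ and cluster only on $\calL$, which is disjoint from $\ol\calF$. Hence on a slightly smaller neighborhood $N$ of $\ol\calF$ each $u_n(\,\cdot\,;\vy)$ is analytic, and uniformly bounded in $n$ and $\vy$ once one notes $a_n(\vy)$ is bounded away from $0$ (Corollary~\ref{C7.4}); this is (ii). For (iii), \eqref{6.21}--\eqref{4.32} identify the orbit $n\mapsto U^n(\vy)$ with the linear flow $n\mapsto\bigl(\chi_j e^{-2\pi i n\rho_\fre([\alpha_1,\beta_j])}\bigr)_{j=1}^\ell$ on $\Gamma^*\cong(\partial\bbD)^\ell$, where $(\chi_1,\dots,\chi_\ell)$ represents $\ti\frA(\vy)$; thus $a_n(\vy)^{-1}$ and $u(z;U^n(\vy))$, and so $B(z)^{-n}u_n(z;\vy)=a_n(\vy)^{-1}u(z;U^n(\vy))$, are values along this flow of fixed functions on $(\partial\bbD)^\ell$, i.e.\ quasiperiodic in $n$ with periods $\{\rho_\fre([\alpha_1,\beta_j])\}$, uniformly in $z\in N$ and $\vy\in\bbG$. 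Here $a_n(\vy)^{-1}=a_1(U^{n-1}(\vy))^{-1}$ is real analytic in $\vy$ (as $a_1$ is real analytic on $\bbG$ via the theta-function formula following Theorem~\ref{T6.2} and $U$ acts by translation on $\Gamma^*$), so the $\vy$-regularity of $u_n$ --- and hence the ``real analytic'' refinement of the quasiperiodicity --- is exactly that of $\varphi$ from Section~\ref{s7}. This is where the only real difficulty lies: $\varphi$ is real analytic when $\fre$ is Diophantine (Theorem~\ref{T7.7}) but in general only continuous, the open question noted after \eqref{7.9}; every other step is routine bookkeeping once the sum rule \eqref{7.17} is granted.
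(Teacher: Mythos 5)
Your proposal is correct and follows essentially the same route as the paper: \eqref{9.5} is obtained by feeding the step-by-step sum rule \eqref{7.17} into the product formula \eqref{9.3} of Theorem~\ref{T9.1} and telescoping (this is exactly the paper's \eqref{9.6}--\eqref{9.7}); (i) comes from the shift identity (the paper's \eqref{9.8}, your cocycle relation) together with the fact that the Weyl solution satisfies \eqref{8.1} at site $1$ because of \eqref{8.4} and $W_0=a_0^{-1}$; and (ii), (iii) are read off \eqref{9.5} using Theorem~\ref{T8.6}, the analyticity of $B$ near $\ol\calF$, and the fact that $\ti\frA$ conjugates $U$ to multiplication by $\frA(\infty)^{-1}$ on $\Gamma^*$.

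Concerning your closing caveat: you are not missing a step that the paper supplies. The paper's proof of (ii) says only that it is immediate from Theorem~\ref{T8.6}, and that theorem is silent about $\vy$-regularity; by \eqref{8.26}, real analyticity of $u(z;\dott)$ in $\vy$ (hence of $u_n(z;\dott)$, and of the function on $\Gamma^*$ generating the quasiperiodic sequence in (iii)) is equivalent to real analyticity of $\varphi$, which the paper establishes only under the Diophantine hypothesis (Theorem~\ref{T7.7}) and explicitly flags as an open question after \eqref{7.9}. So your conditional reading of the ``real analytic in $\vy$'' and ``real analytic quasiperiodic'' refinements is the careful one; the analytic continuation in $z$, the uniform bounds, the solution property (i), and (continuous) quasiperiodicity you have proved unconditionally, in the same way the paper does.
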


\begin{proof} By \eqref{7.17},
\begin{equation} \lb{9.6}
a_{j+1} M^{(j)}(z) = B(z)\, \f{u(z;U^{j+1}(\vy))}{u(z;U^j(\vy))}
\end{equation}
Thus, by \eqref{9.3},
\begin{align}
a_n W_n(z) &= \prod_{j=0}^{n-1} a_{j+1} M^{(j)}(z) \notag \\
&= B(z)^n \, \f{u(z;U^n(\vy))}{u(z;\vy)} \lb{9.7}
\end{align}
which is \eqref{9.5}.

By \eqref{9.5}, we have
\begin{equation} \lb{9.8}
a_{n+j}u_{n+j}(z;\vy) = a_nB(z)^j u_n(z;U^j(\vy))
\end{equation}
for all $n,j\in\bbZ$. Since $W_n$ solves \eqref{8.1} for $n\geq 1$,
$u_n$ does also, and then by \eqref{9.8}, $u_n$ solves \eqref{8.1}
for all $n\in\bbZ$. (ii) is immediate from Theorem~\ref{T8.6}. (iii)
is then immediate from \eqref{9.5} and the fact that under
$\ti\frA$, $U$ is transformed to multiplication (by
$\frA(\infty)^{-1}$) on $\Gamma^*$.
\end{proof}

For $x\in\fre$, define $u^+$ by picking $\z(x)\in\ol\calF$ with
$\Ima \z\geq 0$, so that $\x(\z(x))=x$, and letting
\begin{equation} \lb{9.9}
u_n^+(x;\vy) = u_n(\z(x);\vy)
\end{equation}
We also let
\begin{equation} \lb{9.10}
u_n^- (x;\vy) = \ol{u_n^+ (x;\vy)} = u_n(\ol{\z(x)};\vy)
\end{equation}

\begin{theorem}\lb{T9.3} Define the Wronskian, $\Wr(f,g)$, of two solutions of \eqref{8.1} by
\begin{equation} \lb{9.11}
\Wr(f,g) =a_n (f_{n+1} g_n - f_n g_{n+1})
\end{equation}
{\rm{(}}which is $n$-independent{\rm{)}}. Then, if $x=\x(\z(x))$ and
\begin{equation} \lb{9.12}
\Ima \z>0 \qquad x\in\fre^\intt
\end{equation}
we have
\begin{equation} \lb{9.13}
\Wr(u_\bddot^+ (x;\vy), u_\bddot^- (x;\vy)) = 2\pi i
\abs{u(\z(x);\vy)}^2 w_{\vec y}(x)
\end{equation}
where $w_{\vec y}$ is the weight in the spectral measure,
$d\mu_{\vec y}$.
\end{theorem}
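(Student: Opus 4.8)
The plan is to compute the ($n$-independent) Wronskian at the single index $n=1$, reduce the left side to $2i\,|u(\z(x);\vy)|^2$ times $a_1(\vy)\,\Ima\bigl(W_2(\z(x))\overline{W_1(\z(x))}\bigr)$, and then identify this last quantity with $\Ima M_{\vec y}(\z(x))=\pi w_{\vec y}(x)$ by feeding in Theorem~\ref{T9.1} and the coefficient-stripping relation \eqref{7.28}.

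First I would note that all the relevant functions extend to the boundary point $\z(x)$: by \eqref{9.4} (extended via \eqref{9.5}) one has $u_n(\z;\vy)=u(\z;\vy)\,W_n(\z)$ on the neighborhood $N$ of $\ol\calF$ where everything is analytic (Theorems~\ref{T8.6}, \ref{T9.2}(ii)), and $\z(x)\in N\cap\partial\bbD$ is a point of analyticity of $M_{\vec y},M_{U(\vy)}$ and of non-vanishing of $u(\,\cdot\,;\vy)$, because the poles of those $m$-functions, and the zeros of $u(\,\cdot\,;\vy)$, lie over the gaps $G_j$ and the band edges $\{\alpha_j,\beta_j\}$, whereas $\x(\z(x))=x\in\fre^\intt$. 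Thus $u(\z(x);\vy)$ is a finite nonzero number. Since $u_n^-=\overline{u_n^+}$ by \eqref{9.10} and the Wronskian \eqref{9.11} is $n$-independent, evaluating at $n=1$ gives
\[
\Wr(u^+_\bddot,u^-_\bddot)=a_1(\vy)\bigl(u^+_2\overline{u^+_1}-u^+_1\overline{u^+_2}\bigr)=2i\,a_1(\vy)\,\abs{u(\z(x);\vy)}^2\,\Ima\bigl(W_2(\z(x))\overline{W_1(\z(x))}\bigr).
\]

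Next I would apply Theorem~\ref{T9.1}, which gives $W_1(z)=M_{\vec y}(z)$ and $W_2(z)=a_1(\vy)\,M_{\vec y}(z)\,M_{U(\vy)}(z)$ (the once-stripped $m$-function on $\bbD$ being $M_{U(\vy)}$). Hence $W_2\overline{W_1}=a_1(\vy)\,\abs{M_{\vec y}}^2\,M_{U(\vy)}$, and since $a_1(\vy)>0$,
\[
a_1(\vy)\,\Ima\bigl(W_2(\z(x))\overline{W_1(\z(x))}\bigr)=a_1(\vy)^2\,\abs{M_{\vec y}(\z(x))}^2\,\Ima M_{U(\vy)}(\z(x)).
\]
Now \eqref{7.28} is an identity of meromorphic functions on $\bbD$, hence valid at $\z(x)$ by continuity; there $\x(\z(x))=x$ is real, so taking imaginary parts of $M_{\vec y}(\z(x))^{-1}=x-b_1-a_1(\vy)^2 M_{U(\vy)}(\z(x))$ yields $-\Ima M_{\vec y}(\z(x))/\abs{M_{\vec y}(\z(x))}^2=-a_1(\vy)^2\,\Ima M_{U(\vy)}(\z(x))$, i.e.\ $a_1(\vy)^2\abs{M_{\vec y}(\z(x))}^2\Ima M_{U(\vy)}(\z(x))=\Ima M_{\vec y}(\z(x))$. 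Combining the three displays, the factors of $a_1(\vy)$ cancel and
\[
\Wr(u^+_\bddot,u^-_\bddot)=2i\,\abs{u(\z(x);\vy)}^2\,\Ima M_{\vec y}(\z(x)).
\]

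Finally, since $\Ima\z(x)>0$ by \eqref{9.12}, the point $\z(x)$ is approached from $\calF\cap\bbC_+$, on which $\x$ takes values in $\bbC_-$; therefore $M_{\vec y}(\z(x))=-m_{\vec y}(x-i0)=-\ol{m_{\vec y}(x+i0)}$, so $\Ima M_{\vec y}(\z(x))=\Ima m_{\vec y}(x+i0)=\pi w_{\vec y}(x)$ by \eqref{6.16d}. This gives \eqref{9.13}. The one step demanding care is the passage of \eqref{7.28} and of Theorem~\ref{T9.1} to the boundary point $\z(x)$, together with the verification that $\z(x)$ is a regular point at which $u(\,\cdot\,;\vy)$ does not vanish; this is precisely where the hypothesis $x\in\fre^\intt$ is used, via the description of the zero/pole sets of $u(\,\cdot\,;\vy)$ and the $m$-functions as lying over the gaps and band edges. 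Everything else is the elementary Wronskian manipulation above.
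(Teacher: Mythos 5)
Your proposal is correct and is essentially the paper's argument: both exploit the $n$-independence of the Wronskian, the factorization $u_n=u(\z;\vy)W_n(\z)$ with $u^-_n=\ol{u^+_n}$, and the identification of the boundary value $\Ima M_{\vec y}(\z(x))$ with $\pi w_{\vec y}(x)$. The only difference is one of economy: you evaluate at $n=1$ and then need Theorem~\ref{T9.1} plus the stripping identity \eqref{7.28} to collapse $a_1\Ima\bigl(W_2\ol{W_1}\bigr)$ back to $\Ima M_{\vec y}$, whereas the paper evaluates at $n=0$, where $W_0=a_0^{-1}$ is real, so that $\Wr(\ol W,W)=-2i\,\Ima W_1=-2i\,\Ima M_{\vec y}$ at once.
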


\begin{proof} Since $W_0=a_0^{-1}$,
\begin{align}
\Wr (\bar W,W) &= a_0 (\bar W_1 a_0^{-1} -W_1 a_0^{-1}) \notag \\
&= -2i \Ima W_1 \lb{9.14}
\end{align}
Taking into account that
\begin{equation} \lb{9.15}
\f 1\pi\Ima m(x_0 + i0) = w(x_0)
\end{equation}
and
\begin{equation} \lb{9.16}
\Wr(\bar c\bar f,cf) = \abs{c}^2 \Wr(\bar f,f)
\end{equation}
we get \eqref{9.13}.
\end{proof}

Recall (\cite[Ch.~3]{Rice}) that the transfer matrix, $T_n(z)$, for
a Jacobi matrix updates solutions of \eqref{8.1} via
\begin{equation} \lb{9.17}
\begin{pmatrix} u_{n+1} \\ a_n u_n \end{pmatrix} = T_n(z)
\begin{pmatrix} u_1 \\ a_0 u_0 \end{pmatrix}
\end{equation}
and is given by
\begin{equation} \lb{9.18}
T_n(z) = \begin{pmatrix}
p_n(z) & -q_n(z) \\
a_np_{n-1}(z) & - a_nq_{n-1}(z)
\end{pmatrix}
\end{equation}
where $q_n$ are the second kind polynomials. As usual, if we want to
indicate the underlying point in the isospectral torus, we write
$T_n(z;\vy)$.

\begin{theorem}\lb{T9.4} There is a constant $C$ so that uniformly for $\vy\in\bbG$ and $x\in\fre^\intt$,
\begin{equation} \lb{9.19}
\norm{T_n(x;\vy)} \leq C\, \dist(x,\bbR\setminus\fre)^{-1/2}
\end{equation}
\end{theorem}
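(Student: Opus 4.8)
The plan is to express the transfer matrix $T_n(x;\vy)$ in terms of the two Jost solutions $u_n^\pm(x;\vy)$ and then use the bounds we have already established on $u$ and on the Wronskian. The key point is that, by Theorem~\ref{T9.2}, $u_n(z;\vy) = a_n^{-1} B(z)^n u(z;U^n(\vy))$, and on $\fre^\intt$ we have $\abs{B(x)}=1$ (since $\z(x)\in\partial\bbD\setminus\calL$), so $\abs{u_n^\pm(x;\vy)} = a_n^{-1}\abs{u(\z(x);U^n(\vy))}$. By Theorem~\ref{T8.6}, $u$ is uniformly bounded on a neighborhood of $\ol\calF$ and uniformly in $\vy\in\bbG$; since $a_n(\vy)$ is uniformly bounded below (the $a_n$'s are an almost periodic sequence of positive numbers varying over the compact torus $\bbG$, all nonzero), we get a uniform bound $\abs{u_n^\pm(x;\vy)}\leq C_1$ for all $n\in\bbZ$, all $x\in\fre$, all $\vy\in\bbG$. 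Similarly $\abs{a_n(\vy)}$ is uniformly bounded above.

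First I would write $T_n$ acting on the basis of solutions $u^\pm$. Since $u^+$ and $u^-$ are complex conjugates and (for $x\in\fre^\intt$) linearly independent, $\{u^+_\bddot, u^-_\bddot\}$ is a basis for the solution space of \eqref{8.1}, and $T_n$ maps $\binom{u_1}{a_0 u_0}$ to $\binom{u_{n+1}}{a_n u_n}$ for either solution. Writing the $2\times2$ matrix whose columns are $\binom{u^+_{n+1}}{a_n u^+_n}$ and $\binom{u^-_{n+1}}{a_n u^-_n}$ as $V_n$, we have $T_n = V_n V_0^{-1}$. The entries of $V_n$ are bounded by $C(1+\abs{a_n})\leq C_2$ uniformly (using the bound on $u^\pm$ above and the uniform upper bound on $a_n$). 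The determinant of $V_n$ is, up to sign, the Wronskian $\Wr(u^+_\bddot,u^-_\bddot)$, which by Theorem~\ref{T9.3} equals $2\pi i \abs{u(\z(x);\vy)}^2 w_{\vec y}(x)$; this is $n$-independent, so $\det V_n = \det V_0 = \Wr(u^+,u^-)$. Hence $T_n = V_n V_0^{-1}$ has $\norm{T_n}\leq \norm{V_n}\,\norm{V_0^{-1}} \leq C_2^2 / \abs{\det V_0}$ by Cramer's rule, and $\abs{\det V_0} = 2\pi \abs{u(\z(x);\vy)}^2 w_{\vec y}(x)$.

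The remaining task is to bound $\abs{u(\z(x);\vy)}^2 w_{\vec y}(x)$ from below by $c\,\dist(x,\bbR\setminus\fre)^{1/2}$ uniformly in $\vy$. For the weight, the lower bound in \eqref{6.5a}, namely $w_{\vec y}(x)\geq C\sqrt{\abs{R(x)}}$, combined with the standard comparison $\abs{R(x)}\geq c\,\dist(x,\bbR\setminus\fre)$ on $\fre$ (which is \eqref{6.16g}), gives $w_{\vec y}(x)\geq C'\dist(x,\bbR\setminus\fre)^{1/2}$. For $\abs{u(\z(x);\vy)}$, I would argue it is bounded below away from zero uniformly: by Proposition~\ref{P8.7}, $u(\cdot;\vy)>0$ on $\bbR$, and by \eqref{8.26}, $u(z;\vy) = \varphi(\vy)\varphi(\ve{w})^{-1}\ti\Theta(z;\vy)/\ti\Theta(z;\ve{w})$; on $\ol\calF\cap\partial\bbD$ the function $\ti\Theta(\cdot;\vy)$ is continuous, nonvanishing (its zeros $\{\gamma(\xi_j)\}$ all have $\abs{\xi_j}\leq1$ but are interior to the orthocircles, hence off $\ol\calF\cap\partial\bbD$), positive, and jointly continuous in $(z,\vy)$ on the compact set $(\ol\calF\cap\partial\bbD)\times\bbG$, as is $\varphi$ (continuous and strictly positive on $\bbG$). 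So $\abs{u(\z(x);\vy)}$ achieves a positive minimum over this compact set. Combining, $\abs{\det V_0}\geq c\,\dist(x,\bbR\setminus\fre)^{1/2}$, which yields \eqref{9.19}.

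The main obstacle I anticipate is the bookkeeping around the boundary behavior: $\z(x)$ lies on $\partial\bbD$, and Theorem~\ref{T8.6} gives analyticity and boundedness of $u$ in a complex neighborhood $N$ of $\ol\calF$, but I need to be careful that the relevant quantities ($u$, $\ti\Theta$, $\varphi$) are genuinely continuous up to and including the boundary arc $\ol\calF\cap\partial\bbD$ and jointly continuous in $\vy$, so that compactness delivers the uniform positive lower bound on $\abs{u(\z(x);\vy)}$ — and, symmetrically, that the factor $\abs{B(x)}=1$ identity and the boundedness of $u(\cdot;U^n(\vy))$ hold uniformly in $n$, which follows because $U^n(\vy)$ stays in the compact torus $\bbG$. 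Once these uniformities are pinned down, the rest is the routine linear-algebra estimate $\norm{T_n}=\norm{V_nV_0^{-1}}\leq \norm{V_n}\norm{V_0^{-1}}$ with the numerator bounded and the denominator's determinant controlled from below.
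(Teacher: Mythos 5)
Your overall route is the paper's route: write $T_n=U_nU_0^{-1}$ with the columns built from $u^\pm$, bound $\norm{U_n}$ by the uniform boundedness of the Jost solutions (Theorems~\ref{T9.2} and \ref{T8.6}, plus $\abs{B}=1$ on $\partial\bbD\setminus\calL$ and the uniform bounds on $a_n$), and control $\abs{\det U_0}$ through the Wronskian formula \eqref{9.13}. The gap is in your last step, where you split the Wronskian $2\pi\abs{u(\z(x);\vy)}^2 w_{\vec y}(x)$ into two factors and claim a uniform positive lower bound for $\abs{u(\z(x);\vy)}$ on $(\ol\calF\cap\partial\bbD)\times\bbG$. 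That claim is false. By \eqref{8.26}, the zeros of $u(\,\cdot\,;\vy)$ are at $\{\gamma(\xi_j)\}$ with $\xi_j\in\ti C_j^+$, $\x^\sharp(\xi_j)=y_j$; as $y_j$ runs over the closed gap $G_j$, the point $\xi_j$ runs over all of $\ti C_j^+$, including its two points on $\partial\bbD$. When $y_j=\alpha_{j+1}$ (a resonance), $\xi_j$ is precisely the endpoint of the boundary arcs with $\x(\xi_j)=\alpha_{j+1}$, so $u(\,\cdot\,;\vy)$ vanishes at a point of $\ol\calF\cap\partial\bbD$, and for nearby $\vy$ the zero is arbitrarily close to the arcs. (Proposition~\ref{P8.7} gives positivity only on $(-1,1)$, not on $\partial\bbD$; also the zeros with $\abs{\xi_j}<1$ lie on $C_j^+\subset\calF$ itself, not strictly inside the removed disks.) With only $w_{\vec y}(x)\geq c\,\dist(x,\bbR\setminus\fre)^{1/2}$ from \eqref{6.5a} and no lower bound on $\abs{u}$, your estimate degrades near a resonant band edge to $\norm{T_n(x;\vy)}\leq C\,\dist(x,\bbR\setminus\fre)^{-3/2}$, which is weaker than \eqref{9.19}.

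The repair is to bound the Wronskian as a single quantity: uniformly in $\vy\in\bbG$ and $x\in\fre^\intt$, $\abs{u(\z(x);\vy)}^2 w_{\vec y}(x)\geq c\,\dist(x,\bbR\setminus\fre)^{1/2}$. The point is a cancellation visible from the explicit formula \eqref{6.9} together with \eqref{6.16d}: the only factor of $w_{\vec y}$ that can degenerate as $x$ approaches a band edge $x_0$ is $\abs{x-\pi(y_j)}^{-1}$, and it blows up exactly when $\abs{u(\z(x);\vy)}^2$ acquires a compensating zero; at a resonance $w_{\vec y}(x)\sim\abs{x-x_0}^{-1/2}$ while $\abs{u(\z(x);\vy)}^2\sim\abs{x-x_0}$, so the product is still of order $\abs{x-x_0}^{1/2}$. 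This is the content of Proposition~\ref{P9.10} and its remark; since the limit there is continuous in $\vy$ and $\bbG$ is compact (and there are finitely many band edges), it upgrades to the needed uniform product bound, while away from small neighborhoods of the band edges your compactness argument does give a positive lower bound on $\abs{u}$. With that replacement, your argument coincides with the paper's proof.
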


\begin{remark} This result is used in Proposition~7.2 of \cite{BLS}.
\end{remark}

\begin{proof} Let $U_n(x;\vy)$ be the matrix
\begin{equation} \lb{9.20}
U_n(x;\vy) = \begin{pmatrix}
u_{n+1}^+ (x;\vy) & u_{n+1}^- (x;\vy) \\
a_n u_n^+ (x;\vy) & a_n u_n^- (x;\vy)
\end{pmatrix}
\end{equation}
Then
\begin{equation} \lb{9.21}
T_n(x;\vy) U_0(x;\vy) = U_n(x;\vy)
\end{equation}
so
\begin{equation} \lb{9.22}
T_n(x;\vy) = U_n(x;\vy) U_0(x;\vy)^{-1}
\end{equation}
and, since $2\times 2$ matrices obey $\norm{C^{-1}}=\abs{\det(C)}^{-1} \norm{C}$,
\begin{equation} \lb{9.23}
\norm{T_n(x;\vy)} \leq \abs{\det(U_0(x;\vy))}^{-1}
\norm{U_0(x;\vy)}\, \norm{U_n(x;\vy)}
\end{equation}

As $u_n$ is uniformly bounded in $n$, $\vy$, and $x\in\fre$, and
$\det(U_0)$ is the Wronskian, by \eqref{9.13},
\[
\norm{T_n(x;\vy)} \leq C w_{\vec y}(x)^{-1}
\]
which yields \eqref{9.19}, given \eqref{6.5a}.
\end{proof}

\begin{corollary} \lb{C9.5} Uniformly in $n$, $x\in\fre^\intt$, and $\vy\in\bbG$,
\[
C_1\, \dist (x,\bbR\setminus\fre) \leq \abs{p_n(x)}^2 +
\abs{p_{n-1}(x)}^2 \leq C_2\, \dist (x,\bbR\setminus\fre)^{-1}
\]
for suitable constants $C_1$ and $C_2$.
\end{corollary}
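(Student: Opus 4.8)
The plan is to read both bounds off the transfer-matrix estimate of Theorem~\ref{T9.4}. The key algebraic point is that, applying \eqref{9.17}--\eqref{9.18} to the solution $u_m = p_{m-1}$ of \eqref{8.1} (for which $u_1 = p_0 = 1$ and $u_0 = p_{-1} = 0$), the first column of $T_n(x;\vy)$ is precisely $\bigl(p_n(x),\, a_n(\vy)\,p_{n-1}(x)\bigr)^t$, so its squared Euclidean norm equals $\abs{p_n(x)}^2 + a_n(\vy)^2\abs{p_{n-1}(x)}^2$. Now for any $2\times 2$ matrix $M$ with columns $c_1,c_2$ one has the elementary inequalities $\abs{\det M}/\norm M \leq \norm{c_1} \leq \norm M$, coming from $\abs{\det M}\leq\norm{c_1}\norm{c_2}\leq\norm{c_1}\norm M$. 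Applying this with $M = T_n(x;\vy)$, and noting that $\det T_n(z;\vy)=1$ (it is independent of $n$ and of $z$, as one checks from the one-step transfer matrices, and equals $1$ at $n=0$ since $T_0=I$), we get
\begin{equation}
\f{1}{\norm{T_n(x;\vy)}^{2}} \;\leq\; \abs{p_n(x)}^2 + a_n(\vy)^2\abs{p_{n-1}(x)}^2 \;\leq\; \norm{T_n(x;\vy)}^{2}.
\end{equation}
Theorem~\ref{T9.4} bounds $\norm{T_n(x;\vy)}^2$ above by $C^2\dist(x,\bbR\setminus\fre)^{-1}$, which already gives the two-sided estimate for $\abs{p_n(x)}^2 + a_n(\vy)^2\abs{p_{n-1}(x)}^2$.

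It then remains only to replace $a_n(\vy)^2$ by harmless constants. For this I would record that the Jacobi parameters of the isospectral torus are uniformly controlled: there are $0 < a_- \leq a_+ < \infty$ with $a_- \leq a_n(\vy) \leq a_+$ for all $n\geq 1$ and all $\vy\in\bbG$. Indeed $a_n(\vy)=a_1(U^{n-1}(\vy))$ since the $n$-times stripped $m$-function of $m_{\vy}$ is $m_{U^n(\vy)}$ (cf.\ \eqref{7.34}), and $\vy\mapsto a_1(\vy)$ is a continuous, strictly positive (in fact real analytic, by the remark following Theorem~\ref{T6.2}) function on the compact torus $\bbG$. Using $a_- \leq a_n(\vy) \leq a_+$ to compare $\abs{p_n(x)}^2 + a_n(\vy)^2\abs{p_{n-1}(x)}^2$ with $\abs{p_n(x)}^2 + \abs{p_{n-1}(x)}^2$ converts the previous display into the stated inequalities, with $C_1$ and $C_2$ depending only on $C$, $a_-$, $a_+$. (The case $n=0$ is trivial, since $\abs{p_0}^2 + \abs{p_{-1}}^2 = 1$ and $\dist(x,\bbR\setminus\fre)$ is bounded on $\fre$.)

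I do not expect a genuine obstacle here: all of the analytic content sits in Theorem~\ref{T9.4}, which rests on the uniform bound on the Jost solutions (Theorem~\ref{T8.6}) and the weight estimate \eqref{6.5a}. The only steps needing a moment's care are the two bookkeeping facts just mentioned — that $\det T_n\equiv 1$ and that the $a_n(\vy)$ are uniformly bounded above and bounded away from $0$ over the whole torus and all $n$ — and both are routine.
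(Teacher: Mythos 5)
Your proposal is correct and follows essentially the same route as the paper: the paper's proof also reads the bounds off \eqref{9.18} and \eqref{9.19}, using $\det T_n=1$ (so $\norm{T_n^{-1}}=\norm{T_n}$, which is the same content as your Hadamard-type column inequality) and the fact that the $a_n$'s are bounded and bounded away from $0$. You merely spell out the column-norm estimate and the uniform control of $a_n(\vy)=a_1(U^{n-1}(\vy))$ over the compact torus, details the paper leaves implicit.
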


\begin{proof} Immediate from \eqref{9.18}, $\det(T_n)=1$ (so $\norm{T_n^{-1}} =\norm{T_n}$), and
\eqref{9.19}, recalling that the $a_n$'s are bounded and bounded
away from $0$.
\end{proof}

Next, we look at band edges where $\norm{T_n}$ can diverge. We begin with a critical fact about the outer
edges:

\begin{theorem} \lb{T9.6} There are positive, finite constants $C_1$ and $C_2$, so that uniformly in $n$ and $\vy$,
\begin{align}
C_1 &\leq u_n (\beta_{\ell+1};\vy) \leq C_2 \lb{9.24} \\
C_1 &\leq (-1)^n u_n (\alpha_1;\vy) \leq C_2 \lb{9.25}
\end{align}
\end{theorem}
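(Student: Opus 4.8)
The plan is to evaluate \eqref{9.5} at the lifts of the two outer band edges and then to control the three factors that appear. Under the normalization \eqref{1.20}, $\x$ maps $(-1,1)$ bijectively onto $\bbR\cup\{\infty\}\setminus[\alpha_1,\beta_{\ell+1}]$ with $\x(0)=\infty$ and $z\x(z)\to x_\infty>0$ as $z\to0$; following $\x$ along $(0,1)$ (resp.\ $(-1,0)$), which stays in $(\beta_{\ell+1},\infty)$ (resp.\ $(-\infty,\alpha_1)$) and runs from $+\infty$ (resp.\ $-\infty$) down to the adjacent band edge, shows that $z=1$ is the lift $\z(\beta_{\ell+1})$ and $z=-1$ is the lift $\z(\alpha_1)$. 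Both lie in $\partial\bbD\cap\ol\calF$ and not in $\calL$ (near $\pm1$ the function $B$ is in fact analytic, by Theorem~\ref{T4.3}). Hence $u_n(\beta_{\ell+1};\vy)=u_n(1;\vy)$, $u_n(\alpha_1;\vy)=u_n(-1;\vy)$, and by \eqref{9.5},
\[
u_n(\pm1;\vy)=a_n(\vy)^{-1}\,B(\pm1)^n\,u\bigl(\pm1;U^n(\vy)\bigr),\qquad n\in\bbZ.
\]

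The first real step is to compute $B(\pm1)$. By \eqref{4.30}, $\abs{B(z)}=e^{-G_\fre(\x(z))}$ on $\bbD\setminus\{0\}$, and $G_\fre\restriction\fre=0$ by \eqref{4.27x}, so $\abs{B(\pm1)}=1$. On $(-1,1)$ the only zero of $B$ is at $0$ (its other zeros, the $\gamma(0)$ with $\gamma\neq1$, lie outside $\calF^\intt\supset(-1,1)$), so $B$ is real and of a single sign on each of $(0,1)$ and $(-1,0)$; since $B'(0)=\ca(\fre)/x_\infty>0$ by \eqref{4.30a} and \eqref{4.31}, $B>0$ on $(0,1)$ and $B<0$ on $(-1,0)$. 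Passing to the boundary (legitimate by the analyticity of $B$ near $\pm1$) gives $B(1)=1$ and $B(-1)=-1$, so
\[
u_n(\beta_{\ell+1};\vy)=\frac{u\bigl(1;U^n(\vy)\bigr)}{a_n(\vy)},\qquad
(-1)^n u_n(\alpha_1;\vy)=\frac{u\bigl(-1;U^n(\vy)\bigr)}{a_n(\vy)}.
\]

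It remains to bound the two factors on the right uniformly above and below by positive constants. For the denominators, \eqref{7.12} gives $a_1(\vy)=\ca(\fre)\,\varphi(U(\vy))/\varphi(\vy)$, and since $\varphi$ is continuous and strictly positive on the compact torus $\bbG$ (Theorem~\ref{T7.1}), the numbers $a_1(\vy)$, hence all $a_n(\vy)=a_1(U^{n-1}(\vy))$ for $n\in\bbZ$, lie in a fixed compact subinterval of $(0,\infty)$. For the numerators, \eqref{8.26} writes $u(z;\vy)=\varphi(\vy)\varphi(\ve{w})^{-1}\ti\Theta(z;\vy)/\ti\Theta(z;\ve{w})$, which is jointly continuous in $(z,\vy)$ near $z=\pm1$ and finite and nonzero there, since $\x(\pm1)\in\{\alpha_1,\beta_{\ell+1}\}$ lies off the closed gaps over which all of $\zeta_j,\ti\zeta_j,\xi_j$ sit; moreover $u(\pm1;\vy)>0$ by Proposition~\ref{P8.7} (with Corollary~\ref{C5.5}). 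Thus $\vy\mapsto u(\pm1;\vy)$ is a positive continuous function on the compact $\bbG$, so $u(\pm1;U^n(\vy))$ ranges over a fixed compact subinterval of $(0,\infty)$, uniformly in $n$ and $\vy$. Dividing yields \eqref{9.24} and \eqref{9.25} with suitable $0<C_1\le C_2<\infty$. The only delicate point is the exact evaluation $B(1)=1$, $B(-1)=-1$, which is what produces the $(-1)^n$ in \eqref{9.25}; everything else is an assembly of the explicit product formula \eqref{9.5}–\eqref{8.26} with the uniform positivity and boundedness already established in Theorems~\ref{T7.1} and \ref{T8.6} and Proposition~\ref{P8.7}.
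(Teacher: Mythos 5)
Your proof is correct and follows essentially the same route as the paper: evaluate \eqref{9.5} at the lifts $z=\pm1$ of $\beta_{\ell+1}$ and $\alpha_1$, show $B(\pm1)=\pm1$ from $\abs{B(\pm1)}=1$ together with the sign of $B$ on $(0,1)$ and $(-1,0)$, and then use the uniform positivity, boundedness, and continuity of $u(\,\cdot\,;\vy)$ (Theorem~\ref{T8.6}, Proposition~\ref{P8.7}) on the compact torus. Your explicit uniform two-sided bound on the $a_n(\vy)$ via \eqref{7.12} is a detail the paper leaves implicit, but it is the same argument.
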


\begin{remark} \eqref{9.24} says, in the language of \cite{FSW}, that each whole-line $\ti J_{\vec y}$ has a regular
ground state (see \cite[Example~1.5]{FSW}). It implies critical
Lieb--Thirring bounds for perturbations of $\ti J_{\vec y}$ in
$(\beta_{\ell+1},\infty)$. \eqref{9.25} implies similar bounds for
$(-\infty, \alpha_1)$.
\end{remark}

\begin{proof} $B(z)/z$ is positive at $z=0$ and real and nonvanishing on $(-1,1)$, so $B(x)>0$ on $(0,1]$
and $B(x)<0$ on $[-1,0)$. Since $\abs{B(\pm 1)} =1$, we conclude
that
\begin{equation} \lb{9.26}
B(\pm 1) =\pm 1
\end{equation}
Since $u(x;\vy)$ is bounded, strictly positive and continuous in $x$
and $\vy$ for $x\in [-1,1]$ and $\vy\in\bbG$, \eqref{9.5} implies
\eqref{9.24}--\eqref{9.25}.
\end{proof}

$u_n^+(x)$ is real at $x\in \{\alpha_j,\beta_j\}_{j=1}^{\ell+1}$, so
$u_n^-=u_n^+$ and the Jost solutions are not linearly independent.
The following gives a second solution which grows linearly in $n$.

\begin{theorem}\lb{T9.7} Uniformly in $\vy\in\bbG$ and $z\in\partial\calF\cap\bbD$,
\begin{SL}
\item[{\rm{(i)}}]
\begin{equation} \lb{9.27}
\biggl| \f{\partial u_n^+ (\x(z);\vy)}{\partial z}\biggr| \leq
C(\abs{n}+1)
\end{equation}
\item[{\rm{(ii)}}]
\begin{equation} \lb{9.28}
\liminf_{n\to\infty}\, \biggl| \f{1}{n} \, \f{\partial
u_n^+(\x(z);\vy)}{\partial z}\biggr| >0
\end{equation}
\item[{\rm{(iii)}}] At $\x(z)\in \{\alpha_j,\beta_j\}_{j=1}^{\ell+1}$,
\begin{equation} \lb{9.29}
v_n(\vy) = \f{\partial u_n^+}{\partial z}\, (\x(z);\vy)
\end{equation}
is a solution of \eqref{8.1}, linearly independent of
$u_n^+(\x(z);\vy)$.
\end{SL}
\end{theorem}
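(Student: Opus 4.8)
The plan is to exploit the explicit theta-function formula \eqref{9.5}, namely $u_n(z;\vy) = a_n^{-1} B(z)^n u(z;U^n(\vy))$, which by Theorem~\ref{T9.2}(ii) is analytic in a neighborhood $N$ of $\ol\calF$ and real analytic in $\vy$. Differentiating in $z$,
\[
\f{\partial u_n^+}{\partial z}(\x(z);\vy) = a_n^{-1}\Bigl[ n B(z)^{n-1} B'(z) u(z;U^n(\vy)) + B(z)^n \f{\partial}{\partial z} u(z;U^n(\vy))\Bigr].
\]
At a band edge $z_0$ with $\x(z_0)\in\{\alpha_j,\beta_j\}$ we have $\abs{B(z_0)}=1$ (by \eqref{9.26} at $\pm1$, and by $\abs{B(e^{i\theta})}=1$ on $\partial\bbD\setminus\calL$ for the interior edges, which lie on $\partial\bbD$). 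On $\partial\calF\cap\bbD$ away from the edges the same bound holds, and in fact $\abs{B(z)}\le 1$ throughout $\bbD$. The two terms in brackets are then, respectively, $O(\abs{n})$ and $O(1)$ uniformly in $\vy$ (using Theorem~\ref{T8.6} and uniform bounds on $B'$ and the $z$-derivative of $u$ on the compact set $\partial\calF\cap\bbD$, the latter via Cauchy estimates on $N$, plus boundedness of $a_n$ away from $0$ and $\infty$). This gives \eqref{9.27}.

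For (ii), the point is that the $O(\abs n)$ term genuinely dominates in the limit. Write $\tfrac1n \partial_z u_n^+ = a_n^{-1}[ B(z)^{n-1} B'(z) u(z;U^n(\vy)) + \tfrac1n B(z)^n \partial_z u(z;U^n(\vy))]$. The second term is $O(1/n)\to 0$. For the first term, $\abs{B(z)^{n-1}}=1$ at a band edge (and is bounded below on $\partial\calF\cap\bbD$ uniformly — here I would note that on this compact arc $\abs{B}$ is bounded away from $0$), $\abs{B'(z)}\ge c>0$ there since $B$ is a nonconstant analytic function whose zeros lie at the $\gamma(0)$ which stay away from $\partial\calF\cap\bbD$, and $u(z;U^n(\vy))$ is bounded below uniformly in $n,\vy$ by Proposition~\ref{P8.7} together with the compactness of $\bbG$ (the orbit $U^n(\vy)$ stays in $\bbG$). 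Hence $\liminf_n \abs{\tfrac1n\partial_z u_n^+} \ge c' \liminf_n \abs{u(z;U^n(\vy))} > 0$, uniformly. The main obstacle here is making the lower bounds genuinely uniform in $z$ on $\partial\calF\cap\bbD$ and simultaneously in $\vy$; this uses that $\partial\calF\cap\bbD$ is compact in $N\setminus\{0,\gamma(0)\}_\gamma$ and that $\{u(\cdot;\vec w): \vec w\in\bbG\}$ is a compact family of functions nonvanishing and continuous on $\ol\calF\cap\bbR$ (Proposition~\ref{P8.7} and Theorem~\ref{T8.6}).

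For (iii), at $z_0$ with $x=\x(z_0)\in\{\alpha_j,\beta_j\}$ we have $\x'(z_0)=0$, $\x''(z_0)\ne 0$, so $z\mapsto\x(z)$ is a two-sheeted local cover and $x - \x(z) \sim \tfrac12\abs{\x''(z_0)}(z-z_0)^2$. Since $u_n^+(\x(z);\vy)$ solves \eqref{8.1} with coefficients depending analytically on the parameter $\x(z)$, differentiating the recursion $a_n u_{n+1} + (b_n - \x(z)) u_n + a_{n-1} u_{n-1} = 0$ with respect to $z$ and evaluating at $z_0$ gives
\[
a_n \partial_z u_{n+1} + (b_n - x)\partial_z u_n + a_{n-1}\partial_z u_{n-1} = \x'(z_0) u_n = 0,
\]
so $v_n(\vy) = \partial_z u_n^+(x;\vy)$ solves \eqref{8.1}. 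Linear independence from $u_n^+(x;\vy)$: compute the (constant-in-$n$) Wronskian $\Wr(u_\bddot^+(x), v_\bddot)$. Evaluating at a convenient $n$ (say $n=0$, using $W_0 = a_0^{-1}$) and using $u_n^+(x)$ bounded while $\abs{v_n}$ grows like $n$ by parts (i)–(ii) shows the two solutions cannot be proportional; alternatively, one Wronskian computation shows $\Wr(u^+,v)$ is a nonzero constant (indeed it is essentially $-\partial_z(\x(z))/\ldots$ paired against the derivative data, but I would just exhibit nonvanishing directly from the asymptotic growth of $v_n$ versus the boundedness of $u_n^+$). The hard part is (ii); (i) and (iii) are routine once \eqref{9.5} and the uniform estimates of Theorems~\ref{T8.6} and \ref{T9.2} are in hand.
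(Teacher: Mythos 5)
Your parts (i) and (iii) follow the paper's route exactly: differentiate \eqref{9.5}, invoke the uniform bounds of Theorem~\ref{T8.6} (and Cauchy estimates for $\partial_z u$), and at band edges use $\x'(z_0)=0$ to see that $v_n$ solves \eqref{8.1}, with independence coming from boundedness of $u_n^+$ against unboundedness of $v_n$. The trouble is in your part (ii), and it is a genuine gap, not a presentational one.

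You control the dominant term by asserting that $\abs{B(z)^{n-1}}$ is ``bounded below uniformly'' on $\partial\calF\cap\bbD$ because $\abs{B}$ is bounded away from $0$ on that compact set. That inference fails: a bound $\abs{B(z)}\geq c$ with $c<1$ gives $\abs{B(z)}^{n-1}\to 0$ geometrically. By \eqref{4.30}, $\abs{B(z)}=e^{-G_\fre(\x(z))}$, so $\abs{B(z)}=1$ precisely when $\x(z)\in\fre$, i.e., on the arcs of $\partial\bbD$; on the orthocircle pieces of $\partial\calF$ lying strictly inside $\bbD$ (which map into the gaps) one has $\abs{B(z)}<1$, both terms in your expansion tend to $0$, and no repair of your estimate can produce \eqref{9.28} there. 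The paper's proof is pinned to the circle: it uses only the boundedness of $u(\cdot\,;U^n(\vy))$ and $\partial_z u(\cdot\,;U^n(\vy))$ together with $B'(e^{i\theta})>0$, which is the specific boundary fact \eqref{4.22x}/\eqref{4.17a}, $\abs{B'(e^{i\theta})}=\sum_{\gamma\in\Gamma}\abs{\gamma'(e^{i\theta})}>0$. Two of your auxiliary claims are also unjustified. First, $\abs{B'(z)}\geq c>0$ on $\partial\calF\cap\bbD$ does not follow from the zeros of $B$ sitting at $\{\gamma(0)\}$: the derivative of an analytic function can vanish away from its zeros, and indeed $B'$ vanishes at the point of each $C_j^{\pm}$ lying over the critical point of $G_\fre$ in the $j$-th gap (there $\f{d}{dz}\,G_\fre(\x(z))=G_\fre'(\x(z))\x'(z)=0$), so strict positivity of $\abs{B'}$ is a boundary-circle statement, not one on the orthocircles. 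Second, Proposition~\ref{P8.7} gives positivity of $u(\cdot\,;\vy)$ only for real $z$, so it cannot be combined with compactness of $\bbG$ to give $\inf_n\abs{u(z;U^n(\vy))}>0$ at the points relevant to \eqref{9.28}: on the boundary arcs $u(\cdot\,;\ve{w})$ can vanish at band-edge points (the resonant $\ve{w}$), so the uniform lower bound you assert is exactly the delicate issue and needs an argument rather than a citation. Since your (iii) borrows the unboundedness of $v_n$ from (ii), that dependence should also be revisited once (ii) is fixed.
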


\begin{proof} (i), (ii) \ By \eqref{9.5},
\begin{equation} \lb{9.30}
\begin{split}
\f{\partial u_n^+(\x(z);\vy)}{\partial z} &= a_n^{-1} nB^{n-1}(z) B'(z) u(z;U^n(\vy))\\
&\qquad + a_n^{-1} B^n(z) \, \f{\partial}{\partial z}\,
u(z;U^n(\vy))
\end{split}
\end{equation}
Since $u(z;U^n(\vy))$ and $\f{\partial}{\partial z}(u(z;U^n(\vy))$
are uniformly bounded in $\vy$ and $n$, and $B'(e^{i\theta}) >0$ for
all $\theta$, \eqref{9.27}--\eqref{9.28} are immediate.

\smallskip
(iii) \ $u_n^+(\x(z))$ obeys \eqref{8.1} with $z$ replaced by
$\x(z)$. Since $\x'(z)=0$ at points with $\x(z)\in
\{\alpha_j,\beta_j\}_{j=1}^{\ell+1}$, we see that $v_n$ also solves
\eqref{8.1}. Since $u_n^+$ is bounded and $v_n$ is not, they are
linearly independent.
\end{proof}

\begin{corollary}\lb{C9.8} For $z\in\fre$, \eqref{8.1} has no solution which belongs to $\ell^2$ at $+\infty$ or at
$-\infty$.
\end{corollary}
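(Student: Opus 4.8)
The plan is to treat separately the case $x\in\fre^\intt$ (interior of a band) and the $2\ell+2$ band edges $x\in\{\alpha_j,\beta_j\}_{j=1}^{\ell+1}$; throughout, $J=\ti J_{\vec y}$ for a fixed $\vec y\in\bbG$.

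First suppose $x\in\fre^\intt$. The two Jost solutions $u_n^\pm(x;\vec y)$ solve \eqref{8.1} (Theorem~\ref{T9.2}), are uniformly bounded in $n$ (by Theorem~\ref{T9.2}(iii), since $\abs{B(z)}=1$ off $\calL$ and the $a_n$ are bounded and bounded away from $0$), and are linearly independent: by Theorem~\ref{T9.3} their $n$-independent Wronskian equals $2\pi i\,\abs{u(\z(x);\vec y)}^2 w_{\vec y}(x)$, which is nonzero because $w_{\vec y}(x)>0$ by \eqref{6.5a} while $u(\z(x);\vec y)\neq 0$ (by Corollary~\ref{C7.2}, together with \eqref{8.21}, the zeros of $u(\dott;\vec y)$ project under $\x$ into the closed gaps, whereas $x\in\fre^\intt$). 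Hence $u^+,u^-$ span the solution space of \eqref{8.1}. If $v$ is a solution which is $\ell^2$ at $+\infty$, write $v=\alpha u^++\beta u^-$; since $v_n\to 0$ while the $u_n^\pm$ and $a_n$ stay bounded, the Wronskians $\Wr(v,u^-)=\alpha\,\Wr(u^+,u^-)$ and $\Wr(v,u^+)=-\beta\,\Wr(u^+,u^-)$ both vanish, forcing $\alpha=\beta=0$ and $v\equiv 0$. The identical computation as $n\to-\infty$ excludes $\ell^2$ solutions at $-\infty$.

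Now let $x_0=\x(z_0)$ be a band edge, $z_0\in\partial\bbD$. Here $u^-=u^+$, so only the single bounded solution $u_n^+(x_0;\vec y)$ is on hand; Theorem~\ref{T9.7}(iii) supplies a second, independent solution $v_n=\partial_z u_n(z;\vec y)|_{z=z_0}$, and by \eqref{9.30} one has $v_n=a_n^{-1}nB(z_0)^{n-1}B'(z_0)\,u(z_0;U^n(\vec y))$ plus a uniformly bounded term, so that, since $\abs{B(z_0)}=1$ and $B'(z_0)\neq 0$, $\abs{v_n}=\abs{n}\,\abs{B'(z_0)}\,\abs{u(z_0;U^n(\vec y))}+\Oh(1)$. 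Thus, provided $\abs{u(z_0;U^n(\vec y))}$ does not tend to $0$ in either direction, $v$ is unbounded at $\pm\infty$; then every $\ell^2$ solution is forced to be a multiple $\alpha u^+$, and it only remains to see that $u_n^+(x_0;\vec y)=a_n^{-1}B(z_0)^n u(z_0;U^n(\vec y))$ does not tend to $0$.

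Everything therefore reduces to the assertion $u(z_0;U^n(\vec y))\not\to 0$, and this is the step I expect to be the main obstacle, precisely because of the band-edge \emph{resonances}: when $\x^\sharp(z_0)$ equals a pole $y_m$ of $m_{\vec y}$, $z_0$ is a zero of $\Theta(\dott;y_m)$ and so $u(z_0;\vec y)=0$. To handle this, use Corollary~\ref{C6.3}: under the Abel map $\ti\frA$ the stripping map $U$ becomes translation by $\frA(\infty)^{-1}$ on $\Gamma^*$, so the orbit closure $K=\overline{\{U^n(\vec y)\}_{n\in\bbZ}}$ is a coset of a closed subgroup of $\Gamma^*$ and coincides with the closures of the forward and backward orbits. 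Since $\vec w\mapsto u(z_0;\vec w)$ is continuous on $\bbG$ (Theorem~\ref{T8.6}), $u(z_0;U^n(\vec y))\to 0$ would force $u(z_0;\dott)\equiv 0$ on all of $K$, hence $u_n^+(x_0;\vec y)=0$ for every $n$, i.e.\ $u^+\equiv 0$, making the candidate $\alpha u^+$ trivial --- a contradiction. In the remaining degenerate case, where $u(z_0;\dott)$ does vanish identically on $K$, one runs the same argument one derivative higher: $z_0$ is then a \emph{simple} zero of $\Theta(\dott;\x^\sharp(z_0))$ and is not a zero of $\Theta(\dott;y_m)$ for the other $m$, so the next Taylor coefficient in $z$ of $u_n(z;\vec y)$ at $z_0$ is a bounded solution of \eqref{8.1} whose modulus is now bounded below along $K$, while the following coefficient gives a solution growing linearly in $n$, and the argument of the previous paragraph applies verbatim. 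Organising this last bookkeeping (which cases of resonance occur, and the resulting Taylor coefficients solve \eqref{8.1} because $\x'(z_0)=0$) is the only real work; granting it, no $x\in\fre$ admits a solution of \eqref{8.1} that is $\ell^2$ at $+\infty$ or at $-\infty$.
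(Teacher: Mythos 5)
Your proposal is correct in substance but is a genuinely different (and much more self-contained) route than the paper's, whose proof is a one-liner: on $\fre^\intt$ it invokes the uniform bound of Theorem~\ref{T9.4} together with $\det T_n=1$, so $\norm{T_n(x;\vy)^{-1}}=\norm{T_n(x;\vy)}$ is bounded and no nonzero solution can even tend to zero; at band edges it simply cites Theorem~\ref{T9.7}. Your interior-band argument (bounded Jost solutions, nonvanishing Wronskian via Theorem~\ref{T9.3} and \eqref{6.5a}, then the standard Wronskian-limit computation) is exactly the mechanism underlying Theorem~\ref{T9.4}, so that part is fine, if longer than necessary. At the band edges you re-derive the linear growth of the second solution instead of quoting Theorem~\ref{T9.7}(ii), and in doing so you correctly isolate the resonance issue---that $u(\z(x_0);U^n(\vy))$ can vanish for some $n$---which the paper's citation passes over silently; your orbit-closure/continuity argument for $u(\z(x_0);U^n(\vy))\not\to 0$ is a legitimate way to handle it.

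The one soft spot is the ``degenerate case'' you leave as bookkeeping, namely $u(\z(x_0);\,\cdot\,)\equiv 0$ on the orbit closure, which you propose to treat by passing to higher Taylor coefficients. You can close this in one line by observing that the case is vacuous: $u(\z(x_0);\vy_1)=0$ exactly when the pole of $m_{\vec{y}_1}$ in the relevant gap sits at the band edge $x_0$ (the resonance condition of Section~\ref{s10}); since the poles of the once-stripped function are the zeros of $m_{\vec{y}_1}$ (see \eqref{6.14} and the proof of Theorem~\ref{T6.1}(iii)) and a meromorphic function cannot have a zero and a pole at the same point, $u(\z(x_0);U(\vy_1))\neq 0$ whenever $u(\z(x_0);\vy_1)=0$. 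Thus no two consecutive $n$ are resonant, so $u(\z(x_0);\,\cdot\,)$ cannot vanish identically on the orbit closure, and your continuity argument already gives $u(\z(x_0);U^n(\vy))\not\to 0$ with no higher-derivative analysis needed. (If you insist on the higher-derivative route, note that the second Taylor coefficient solves \eqref{8.1} only because, in that hypothetical case, $u_n(\z(x_0))=0$ for all $n$ in addition to $\x'(\z(x_0))=0$; and the lower bound on the first derivative along the orbit closure needs the simplicity of the zeros plus compactness---so it is genuinely more work than the one-line observation above.)
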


\begin{remark} This result is used in \cite{BLS}.
\end{remark}

\begin{proof} If $z\in\fre^\intt$, this follows from the fact that $\norm{T_n(z)^{-1}}$ is bounded, and for $z\in\partial\fre$,
it follows from Theorem~\ref{T9.7}.
\end{proof}

\begin{corollary} \lb{C9.9} If $x\in\{\alpha_j,\beta_j\}_{j=1}^{\ell+1}$, then
\begin{equation} \lb{9.31}
\norm{T_n(x)} \leq C(1+\abs{n})
\end{equation}
\end{corollary}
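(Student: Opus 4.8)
The plan is to produce a fundamental solution matrix for \eqref{8.1} at the band edge $x_0\in\{\alpha_j,\beta_j\}_{j=1}^{\ell+1}$ from the two solutions already constructed in Theorem~\ref{T9.7}, and then read off the transfer matrix bound from their growth rates. First I would record that the Jost solution $n\mapsto u_n^+(x_0;\vy)$ is bounded uniformly in $n\in\bbZ$ and $\vy\in\bbG$: if $z_0\in\ol\calF\cap\partial\bbD$ is the preimage of $x_0$ used to define $u_n^+$, then by \eqref{9.5} one has $u_n^+(x_0;\vy)=a_n^{-1}B(z_0)^n\,u(z_0;U^n(\vy))$, where $\abs{B(z_0)}=1$ since $z_0\in\partial\bbD\setminus\calL$, $u(z_0;\cdot\,)$ is bounded on $\bbG$ by Theorem~\ref{T8.6}, and the $a_n$'s are bounded away from $0$. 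Theorem~\ref{T9.7} then supplies a second solution $v_n(\vy)$, namely the $z$-derivative of $u_n^+(\x(z);\vy)$ evaluated at the preimage of $x_0$, which is linearly independent of $u_n^+(x_0;\vy)$ and satisfies $\abs{v_n(\vy)}\leq C(1+\abs{n})$ uniformly in $n$ and $\vy$.

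Next I would form the $2\times 2$ matrix
\[
V_n(\vy)=\begin{pmatrix} u_{n+1}^+(x_0;\vy) & v_{n+1}(\vy)\\ a_n u_n^+(x_0;\vy) & a_n v_n(\vy)\end{pmatrix},
\]
whose columns both solve \eqref{8.1}, so that the updating relation \eqref{9.17} gives $T_n(x_0;\vy)\,V_0(\vy)=V_n(\vy)$, hence $T_n(x_0;\vy)=V_n(\vy)\,V_0(\vy)^{-1}$. Here $\det V_0(\vy)=\Wr\bigl(u_\bddot^+(x_0;\vy),v_\bddot(\vy)\bigr)$ is the nonzero, $n$-independent Wronskian of two independent solutions. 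Using the $2\times 2$ identity $\norm{C^{-1}}=\abs{\det C}^{-1}\norm{C}$, I get
\[
\norm{T_n(x_0;\vy)}\leq \abs{\det V_0(\vy)}^{-1}\,\norm{V_0(\vy)}\,\norm{V_n(\vy)}.
\]
The two uniform bounds above give $\norm{V_n(\vy)}\leq C'(1+\abs{n})$, and since $u_n^+(x_0;\vy)$ and $v_n(\vy)$ are real analytic in $\vy$ over the compact torus $\bbG$ and the Wronskian never vanishes, the prefactor $\abs{\det V_0(\vy)}^{-1}\norm{V_0(\vy)}$ is bounded uniformly in $\vy$. Combining these yields \eqref{9.31} with $C$ independent of $\vy$.

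I do not expect a serious obstacle here; the steps that need a bit of care are (a) checking that the bound $\abs{\partial_z u_n^+}\leq C(\abs{n}+1)$ and the linear-independence statement of Theorem~\ref{T9.7}, phrased there for $z\in\partial\calF\cap\bbD$, genuinely apply at the boundary point $z_0$ with $\x(z_0)=x_0$ — this follows from the explicit formula \eqref{9.30} together with $B'(z_0)\neq 0$ for $z_0\in\partial\bbD\setminus\calL$ and $\x'(z_0)=0$, $\x''(z_0)\neq 0$ — and (b) the uniformity in $\vy$ of the lower bound on $\abs{\det V_0(\vy)}$, which is just a compactness argument on $\bbG$. Everything else is the standard fundamental-matrix manipulation already used in the proof of Theorem~\ref{T9.4}.
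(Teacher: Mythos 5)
Your proposal is correct and follows essentially the same route as the paper: the paper also forms the fundamental matrix $\ti U_n$ with columns $u_{\bddot}^+(x_0)$ and the derivative solution $v_{\bddot}$ from Theorem~\ref{T9.7}, writes $T_n(x_0)=\ti U_n \ti U_0^{-1}$, and concludes from linear independence (invertibility of the Wronskian matrix) together with $\norm{\ti U_n}\leq C(1+\abs{n})$. Your added care about evaluating \eqref{9.5} on $\partial\bbD$ and about uniformity in $\vy$ is consistent with, but not beyond, what the paper's argument implicitly uses.
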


\begin{proof} Let
\begin{equation} \lb{9.32}
\ti U_n(x) = \begin{pmatrix}
u_{n+1}^+(x) & v_{n+1}(x) \\
a_n u_n^+(x) & a_n v_n(x)
\end{pmatrix}
\end{equation}
As in \eqref{9.22},
\begin{equation} \lb{9.33}
T_n(x) = \ti U_n(x) U_0(x)^{-1}
\end{equation}
Since $u^+,v$ are independent, $U_0$ is invertible and, clearly, $\norm{\ti U_n}\leq C(1+\abs{n})$.
\end{proof}

The bound \eqref{9.19} diverges as $x$ approaches a point in
$\{\alpha_j,\beta_j\}_{j=1}^{\ell+1}$. Since $\norm{T_n}$ is not
bounded at these points, it must. However, we are heading towards a
uniform (on $\fre$) $O(n)$ bound. As a starting point, we need to
know more about the right side of \eqref{9.13} than the crude bound
from \eqref{6.5a}.

\begin{proposition}\lb{P9.10} For any $\vy\in\bbG$ and fixed $x_0\in\{\alpha_j,\beta_j\}_{j=1}^{\ell+1}$,
\begin{equation} \lb{9.34}
\lim_{\substack{ x\to x_0 \\ x\in\fre^\intt}}\, \abs{x-x_0}^{-1/2}
\abs{u(\z(x);\vy)}^2 w_{\vec y}(x)
\end{equation}
exists, is finite and nonvanishing, and is continuous in $\vy$. In
\eqref{9.34}, $\z(x)$ is the point obeying \eqref{9.12} with
$\z\in\ol\calF\cap\bbC_+$ and $x=\x(\z(x))$.
\end{proposition}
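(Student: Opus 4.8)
The plan is to read off the leading term, in powers of $\abs{x-x_0}^{1/2}$, of the Wronskian identity \eqref{9.13}, exploiting that $\x$ is two-to-one and branched over the band edge $x_0$.

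First I would set $z_0:=\lim_{x\to x_0,\,x\in\fre^\intt}\z(x)$, a point of $\partial\bbD$ with $\x(z_0)=x_0$, $\x'(z_0)=0$, $\x''(z_0)\neq 0$ (the behaviour of $\x$ at a preimage of a band edge, from Section~\ref{s2}). By Theorem~\ref{T8.6} and \eqref{9.5}, each $u_n(\dott;\vy)$ is analytic near $z_0$, and $u_n(z_0;\vy)$ and $v_\bddot(\vy)$ --- the solution of \eqref{8.1} from Theorem~\ref{T9.7}(iii), so $v_n(\vy)=(\partial_z u_n)(z_0;\vy)$ --- depend real-analytically on $\vy$. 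I would take the local coordinate $t$ on $\calS$ at $\pi^{-1}(x_0)$ which is real on the lift of the adjacent band; then $\x-x_0=c_0\,t^2$ for a unique $c_0\in\bbR\setminus\{0\}$, one has $\z(x)-z_0=c_1\,t+O(t^2)$ with $c_1\neq 0$, and for $x\in\fre^\intt$ near $x_0$ the parameter $t=t(x)$ is real with $\abs{t(x)}=\abs{c_0}^{-1/2}\abs{x-x_0}^{1/2}$ (of constant sign as $x$ runs over the band).

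For existence of the limit and its continuity, I would work with the Wronskian. Since $u_n^+(x;\vy)=u_n(\z(x);\vy)$ and $u_n^-=\ol{u_n^+}$ by \eqref{9.9}--\eqref{9.10}, Taylor expansion at $z_0$ gives $u_n^+(x;\vy)=P_n(\vy)+c_1\,t\,v_n(\vy)+O(t^2)$ and $u_n^-(x;\vy)=\ol{u_n^+(x;\vy)}$, where $P_n(\vy):=u_n^+(x_0;\vy)=u_n(z_0;\vy)$ is real (by the remark before Theorem~\ref{T9.7}). Hence the $t^0$ term of $\Wr(u_\bddot^+(x;\vy),u_\bddot^-(x;\vy))$ drops out, and, using reality of $P_\bddot(\vy)$ again (so $\Wr(P_\bddot,\ol{v_\bddot})=\ol{\Wr(P_\bddot,v_\bddot)}$), its $t^1$ term equals $-2i\,t\,\Wr\big(P_\bddot(\vy),\Ima(c_1 v_\bddot(\vy))\big)=-2i\,t\,\Ima\big(c_1\,\mathcal{W}(\vy)\big)$ with the $n$-independent $\mathcal{W}(\vy):=\Wr\big(u_\bddot^+(x_0;\vy),v_\bddot(\vy)\big)$. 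With \eqref{9.13} this yields $\abs{u(\z(x);\vy)}^2 w_{\vy}(x)=-\tfrac{t(x)}{\pi}\,\Ima(c_1\,\mathcal{W}(\vy))+O(t(x)^2)$, so
\[
\lim_{\substack{x\to x_0\\ x\in\fre^\intt}}\abs{x-x_0}^{-1/2}\,\abs{u(\z(x);\vy)}^2 w_{\vy}(x)=\frac{\abs{\Ima(c_1\,\mathcal{W}(\vy))}}{\pi\,\abs{c_0}^{1/2}},
\]
the modulus being forced by positivity of the left-hand side. This is finite, and it is continuous in $\vy$ since $c_0,c_1$ do not depend on $\vy$ while $\mathcal{W}(\vy)$, assembled from $u_n(z_0;\vy)$ and $v_n(\vy)$, is continuous in $\vy$.

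It remains to check the limit is nonzero, and for this I would expand the two factors separately. Writing $w_{\vy}(x)=\tfrac1\pi\Ima M_{\vy}(\z(x))$ with $M_{\vy}=-m_{\vy}\circ\x$ and expanding $M_{\vy}$ as a meromorphic function on $\calS$ at $\pi^{-1}(x_0)$ in $t$: since $t\mapsto -t$ exchanges the two sheets over a band point while $M_{\vy}$ has complex-conjugate boundary values there, the coefficients at even powers of $t$ are real and those at odd powers purely imaginary; hence on the adjacent band $w_{\vy}(x)=\tfrac1\pi\Ima(\nu_{1-2N})\,t^{1-2N}(1+O(t^2))$, where $N\in\{0,1\}$ is the order of the pole of $m_{\vy}$ at $\pi^{-1}(x_0)$ and $\nu_{1-2N}\neq 0$ (the residue if $N=1$, the nonvanishing $\sqrt{R}$-contribution to $m_{\vy}$ if $N=0$). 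On the other hand, by \eqref{8.26} the zeros of $u(\dott;\vy)$ are the $\Gamma$-translates of the points over the poles $y_1,\dots,y_\ell$ of $m_{\vy}$, so $u(\dott;\vy)$ vanishes at $z_0$ precisely when the pole of $m_{\vy}$ in the gap adjacent to $x_0$ sits at $x_0$, i.e.\ precisely when $N=1$, and then only simply; thus $\abs{u(\z(x);\vy)}^2=C_u(\vy)\,t^{2N}(1+O(t))$ with $C_u(\vy)>0$. Multiplying, $\abs{u(\z(x);\vy)}^2 w_{\vy}(x)=\tfrac1\pi C_u(\vy)\,\Ima(\nu_{1-2N})\,t\,(1+O(t))$ has nonvanishing leading coefficient, and comparison with the Wronskian expansion gives $\Ima(c_1\,\mathcal{W}(\vy))\neq 0$, finishing the proof. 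The step I expect to be trickiest is exactly this interface between the regimes $N=0$ and $N=1$ --- when a pole $y_k$ of $m_{\vy}$ runs into the edge $x_0$, the value $u(z_0;\vy)$ collapses to $0$ while $w_{\vy}$ develops an inverse square-root singularity --- but routing existence and continuity through the single quantity $\mathcal{W}(\vy)$, which stays finite and, by Theorem~\ref{T9.7}, nonzero across that interface, sidesteps it; the case distinction then enters only in the short nonvanishing verification.
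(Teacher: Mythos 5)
Your proposal is correct, but it runs along a genuinely different track than the paper's proof. The paper's argument is a one-line cancellation in explicit formulas: by \eqref{6.9} and \eqref{6.16d}, every factor of $\abs{x-x_0}^{-1/2}w_{\vec y}(x)$ is jointly continuous and nonvanishing at $(x_0,\vy)$ except the single factor $\abs{x-\pi(y_j)}^{-1}$ from the zero of $a$ nearest $x_0$, and by \eqref{5.12}/\eqref{8.26} the function $\abs{u(\z(x);\vy)}^2$ carries exactly the cancelling factor $\abs{\x(\z(x))-\pi(y_j)}$; existence, positivity, and continuity in $\vy$ then come out simultaneously. You instead extract existence, finiteness, and continuity from a Taylor expansion of the Wronskian identity \eqref{9.13} in the branch-point coordinate $t$, which has the genuine advantage of packaging the delicate resonance interface ($y_j\to x_0$) into the single quantity $\mathcal{W}(\vy)=\Wr(u^+_\bddot(x_0;\vy),v_\bddot(\vy))$, manifestly continuous in $\vy$; your separate nonvanishing check—parity of the Laurent coefficients of $m_{\vec y}$ in $t$ together with the location and simplicity of the zero of $u(\dott;\vy)$ at $z_0$ read off from \eqref{8.26}—is in substance the paper's cancellation argument transplanted to the coordinate $t$. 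So the paper's route is shorter and yields everything at once from \eqref{6.9}, while yours isolates continuity in a softer, more structural statement and only invokes the explicit formulas pointwise.

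One caveat, inherited from the paper itself: you justify dropping the $t^0$ term and the identification of the $t^1$ coefficient as $-2i\,\Ima(c_1\mathcal{W})$ by the remark before Theorem~\ref{T9.7} that $u_n^+(x_0)$ is real. At an interior band edge this is only true up to a constant phase: since $\bar z_0=\gamma_j^{-1}(z_0)$ and all the $u_n(\dott;\vy)$ share one character $c_*=\ti\frA(\vy)\,\ti\frA(\ve{w})^{-1}$ (use \eqref{9.5} and \eqref{6.19x}), one has $\ol{u_n(z_0)}=c_*(\gamma_j)^{-1}u_n(z_0)$, i.e.\ constant phase independent of $n$—this is the accurate statement made in Section~\ref{s10}, not reality. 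This does not damage your proof: the common phase still makes the $t^0$ term of the Wronskian cancel, and factoring it out of $P_\bddot$ turns the $t^1$ coefficient into $-2i\,\Ima\bigl(c_1c_*(\gamma_j)^{-1}\mathcal{W}(\vy)\bigr)$, which is again continuous in $\vy$; your nonvanishing argument is untouched. Likewise, "real-analytic in $\vy$" for $u_n(z_0;\vy)$, $v_n(\vy)$ is more than you need (and more than is known unconditionally, since $\varphi$ in \eqref{7.10} is only known to be continuous in general); continuity suffices and is what your continuity conclusion actually uses.
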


\begin{remark} This result is subtle because $\abs{u(z;\vy)}^2$ can vanish at $x_0$. In that case, $w_{\vec y}(x)$
has $O(\sqrt{\abs{x-x_0}}^{-1})$ asymptotics rather than $O(\sqrt{x-x_0})$ asymptotics.
\end{remark}

\begin{proof} By the explicit formula for $w_{\vec y}$ (\eqref{6.9} and \eqref{6.16d}), each factor in $w_{\vec y}(x)
\abs{x-x_0}^{-1/2}$ is continuous in $x$ and $\vy$ except for the
$\abs{x-y_j}^{-1}$ factor with the $y_j$ closest to $x_0$. There is
a cancelling factor in $\abs{u(z;\vy)}^2$, so the limit exists and
is continuous in $\vy$.
\end{proof}

\begin{theorem} \lb{T9.11} There is a constant $C$ so that uniformly in $x\in\fre$ and $\vy\in\bbG$,
\begin{equation} \lb{9.40}
\norm{T_n(x;\vy)} \leq C(\abs{n}+1)
\end{equation}
\end{theorem}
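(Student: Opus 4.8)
The plan is to interpolate between Theorem~\ref{T9.4}, which bounds $\norm{T_n(x;\vy)}$ by $C\,\dist(x,\bbR\setminus\fre)^{-1/2}$ on $\fre^\intt$, and Corollary~\ref{C9.9}, which gives the $O(n)$ bound exactly at the band edges, by a Duhamel expansion around the nearest band edge. First I would reduce to large $n$: since $T_n(x;\vy)$ is a polynomial in $x$ whose coefficients are continuous (indeed real analytic, Corollary~\ref{C6.3}) in $\vy$, and $\fre\times\bbG$ is compact, \eqref{9.40} is automatic for $n$ in any bounded range, so it suffices to treat $n\geq n_1$ with $n_1$ chosen so large that $n_1^{-2}$ is below half the shortest band length and below half the shortest gap length. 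Throughout one uses $a_{\min}:=\inf_{k,\vy}a_k(\vy)>0$ (valid since $a_1$ is positive and continuous on the compact torus $\bbG$ and $a_k(\vy)=a_1(U^{k-1}(\vy))$). For a fixed such $n$, I would split $\fre$ into region (A), where $\dist(x,\bbR\setminus\fre)\geq n^{-2}$, on which Theorem~\ref{T9.4} gives $\norm{T_n(x;\vy)}\leq Cn$ directly and uniformly in $\vy$; and region (B), where $\dist(x,\bbR\setminus\fre)< n^{-2}$, which by the choice of $n_1$ consists of points within $n^{-2}$ of a \emph{unique} band edge $x_0\in\{\alpha_j,\beta_j\}_{j=1}^{\ell+1}$, with $\dist(x,\bbR\setminus\fre)=\abs{x-x_0}$; if $x=x_0$ we are done by Corollary~\ref{C9.9} (whose constant is uniform in $\vy$, since its proof rests on the $\vy$-uniform bounds of Theorems~\ref{T8.6} and \ref{T9.7} and on the non-vanishing $\vy$-continuous Wronskian $\Wr(u_\bddot^+(x_0;\vy),v_\bddot(\vy))$), so in region (B) I may assume $x\in\fre^\intt$ with $0<\abs{x-x_0}<n^{-2}$.

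For region (B) the heart of the argument is as follows. Write $T_m(x;\vy)$ as a product of the one-step transfer matrices $A_k(x;\vy)$ of $\ti J_{\vec y}$ (whose $x$-dependence sits only in the $(1,1)$ entry $(x-b_k(\vy))/a_k(\vy)$), and similarly at $x_0$. Then $\Delta_k:=A_k(x;\vy)-A_k(x_0;\vy)$ has $\norm{\Delta_k}\leq a_{\min}^{-1}\abs{x-x_0}$, and the telescoping (Duhamel) identity holds:
\begin{equation}
T_n(x;\vy)-T_n(x_0;\vy)=\sum_{j=1}^n \bigl[A_n(x;\vy)\cdots A_{j+1}(x;\vy)\bigr]\,\Delta_j\, T_{j-1}(x_0;\vy),
\end{equation}
with the bracket read as the identity when $j=n$. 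The key observation is that the partial product $A_n(x;\vy)\cdots A_{j+1}(x;\vy)$ is the transfer matrix from site $j$ to site $n$ for $\ti J_{\vec y}$, hence equals the transfer matrix $T_{n-j}(x;U^j(\vy))$ of the $j$-times stripped matrix, because coefficient stripping acts on $\bbG$ as $U$ (Theorem~\ref{T6.1}(iii) and \eqref{6.16x}).

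It then remains to estimate the sum. Since $U^j(\vy)\in\bbG$, Theorem~\ref{T9.4} (uniform over the whole isospectral torus, with $C$ depending only on $\fre$) bounds $\norm{A_n(x;\vy)\cdots A_{j+1}(x;\vy)}=\norm{T_{n-j}(x;U^j(\vy))}\leq C\abs{x-x_0}^{-1/2}$ for every $j$, using $\dist(x,\bbR\setminus\fre)=\abs{x-x_0}$; by Corollary~\ref{C9.9}, $\norm{T_{j-1}(x_0;\vy)}\leq C(1+j)$ uniformly in $\vy$. Combining with $\norm{\Delta_j}\leq a_{\min}^{-1}\abs{x-x_0}$,
\begin{equation}
\norm{T_n(x;\vy)-T_n(x_0;\vy)}\leq \sum_{j=1}^n C\abs{x-x_0}^{-1/2}\,a_{\min}^{-1}\abs{x-x_0}\,C(1+j)\leq C'\abs{x-x_0}^{1/2}n^2\leq C'n,
\end{equation}
the last step using $\abs{x-x_0}<n^{-2}$. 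Together with $\norm{T_n(x_0;\vy)}\leq C(1+n)$ (Corollary~\ref{C9.9} again), this gives \eqref{9.40} on region (B) and completes the proof.

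The main obstacle, and the only non-routine point, is obtaining the correct power in the Duhamel sum. Estimating the partial products $A_n(x)\cdots A_{j+1}(x)$ by the naive $\norm{T_n(x)}\,\norm{T_j(x)}$ would square the singular factor $\abs{x-x_0}^{-1/2}$ and yield only an $O(n^2)$ bound; it is precisely the isospectral structure, which identifies each partial product with a transfer matrix of another point of $\bbG$ and so permits a single fixed constant in Theorem~\ref{T9.4}, that keeps this factor at $\abs{x-x_0}^{-1/2}$. The compensating $\abs{x-x_0}^{1/2}$ then extracted from $\Delta_j$ and Theorem~\ref{T9.4} is exactly enough to absorb the $\sum_{j\leq n}j\asymp n^2$ growth coming from Corollary~\ref{C9.9} down to $O(n)$, and taking the threshold to be $n^{-2}$ is what makes regions (A) and (B) match.
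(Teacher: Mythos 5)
Your proof is correct, but it takes a genuinely different route from the one in the paper. The paper argues locally at each band edge $x_0$: it keeps the solution-matrix formalism of \eqref{9.20}--\eqref{9.22} but replaces $u^-$ by the renormalized difference quotient $\ti v=(u^--u^+)/\abs{x-x_0}^{1/2}$, which has a nondegenerate limit at $x_0$; the lower bound on the resulting Wronskian comes from \eqref{9.13} together with Proposition~\ref{P9.10}, and the $O(\abs{n}+1)$ size of the entries comes from writing $u^--u^+$ as an integral of $\partial u^+/\partial z$ and invoking \eqref{9.27}. This gives \eqref{9.40} on each half-band in one stroke, with no $n$-dependent case analysis. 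You instead split $\fre$ at distance $n^{-2}$ from the edges, use Theorem~\ref{T9.4} in the bulk, and in the edge zone run a telescoping (Duhamel) expansion around $x_0$; the decisive point—that the partial products $A_n(x;\vy)\cdots A_{j+1}(x;\vy)$ equal $T_{n-j}(x;U^j(\vy))$ because coefficient stripping is the map $U$ on $\bbG$, so the single uniform constant of Theorem~\ref{T9.4} applies to every factor—is exactly what keeps the singular factor at $\abs{x-x_0}^{-1/2}$ and makes the $n^{-2}$ threshold balance the $\sum_j j$ growth from Corollary~\ref{C9.9}. What each approach buys: the paper's is shorter and needs no splitting, but it requires the somewhat delicate Proposition~\ref{P9.10} and the renormalized second solution; yours avoids both, at the price of the $n$-dependent decomposition and of needing Corollary~\ref{C9.9} uniformly in $\vy$, which you justify correctly (compactness of $\bbG$, continuity of $u_n^+$ and $v_n$ in $\vy$, nonvanishing of their Wronskian), and it exhibits the almost-periodic/stripping structure explicitly. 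Two small remarks: your one-step matrices with the paper's normalization \eqref{9.17} indeed depend only on $(a_k,b_k)$, which is what makes the identification with the stripped transfer matrices exact, so that step is sound; and for negative $n$ (the $\abs{n}$ in \eqref{9.40} refers to the whole-line matrices $\ti J_{\vec y}$) the same telescoping works verbatim since $U$ is invertible on $\bbG$ and $a_{i+j}(\vy)=a_i(U^j(\vy))$ holds for all $i,j\in\bbZ$, a point worth one sentence if you write this up.
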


\begin{proof} For each $x_0\in\{\alpha_j,\beta_j\}_{j=1}^{\ell+1}$, we prove \eqref{9.40} in the half-band starting
at $x_0$. Form a matrix $\ti U_n$ like \eqref{9.20} but with $u^-$
replaced by $\ti v = (u^--u^+)/\abs{x-x_0}^{1/2}$. As we have seen,
$\ti v$ has a limit as $x$ approaches $x_0$ from $\fre^\intt$. By
the Wronskian calculation, $\det(U_n)$ (which is $n$-independent) is
bounded as $x$ approaches $x_0$.

Finally, writing $(u^--u^+)$ as the integral of a derivative and using \eqref{9.27}, we get
\begin{equation} \lb{9.41}
\abs{\ti U_n} \leq C(\abs{n}+1)
\end{equation}
so
\begin{equation} \lb{9.42}
\norm{\ti U_n} \leq C(\abs{n}+1)
\end{equation}
which implies \eqref{9.40}.
\end{proof}

At last, we want to note that \eqref{9.5} implies a result about the
Jost solutions used in \cite{2ext}.

\begin{theorem}\lb{T9.12} For any compact interval $I\subset\fre^\intt$, we can write
\begin{equation} \lb{9.43}
u_n^+(x) = e^{in\theta(x)} f_n(x)
\end{equation}
where
$\theta'(x) = \pi\rho_\fre (x)$
and
$f_n$ is real analytic in $x$ with derivatives uniformly bounded in $n$.
\end{theorem}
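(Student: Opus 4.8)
The plan is to read the decomposition directly off the formula \eqref{9.5}, $u_n(z;\vy)=a_n^{-1}B(z)^n\,u(z;U^n(\vy))$, specialized to boundary points. A compact interval $I\subset\fre^\intt$ is connected, hence lies in a single band; for $x\in I$ the point $\z(x)$ lies on one of the open arcs of $\partial\bbD\cap\ol\calF$, at positive distance from $\calL$ and from the preimages of the band edges, and there $\x$ restricts to a real analytic diffeomorphism onto the band with nonvanishing derivative, so $x\mapsto\z(x)$ is real analytic on $I$ with all derivatives bounded. By Theorem~\ref{T4.4}, $\abs{B(\z(x))}=e^{-G_\fre(x)}=1$ on $I$ since $G_\fre$ vanishes on $\fre$; thus $x\mapsto B(\z(x))$ is unimodular and real analytic, and we fix a real analytic branch $\theta(x)$ of its argument, so that $B(\z(x))^n=e^{in\theta(x)}$ on $I$ for every $n$. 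Setting
\[
f_n(x)=a_n^{-1}\,u(\z(x);U^n(\vy)),
\]
the identity \eqref{9.5} becomes precisely $u_n^+(x)=e^{in\theta(x)}f_n(x)$.

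To identify $\theta'$, I would use $B=E\circ\x$ from \eqref{4.38}, where $E=e^{-\ti G_\fre}$ and $\ti G_\fre=G_\fre+iH$ is a local analytic completion of the potential theoretic Green's function; recall from the remarks after \eqref{4.26x} that $G_\fre$ is real analytic in $x$ and $\abs{y}$ near the band interior, so $H$ is a well-defined one-sided harmonic conjugate there. Then $\theta(x)=\arg B(\z(x))$ equals $-H$ evaluated on the appropriate side of the band, and the Cauchy--Riemann equations reduce $\theta'(x)$ to a normal derivative of $G_\fre$ at the band, which by \eqref{4.27} is $\pi\rho_\fre(x)$ up to sign. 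Pinning down that sign — it is forced by the orientation convention in the definition of $\z(\cdot)$ (in the model case $\fre=[-2,2]$ one has $B(z)=z$, $\x(e^{i\phi})=2\cos\phi$, and is looking at $e^{\pm in\arccos(x/2)}$) — is the one point in the argument that genuinely requires care; the rest is routine.

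For the uniformity, Theorem~\ref{T8.6} supplies a fixed neighborhood $N$ of $\ol\calF$ on which $u(\,\cdot\,;\vy)$ is analytic and on which $u$ is bounded uniformly over $N$ and over all of $\bbG$. Since $\z(I)$ is a compact subset of the interior of $N$ lying over the band interior — where $u(\,\cdot\,;\vy)$ has neither zeros nor poles, because the poles $\{\gamma(\ti\zeta_k)\}$ (and similarly the zeros) all project into gaps, as $\x^\sharp(\ti\zeta_k)\in G_k$ — the Cauchy estimates bound each $z$-derivative $\partial_z^k u$ near $\z(I)$ uniformly over the torus, in particular along the orbit $\{U^n(\vy)\}$. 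Composing with the fixed, bounded-derivative map $x\mapsto\z(x)$ via the chain rule, and using that $\{a_n(\vy)\}$ is bounded and bounded away from $0$ (as in the proof of Corollary~\ref{C9.5}), one concludes that $f_n$ is real analytic in $x$ with all $x$-derivatives bounded uniformly in $n$. The only real obstacle is thus the orientation bookkeeping for $\theta'$, together with ensuring the estimates inherited from Theorem~\ref{T8.6} are genuinely uniform along the whole orbit $\{U^n(\vy)\}$, which they are since that theorem is uniform over all of $\bbG$.
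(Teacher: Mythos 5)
Your proposal is correct and follows essentially the same route as the paper: read the factorization off \eqref{9.5} with $f_n(x)=a_n^{-1}u(\z(x);U^n(\vy))$, use unimodularity of $B$ on $\partial\bbD$ to define $\theta$, identify $\theta'$ via the Cauchy--Riemann/normal-derivative computation for $G_\fre$ (the calculation behind \eqref{4.39}--\eqref{4.50} and \eqref{4.27}), and get the uniform derivative bounds from Theorem~\ref{T8.6} together with Cauchy estimates uniformly over $\bbG$. The paper's proof is just a terser version of this, so no further comment is needed.
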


\begin{proof} Let $\z(x)\in\ol\calF\cap\bbC_+$ with $\x(\z(x))=x$ and write
\[
B(\z(x)) =e^{i\theta(x)}
\]
By \eqref{9.5}, $u_n^+$ has the form \eqref{9.43} and we see that
$f_n$ has the required properties. By the calculation that led to
\eqref{4.50}, we get the expression for $\theta'$.
\end{proof}

In \cite{2ext}, this was used to prove that for any $d\mu_{\vec y}$,
with $\vy\in\bbG$,
\begin{equation} \lb{9.55}
\f{1}{n}\, K_n(x,x) \to \f{\rho_\fre(x)}{w_{\vec y}(x)}
\end{equation}
uniformly on $I$, where $K_n$ is the CD kernel (see \cite{CD} for
definition and background on the classical work of
M\'at\'e--Nevai--Totik and Totik on limits like \eqref{9.55}). Using
the calculations in \cite{ALS} and identifying $u_n^+$ as a multiple
of the Deift--Simon eigenfunctions of \cite{ALS}, one sees that
\eqref{9.55} implies that

\begin{proposition}\lb{P9.13} Uniformly on compact subsets of $\fre^\intt \times\bbG$,
\begin{equation} \lb{9.56}
\f{1}{n} \sum_{k=0}^{n-1} \Real [u_k^+ (x;\vy)^2] \to 0 \,\mbox{ as
} \, n\to\infty
\end{equation}
\end{proposition}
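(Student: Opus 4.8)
The plan is to derive \eqref{9.56} from the Christoffel--Darboux asymptotics \eqref{9.55} by identifying $u_n^+$ with a multiple of the Deift--Simon eigenfunction of \cite{ALS}. Concretely, I would prove the stronger statement that, uniformly on compact subsets of $\fre^\intt\times\bbG$,
\[
\f1n\sum_{k=0}^{n-1} u_k^+(x;\vy)^2\longrightarrow 0\qquad(n\to\infty),
\]
from which \eqref{9.56} follows at once by taking real parts, since $u\mapsto\Real u$ is continuous and linear.

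To see why \eqref{9.55} is the relevant input, I would first note that for $x\in\fre^\intt$ and $\vy\in\bbG$ the Jost solutions $u_n^+(x;\vy)$ and $u_n^-(x;\vy)=\ol{u_n^+(x;\vy)}$ are linearly independent solutions of \eqref{8.1}: by Theorem~\ref{T9.3} their Wronskian equals $2\pi i\,\abs{u(\z(x);\vy)}^2 w_{\vec y}(x)$, which is nonzero since $w_{\vec y}(x)>0$ on $\fre^\intt$ by \eqref{6.5a} and $u(\z(x);\vy)>0$ by Proposition~\ref{P8.7}. Hence $p_k(x)=2\,\Real[A(x;\vy)\,u_{k+1}^+(x;\vy)]$ for a coefficient $A(x;\vy)$ that is continuous and bounded above and below on compact subsets of $\fre^\intt\times\bbG$ (using the Wronskian formula, the initial data $p_{-1}=0$, $p_0=1$, and the uniform bounds of Theorems~\ref{T8.6} and \ref{T6.1A}). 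Squaring and summing,
\[
\f1n K_n(x,x)=\f{2\abs{A(x;\vy)}^2}{n}\sum_{k=1}^{n}\abs{u_k^+(x;\vy)}^2+2\,\Real\Bigl[A(x;\vy)^2\,\f1n\sum_{k=1}^{n}u_k^+(x;\vy)^2\Bigr].
\]
Here $\abs{u_k^+(x;\vy)}^2=\abs{f_k(x)}^2$ with $f_k(x)=B(\z(x))^{-k}u_k(\z(x);\vy)$ real analytic quasiperiodic in $k$, uniformly in $(x,\vy)$ (Theorem~\ref{T9.2}(iii) and the proof of Theorem~\ref{T9.12}), so the first average converges uniformly on compacts; since $\f1n K_n(x,x)\to\rho_\fre(x)/w_{\vec y}(x)$ by \eqref{9.55}, the remaining term converges as well, and the real content of the proposition is exactly that the \emph{complex} Cesàro average of $u_k^+(x;\vy)^2$ vanishes.

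That last claim is the main obstacle, and it cannot be read off from quasiperiodicity alone: writing $u_k^+=e^{ik\theta(x)}f_k(x)$ with $\theta'=\pi\rho_\fre$, a Fourier expansion shows that the Cesàro average of $(u_k^+)^2$ vanishes for every $x$ outside the (in general dense) resonant set where $\rho_\fre([\alpha_1,x])$ lies in the $\bbZ$-module generated by $\{\rho_\fre([\alpha_1,\beta_j])\}_{j=1}^\ell$, but not obviously on that set. To treat all $x$ I would identify $u_n^+(\,\cdot\,;\vy)$, up to a bounded nonvanishing $(x,\vy)$-dependent factor, with the Deift--Simon eigenfunction of the two-sided Jacobi matrix $\ti J_{\vec y}$ on its a.c.\ spectrum, and invoke \cite{ALS}, where the diagonal Christoffel--Darboux limit \eqref{9.55}, upgraded there to bulk (sine-kernel) universality, is shown to force the Cesàro average of the square of that eigenfunction to $0$ at every point of the a.c.\ spectrum, whence the displayed limit. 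Combined with the continuity and uniform boundedness in Theorems~\ref{T8.6} and \ref{T9.2}, this gives the displayed limit uniformly on compact subsets of $\fre^\intt\times\bbG$, hence \eqref{9.56}. The one technical point requiring care is that the normalization relating $u_n^+$ to the Deift--Simon eigenfunction, together with the ALS universality estimate itself, be controlled uniformly in $(x,\vy)$.
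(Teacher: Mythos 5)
Your overall route is the one the paper intends---combine \eqref{9.55} with the identification of $u_n^+$ as a multiple of the Deift--Simon eigenfunctions and ``the calculations in \cite{ALS}''---and your preliminary reduction is sound: writing $p_k=2\Real[A(x;\vy)u_{k+1}^+(x;\vy)]$ with $A$ controlled via the Wronskian of Theorem~\ref{T9.3}, and using almost periodicity to get uniform convergence of the Ces\`aro means of $\abs{u_k^+}^2$, the diagonal limit \eqref{9.55} controls the Ces\`aro averages of $\Real[A^2u_k^{+2}]$. The genuine gap is the step you yourself single out as the main obstacle and then dispose of by citation: the assertion that in \cite{ALS} the Ces\`aro average of the squared Deift--Simon eigenfunction is shown to vanish at \emph{every} point of the a.c.\ spectrum. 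That is not what \cite{ALS} contains. There the wave functions and the averaging statements (including the vanishing of $\f1n\sum u_k^2$) are established only for Lebesgue-a.e.\ energy, by $L^2$-in-energy/ergodic arguments, and moreover that vanishing is an \emph{input} to the sine-kernel universality proved there, not a consequence that universality ``forces.'' An a.e.\ statement is useless here precisely because all of the difficulty of Proposition~\ref{P9.13} sits on the exceptional set: at the (generically dense) resonant energies where $2\theta(x)$ lies modulo $2\pi$ in the module generated by $\{2\pi\rho_\fre([\alpha_1,\beta_j])\}_{j=1}^\ell$, the complex Ces\`aro limit of $(u_k^+)^2$ exists by almost periodicity but equals a Fourier--Bohr coefficient of the envelope, and nothing you have quoted makes it vanish. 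This is exactly why the paper claims only the real part (see the Remark: $(\Real u_n^+)^2$ and $(\Ima u_n^+)^2$ have the same average); your stronger complex statement, even if it can be proved, requires an argument, not a reference.

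To close the gap you would have to actually carry out an ALS-type computation in this setting rather than cite one, for instance by exploiting the uniformity of \eqref{9.55}: evaluating the diagonal limit at nearby energies $x+a/n$ (using $u_k^+=e^{ik\theta(x)}f_k(x)$ with $\theta'=\pi\rho_\fre$ and the uniform derivative bounds of Theorems~\ref{T8.6}, \ref{T9.2}, \ref{T9.12}) turns the resonant Fourier coefficient into a factor $\bigl(e^{is}-1\bigr)/(is)$ with $s=2a\theta'(x)$, and the $a$-independence of the limit then kills the offending coefficient; alternatively one can exploit uniformity in $\vy\in\bbG$ together with the covariance $u_{k+j}^+(x;\vy)\propto B(\z(x))^ju_k^+(x;U^j(\vy))$ to average out the phase of $A^2$. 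As written, however, the heart of the proposition---the behavior at resonant energies, uniformly in $(x,\vy)$---has been delegated to a result that the cited paper does not prove.
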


\begin{remark} That is, $(\Real u_n^+)^2$ and $(\Ima u_n^+)^2$ have the same average.
\end{remark}

\section{Bounds on (Spectral Theorist's) Green's Function} \lb{s10}

For $\vy\in\bbG$, let $J_{\vec y}$ be the associated Jacobi matrix
and $\ti J_{\vec y}$ the associated full-line Jacobi matrix. The
(spectral theorist's) Green's functions are defined by
\begin{alignat}{2}
G_{nm}(z) &= \jap{\delta_n, (J_{\vec y}-z)^{-1}\delta_m} && \qquad n,m=1,2,\dots \lb{10.1} \\
\ti G_{nm}(z) &= \jap{\delta_n, (\ti J_{\vec y}-z)^{-1}\delta_m}
&&\qquad n,m\in\bbZ \lb{10.2}
\end{alignat}
We will use $G_{nm}(z;\vy)$ when we need $\vy$ to be explicit. It is
unfortunate that ``Green's function'' is used both for these objects
and for $G_\fre(z)$, the potential theorist's Green's function, but
both names are ubiquitous. $G_\fre$ will appear below in our
discussions of $G_{nm}$.

The analogs of $G_{nm}$ and $\ti G_{nm}$ for $-d^2/dx^2$ on $L^2(0,\infty)$ or $L^2 (-\infty,\infty)$ are
given by
\begin{align}
\ti G(x,y;E) &= \f{e^{-\kappa\abs{x-y}}}{2\kappa} \lb{10.3} \\
G(x,y;E) &= \f{e^{-\kappa x_>} \sinh(\kappa x_<)}{\kappa} \lb{10.4}
\end{align}
where $E= -\kappa^2$, $x_>=\max (x,y)$, $x_< =\min(x,y)$. The bounds
\begin{align}
\abs{\ti G(x,y;E)} &\leq (2\kappa)^{-1} \lb{10.5} \\
\abs{G(x,y;E)} &\leq x_< \lb{10.6}
\end{align}
play important roles in the analysis of bound states of
Schr\"odinger operators with short-range potentials. Here we find
analogs of these bounds for Jacobi matrices in the isospectral torus
for $z$ in $\bbR\setminus\fre$. These bounds were used in
\cite{HS2008} to obtain bounds on perturbations of $J_{\vec y}$ and
$\ti J_{\vec y}$.

We need to begin by defining $u_n^\pm(x;\vy)$ for
$x\in\bbR\setminus\fre$. Define $\z(x)$ to be the unique point in $
[\cup_{j=1}^\ell C_j^+]\cup(-1,1)$ with
\begin{equation} \lb{10.7}
\x(\z(x))=x
\end{equation}
Then we define
\begin{align}
u_n^+(x;\vy) &= a_n^{-1} B(\z(x))^n u(\z(x);U^n(\vy)) \lb{10.8} \\
u_n^- (x;\vy) &= a_n^{-1} B(\z(x))^{-n}\, \ol{u(1/\ol{\z(x)};
U^n(\vy))} \lb{10.9}
\end{align}
$u_n^+$ is the analytic continuation of $u_n^+$ as defined for
$x\in\fre$ in the last section if we keep $x\in\ol\bbC_+$. So is
$u_n^-$ since $u_n^-$ is defined on the lower lip of the cuts, so
continuing in $\ol\bbC_+$ brings us to the second sheet and
$1/\ol{\z(x)}$. The $-n$ in $u_n^-$ comes from
\begin{equation} \lb{10.10}
\ol{B(1/\ol{\z(x)})} =B(\z(x))^{-1}
\end{equation}

Since
\begin{equation} \lb{10.11}
\abs{B(\z(x))} = e^{-G_\fre(x)}
\end{equation}
$u_n^\pm$ decays exponentially as $n\to\pm\infty$ and grows
exponentially as $n\to \mp\infty$. It follows that $u_n^\pm$ must
have constant phase (and perhaps we should redefine them to be
real). Indeed, the phase is constant on each gap and $u^+$ and $u^-$
have opposite phases.

\begin{theorem} \lb{T10.1} For $x\in\bbR\setminus\fre$ and $n\leq
m$, we have
\begin{equation} \lb{10.12}
\ti G_{nm}(x) = \f{u_n^-(x) u_m^+(x)}{\Wr(x)}
\end{equation}
where
\begin{equation} \lb{10.13}
\Wr(x) = a_n (u_{n+1}^+(x) u_n^-(x) - u_{n+1}^-(x) u_n^+(x))
\end{equation}
Uniformly in $x\in\bbR\setminus\fre$ and $\vy\in\bbG$, we have
\begin{equation} \lb{10.14}
\abs{\ti G_{nm}(x)} \leq Ce^{-G_\fre(x)\abs{n-m}} \dist(x,\fre)^{-1/2}
\end{equation}
\end{theorem}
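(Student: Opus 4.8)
\emph{Proof plan.}\ Formula \eqref{10.12}--\eqref{10.13} is the standard expression for the kernel of $(\ti J_{\vec y}-x)^{-1}$ via the two solutions of \eqref{8.1} that are $\ell^2$ at $+\infty$ and at $-\infty$, so the plan begins by verifying its hypotheses. The functions $u_n^\pm(\,\cdot\,;\vy)$ of \eqref{10.8}--\eqref{10.9} solve \eqref{8.1} for all $n\in\bbZ$, being the analytic continuations across the gaps (with the auxiliary variable kept in $\ol\bbC_+$) of the Jost solutions of Section~\ref{s9}; cf.\ Theorem~\ref{T9.2}. By \eqref{10.11} and $G_\fre>0$ on $\bbR\setminus\fre$, and since the $u(\,\cdot\,;\,\cdot\,)$-factors are subexponential in $n$, $\abs{u_n^+(x;\vy)}$ decays like $e^{-nG_\fre(x)}$ as $n\to+\infty$ and $\abs{u_n^-(x;\vy)}$ like $e^{nG_\fre(x)}$ as $n\to-\infty$. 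As $\ti J_{\vec y}$ lies in the isospectral torus, $\sigma_\ess(\ti J_{\vec y})=\fre$; combined with this decay and Corollary~\ref{C9.8} (no $\ell^2$ solutions on $\fre$) it follows that $\ti J_{\vec y}$ has no eigenvalues, so $(\ti J_{\vec y}-x)^{-1}$ is bounded for all $x\in\bbR\setminus\fre$ and $\ti G_{nm}$ is real analytic there. Granted this, the classical resolvent-kernel formula --- with signs fixed by \eqref{9.11} and the convention $(\ti J_{\vec y}-x)$ --- gives \eqref{10.12}--\eqref{10.13} at every $x\in\bbR\setminus\fre$ at which $u^\pm$ are finite, hence everywhere on $\bbR\setminus\fre$ by continuity of $\ti G_{nm}$.

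For the bound \eqref{10.14}, set $r=\abs{n-m}$; for $n\le m$, \eqref{10.8}--\eqref{10.9} give
\[
u_n^-(x;\vy)\,u_m^+(x;\vy)=\frac{B(\z(x))^{m-n}}{a_n a_m}\;\ol{u\bigl(1/\ol{\z(x)};U^n(\vy)\bigr)}\;u\bigl(\z(x);U^m(\vy)\bigr),
\]
so by \eqref{10.11} the factor $\abs{B(\z(x))}^{m-n}=e^{-rG_\fre(x)}$ is exactly the exponential in \eqref{10.14}. The Jacobi parameters of isospectral-torus matrices are bounded and bounded away from $0$ uniformly over $\bbG$ (since $\{a_n(\vy):n\in\bbZ\}=\{a_1(\vy'):\vy'\in\bbG\}$ and $a_1$ is positive and continuous on the compact $\bbG$), and $\z(x)\in\ol\calF$ for all $x\in\bbR\setminus\fre$, so $\abs{u(\z(x);\vy')}\le C$ uniformly in $x$ and $\vy'$ by Theorem~\ref{T8.6}. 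The only factor not visibly under control is $u(1/\ol{\z(x)};U^n(\vy))$: as $x$ crosses the $j$-th gap, $1/\ol{\z(x)}$ sweeps out the outer arc of the complete orthocircle $\ti C_j^+$ and passes through the point $\ti\zeta_j$, where every $u(\,\cdot\,;\vy')$ (built with the fixed reference $\nu$) has a simple pole; thus $u_n^-(\,\cdot\,;\vy)$ has a simple pole at the single point $\pi(w_j)$ of each gap.

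The point is that this singular factor is the same for all $n$ and all $\vy$, and is cancelled by a matching singularity of $\Wr(x)$. Using the reflection relations \eqref{2.16a}, \eqref{2.4}, \eqref{5.24a} and $\ol{B(1/\bar z)}=B(z)^{-1}$ in the defining formula \eqref{5.12}, one gets a symmetry $\ol{\Theta(1/\ol z;y)}\,\Theta(z;y)=c_y\,(\x(z)-\pi(y))/(\x(z)-\beta_j)$ with $c_y>0$, for $y=\x^\sharp(\zeta)\in G_j$; inserting this together with \eqref{8.26} into the display above shows $u_n^-(x;\vy)=\bigl[\prod_{k=1}^\ell(x-\pi(w_k))\bigr]^{-1}\widetilde\psi_n(x;\vy)$, with $\widetilde\psi_n(\,\cdot\,;\vy)$ analytic on $\bbR\setminus\fre$. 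Because $\Wr$ in \eqref{10.13} is $n$-independent, it has the form $\Wr(x)=\bigl[\prod_{k=1}^\ell(x-\pi(w_k))\bigr]^{-1}\widetilde Q(x)$ with $\widetilde Q$ analytic on $\bbR\setminus\fre$, and the prefactors cancel in \eqref{10.12}, leaving $\ti G_{nm}(x)=\widetilde\psi_n(x;\vy)\,u_m^+(x;\vy)/\widetilde Q(x)$. In the numerator, $\abs{\widetilde\psi_n(x;\vy)\,u_m^+(x;\vy)}$ equals $e^{-rG_\fre(x)}$ times a quantity uniformly bounded in $n,m,\vy$ for $x$ in compact subsets of $\bbR\setminus\fre$ (use Theorem~\ref{T8.6}, boundedness of $\varphi,\varphi^{-1}$ and of the $a$'s via Theorem~\ref{T7.1}, and the fact that a zero of $u(\z(x);\,\cdot\,)$ occurs only where the $\prod_k(x-\pi(\,\cdot\,)_k)$-factor of $\widetilde\psi_n$ also vanishes). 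So everything reduces to a uniform lower bound $\abs{\widetilde Q(x)}\ge c\,\dist(x,\fre)^{1/2}$, uniformly in $\vy\in\bbG$.

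This last estimate is the main obstacle. On $\{x:\veps_0\le\dist(x,\fre)\le R_0\}$ it follows from continuity and compactness in $(x,\vy)$ once $\widetilde Q$ is known to be nowhere zero, which holds because $u^+$ and $u^-$ are linearly independent (they have opposite exponential behaviour at $\pm\infty$). The delicate region is $x\to x_0\in\partial\fre$: there $\dist(x,\fre)^{1/2}=\abs{x-x_0}^{1/2}\to0$ but also $G_\fre(x)=O(\abs{x-x_0}^{1/2})\to0$ by \eqref{4.33}--\eqref{4.34}, so one needs $\abs{\widetilde Q(x)}\asymp\abs{x-x_0}^{1/2}$. I would obtain this from the Wronskian identity \eqref{9.13}: $\widetilde Q$ (which is $\prod_k(x-\pi(w_k))$ times $\Wr$, up to the constant $\varphi(\ve w)^2$) extends continuously across $x_0$ from the $\fre$-side, where $\Wr=2\pi i\abs{u(\z(x);\vy)}^2 w_{\vec y}(x)$, and by \eqref{6.5a} and Proposition~\ref{P9.10} the product $\abs{u}^2 w_{\vec y}$ vanishes exactly to order $\abs{x-x_0}^{1/2}$, uniformly in $\vy$, any vanishing of $\abs{u}^2$ being compensated by a $\abs{x-x_0}^{-1/2}$-blow-up of $w_{\vec y}$. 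Finally, $\abs{x}\to\infty$ is handled separately and more easily: $\norm{(\ti J_{\vec y}-x)^{-1}}=\dist(x,\fre)^{-1}$ and the tridiagonal expansion $\ti G_{nm}(x)=O(\abs{x}^{-r-1})$ both dominate $e^{-rG_\fre(x)}\dist(x,\fre)^{-1/2}$ for $\abs{x}$ large, since $G_\fre(x)=\log\abs{x}-\log\ca(\fre)+O(x^{-1})$ by \eqref{4.26x}. Combining the three regimes gives \eqref{10.14}.
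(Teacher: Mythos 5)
Your architecture is the same as the paper's: the standard two-solution formula for $\ti G_{nm}$, extraction of the factor $\abs{B(\z(x))}^{m-n}=e^{-(m-n)G_\fre(x)}$ from the numerator, and reduction of \eqref{10.14} to a lower bound of order $\dist(x,\fre)^{1/2}$ on (a regularization of) $\Wr$. Your band-edge argument via \eqref{9.13} and Proposition~\ref{P9.10}, transferring the $\abs{x-x_0}^{1/2}$ vanishing across the edge by analyticity in the $z$-variable, is a legitimate variant of the paper's, which instead writes $\Wr$ as $(\z(x)-\z(x_0))$ times the Wronskian of $u^+$ and $\partial u^-/\partial z$ and invokes Theorem~\ref{T9.7}(iii). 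Your explicit cancellation of the simple pole of $u_n^-$ at $\pi(w_j)$ (coming from $1/\ol{\z(x)}$ passing through $\ti\zeta_j$) against the identical pole of $\Wr$ is a point the paper passes over silently; it is correct and harmless since $\pi(w_j)$ sits at a definite distance from $\fre$.

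The genuine gap is the regime $\abs{x}\to\infty$. Neither bound you invoke there gives \eqref{10.14} uniformly in $r=\abs{n-m}$: the resolvent-norm bound $\dist(x,\fre)^{-1}$ has no decay in $r$ at all, and the banded Neumann expansion gives $\abs{\ti G_{nm}(x)}\leq C\,\norm{\ti J_{\vec y}}^{\,r}\abs{x}^{-r-1}$, whose $r$-dependent constant must be compared with the target $e^{-rG_\fre(x)}\dist(x,\fre)^{-1/2}\approx \ca(\fre)^r\abs{x}^{-r-1/2}$ (use \eqref{4.26x}). Since $\norm{\ti J_{\vec y}}=\max(\abs{\alpha_1},\abs{\beta_{\ell+1}})\geq \tfrac12(\beta_{\ell+1}-\alpha_1)\geq 2\,\ca(\fre)$, the needed inequality $(\norm{\ti J_{\vec y}}/\ca(\fre))^{r}\leq C\abs{x}^{1/2}$ fails on any fixed region $\{\abs{x}\geq R_0\}$ as soon as $r$ is large compared to $\log\abs{x}$, so the ``easier'' large-$\abs{x}$ step does not close; it would also have to track the polynomial growth of your prefactor $\prod_k(x-\pi(w_k))$. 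The repair is the paper's: stay inside your own factorization. As $\abs{x}\to\infty$ one has $\z(x)\to 0$ and $1/\ol{\z(x)}\to\infty$, both away from the poles of $u$, so the numerator keeps the sharp factor $e^{-(m-n)G_\fre(x)}$, while in $\Wr$ the term carrying $B(\z(x))^{-1}\sim x/\ca(\fre)$ dominates, giving $\abs{\Wr(x)}\geq C\abs{x}\geq C\dist(x,\fre)^{1/2}$ there.
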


\begin{proof} \eqref{10.12}--\eqref{10.13} is a standard formula for $\ti G$ in terms of any solutions decaying
at $\pm\infty$. As $x$ runs through $\bbR\setminus\fre$, $\z(x)$
runs through $[\cup_{j=1}^\ell C_j^+]\cup(-1,1)\setminus\{0\}$, and
$u(\z(x);\vy)$ is uniformly bounded there. So, by \eqref{10.11}, we
get \eqref{10.14} from
\begin{equation} \lb{10.15}
\abs{\Wr(x)} \geq C\dist (x,\fre)^{1/2}
\end{equation}

This is trivial, except near the points $\abs{x}=\infty$ and
$x\in\{\alpha_j,\beta_j\}_{j=1}^{\ell+1}$ since $\Wr(x)$ is
nonvanishing and continuous away from those points.

Since $u$ is regular at $z=0$ and $z=\infty$, the dominant term in
\eqref{10.8}--\eqref{10.9} is the $B(z)$ term. In
\[
\Wr(x)=a_0 (u_1^+ u_0^- -u_1^- u_0^+)
\]
the dominant term is $B(z)^{-1}$ in $u_1^-$, so
\begin{equation} \lb{10.16}
\abs{\Wr(x)} \sim C\abs{\z(x)}^{-1} \sim C\abs{x} \geq
C\abs{x}^{1/2}
\end{equation}
and \eqref{10.15} holds near $|x|=\infty$.

Near points $x_0\in\{\alpha_j,\beta_j\}_{j=1}^{\ell+1}$, we are
looking at the Wronskian of two solutions $u^+$ and $u^-$  which
approach each other. Thus, we get a Wronskian which goes to zero as
$(\z(x)- \z(x_0))$ times the Wronskian of  $u^+$ and $du^-/dz$. We
have already seen in the last section that these are two linearly
independent solutions, so their Wronskian is nonzero and thus, near
$x_0\in\{\alpha_j,\beta_j\}_{j=1}^{\ell+1}$, for some $C>0$,
\begin{equation}
\Wr(x) \geq C\abs{\z(x)-\z(x_0)} = C\cdot O(\abs{x-x_0}^{1/2})
\end{equation}
proving \eqref{10.15}.
\end{proof}

As $x$ approaches a point
$x_0\in\{\alpha_j,\beta_j\}_{j=1}^{\ell+1}$ from
$\bbR\setminus\fre$, typically (i.e., except for special values of
$\vy$, $n$, and $m$), $\ti G_{nm}(x)\to\infty$. For $G_{nm}(x)$,
this is normally  not true, which is why one expects bounds in this
case that are not divergent at $x_0$. However, there are special
values of $\vy$ for which this is not the case. One sees this for
$n=m=1$ since
\begin{equation} \lb{10.17}
G_{11}(z)=m(z)
\end{equation}
the $m$-function of \eqref{6.3}. $m$ is meromorphic on $\calS$, so it normally has a finite value at $x_0$ but
might have a pole there.

\begin{definition} Fix $\vy\in\bbG$. A point $x_0\in\{\alpha_j,\beta_j\}_{j=1}^{\ell+1}$ is said to be a
\emph{resonance} if and only if it is a pole of $m_{\vec y}(z)$ (in
the sense of poles on $\calS$ which means $(x-x_0)^{-1/2}$
divergence since $x_0$ is a branch point). Otherwise, we say $x_0$
is \emph{nonresonant}.
\end{definition}

It is easy to see that resonances are equivalent to $u_0^+(x_0)=0$.

\begin{theorem}\lb{T10.2} Fix $\vy\in\bbG$. For $n,m\geq 1$ and
$x\in\bbR\setminus\fre$, we have
\begin{equation} \lb{10.17a}
G_{nm}(x)=\ti G_{nm}(x)-\ti G_{0n}(x)\ti G_{0m}(x)\ti G_{00}(x)^{-1}
\end{equation}
Suppose $x_0$ is nonresonant for $\vy$. Let $I$ be the open interval
with one end at $x_0$ and the other at the middle of the gap if
$x_0$ is a boundary point of a finite gap and
$I\subset\bbR\setminus\fre$ with $\abs{I}=1$ if
$x_0\in\{\alpha_1,\beta_{\ell+1}\}$. Then for all $n,m\geq 1$ and
$x\in I$, we have
\begin{align}
\abs{G_{nm}(x)} &\leq C \min(n,m) \lb{10.17b} \\
\abs{G_{nm}(x)} &\leq C\abs{x-x_0}^{-1/2} \lb{10.18}
\end{align}
for some constant $C$.
\end{theorem}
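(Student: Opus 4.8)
The starting point is the resolvent identity \eqref{10.17a}, which expresses the half-line Green's function in terms of the whole-line Green's function studied in Theorem~\ref{T10.1}; this identity is standard and comes from comparing $J_{\vec y}$ (Dirichlet boundary at $n=1$) with $\ti J_{\vec y}$ via rank-one perturbation (equivalently, Krein's formula / the resolvent expansion with the ``$n=0$ site'' playing the role of the boundary). Once \eqref{10.17a} is in hand, the content of \eqref{10.17b}--\eqref{10.18} is to control the right-hand side uniformly for $x\in I$, including the delicate behavior as $x\to x_0$, using the fact that $x_0$ is \emph{nonresonant}, i.e.\ $u_0^+(x_0;\vy)\neq 0$.

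First I would record the explicit formulas for the ingredients in \eqref{10.17a} in terms of the Jost solutions of Section~\ref{s10}: by \eqref{10.12}--\eqref{10.13}, for $n\le m$,
\[
\ti G_{nm}(x)=\f{u_n^-(x)u_m^+(x)}{\Wr(x)},\qquad
\ti G_{00}(x)=\f{u_0^-(x)u_0^+(x)}{\Wr(x)},
\]
so the combination in \eqref{10.17a} telescopes: the $\Wr(x)$ denominators combine to give a single $\Wr(x)^{-1}$ and an $u_0^+(x)^{-1}$ (or $u_0^-(x)^{-1}$, depending on which of $n,m$ is $\le 0$-side), and the numerator becomes a product of $u$'s at indices $\ge 1$. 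Concretely, for $1\le n\le m$ one gets, after the cancellation,
\[
G_{nm}(x)=\f{\bigl(u_0^-(x)u_n^+(x)-u_0^+(x)u_n^-(x)\bigr)u_m^+(x)}{u_0^+(x)\,\Wr(x)},
\]
i.e.\ the numerator is $u_m^+(x)$ times the solution $\psi_n(x):=u_0^-u_n^+-u_0^+u_n^-$ which vanishes at $n=0$ — this is precisely the Dirichlet solution, and $\Wr(x)$ is (up to the fixed nonzero $a_0$) its Wronskian with $u^+$. Writing things this way makes the two bounds transparent. For \eqref{10.17b}: $u_m^+(x)$ decays like $e^{-G_\fre(x)m}$ by \eqref{10.11}, $\psi_n(x)$ grows like $e^{+G_\fre(x)n}$ (it is a multiple of $u_n^-$ plus lower order), and $\Wr(x),u_0^+(x)$ are bounded below; carrying the constants one finds $|G_{nm}(x)|\le C\,n\,e^{-G_\fre(x)(m-n)}\le C\min(n,m)$, where I should separately handle the symmetric case $m\le n$. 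The exponential factor and the bound $\psi_n(x)=O(n)$ uniformly (rather than $O(e^{G_\fre n})$, which would be too weak when $G_\fre(x)$ is small) come exactly from the uniform boundedness of $u(\dott;U^n(\vy))$ on $\ol\calF$ (Theorem~\ref{T8.6}) together with \eqref{10.8}--\eqref{10.9} and $\abs{B(\z(x))}\le 1$.

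For \eqref{10.18}, the only place a blow-up can occur as $x\to x_0$ is through the factors $\Wr(x)^{-1}$ and $u_0^+(x)^{-1}$. The denominator $u_0^+(x)^{-1}$ stays bounded on $I$ precisely because $x_0$ is nonresonant, so $u_0^+(x_0)\ne 0$ and $u_0^+$ is continuous (in fact real analytic in $\z(x)$) up to $x_0$. The Wronskian $\Wr(x)$ does degenerate: as in the proof of Theorem~\ref{T10.1}, $u^+$ and $u^-$ coalesce at $x_0$ and $\Wr(x)\sim C|\z(x)-\z(x_0)|\cdot\Wr(u^+,du^-/dz)=C\cdot O(|x-x_0|^{1/2})$, using that $\x'(\z(x_0))=0$, $\x''(\z(x_0))\ne 0$ (Section~\ref{s2}), and that $u^+$ and $\partial u^-/\partial z$ are linearly independent (Theorem~\ref{T9.7}(iii)). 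So $|\Wr(x)|\ge C|x-x_0|^{1/2}$ on $I$. Combining this with the boundedness of the numerator ($u_m^+$ bounded by Theorem~\ref{T8.6}, and the ``$O(n)$'' bound on $\psi_n$ just discussed — but for \eqref{10.18} we only need it bounded by $Cn$, hence absorbed into the constant after fixing how $C$ depends on $n,m$... actually \eqref{10.18} as stated carries a constant $C$ that may depend on $n,m$, or one interprets it together with \eqref{10.17b}, giving $|G_{nm}(x)|\le C\min(n,m)\min(1,\dist(x_0,\cdot))$-type control) yields \eqref{10.18}.

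The main obstacle is the interplay between \eqref{10.17b} and \eqref{10.18}: one wants a single estimate that is simultaneously $O(\min(n,m))$ away from $x_0$ and $O(|x-x_0|^{-1/2})$ near $x_0$, and the honest statement (which the theorem packages as two inequalities) is $|G_{nm}(x)|\le C\min(n,m)$ when $\dist(x,x_0)\gtrsim 1$ and $|G_{nm}(x)|\le C|x-x_0|^{-1/2}$ uniformly on $I$, with $C$ possibly depending on $n,m$ in the second. So I would prove \eqref{10.17b} first with the clean telescoping argument above, then prove \eqref{10.18} by the same telescoping formula, this time tracking the $|x-x_0|^{-1/2}$ coming from $\Wr(x)^{-1}$ and invoking nonresonance for the $u_0^+(x)^{-1}$ factor. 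A secondary technical point to be careful about is the case distinction $n\le m$ vs.\ $m\le n$ in \eqref{10.12}, and the case $x_0\in\{\alpha_1,\beta_{\ell+1}\}$ versus $x_0$ a finite-gap endpoint — but in both cases the behavior of $\z(x)$ near $\z(x_0)$ and the local structure of $\x$ are identical (a simple branch point), so the Wronskian estimate \eqref{10.15}-type lower bound goes through verbatim, and the choice of $I$ with $|I|=1$ in the unbounded-gap case is just to keep $\z(I)$ in a fixed compact subset of $\ol\calF$ where all the uniform bounds of Theorem~\ref{T8.6} apply.
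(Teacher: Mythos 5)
Your treatment of \eqref{10.17a} and of \eqref{10.18} follows the paper's route in substance (rewrite $G_{nm}$ as $q_n u_m^+/(u_0^+\Wr)$ with $q_n=u_0^+u_n^--u_0^-u_n^+$, i.e.\ your $-\psi_n$; use nonresonance to keep $u_0^+$ away from zero and the lower bound $\abs{\Wr(x)}\geq C\abs{x-x_0}^{1/2}$ from \eqref{10.15}), but your argument for \eqref{10.17b} has a genuine gap. You assert that ``$\Wr(x),u_0^+(x)$ are bounded below'' on $I$ --- false for $\Wr$, which vanishes like $\abs{x-x_0}^{1/2}$ at $x_0$, as you yourself use two sentences later --- and that ``$\psi_n(x)=O(n)$ uniformly,'' which is also false: $\psi_n$ contains the term $u_0^+u_n^-\sim B(\z(x))^{-n}$ and grows like $e^{nG_\fre(x)}$ for fixed $x$ in the open gap; only the renormalized quantity $q_n(\x(z))B(z)^n$ is uniformly bounded. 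Moreover, even a correct $O(n)$ bound on the numerator would, after dividing by $\Wr$, only give $O(n\abs{x-x_0}^{-1/2})$, not $O(\min(n,m))$ uniformly on $I$. The theorem asserts both bounds with one constant uniform in $n,m$ and in $x\in I$ including $x\to x_0$; your closing suggestion that the ``honest statement'' is $C\min(n,m)$ only away from $x_0$, with $C$ allowed to depend on $n,m$ in \eqref{10.18}, is strictly weaker than what is claimed and is a symptom of the missing mechanism.

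That mechanism is the cancellation of the Wronskian singularity by the vanishing of $q_n$ at the band edge. Since $\z(x_0)\in\partial\bbD$ one has $u_n^-(x_0)=\ol{u_n^+(x_0)}$, and the constancy of the phases of $u_n^\pm$ on the gap gives $q_n(x_0)=0$ for \emph{every} $n$ (this is \eqref{10.26x}). Setting $h_n(z)=q_n(\x(z))B(z)^n$ as in \eqref{10.27}, the factors $u(\cdot\,;U^n(\vy))$ entering $h_n$ are analytic with uniformly bounded derivatives on a neighborhood of $\z(I)$ (Theorem~\ref{T8.6}), while differentiating the $B(z)^{2n}$ factor costs only $Cn$; hence $\sup_{z\in\z(I)}\abs{h_n'(z)}\leq Cn$ as in \eqref{10.28}, and since $\abs{\z(x)-\z(x_0)}=O(\abs{x-x_0}^{1/2})$ this yields $\abs{q_n(x)B(\z(x))^n}\leq Cn\abs{x-x_0}^{1/2}$, i.e.\ \eqref{10.25}. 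Feeding this into \eqref{10.23} together with $\abs{B(\z(x))}^{m-n}\leq 1$ for $m\geq n$, the uniform boundedness of $u_m^+B^{-m}$, nonresonance for $u_0^+$, and \eqref{10.15}, the factors $\abs{x-x_0}^{\pm 1/2}$ cancel and one gets $\abs{G_{nm}(x)}\leq Cn=C\min(n,m)$ uniformly on $I$ (the case $m\leq n$ by symmetry). The same pairing of exponentials --- $q_nu_m^+=h_n\,(u_m^+B^{-m})\,B^{m-n}$ is uniformly bounded --- is what makes the constant in \eqref{10.18} independent of $n,m$, rather than ``absorbing an $O(n)$'' into it. Your sketch becomes a proof once you add $q_n(x_0)=0$ and the $O(n)$ derivative estimate for $h_n$, and delete the incorrect lower bound on $\Wr$ and the unrenormalized $O(n)$ claim for $\psi_n$.
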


\begin{proof} Since, for $x\in\bbR\setminus\fre$ fixed,
\begin{equation} \lb{10.19}
u_0^+ u_n^- - u_0^- u_n^+ \equiv q_n
\end{equation}
vanishes at $n=0$, $q_n(x)=C(x) p_n(x)$ for some constant (depending
on $x$). Thus, by the standard formula for $G_{nm}$,
\begin{equation} \lb{10.20}
G_{nm} = \f{p_n\, u_m^+}{\Wr( u^+,p)}, \quad 1\leq n\leq m
\end{equation}
we get
\begin{equation} \lb{10.21}
G_{nm}(x) = q_n(x) u_m^+(x) \wti{\Wr}(x)^{-1}
\end{equation}
where $\wti{\Wr}$ is the Wronskian of $u^+$ and $q$.

By \eqref{10.19},
\begin{equation} \lb{10.22}
\wti{\Wr}(x) = u_0^+(x) \Wr(x)
\end{equation}
so \eqref{10.21} becomes
\begin{equation} \lb{10.23}
G_{nm}(x) = \f{u_0^+(x) u_n^-(x) u_m^+(x) - u_0^-(x) u_n^+(x)
u_m^+(x)}{u_0^+(x)\Wr(x)}
\end{equation}
The first term in \eqref{10.23} is, by \eqref{10.12}, $\ti G_{nm}(x)$. If we note that (also by \eqref{10.12})
\begin{equation} \lb{10.24}
\ti G_{0n}(x) \ti G_{0m}(x) \ti G_{00}(x)^{-1} = \f{u_0^-(x)u_n^+(x)
u_m^+(x)}{u_0^+(x) \Wr(x)}
\end{equation}
we see that the second term in \eqref{10.23} is the second term in
\eqref{10.17a}, so we have proven \eqref{10.17a}.

As we noted above, $x_0$ nonresonant implies that $u_0^+(x_0)\neq
0$. Thus, \eqref{10.23} shows that
\begin{equation} \lb{10.24x}
\sup_{\substack{n,m\geq 1 \\ x\in I}}\, \abs{G_{nm}(x)} \leq C\, \Wr(x)^{-1}
\end{equation}
which, by \eqref{10.16}, proves \eqref{10.18}.

We claim first that \eqref{10.17b} is implied by
\begin{equation} \lb{10.25}
\abs{q_n(x) B(\z(x))^n} \leq Cn\abs{x-x_0}^{1/2}
\end{equation}
For, by \eqref{10.23} and \eqref{10.16},
\begin{equation} \lb{10.26}
\abs{G_{nm}(x)} \leq C \abs{u_m^+(x)B(\z(x))^{-m}}\, \abs{B(\z(x))}^{m-n}
\abs{q_n(x) B(\z(x))^n}\, \abs{x-x_0}^{-1/2}
\end{equation}
so \eqref{10.25} together with $\abs{B(\z(x))} \leq 1$ and the fact
that $\abs{u_m^+(x) B(\z(x))^{-m}}$ is bounded implies
\eqref{10.17b}.

Next, note that because of the definitions
\eqref{10.8}--\eqref{10.9} and the constancy of the phase of
$u_n^\pm$, we see that for all $n$,
\begin{equation} \lb{10.26x}
q_n(x_0)=0
\end{equation}
Define
\begin{equation} \lb{10.27}
h_n(z) = q_n(\x(z)) B(z)^n
\end{equation}
By \eqref{10.26x} and $\abs{\z(x)-\z(x_0)} =O(\abs{x-x_0}^{1/2})$,
\eqref{10.25} follows from
\begin{equation} \lb{10.28}
\sup_{z\in\z(I)}\, \biggl| \f{dh_n(z)}{dz}\biggr| \leq Cn
\end{equation}
$h_n$ is built out of $u$'s which have bounded derivatives and
$B(z)^{n}$ which has a derivative bounded by $Cn$, so \eqref{10.28}
holds.
\end{proof}

\bigskip


\end{document}